\documentclass[12pt]{amsart}

\usepackage{amsmath,amssymb,amsthm,enumerate,graphicx}
\usepackage[margin=1in]{geometry}

\synctex=1

\numberwithin{equation}{section}

\DeclareMathOperator{\supp}{supp}

\newcommand{\sm}{\setminus}

\newcommand{\C}{\mathbb{C}}
\newcommand{\D}{\mathcal{D}}

\newcommand{\cE}{\mathcal{E}}
\newcommand{\N}{\mathbb{N}}
\newcommand{\Z}{\mathbb{Z}}

\newcommand{\Q}{\mathcal{Q}}
\newcommand{\R}{\mathbb{R}}

\newcommand{\F}{\mathbb{F}}

\newcommand{\E}{\mathbb{E}}
\newcommand{\A}{\mathcal{A}}
\newcommand{\T}{\mathbb{T}}

\newcommand{\ve}{\varepsilon}

\newcommand{\1}{^{-1}}

\newcommand{\f}[2]{\frac{#1}{#2}}

\newcommand{\ul}[1]{\underline{#1}}

\newtheorem{theorem}{Theorem}[section]

\newtheorem{definition}[theorem]{Definition}

\newtheorem{corollary}[theorem]{Corollary}
\newtheorem{proposition}[theorem]{Proposition}
\newtheorem{lemma}[theorem]{Lemma}

\numberwithin{theorem}{section}
%\numberwithin{figure}{section}

\title{Bounds for sets with no polynomial progressions}
\author{Sarah Peluse}
\address{School of Mathematics, Institute for Advanced Study, 1 Einstein Drive, Princeton, NJ 08540, USA}
\address{Department of Mathematics, Princeton University, Fine Hall, Washington Road, Princeton, NJ 08540, USA}
\email{speluse@princeton.edu}

\begin{document}
\begin{abstract}
Let $P_1,\dots,P_m\in\Z[y]$ be polynomials with distinct degrees, each having zero constant term. We show that any subset $A$ of $\{1,\dots,N\}$ with no nontrivial progressions of the form $x,x+P_1(y),\dots,x+P_m(y)$ has size $|A|\ll N/(\log\log{N})^{c_{P_1,\dots,P_m}}$. Along the way, we prove a general result controlling weighted counts of polynomial progressions by Gowers norms.
\end{abstract}

\maketitle

\section{Introduction}\label{sec1}

For any polynomials $P_1,\dots,P_m\in\Z[y]$, let $r_{P_1,\dots,P_m}(N)$ denote the size of the largest subset of $[N]:=\{1,\dots,N\}$ containing no progressions of the form $x,x+P_1(y),\dots,x+P_m(y)$ with $y\neq 0$. Bergelson and Leibman~\cite{BergelsonLeibman96} showed that
\[
  r_{P_1,\dots,P_m}(N)=o_{P_1,\dots,P_m}(N)
\]
whenever $P_1,\dots,P_m\in\Z[y]$ all have zero constant term. This is a polynomial generalization of Szemer\'edi's theorem~\cite{Szemeredi75} on arithmetic progressions, which states that $r_{y,2y,\dots,(k-1)y}(N)=o_k(N)$ for every $k\in\N$. While quantitative bounds in Szemer\'edi's theorem for all $k\in\N$ are known due to work of Gowers~\cite{Gowers98,Gowers01}, no bounds are known in general for the polynomial Szemer\'edi theorem. Thus, Gowers~\cite{Gowers01S} has posed the problem of proving explicit bounds for the quantities $r_{P_1,\dots,P_m}(N)$.

In this paper, we prove quantitative bounds for $r_{P_1,\dots,P_m}(N)$ whenever $P_1,\dots,P_m$ have distinct degrees, giving the first quantitative version of the polynomial Szemer\'edi theorem for this large class of progressions.
\begin{theorem}\label{thm1.1}
Let $P_1,\dots,P_m\in\Z[y]$ be polynomials with distinct degrees, each having zero constant term. There exists a $c_{P_1,\dots,P_m}>0$ such that
\[
r_{P_1,\dots,P_m}(N)\ll\f{N}{(\log\log{N})^{c_{P_1,\dots,P_m}}}.
\]
\end{theorem}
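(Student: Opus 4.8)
The plan is a density-increment argument driven by a generalized von Neumann theorem --- the ``general result controlling weighted counts of polynomial progressions by Gowers norms'' advertised in the abstract --- that reduces the whole problem to a single low-order Gowers-norm estimate whose inverse theorem is Fourier-analytic and so carries no quantitative penalty. Let $A\subseteq[N]$ have density $\alpha$ and contain no nontrivial progression $x,x+P_1(y),\dots,x+P_m(y)$, and set $d=\max_i\deg P_i$. First I would restrict $x$ to a suitable subinterval and let $y$ run over an interval of length $M\asymp N^{1/d}$, chosen so that $x+P_i(y)\in[N]$ for every $i$. Since every tuple then counted is a genuine configuration in $A$, and the only configurations the hypothesis does not forbid are the $O_{P_1,\dots,P_m}(1)$ degenerate ones (coming from values of $y$ that are roots of some $P_i$ or of some $P_i-P_j$), the normalized count
\[
\Lambda(1_A,\dots,1_A):=\E_{x,y}\,1_A(x)\prod_{i=1}^m 1_A(x+P_i(y))
\]
satisfies $\Lambda(1_A,\dots,1_A)\ll 1/M\ll N^{-1/d}$.

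The crux is to prove that, for arbitrary $1$-bounded functions $f_0,\dots,f_m$ --- this generality being what is meant by ``weighted'' counts, and being needed because later stages of the iteration produce counts twisted by indicators of arithmetic progressions and by derivatives of functions --- one has
\[
|\Lambda(f_0,\dots,f_m)|\ll\|f_m\|_{U^2}^{\,c}
\]
for the function attached to the top-degree polynomial, with $c>0$ and the implied constant depending only on the degrees $d_1,\dots,d_m$ (after a routine passage to a cyclic group of order $\asymp N$). I would establish this in two stages. The first is the Bergelson--Leibman PET induction: iterating the van der Corput inequality in the variable $y$ a bounded number of times --- the number of steps and the order $s_0$ of the resulting Gowers-type norm depending only on $\{d_i\}$ --- reduces control of $\Lambda$ to control by $\|f_m\|_{U^{s_0}}$ at a scale that is a fixed power of $N$. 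The second stage, which I expect to be the \emph{main obstacle}, is a \emph{degree-lowering} argument: one iteratively passes from $U^{t+1}$-control to $U^{t}$-control by expanding the higher norm in terms of the multiplicative derivatives $\Delta_h f_m$, re-deriving for a positive-density set of shifts $h$ a lower-complexity polynomial-progression estimate, and invoking Weyl-type equidistribution for the $P_i$. Keeping these differenced systems non-degenerate is exactly where the distinct-degrees hypothesis becomes indispensable --- for systems of repeated degrees (long arithmetic progressions, say) no such collapse down to the $U^2$ level is available, which is the deep obstruction the hypothesis sidesteps. After $O_{\{d_i\}}(1)$ steps, each costing only a polynomial loss in the parameter, the control is brought all the way down to $\|f_m\|_{U^2}$.

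Finally I would combine the two ingredients. Writing $1_A=\alpha+f$ with $f$ of mean $\approx0$ and telescoping the count one coordinate at a time --- each step either peeling off an error controlled via the von Neumann theorem by $\|f\|_{U^2}^{\,c}$, or collapsing onto the count for the shorter, still-admissible system $P_1,\dots,P_{m-1}$ --- gives
\[
\Lambda(1_A,\dots,1_A)=c_0\,\alpha^{m+1}+O\!\left(\|1_A-\alpha\|_{U^2}^{\,c}\right),\qquad c_0\asymp 1.
\]
Comparing with the bound $\Lambda(1_A,\dots,1_A)\ll N^{-1/d}$, we may assume $\alpha\ge(\log\log N)^{-c_{P_1,\dots,P_m}}$ (else we are done), in which case $\alpha^{m+1}$ dominates $N^{-1/d}$ and hence $\|1_A-\alpha\|_{U^2}\gg\alpha^{C}$. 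By the $U^2$-inverse theorem (Plancherel) there is a frequency $\theta$ with $|\widehat{1_A-\alpha}(\theta)|\gg\alpha^{C'}$, and a routine Dirichlet-approximation argument then locates an arithmetic progression $P\subseteq[N]$ of length $\gg N^{\eta}$, for some fixed $\eta=\eta(P_1,\dots,P_m)\in(0,1)$, on which $A$ has relative density at least $\alpha+c\alpha^{C''}$. Rescaling $P$ to an interval, and restricting $y$ to multiples of the common difference $q$ of $P$, replaces $P_1,\dots,P_m$ by the polynomials $z\mapsto P_i(qz)/q$, which again have distinct degrees and zero constant term; so the argument iterates. At each pass the length drops to a fixed power, $N\mapsto N^{\eta}$, which caps the number of passes at $O_{P_1,\dots,P_m}(\log\log N)$, while the density increases by the polynomial amount $c\alpha^{C''}$; these two facts together force $\alpha\ll(\log\log N)^{-c_{P_1,\dots,P_m}}$, which is the theorem.
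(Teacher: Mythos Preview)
Your outline captures the broad density-increment architecture correctly, but it skips over what the paper identifies as the main technical obstacle in the integer setting and thereby contains a genuine gap.

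\textbf{The missing concatenation step.} You assert that PET induction ``reduces control of $\Lambda$ to control by $\|f_m\|_{U^{s_0}}$ at a scale that is a fixed power of $N$.'' In finite fields this is true, but over the integers it is not: iterated van der Corput in the $y$-variable (with $y$ ranging only over $[M]$, $M\asymp N^{1/d}$) produces control by an \emph{average} of Gowers \emph{box} norms of the form $\|f_m\|_{\square^s_{(p_i(\underline{a})[\delta'M])_i}([N])}$, where the differencing directions $p_i(\underline{a})$ are polynomials in auxiliary parameters $\underline{a}$ and the boxes have side length $\asymp M$, not $N$. These are not uniformity norms, and the only prior results converting such averages into genuine $U^s$-norms (Tao--Ziegler concatenation) are purely qualitative. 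The paper devotes Section~5 to a new \emph{quantitative} concatenation theorem (Theorem~3.5 and Lemma~5.1) that, with polynomial bounds, merges these averaged box norms into a single $U^{s'}$-norm at scale $\asymp M^{\deg P_\ell}\asymp N$; the paper explicitly calls this the ``main challenge'' and says ``the bulk of the new ideas in this paper go into proving these concatenation results.'' Your proposal does not address this step at all.

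\textbf{The degree-lowering mechanism.} Your description of degree lowering --- differencing $f_m$ and re-running the argument on a shorter system --- is not how it works here and would not obviously succeed. The paper lowers the degree of the norm applied to the \emph{dual function} $F_\ell$ (Corollary~3.8), not to $f_m$, because $F_\ell$ has the special structure that allows a Weyl-sum / major-arc analysis of the phase that appears after the $U^2$-inverse step (Lemma~3.10). Moreover, the distinct-degrees hypothesis is not used to keep differenced systems ``non-degenerate''; it is needed so that the major-arc lemma places the top frequency $\alpha_m$ within $M^{-\deg P_m}$ (rather than merely $M^{-\deg P_1}$) of a rational with small denominator --- the quantitatively sharper conclusion required for the degree-lowering induction to close. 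The paper's degree lowering (Lemma~3.9) and major-arc lemma (Lemma~3.10) are proved by a mutual induction on $\ell$, and the argument does not stop at $U^2$-control of $f_m$: it continues (Lemma~3.11) to peel off $f_m,f_{m-1},\dots$ as characters until only a local $U^1$-average of $f_1$ remains, which is what actually feeds the density increment. Finally, one must track $(C,q)$-coefficients (Definition~3.1) throughout to guarantee that the size of the density increment is uniform across iterations despite the growing common difference; your proposal does not address this uniformity.
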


Obviously, any polynomial progression involving only linear polynomials is a subprogression of some arithmetic progression, so that bounds for Szemer\'edi's theorem (such as the current best bounds of Bloom~\cite{Bloom16} for $3$-term progressions, Green and Tao~\cite{GreenTao17} for $4$-term progressions, and Gowers~\cite{Gowers01} for longer progressions) imply bounds in the linear case of the polynomial Szemer\'edi theorem. Until recently, very few cases beyond this were known. Indeed, quantitative versions of the polynomial Szemer\'edi theorem were known in only two other situations: for two-term polynomial progressions~\cite{Sarkozy78I,Sarkozy78II,BalogPelikanPintzSzemeredi94,Slijepcevic03,Lucier06,Rice19}, to which Fourier analytic methods immediately apply, and for arithmetic progressions with common difference equal to a perfect power~\cite{Prendiville17} (and thus all subprogressions of those progressions), to which Gowers's method~\cite{Gowers01} may be adapted to apply.

It was essential for the success of the density increment arguments in~\cite{Gowers98} and~\cite{Gowers01} that $k$-term arithmetic progressions are preserved under translation and dilation, since the inverse theorems for the Gowers norms (both local and global) give a density increment on an arithmetic progression whose common difference can be much larger than the length of the progression. Similarly, $k$-term arithmetic progressions with common difference equal to a perfect $d^{th}$ power are preserved under translation and dilation by a perfect $d^{th}$ power, so that Gowers's local inverse theorem from~\cite{Gowers01} could be applied in~\cite{Prendiville17} with suitable modification to get a density increment on a progression with common difference equal to a perfect $d^{th}$ power. However, the vast majority of polynomial progressions do not behave so nicely under dilation (e.g., $x,x+y,x+y^2$), and so to handle more progressions of length greater than two, new strategies avoiding the use of the inverse theorems for the Gowers norms were needed.

Recently, significant progress has been made on the problem of proving a quantitative version of the polynomial Szemer\'edi theorem in the finite field setting. Similar to above, let $r_{P_1,\dots,P_m}(\F_p)$ denote the size of the largest subset of $\F_p$ containing no nontrivial progressions of the form $x,x+P_1(y),\dots,x+P_m(y)$. Bourgain and Chang~\cite{BourgainChang17} proved that $r_{y,y^2}(\F_p)\ll p^{14/15}$, the author~\cite{Peluse18} proved that $r_{P_1,P_2}(\F_p)\ll p^{23/24}$ whenever $P_1$ and $P_2$ are affine-linearly independent over $\mathbb{Q}$, and then Dong, Li, and Sawin~\cite{DongLiSawin17} very shortly after and independently showed improved bounds, getting $r_{P_1,P_2}(\F_p)\ll_{P_1,P_2} p^{11/12}$. All three of these arguments completely avoided the use of any inverse theorems for the Gowers norms. However, there were serious barriers to generalizing any of the methods of~\cite{BourgainChang17,Peluse18,DongLiSawin17} to the integer setting or to longer progressions in the finite field setting.

Using a different method, the author~\cite{Peluse19} proved that $r_{P_1,\dots,P_m}(\F_p)\ll p^{1-\gamma_{P_1,\dots,P_m}}$ whenever $P_1,\dots,P_m\in\Z[y]$ are affine-linearly independent. Theorem~\ref{thm1.1} thus brings our knowledge of the polynomial Szemer\'edi theorem in the integers more in line with what is known in finite fields. The proof of Theorem~\ref{thm1.1} involves adapting the central idea of~\cite{Peluse19} to the integer setting. Such an adaptation was first done by Prendiville and the author~\cite{PelusePrendiville19} for the special case of the progression $x,x+y,x+y^2$, showing that $r_{y,y^2}(N)\ll N/(\log\log{N})^c$ for some absolute constant $c>0$. It turns out that the assumption that $P_1,\dots,P_m$ have distinct degrees in Theorem~\ref{thm1.1} is the exact condition needed to adapt the argument of~\cite{Peluse19} to the integers in full. We will say more about why this is the case in Section~\ref{sec3}.

We now briefly discuss the proof of Theorem~\ref{thm1.1} in comparison to the arguments in~\cite{Peluse19} and~\cite{PelusePrendiville19}. The proof of Theorem~\ref{thm1.1} proceeds via a density increment argument where, as in~\cite{PelusePrendiville19}, it is shown that any subset of $[N]$ with no nontrivial polynomial progressions has increased density on a long arithmetic progression with very small common difference. This is done by following the strategy for proving quantitative bounds in the polynomial Szemer\'edi theorem originating in~\cite{Peluse19}, which is to first show that the count of polynomial progressions in a set is controlled by some Gowers $U^s$-norm, and then to show that, in certain situations, one can combine this $U^s$-control with understanding of shorter progressions to deduce $U^{s-1}$-control. We refer to this second part of the argument as a ``degree-lowering'' result, and it is here that it is crucial that $P_1,\dots,P_m$ have distinct degrees. A key feature of the proof of the degree-lowering result is that, while the $U^s$-norm plays a role in the argument for arbitrarily large $s$, it bypasses the use of any inverse theorems for uniformity norms of degree greater than $2$. Starting with control by any $U^s$-norm, one can repeatedly apply the degree-lowering result to deduce control in terms of the $U^2$- or $U^1$-norm, which are much easier to deal with than higher degree uniformity norms.

In contrast to the finite field situation of~\cite{Peluse19}, the main challenge in this paper is to first prove that the count of polynomial progressions is controlled by some $U^s$-norm. By using repeated applications of the van der Corput inequality following Bergelson and Leibman's~\cite{BergelsonLeibman96} PET induction scheme, we can prove control in terms of an average of a certain family of Gowers box norms. In~\cite{TaoZiegler18}, Tao and Ziegler use the results of their paper on concatenation~\cite{TaoZiegler16} to prove that such an average is qualitatively controlled by a global $U^s$-norm, but with no quantitative bounds. The results of~\cite{TaoZiegler16} are purely qualitative, and so not suitable for our purposes. In this paper, we prove a new quantitative concatenation result, which we use to control (with polynomial bounds) the averages of Gowers box norms just mentioned by a $U^s$-norm for some $s$ depending only on the degrees of the polynomials involved. In~\cite{PelusePrendiville19}, this was done for the special case of the average of Gowers box norms controlling the progression $x,x+y,x+y^2$, which is the simplest case requiring a nontrivial concatenation argument. In the general situation covered by Theorem~\ref{thm1.1}, these averages of Gowers box norms can become arbitrarily complex, necessitating a new and more general approach. The concatenation theory developed in this paper is significantly stronger than that in~\cite{PelusePrendiville19}, and the bulk of the new ideas in this paper go into proving these concatenation results. We must also be more careful during the PET induction step than in previous works in order to produce an average of Gowers box norms of the particular form that our concatenation result can be applied to. Though the proof of Theorem~\ref{thm1.1} only requires a $U^s$-control result for polynomial progressions involving polynomials with distinct degrees, a result for general polynomial progressions can be proved with a little more work using our methods. Since it may be of independent interest, we record this result in Theorem~\ref{thm6.2}.

In~\cite{PelusePrendiville19}, the author and Prendiville adapted the degree-lowering method of~\cite{Peluse19} to handle the progression $x,x+y,x+y^2$ in the integer setting. The adaptation in that paper quickly breaks down for essentially all other non-linear progressions, however. To prove a degree-lowering result that works in the generality of Theorem~\ref{thm1.1}, we must prove several intermediate degree-lowering results by induction. This induction is intertwined with an induction proving several intermediate ``major arc lemmas''. These lemmas are ingredients in the proofs of the intermediate degree-lowering results whose proofs themselves require other intermediate ``major arc lemmas'' and degree-lowering results, along with the $U^s$-control result mentioned in the previous paragraph. Despite the additional complications of this inductive argument, the proof of each intermediate degree-lowering result (assuming the corresponding major arc lemma) is still based on the proof of the degree-lowering result of~\cite{PelusePrendiville19}.

This paper is organized as follows. In Section~\ref{sec2}, we set notation and recall some facts about the Gowers uniformity and box norms. In Section~\ref{sec3}, we give a detailed outline of the proof of Theorem~\ref{thm1.1}, stating the most important intermediate results needed. In Section~\ref{sec4}, we prove that weighted counts of the polynomial progressions we consider are controlled by an average of a certain family of Gowers box norms. In Section~\ref{sec5}, we prove our main concatenation result, which we combine with the results of Section~\ref{sec4} to deduce control by uniformity norms in Section~\ref{sec6}. In Section~\ref{sec7}, we prove several lemmas needed to carry out the degree-lowering argument, and in Section~\ref{sec8} we prove our general degree-lowering result. We repeatedly combine the degree-lowering result with the $U^s$-control result proven in Section~\ref{sec6} to deduce a local $U^1$-control result in Section~\ref{sec9}. In Section~\ref{sec10}, we use this local $U^1$-control result to carry out the density increment argument, completing the proof of Theorem~\ref{thm1.1}.

\section*{Acknowledgments}
The author thanks Sean Prendiville and Kannan Soundararajan for helpful comments on earlier versions of this paper and the anonymous referees for many useful suggestions that improved the presentation in this paper, including one that simplified the proof of Corollary~\ref{cor3.8}. The author was partially supported by the NSF Graduate Research Fellowship Program under Grant No. DGE-114747 and by the Stanford University Mayfield Graduate Fellowship.

\section{Notation and preliminaries}\label{sec2}

We are interested in the regime where $N\to\infty$, and so we will assume that $N$ is sufficiently large so that, for example, the quantity $\log\log{N}$ is well-defined and positive. The standard asymptotic notation $O$ and $\Omega$, along with $\ll,\gg,$ and $\asymp$, will be used throughout the paper. So, $A=O(B)$, $B=\Omega(A)$, $A\ll B$, and $B\gg A$ all mean that $|A|\leq C|B|$ for some absolute constant $C>0$, and $A\asymp B$ means that $A\ll B$ and $B\ll A$. When $O$, $\Omega$, $\ll$, $\gg$, or $\asymp$ appear with a subscript, this means that the implied constant $C$ may depend on the subscript. We will also use expressions of the form $O(A)$ to denote a quantity that has size at most an absolute constant times $A$, and analogously for $\Omega(A)$. 

For any function $f:\Z^n\to\C$ and finite subset $S\subset\Z^n$, we denote the average of $f$ over $S$ by $\E_{x\in S}f(x):=\f{1}{|S|}\sum_{x\in S}f(x)$, and if $\mu:\Z^n\to[0,\infty)$ is finitely supported, we similarly denote the average of $f$ with respect to $\mu$ by $\E_{x}^\mu f(x):=\sum_{x\in\Z^n}f(x)\mu(x)$. We say that $f$ is \textit{$1$-bounded} if $\|f\|_{L^\infty}\leq 1$.  We normalize the $\ell^p$-norms on the space of functions $\Z^n\to\C$ by setting $\|f\|_{\ell^p}^p:=\sum_{x\in\Z^n}|f(x)|^p$. For any $L>0$, we define the weight $\mu_L:\Z\to[0,1]$ by
\[
\mu_L(h):=\f{\#\{(h_1,h_2)\in[L]^2: h_1-h_2=h\}}{L^2},
\]
so that $\supp \mu_L\subset(-L,L)$, $\|\mu_L\|_{\ell^1}=1$, and $\|\mu_{L}\|_{\ell^2}^2\leq 1/L$. Set $e(x):=e^{2\pi i x}$. When $f:\Z\to\C$ is finitely supported, we define its Fourier transform $\hat{f}:\T\to\C$ by
\[
\hat{f}(\xi):=\sum_{x\in\Z}f(x)e(-\xi x),
\]
and the convolution of $f$ with another finitely supported function $g:\Z\to\C$ by
\[
(f*g)(x):=\sum_{y\in\Z}f(y)g(x-y).
\]
With this choice of normalizations, note that $\widehat{f*g}=\widehat{f}\cdot\widehat{g}$, $f(x)=\int_{\T}\widehat{f}(\xi)e(\xi x)d\xi$ for all $x\in\Z$, and $\sum_{x\in\Z}f(x)\overline{g(x)}=\int_{\T}\widehat{f}(\xi)\overline{\widehat{g}(\xi)}d\xi$.

For any $f:\Z\to\C$ and $h\in\Z$, we define functions $T_hf:\Z\to\C$ and $\Delta_hf:\Z\to\C$ by $T_hf(x)=f(x+h)$ and $\Delta_hf(x):=\overline{f(x+h)}f(x)$, and also define, for $h_1,\dots,h_s$, the function $\Delta_{h_1,\dots,h_s}f:\Z\to\C$ by $\Delta_{h_1,\dots,h_s}f=\Delta_{h_1}\cdots\Delta_{h_s}f$. Note that $\Delta_{h_1}\Delta_{h_2}f=\Delta_{h_2}\Delta_{h_1}f$ for any $h_1,h_2\in\Z$. Thus, for any finite subset $I\subset\Z$, we may unambiguously define $\Delta_{(h_i)_{i\in I}}f$ to equal $\Delta_{h_{i_1},\dots,h_{i_{|I|}}}f$ where $i_1,\dots,i_{|I|}$ is any enumeration of the elements of $I$. In the same vein, we will use the notation $\Delta_{\ul{h}}f$ when $\ul{h}=(h_1,\dots,h_k)$ to denote the function $\Delta_{h_1,\dots,h_k}f$. Finally, for any $(h_1,h_1')\in\Z^2$ we similarly define $\Delta_{(h_1,h_1')}'f:\Z\to\C$ by $\Delta_{(h_1,h_1')}'f(x):=\overline{f(x+h_1)}f(x+h_1')$, and also define $\Delta'_{(h_1,h_1'),\dots,(h_s,h'_s)}f$ and $\Delta'_{(h_i,h'_i)_{i\in I}}f$ analogously to $\Delta_{h_1,\dots,h_s}f$ and $\Delta_{(h_i)_{i\in I}}f$.

We can now define the Gowers box and uniformity norms.

\begin{definition}\label{def2.1}
Let $d\in\N$, $Q_1,\dots,Q_d\subset\Z$ be finite subsets, and $f:\Z\to\C$ be a function supported on a finite subset $S\subset\Z$. We define the (normalized) Gowers box norm of $f$ with respect to $Q_1,\dots,Q_d$ by
\[
\|f\|^{2^d}_{\square^d_{Q_1,\dots,Q_d}(S)}:=\f{1}{|S|}\sum_{x\in\Z}\E_{\substack{h_i,h_i'\in Q_i \\ i=1,\dots,d}}\Delta'_{(h_1,h_1'),\dots,(h_d,h_d')}f(x).
\]
When $Q\subset\Z$ is any finite subset, we define the Gowers $U^s$-norm of $f$ with respect to $Q$ by
\[
\|f\|_{U_Q^s(S)}:=\|f\|_{\square_{Q,\dots,Q}^s(S)}.
\]
\end{definition}

We will occasionally use the Gowers--Cauchy--Schwarz inequalities, which we now recall. The following two results are standard (see Lemma~B.2 of~\cite{GreenTao10}, for example).
\begin{lemma}\label{lem2.2}
  Let $X_1,\dots,X_s$ be finite sets, $f:\prod_{i=1}^sX_i\to\C$, and, for each $i\in[s]$, $g_i:\prod_{i=1}^sX_i\to\C$ be a $1$-bounded function such that the value of $g_i(x_1,\dots,x_s)$ does not depend on $x_i$. We have
  \[
    \left|\E_{\substack{x_i\in X_i \\ i=1,\dots,s}}f(x_1,\dots,x_s)\prod_{i=1}^sg_i(x_1,\dots,x_s)\right|^{2^s}\leq\E_{\substack{x_i^0,x_i^1\in X_i \\ i=1,\dots,s}}\prod_{\omega\in\{0,1\}^s}\mathcal{C}^{|\omega|}f(x_1^{\omega_1},\cdots,x_s^{\omega_s}).
  \]
\end{lemma}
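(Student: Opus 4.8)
The plan is to prove the inequality by induction on $s$, peeling off one variable at a time with the Cauchy--Schwarz inequality. For the base case $s=1$, the hypothesis forces $g_1$ to be a constant of modulus at most $1$, so
\[
\left|\E_{x_1\in X_1}f(x_1)g_1(x_1)\right|^2\leq\left|\E_{x_1\in X_1}f(x_1)\right|^2=\E_{x_1^0,x_1^1\in X_1}f(x_1^0)\overline{f(x_1^1)},
\]
which is exactly the claimed bound, since $\prod_{\omega\in\{0,1\}}\mathcal{C}^{|\omega|}f(x_1^\omega)=f(x_1^0)\overline{f(x_1^1)}$.

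For the inductive step I would assume the result for $s-1$ sets and write $\ul{x}=(x_1,\dots,x_{s-1})$. Since $g_s$ does not depend on $x_s$, factor the average as
\[
\E_{x_1,\dots,x_s}f\prod_{i=1}^s g_i=\E_{\ul{x}}\,g_s(\ul{x})\left(\E_{x_s}f(\ul{x},x_s)\prod_{i=1}^{s-1}g_i(\ul{x},x_s)\right),
\]
and then apply Cauchy--Schwarz in the $\ul{x}$-average, using $|g_s|\leq 1$, to get
\[
\left|\E_{x_1,\dots,x_s}f\prod_{i=1}^s g_i\right|^2\leq\E_{\ul{x}}\left|\E_{x_s}f(\ul{x},x_s)\prod_{i=1}^{s-1}g_i(\ul{x},x_s)\right|^2.
\]
Expanding the inner square introduces a pair of variables $x_s^0,x_s^1\in X_s$; interchanging the order of summation, the right-hand side becomes $\E_{x_s^0,x_s^1}\bigl[\E_{\ul{x}}F_{x_s^0,x_s^1}(\ul{x})\prod_{i=1}^{s-1}G_{i,x_s^0,x_s^1}(\ul{x})\bigr]$, where
\[
F_{x_s^0,x_s^1}(\ul{x}):=\prod_{\omega_s\in\{0,1\}}\mathcal{C}^{\omega_s}f(\ul{x},x_s^{\omega_s}),\qquad G_{i,x_s^0,x_s^1}(\ul{x}):=\prod_{\omega_s\in\{0,1\}}\mathcal{C}^{\omega_s}g_i(\ul{x},x_s^{\omega_s}).
\]
The key point is that each $G_{i,x_s^0,x_s^1}$ is still $1$-bounded and still does not depend on $x_i$, so the inductive hypothesis on $X_1,\dots,X_{s-1}$ applies to the inner average with $F_{x_s^0,x_s^1}$ playing the role of $f$.

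To finish, I would first observe that $\E_{x_s^0,x_s^1}\bigl[\E_{\ul{x}}F_{x_s^0,x_s^1}\prod_i G_{i,x_s^0,x_s^1}\bigr]$ is real and nonnegative, since for fixed $\ul{x}$ the $x_s^0,x_s^1$-sum equals $\bigl|\E_{x_s}f(\ul{x},x_s)\prod_{i<s}g_i(\ul{x},x_s)\bigr|^2$. Hence raising the Cauchy--Schwarz bound to the power $2^{s-1}$ and using convexity of $t\mapsto t^{2^{s-1}}$ lets me move the outer average $\E_{x_s^0,x_s^1}$ outside the $2^{s-1}$-th power; applying the inductive hypothesis to $\bigl|\E_{\ul{x}}F_{x_s^0,x_s^1}\prod_i G_{i,x_s^0,x_s^1}\bigr|^{2^{s-1}}$ and then re-indexing each $\omega'\in\{0,1\}^{s-1}$ together with $\omega_s\in\{0,1\}$ as $\omega=(\omega',\omega_s)\in\{0,1\}^s$ — using $\mathcal{C}^{|\omega'|}\mathcal{C}^{\omega_s}=\mathcal{C}^{|\omega|}$ — yields precisely the right-hand side of the claimed inequality. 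The argument is routine; the only points that need care are the bookkeeping of the conjugation operators $\mathcal{C}^{|\omega|}$ through the doubling, checking that the ``independence of $x_i$'' hypothesis survives the passage from $g_i$ to $G_{i,x_s^0,x_s^1}$, and confirming that the intermediate expressions are real and nonnegative so the final convexity step is valid. I expect the re-indexing of the cube $\{0,1\}^s$ to be the only place where one could plausibly slip up.
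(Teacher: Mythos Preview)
Your proof is correct and is precisely the standard induction-by-Cauchy--Schwarz argument for this Gowers--Cauchy--Schwarz inequality. The paper does not actually prove Lemma~\ref{lem2.2}; it simply cites it as a standard result (Lemma~B.2 of Green--Tao~\cite{GreenTao10}), so there is nothing to compare against beyond noting that what you wrote is exactly the usual proof one finds in the literature.

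One tiny remark on presentation: when you invoke ``convexity of $t\mapsto t^{2^{s-1}}$'' to pass the average $\E_{x_s^0,x_s^1}$ outside the power, the inner quantity $Z(x_s^0,x_s^1)=\E_{\ul{x}}F_{x_s^0,x_s^1}\prod_i G_{i,x_s^0,x_s^1}$ need not be real for individual $(x_s^0,x_s^1)$. Since you have already checked that $\E_{x_s^0,x_s^1}Z\geq 0$, the cleanest phrasing is $(\E Z)^{2^{s-1}}=|\E Z|^{2^{s-1}}\leq(\E|Z|)^{2^{s-1}}\leq\E|Z|^{2^{s-1}}$, with the last step being Jensen for the convex function $z\mapsto|z|^{2^{s-1}}$ on $\C$ (equivalently, H\"older). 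You clearly had this in mind; it is just worth stating it this way to avoid any ambiguity.
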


\begin{lemma}\label{lem2.3}
Let $Q_1,\dots,Q_d\subset\Z$ be finite subsets and, for each $\omega\in\{0,1\}^d$, $f_\omega:\Z\to\C$ be a function supported on a finite subset $S\subset\Z$. We have
\[
\left|\f{1}{|S|}\sum_{x\in\Z}\E_{\substack{h_i,h_i'\in Q_i \\ i=1,\dots,d}}\prod_{\omega\in\{0,1\}^d}\mathcal{C}^{|\omega|}f_\omega(x+\ul{h}\cdot\omega+\ul{h}'\cdot(\ul{1}-\omega))\right|\leq\prod_{\omega\in\{0,1\}^d}\|f_\omega\|_{\square^d_{Q_1,\dots,Q_d}(S)}.
\]
\end{lemma}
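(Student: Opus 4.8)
The plan is to establish this multilinear Gowers--Cauchy--Schwarz inequality for box norms by $d$ successive applications of the ordinary Cauchy--Schwarz inequality, one for each of the $d$ box directions --- essentially the same argument that proves Lemma~\ref{lem2.2} (a textbook reference for the inequality itself is \cite[Lemma~B.2]{GreenTao10}). The first step is to clear the normalizations: the right-hand side is a product of $2^d$ box norms, each carrying a factor $|S|^{-1/2^d}$, so both sides carry exactly one factor $|S|^{-1}$. Cancelling it, it suffices to prove the inequality for the \emph{unnormalized} quantities, i.e.\ to bound $\big|\sum_{x}\E_{h_i,h_i'\in Q_i}\prod_{\omega\in\{0,1\}^d}\mathcal{C}^{|\omega|}f_\omega(x+\ul{h}\cdot\omega+\ul{h}'\cdot(\ul{1}-\omega))\big|$ by $\prod_{\omega\in\{0,1\}^d}\big(\sum_x\E_{h_i,h_i'\in Q_i}\Delta'_{(h_1,h_1'),\dots,(h_d,h_d')}f_\omega(x)\big)^{1/2^d}$. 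Working with the unnormalized inner products also removes any need to track the (finite) supports of the auxiliary functions that appear below.

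The core is a single ``peeling'' step in a chosen direction $i\in\{1,\dots,d\}$. Group the product over $\omega\in\{0,1\}^d$ according to the bit $\omega_i$, and write $z_{\omega'}$ for the base point built out of $x$ and the variables $h_j,h_j'$ with $j\ne i$, where $\omega'$ now ranges over the remaining $d-1$ coordinates. The corners with $\omega_i=0$ contribute a product over $\omega'$ of (suitably conjugated) values $f_\omega(z_{\omega'}+h_i')$, and the corners with $\omega_i=1$ contribute a product over $\omega'$ of values $f_\omega(z_{\omega'}+h_i)$. Since $z_{\omega'}+h_i'=z_{\omega'}(x+h_i')$, the first product equals $P_0(x+h_i')$ for a function $P_0$ not involving $h_i$ or $h_i'$, and the second equals $P_1(x+h_i)$; as $h_i$ and $h_i'$ are averaged independently, the unnormalized inner product equals $\sum_x\E_{(h_j,h_j')_{j\ne i}}G(x)K(x)$ with $G(x):=\E_{h\in Q_i}P_0(x+h)$ and $K(x):=\E_{h\in Q_i}P_1(x+h)$. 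The Cauchy--Schwarz inequality, in $x$ together with the surviving $h$-variables, bounds the square of this by $\big(\sum_x\E\,|G|^2\big)\big(\sum_x\E\,|K|^2\big)$; and expanding $\sum_x\E\,|G|^2$ reintroduces a fresh pair of variables $h_i,h_i'\in Q_i$ and identifies it with the unnormalized box inner product of the family obtained from $(f_\omega)$ by overriding the $i$-th index-bit of every function to $0$ (and likewise for $\sum_x\E\,|K|^2$, overriding to $1$) --- each of which is a visibly nonnegative real, so the peeling step applies to it again.

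It remains to iterate. Applying the peeling step in directions $1,2,\dots,d$ in turn --- at the $k$-th step bounding the square of a nonnegative box inner product whose $2^d$ component functions already have their first $k-1$ index-bits frozen by the product of the two such inner products in which the $k$-th bit is frozen to $0$ and to $1$ --- a trivial induction gives $\big|\sum_{x}\E\prod_{\omega}\mathcal{C}^{|\omega|}f_\omega(\cdots)\big|^{2^d}\le\prod_{\eta\in\{0,1\}^d}\big(\sum_x\E\,\Delta'_{(h_1,h_1'),\dots,(h_d,h_d')}f_\eta(x)\big)$, since once all $d$ bits are frozen the $\eta$-th factor has every component function equal to the constant function $f_\eta$. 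Taking $2^d$-th roots yields the lemma. I do not expect a genuine obstacle here: the only points demanding care are the combinatorial bookkeeping of the peeling step --- keeping track of the conjugations $\mathcal{C}^{|\omega|}$ and of which index-bits have been frozen at each stage --- and the reconciliation of the $|S|$-normalizations at the end, all of which is routine.
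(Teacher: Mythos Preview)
Your proposal is correct and is precisely the standard iterated Cauchy--Schwarz argument the paper has in mind: the paper does not supply its own proof of this lemma but simply cites it as a well-known result (Lemma~B.2 of~\cite{GreenTao10}). Your peeling step and bookkeeping with the conjugations and normalizations are accurate, so there is nothing to add.
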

In the above lemmas and elsewhere in the paper, $\mathcal{C}:\C\to\C$ denotes the complex conjugation operator and $\ul{1}$ denotes the tuple with entries all equal to $1$, whose dimensions will be clear from context. Similarly, $\ul{0}$ denotes the tuple with entries all equal to $0$.

Finally, we will need an inverse theorem for $U^2$-norms of the form $\|\cdot\|_{U^2_{[\delta' L]}([L])}$. This is the only inverse result for uniformity norms used in the proof of Theorem~\ref{thm1.1}.
\begin{lemma}\label{lem2.4}
Let $L>0$. If $f:\Z\to\C$ is $1$-bounded, supported on the interval $[L]$, and satisfies
\[
\|f\|_{U^2_{[\delta' L]}([L])}\geq\delta,
\]
then there exists a $\beta\in\T$ such that
\[
\left|\E_{x\in [L]}f(x)e(\beta x)\right|\gg(\delta\delta')^{O(1)}.
\]
\end{lemma}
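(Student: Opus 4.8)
The plan is to reduce the statement about the "local" $U^2$-norm $\|\cdot\|_{U^2_{[\delta'L]}([L])}$ to the well-known inverse theorem for the genuine $U^2$-norm on a cyclic group, by unfolding the definition and using Fourier analysis on $\Z$ (or on $\Z/N'\Z$ for a suitable $N'\asymp L$). First I would write out, from Definition~\ref{def2.1} with $d=2$ and $Q_1=Q_2=[\delta'L]$,
\[
\|f\|_{U^2_{[\delta'L]}([L])}^4=\f{1}{L}\sum_{x\in\Z}\E_{\substack{h_1,h_1'\in[\delta'L]\\ h_2,h_2'\in[\delta'L]}}\Delta'_{(h_1,h_1'),(h_2,h_2')}f(x),
\]
and observe that after the change of variables replacing the pair $(h_i,h_i')$ by its difference, this equals $\f{1}{L}\sum_x\sum_{h_1,h_2}\mu_{\delta'L}(h_1)\mu_{\delta'L}(h_2)\,\Delta_{h_1,h_2}f(x)$ up to translating $x$, i.e.\ a smoothed second-difference average against the weights $\mu_{\delta'L}$ defined in Section~\ref{sec2}. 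The point is that this expression is manifestly a nonnegative quantity expressible on the Fourier side: expanding $\Delta_{h_1,h_2}f(x)$ and summing over $x,h_1,h_2$ gives
\[
\|f\|_{U^2_{[\delta'L]}([L])}^4=\f{1}{L}\int_{\T}|\widehat f(\xi)|^2\,\widehat{g}(\xi)\,d\xi
\quad\text{for a suitable nonnegative smoothing kernel,}
\]
or more cleanly one shows directly that it is comparable to $\f1L\sum_{\xi}|\widehat{f}(\xi)|^2 |\widehat{\mu_{\delta'L}}(\xi)|^{?}$-type weight; in any case the quantity is $\ll \f{1}{L}\sup_\xi|\widehat f(\xi)|\cdot\|\widehat f\|$ suitably interpreted, by pulling out the sup of one factor. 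Cleanest is to note the standard identity $\|f\|_{U^2(\Z/N'\Z)}^4=\sum_\alpha|\widehat f(\alpha)|^4$ and that the local norm dominates a localized version of this.

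Concretely, the key steps in order: (1) Unfold the definition and change variables so the local $U^2$-norm becomes $\E_{h_1,h_2}^{\mu\times\mu}\E_{x\in[L]}\Delta_{h_1,h_2}f(x+\text{shift})$ with $\mu=\mu_{\delta'L}$, up to the harmless normalization $|S|=L$. (2) Embed $f$ into $\Z/N'\Z$ with $N'$ a prime (or just a modulus) of size $\asymp L/\delta'$ or $\asymp L$, chosen so that all the relevant sums — which involve $x\in[L]$ and shifts of size $O(\delta'L)$ — do not wrap around, so that the integer average agrees with the cyclic-group average up to a factor $N'/L\asymp 1/\delta'$ or $O(1)$. (3) On $\Z/N'\Z$, use Parseval/Fourier expansion to write the smoothed average as $\sum_{\alpha\in\widehat{\Z/N'\Z}}|\widehat f(\alpha)|^4 |m(\alpha)|$-type expression where $m$ is the Fourier transform of the weight $\mu\times\mu$, and then bound: since $\mu_{\delta'L}$ has $\ell^1$-norm $1$, its Fourier transform is $1$-bounded, so the smoothed fourth-moment is at most the genuine $\sum_\alpha|\widehat f(\alpha)|^4\le(\sup_\alpha|\widehat f(\alpha)|^2)\sum_\alpha|\widehat f(\alpha)|^2$. (4) Apply Parseval ($\sum_\alpha|\widehat f(\alpha)|^2=\|f\|_2^2\le L$ after normalization, using $1$-boundedness and $\supp f\subset[L]$), and pull out the supremum to get $\sup_\alpha|\widehat f(\alpha)|\gg (\delta\delta')^{O(1)}$, i.e.\ there is a frequency $\beta=\alpha/N'\in\T$ with $|\E_{x\in[L]}f(x)e(\beta x)|\gg(\delta\delta')^{O(1)}$.

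The main obstacle is step (3): the inequality must go in the right direction. We need the \emph{local} $U^2$-norm (with small difference parameter $\delta'L$) to be \emph{bounded above} by a large Fourier coefficient, but smoothing by $\mu_{\delta'L}$ in the first-difference variables makes $\widehat{\mu_{\delta'L}}$ \emph{concentrated near} $\xi=0$, which could in principle prevent us from seeing large Fourier mass at $\xi\ne 0$. The resolution, and the one genuinely careful point, is that $|\widehat{\mu_L}(\xi)|$ is a Fejér-type kernel: it is $\gg 1$ for $\|\xi\|\ll 1/L$ and decays, but crucially we only need an \emph{upper} bound on the local norm, and $|\widehat{\mu_{\delta'L}}(\xi)|\le\|\mu_{\delta'L}\|_{\ell^1}=1$ uniformly, which is all that is required to dominate by $\sum|\widehat f|^4$; the $\delta'$-loss enters only through the normalization factor $N'/L$ when passing between $[L]$ and $\Z/N'\Z$, contributing the polynomial-in-$\delta'$ factor. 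One must also verify the elementary fact that the differencing ranges $[\delta'L]$ being \emph{nonempty} requires $\delta'L\ge 1$, which is automatic since otherwise the hypothesis $\|f\|_{U^2_{[\delta'L]}([L])}\ge\delta>0$ is vacuous; and one should double-check the wrap-around bookkeeping in step (2), but this is routine given that all variables live in an interval of length $O(L)$.
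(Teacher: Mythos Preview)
Your outline founders at step~(3): the Fourier expansion of the local $U^2$-quantity is not of the form you claim. Writing $\frac{1}{L}\sum_{x,h_1,h_2}\mu_{\delta'L}(h_1)\mu_{\delta'L}(h_2)\Delta_{h_1,h_2}f(x)$ on the frequency side (either on $\T$ or after embedding into $\Z/N'\Z$) yields
\[
\frac{1}{L}\sum_{\alpha_1+\alpha_4=\alpha_2+\alpha_3}\hat f(\alpha_1)\overline{\hat f(\alpha_2)}\,\overline{\hat f(\alpha_3)}\hat f(\alpha_4)\,\hat\mu_{\delta'L}(\alpha_2-\alpha_4)\,\hat\mu_{\delta'L}(\alpha_3-\alpha_4),
\]
a sum over \emph{additive quadruples}, not a diagonal sum $\sum_\alpha|\hat f(\alpha)|^4 m(\alpha)$. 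The bound $|\hat\mu_{\delta'L}|\le 1$ therefore does not dominate this by $\sum_\alpha|\hat f(\alpha)|^4$: the off-diagonal terms do not go away, and simple examples (e.g.\ $\hat f$ supported on two adjacent frequencies) show the weighted sum can exceed the diagonal one. Your earlier formula $\frac{1}{L}\int_\T|\hat f(\xi)|^2\hat g(\xi)\,d\xi$ is also incorrect, since the quantity is quartic, not quadratic, in $f$. Relatedly, your accounting of the $\delta'$-loss is off: with $N'\asymp L$ the normalization contributes $O(1)$, so in your scheme there is no visible source for the required polynomial loss in $\delta'$.

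The paper avoids this by Fourier-inverting $\mu_{\delta'L}$ rather than $f$: inserting $\mu_{\delta'L}(h_i)=\int_\T\hat\mu_{\delta'L}(\xi_i)e(\xi_i h_i)\,d\xi_i$ and pulling the integrals outside costs a factor of $\bigl(\int_\T|\hat\mu_{\delta'L}|\bigr)^2=(\delta'L)^{-2}$ (this is where $\delta'$ enters), and leaves a supremum over $\xi_1,\xi_2$ of an \emph{untwisted-in-$h$} quantity
\[
\frac{1}{L^3}\sum_{x,h_1,h_2}\bigl(f(x)e((\xi_1+\xi_2)x)\bigr)\overline{f(x+h_1)e(\xi_1(x+h_1))}\,\overline{f(x+h_2)e(\xi_2(x+h_2))}\,f(x+h_1+h_2),
\]
which is a genuine $U^2$-inner product of four $1$-bounded functions on $\Z/5\lceil L\rceil\Z$. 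One then applies Gowers--Cauchy--Schwarz and the standard $U^2$-inverse theorem. So the overall architecture you describe (reduce to the cyclic-group $U^2$ inverse theorem) is right, but the mechanism for stripping off the $\mu_{\delta'L}$-weights must go through $\int|\hat\mu_{\delta'L}|$ and a twisted Gowers inner product, not through the nonexistent diagonal identity.
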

\begin{proof}
  By making the change of variables $x\mapsto x-h_1'-h_2'$ in the definition of $\|\cdot\|_{U^2_{[\delta'L]}([L])}$, we have
  \[
\f{1}{L}\sum_{x,h_1,h_2\in\Z}\Delta_{h_1,h_2}f(x)\mu_{\delta'L}(h_1)\mu_{\delta'L}(h_2)\geq\delta^4.
  \]
 By Fourier inversion, it follows that
  \[
\left(\int_{\T}|\widehat{\mu_{\delta'L}}(\xi)|d\xi\right)^2\cdot\max_{\xi_1,\xi_2\in\T}\left|\f{1}{L}\sum_{x,h_1,h_2\in\Z}\Delta_{h_1,h_2}f(x)e(\xi_1h_1)e(\xi_2h_2)\right|\geq\delta^4.
  \]
 Note that
  \[
\int_{\T}|\widehat{\mu_{\delta'L}}(\xi)|d\xi=\int_{\T}\f{|\widehat{1_{[\delta' L]}}(\xi)|^2}{(\delta'L)^2}d\xi=\f{\|1_{[\delta'L]}\|_{\ell^2}^2}{(\delta'L)^2}=\f{1}{\delta'L},
\]
 since $\mu_{\delta'L}=(1_{[\delta'L]}*1_{-[\delta'L]})/(\delta'L)^2$. Thus,
  \[
\left|\f{1}{L^3}\sum_{x,h_1,h_2\in\Z}f(x)e((\xi_1+\xi_2)x)\overline{f(x+h_1)e(\xi_1(x+h_1))f(x+h_2)e(\xi_2(x+h_2))}f(x+h_1+h_2)\right|
  \]
is at least $(\delta')^2\delta^4$ for some $\xi_1,\xi_2\in\T$. The result now follows by applying the Gowers--Cauchy--Schwarz inequality and $U^2$-inverse theorem in $\Z/5\lceil L\rceil\Z$ (see~\cite{Tao12}, for example, for these standard results).
\end{proof}

\section{Outline of the proof of Theorem~\ref{thm1.1}}\label{sec3}
To hopefully aid the reader, Figure~\ref{fig1} below shows the logical dependencies between the key intermediate results stated in this section, as well as Theorem~\ref{thm1.1}.
\begin{center}
	\begin{figure}[h]
\includegraphics{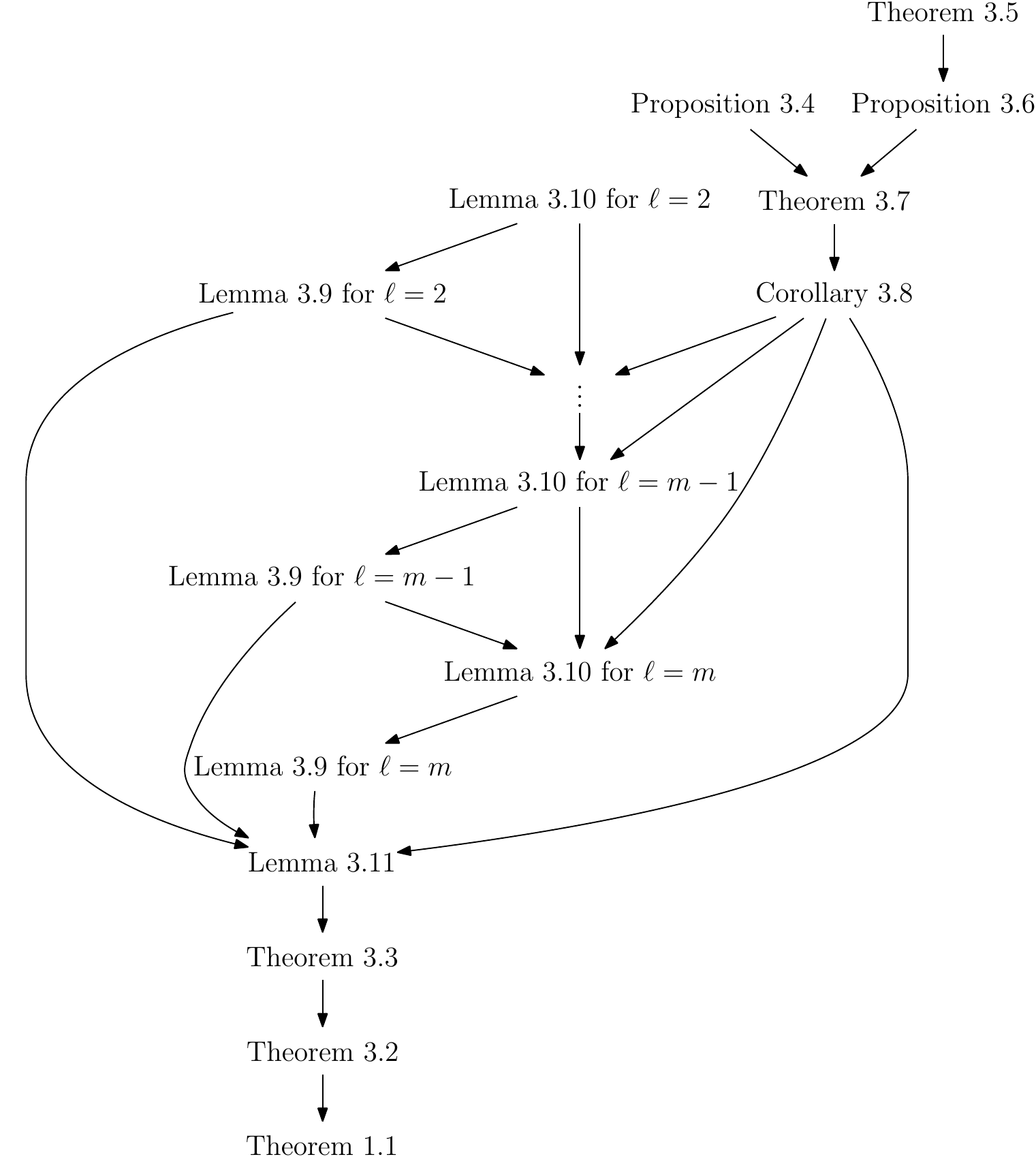}          \caption{Logical dependencies between key results}
                \label{fig1}
	\end{figure}
\end{center}

As was mentioned in the introduction, Theorem~\ref{thm1.1} is proved using a density increment argument. Let $P_1,\dots,P_m\in\Z[y]$ be polynomials with distinct degrees, each having zero constant term. We show that if $A\subset[N]$ has density $\alpha$ and contains no nontrivial progressions of the form $x,x+P_1(y),\dots,x+P_m(y)$, then there exists an arithmetic progression $a+q[N']\subset[N]$ with $N'\asymp_{P_1,\dots,P_m}N^{\Omega_{P_1,\dots,P_m}(1)}$ and $q\ll_{P_1,\dots,P_m}\alpha^{-O_{P_1,\dots,P_m}(1)}$ such that
\[
  \f{|A\cap (a+q[N'])|}{N'}\geq\alpha+\Omega_{P_1,\dots,P_m}(\alpha^{O_{P_1,\dots,P_m}}).
\]
Note that if $A\subset[N]$ contains no nontrivial progressions of the form $x,x+P_1(y),\dots,x+P_m(y)$, then the rescaled set $A':=\{n\in [N']:a+qn\in A\cap (a+q[N'])\}$ contains no nontrivial progressions of the form
\begin{equation}\label{eq3.1}
x,x+\f{P_1(qy)}{q},\dots,x+\f{P_m(qy)}{q},
\end{equation}
and the polynomials $P_i^{(q)}(y):=\f{P_i(qy)}{q}$ for $i=1,\dots,m$ all have integer coefficients and zero constant term.

To continue the density increment argument, we must prove that $A'$ also has increased density on a long arithmetic progression with small common difference. To ensure that our density increment iteration terminates, we want the size of the density increment for $A'$ to depend only on the original polynomials $P_1,\dots,P_m$, and not on $q$. For this reason, we make the following useful definition.
\begin{definition}\label{def3.1}
A polynomial $P=a_dy^d+\dots+a_1y$ has \textit{$(C,q)$-coefficients} if $|a_i|\leq C|a_d|$ for all $i=1,\dots,d-1$ and  $a_d=a_d'q^{d-1}$ with $0<|a_d'|\leq C$.
\end{definition}
Note that any polynomial with $(C,q)$-coefficients has zero constant term by definition, and that any polynomial with zero constant term trivially has $(C,1)$-coefficients for some $C>0$. The usefulness of this definition comes from the fact that if $P_1,\dots,P_m$ all have $(C,r)$-coefficients, then $P_1^{(q)},\dots,P_m^{(q)}$ all have $(C,qr)$-coefficients.

Now we can state our density increment result.
\begin{theorem}\label{thm3.2}
Let $N>0$, $q\in\N$, and $P_1,\dots,P_m\in\Z[y]$ be polynomials with $(C,q)$-coefficients such that $\deg P_1<\dots<\deg P_{m}$. If $A\subset[N]$ has density $\alpha:=|A|/N$ and contains no nontrivial progressions of the form $x,x+P_1(y),\dots,x+P_m(y)$, then there exist positive integers $q'$ and $N'$ satisfying $q'\ll_{C,\deg{P_m}}\alpha^{-O_{\deg{P_m}}(1)}$ and 
\[
N^{\deg{P_1}/\deg{P_m}}\geq N'\gg_{C,\deg{P_m}}N^{\deg{P_1}/\deg{P_m}}(\alpha/q)^{O_{\deg{P_m}}(1)}
\]
such that
\[
\f{|A\cap Q|}{N'}\geq\alpha+\Omega_{C,\deg{P_m}}(\alpha^{O_{\deg{P_m}}(1)})
\]
for some arithmetic progression $Q\subset[N]$ of the form $Q=a+q'q^{b}[N']$ with $b\ll_{\deg{P_m}}1$, provided that $N\gg_{C,\deg{P_m}}(q/\alpha)^{O_{\deg{P_m}}(1)}$.
\end{theorem}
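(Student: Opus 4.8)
The plan is to derive Theorem~\ref{thm3.2} from the local $U^1$-control result established by the end of Section~\ref{sec9}, via the classical density-increment mechanism: the absence of progressions forces the relevant weighted progression count to fall far below its expected size, so the balanced function $f:=1_A-\alpha 1_{[N]}$ must have a large local $U^1$-norm at a scale governed by the polynomials, and a large local $U^1$-norm of a mean-zero $1$-bounded function is precisely a density increment on an arithmetic progression.

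I would first set up the count. Using the $(C,q)$-coefficient hypothesis of Definition~\ref{def3.1}, fix $M\asymp_{C,\deg P_m}N^{1/\deg P_m}q^{-(1-1/\deg P_m)}$ small enough that $|P_i(y)|\le N/4$ for all $i\le m$ and all $|y|\le M$. Since $A$ contains no nontrivial progression and the $P_i$ have distinct degrees and zero constant term, the only contributions to $\Lambda:=\E_{x\in[N],\,y\in[M]}1_A(x)\prod_{i=1}^m1_A(x+P_i(y))$ come from the $O_{P_1,\dots,P_m}(1)$ values of $y\in[M]$ for which two of $0,P_1(y),\dots,P_m(y)$ coincide, so $\Lambda\ll_{P_1,\dots,P_m}\alpha/M$. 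On the other hand, expanding $1_A=\alpha 1_{[N]}+f$ in each of the $m+1$ factors, the all-$\alpha$ term equals $\alpha^{m+1}\E_{x\in[N],\,y\in[M]}\prod_{i=1}^m1_{[N]}(x+P_i(y))$, which is $\asymp\alpha^{m+1}$ since $|P_i(y)|\le N/4$ keeps every shifted indicator equal to $1$ for $x$ in a subinterval of density $\tfrac12$. As the standing hypothesis $N\gg_{C,\deg P_m}(q/\alpha)^{O_{\deg P_m}(1)}$ forces $\alpha/M\le\tfrac12\alpha^{m+1}$, at least one of the remaining $2^{m+1}-1$ terms of the expansion — each of whose factors is $1$-bounded and at least one of which equals $f$ — has absolute value $\gg_{P_1,\dots,P_m}\alpha^{m+1}$.

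Feeding that term into the local $U^1$-control result, applied with $f$ in the slot it occupies, yields
\[
\|f\|_{U^1_{q'q^b[\eta L]}([N])}\gg_{C,\deg P_m}\alpha^{O_{\deg P_m}(1)}
\]
for some $q'\ll_{C,\deg P_m}\alpha^{-O_{\deg P_m}(1)}$, $b\ll_{\deg P_m}1$, $\eta\gg_{C,\deg P_m}\alpha^{O_{\deg P_m}(1)}$, and $L$ comparable to the spread on $[M]$ of the relevant polynomial; here the dilation $q^b$ reflects the $q$-content of the leading coefficients, $q'$ is the Dirichlet denominator produced in the intervening major-arc and degree-lowering steps, and $L\gg_{C,\deg P_m}N^{\deg P_1/\deg P_m}(\alpha/q)^{O_{\deg P_m}(1)}$. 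To convert this into a density increment, unpack Definition~\ref{def2.1} to write
\[
\|f\|_{U^1_{q'q^b[\eta L]}([N])}^2=\f{1}{N}\sum_{x\in\Z}\bigl|\E_{h\in[\eta L]}f(x+q'q^bh)\bigr|^2,
\]
and note that $F(x):=\E_{h\in[\eta L]}f(x+q'q^bh)$ is real-valued, $1$-bounded, supported on an interval of length $\asymp N$, and mean-zero because $\sum_xf(x)=0$. A pigeonhole in $x$ together with the positivity identity $\sum_x\max(F,0)=\tfrac12\sum_x|F|\ge\tfrac12\sum_xF(x)^2$ produces an $x_0$ with $F(x_0)\gg_{C,\deg P_m}\alpha^{O_{\deg P_m}(1)}$; restricting to the (consecutive) portion of $\{x_0+q'q^bh:h\in[\eta L]\}$ lying in $[N]$ and chopping it into consecutive sub-progressions of a common length $N'$ with $N^{\deg P_1/\deg P_m}\ge N'\gg_{C,\deg P_m}N^{\deg P_1/\deg P_m}(\alpha/q)^{O_{\deg P_m}(1)}$ then furnishes, by averaging, a residue $a$ and a progression $Q=a+q'q^b[N']$ on which $A$ has density at least $\alpha+\Omega_{C,\deg P_m}(\alpha^{O_{\deg P_m}(1)})$. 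The inclusion $Q\subset[N]$ holds since $q'q^bN'\ll_{C,\deg P_m}\alpha^{-O(1)}q^{O(1)}N^{\deg P_1/\deg P_m}\le N$, using $\deg P_1<\deg P_m$ and again $N\gg_{C,\deg P_m}(q/\alpha)^{O(1)}$.

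The genuine difficulty is entirely upstream: it is the local $U^1$-control result itself, which is assembled from the quantitative concatenation theory of Section~\ref{sec5}, the $U^s$-control of Section~\ref{sec6}, and the interlocking degree-lowering and major-arc inductions of Sections~\ref{sec7}--\ref{sec9}, and it is there that the hypothesis of distinct degrees becomes indispensable. Granting that input, the only points in the present argument demanding care are the bookkeeping that keeps both the size of the density increment and the common difference of $Q$ depending only on $C$ and $\deg P_m$ rather than on $q$ — precisely what the $(C,q)$-coefficient formalism of Definition~\ref{def3.1} is designed to achieve — together with the elementary but slightly fussy passage from a lower bound on a local $U^1$-norm to an honest density increment on a progression of the required length.
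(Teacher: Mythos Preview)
Your strategy is the same as the paper's: expand $1_A=\alpha 1_{[N]}+f$ multilinearly, locate a large cross term, invoke Theorem~\ref{thm3.3}, and convert the resulting local-average bound into a density increment via mean-zero positivity. The paper uses a telescoping expansion (so $f_A$ appears in exactly one slot of each error term) rather than your full $2^{m+1}$-term one, and its conversion step works directly with the $L^1$-type output $\tfrac{1}{N}\sum_x\bigl|\E_{y\in[N']}f_A(x+q'q^by)\bigr|\gg\alpha^{O(1)}$ of Theorem~\ref{thm3.3} rather than repackaging this as a $U^1$-norm, but these differences are cosmetic.

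There is, however, a real gap in the phrase ``applied with $f$ in the slot it occupies.'' Theorem~\ref{thm3.3} as stated yields a conclusion only about the function $f_1$ sitting in the slot attached to $x+P_1(y)$, where $P_1$ has the \emph{smallest} degree among the polynomials present. If the large term you select has $f$ only in some slot $j\ge 2$ (and, say, $\alpha 1_{[N]}$ in slot $1$), applying Theorem~\ref{thm3.3} directly produces the useless bound $\|\alpha 1_{[N]}\|_{U^1}\gg\alpha^{O(1)}$. The paper's telescoping is arranged precisely so that whenever $f_A$ occupies slot $k$, the functions in slots $1,\dots,k-1$ are all $1_{[N]}$; since $\max_{y\in[M]}|P_i(y)|\ll_{C,\deg P_m}q^{O(1)}N^{\deg P_i/\deg P_m}=o(N)$ for $i<m$, these factors may be deleted at negligible cost, reducing to $\Lambda_{P_k,\dots,P_m}^{N,M}(1_A,f_A,1_A,\dots,1_A)$, which \emph{does} have $f_A$ in the slot of smallest degree. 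Theorem~\ref{thm3.3} then applies with $P_k,\dots,P_m$ in place of $P_1,\dots,P_m$, yielding a progression of length up to $M^{\deg P_k}\ge M^{\deg P_1}$, which is chopped to length $N'\asymp M^{\deg P_1}$ by pigeonhole to meet the stated bounds. Your full expansion lacks this automatic structure, so you must either isolate the smallest slot $j\ge 1$ containing $f$ and carry out this reduction (which does work, since the slots below it then all carry $\alpha 1_{[N]}$), or switch to the paper's telescoping from the start.
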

Note that, while the length of the progression on which $A$ has increased density in Theorem~\ref{thm3.2} may depend on $q$, the lower bound $\Omega_{C,\deg{P_m}}(\alpha^{O_{\deg{P_m}}(1)})$ on the density increment is unchanged when $P_1,\dots,P_m$ are replaced by $P_1^{(q)},\dots,P_m^{(q)}$. We are thus guaranteed that our density increment argument will terminate, yielding the bound in Theorem~\ref{thm1.1}.

We prove Theorem~\ref{thm3.2} by studying, for functions $f_0,\dots,f_\ell:\Z\to\C$ supported in the interval $[N]$ and characters $\psi_{\ell+1},\dots,\psi_m:\Z\to S^1$, the following general multilinear average:
\begin{align*}
\Lambda_{P_1,\dots,P_m}^{N,M}&(f_0,\dots,f_\ell;\psi_{\ell+1},\dots,\psi_{m}):= \\
&\f{1}{NM}\sum_{x\in\Z}\sum_{y\in[M]}f_0(x)f_1(x+P_1(y))\cdots f_\ell(x+P_\ell(y))\psi_{\ell+1}(P_{\ell+1}(y))\cdots\psi_{m}(P_m(y)).
\end{align*}
When $m=\ell$ and $f_0=\dots=f_m=f$, we denote $\Lambda_{P_1,\dots,P_m}^{N,M}(f_0,\dots,f_m)$ by $\Lambda_{P_1,\dots,P_m}^{N,M}(f)$. Note that for any $A\subset[N]$ and $M$ sufficiently large, the quantity $\Lambda_{P_1,\dots,P_m}^{N,M}(1_A)$ is $1/NM$ times the number of nontrivial progressions $x,x+P_1(y),\dots,x+P_m(y)$ in $A$. It is necessary for us to study the more general averages $\Lambda_{P_1,\dots,P_m}^{N,M}(f_0,\dots,f_\ell;\psi_{\ell+1},\dots,\psi_{m})$ in order to run some of the inductive arguments within the proof Theorem~\ref{thm1.1}.

Theorem~\ref{thm3.2} is a consequence of the following result, whose proof takes up the bulk of this paper.
\begin{theorem}\label{thm3.3}
Let $N>0$, $q\in\N$, and $P_1,\dots,P_m\in\Z[y]$ be polynomials with $(C,q)$-coefficients such that $\deg{P_1}<\dots<\deg{P_m}$. Set $M:=(N/q^{\deg{P_m}-1})^{1/\deg{P_m}}$. If $f_0,\dots,f_m:\Z\to\C$ are $1$-bounded functions supported on the interval $[N]$ and
\[
\left|\Lambda_{P_1,\dots,P_m}^{N,M}(f_0,\dots,f_m)\right|\geq\delta,
\]
then there exist positive integers $q',b,$ and $N'$ satisfying $q'\ll_{C,\deg{P_m}}\delta^{-O_{\deg{P_m}}(1)}$, $b\ll_{\deg{P_m}}1$, and
\[
M^{\deg{P_1}}\geq N'\gg_{C,\deg{P_m}}M^{\deg{P_1}}(\delta/q)^{O_{\deg{m}}(1)}
\]
such that
\[
\f{1}{N}\sum_{x\in\Z}\left|\E_{y\in[N']}f_1(x+q'q^{b}y)\right|\gg_{C,\deg{P_m}}\delta^{O_{\deg{P_m}}(1)},
\]
provided $N\gg_{C,\deg{P_m}}(q/\delta)^{O_{\deg{P_m}}(1)}$.
\end{theorem}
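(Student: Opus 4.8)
The plan is to chain together the two main engines developed earlier in the paper (as summarized in Section~\ref{sec3}): a $U^s$-control result for the weighted average $\Lambda_{P_1,\dots,P_m}^{N,M}$, and a degree-lowering procedure that, when the polynomials have distinct degrees, iteratively reduces control by a high-degree Gowers norm to control by the $U^1$-norm. First I would invoke the $U^s$-control result (the output of Sections~\ref{sec4}--\ref{sec6}, specialized from the general statement recorded as Theorem~\ref{thm6.2}): since $|\Lambda_{P_1,\dots,P_m}^{N,M}(f_0,\dots,f_m)| \geq \delta$ and all the $f_i$ are $1$-bounded and supported on $[N]$, there is an integer $s = s(\deg P_m)$ and a scale $H$ with $H$ polynomially large in $M$ and polynomially small in $\delta/q$, such that $\|f_1\|_{U^s_{[H]}([N])} \gg_{C,\deg P_m} \delta^{O_{\deg P_m}(1)}$ — here one uses the symmetry/relabeling available because $P_1$ has the smallest degree, so it is $f_1$ (the function attached to the lowest-degree polynomial) whose uniformity norm one controls. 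The point of insisting on the $(C,q)$-coefficient bookkeeping and of choosing $M = (N/q^{\deg P_m - 1})^{1/\deg P_m}$ is exactly so that all the implied constants and exponents in this step depend only on $C$ and $\deg P_m$, not on $q$ itself, with $q$ appearing only through explicit powers.

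Next I would run the degree-lowering engine (Sections~\ref{sec7}--\ref{sec8}) repeatedly. Each application takes local $U^t$-control of $f_1$ — control by $\|\cdot\|_{U^t_{[H]}([N])}$ at some scale $H$ — together with the major-arc input and the understanding of shorter progressions, and produces local $U^{t-1}$-control of $f_1$ at a possibly smaller scale $H'$, still polynomially related to $H$, with the size of the new lower bound only polynomially smaller. Since $s$ is bounded in terms of $\deg P_m$, after $O_{\deg P_m}(1)$ iterations we arrive at a scale $H''$, polynomially large in $M$ and polynomially small in $\delta/q$, and a lower bound $\gg_{C,\deg P_m}\delta^{O_{\deg P_m}(1)}$, such that
\[
\|f_1\|_{U^1_{[H'']}([N])} \gg_{C,\deg P_m} \delta^{O_{\deg P_m}(1)}.
\]
Unwinding the definition of the $U^1$-norm, $\|f_1\|_{U^1_{[H'']}([N])}^2 = \frac{1}{N}\sum_x \bigl|\E_{h\in[H'']} f_1(x+h)\bigr|^2 \cdot (\text{normalization})$; after a change of variables and Cauchy--Schwarz this is bounded by $\frac{1}{N}\sum_x |\E_{y\in[H'']} f_1(x+y)|$ up to constants, which is almost the desired conclusion but with common difference $1$ rather than $q'q^b$.

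The last step is to pass from averaging $f_1$ along intervals of integers to averaging along an arithmetic progression with common difference of the form $q'q^b$ where $q' \ll_{C,\deg P_m}\delta^{-O_{\deg P_m}(1)}$ and $b\ll_{\deg P_m}1$. This is where the one genuine inverse theorem, Lemma~\ref{lem2.4}, enters, or rather a pigeonhole-plus-dilation argument built on it: from $\frac1N\sum_x|\E_{y\in[H'']}f_1(x+y)|$ being large one extracts, by a standard $L^1$-to-$L^\infty$ Fourier argument, a frequency $\beta$ with $|\E_{x\in[N]}f_1(x)e(\beta x)|$ large; a Dirichlet/continued-fraction approximation to $\beta$ then yields a denominator $q'$ of controlled size, and the factor $q^b$ is absorbed to keep the common difference compatible with the rescaling $P_i \mapsto P_i^{(q)}$ that will be iterated in Theorem~\ref{thm3.2}. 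One then re-expresses the correlation with $e(\beta x)$ as increased $\ell^1$-mass of interval-averages along the progression $a + q'q^b[N']$, giving the stated bound. The main obstacle — and the reason the bulk of the paper lies in Sections~\ref{sec4}--\ref{sec8} — is establishing the $U^s$-control result quantitatively: the PET-induction/van der Corput argument only yields control by an average over a complicated family of Gowers box norms, and turning that into a single global $U^s$-norm requires the new quantitative concatenation theorem of Section~\ref{sec5}; I would treat that as a black box here, but it is where all the difficulty is concentrated. A secondary technical nuisance is carrying the $(C,q)$-coefficient bookkeeping cleanly through every step so that $q$ never enters an implied constant, only explicit powers.
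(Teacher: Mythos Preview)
Your proposal has two genuine gaps that together make the argument unworkable as stated.

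First, the $U^s$-control result does not give a bound on $\|f_1\|_{U^s}$. PET induction and the concatenation machinery of Sections~\ref{sec4}--\ref{sec6} always produce control on the function attached to the polynomial of \emph{highest} degree: Theorem~\ref{thm3.7} (and Theorem~\ref{thm6.2}) bound $\|f_\ell\|_{U^s}$ where $P_\ell$ is the top-degree polynomial in the configuration. Since $\deg P_1<\dots<\deg P_m$, what one obtains from the hypothesis is a lower bound on a $U^s$-norm of $f_m$ (or rather its dual $F_m$), not of $f_1$. No relabeling fixes this, because the change of variables $x\mapsto x-P_1(y)$ that would make $f_1$ play the role of $f_0$ still leaves $P_m-P_1$ as the polynomial of highest degree.

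Second, and more seriously, degree-lowering (Lemma~\ref{lem3.9}) is not a statement about arbitrary functions: it applies only to the dual function $F_\ell$, whose specific structure as an average over $y$ of products of shifted $f_i$'s is used essentially in the proof (via Lemma~\ref{lem7.4} and the major arc lemma). For a generic $1$-bounded function the implication ``large $U^s$ implies large $U^{s-1}$'' is simply false.

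The paper's actual route is an inductive reduction on $\ell$ captured by Lemma~\ref{lem3.11}: for each $\ell=m,m-1,\dots,2$ one applies Corollary~\ref{cor3.8} to pass to $F_\ell$, then Lemma~\ref{lem3.9} repeatedly to reach $U^2$-control of $F_\ell$, then Lemma~\ref{lem2.4} to replace $f_\ell$ by a character $\psi_\ell$ in the multilinear average. After $m-1$ iterations one is left with an average of counts $\Lambda^{N',M'}_{P_1',\dots,P_m'}(g_0,g_1;\psi_2,\dots,\psi_m)$ where $g_1$ is a shifted and rescaled copy of $f_1$. Only at this final two-function stage does one attack $f_1$, and not via degree-lowering but by van der Corput (Lemma~\ref{lem4.2}) $\deg P_1$ times plus Weyl-type estimates (Lemmas~\ref{lem7.1},~\ref{lem7.2}) to force the characters onto major arcs, after which one can pass to a genuine arithmetic progression of common difference $q'q^b$. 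Your proposed shortcut of going straight from $U^s$-control of $f_1$ to $U^1$-control of $f_1$ skips the entire inductive mechanism that makes the argument work.
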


As was discussed in the introduction, to prove Theorem~\ref{thm3.3} we must show that the average $\Lambda_{P_1,\dots,P_m}^{N,M}(f_0,\dots,f_\ell,\psi_{\ell+1},\dots,\psi_{m})$ is controlled by some $U^s$-norm of the form $\|\cdot\|_{U^s_{c[\delta'L]}([L])}$. We do this by first showing that $\Lambda_{P_1,\dots,P_m}^{N,M}(f_0,\dots,f_\ell,\psi_{\ell+1},\dots,\psi_{m})$ is controlled by an average of a family of Gowers box norms of a special form, and then proving the main concatenation result of Section~\ref{sec5} and repeatedly applying it to averages of such Gowers box norms.

We now describe the special form of the families of Gowers box norms just mentioned. Let $\ell$ and $c$ be nonzero integers with $\ell>0$. For each $j=0,\dots,\ell-1$, we define a sequence of finite sets $I_j=I_j((k_i)_{i\in I_{j-1}})$, which depend on the choice of $k_i\in\N$ for each $i\in I_{j-1}$ when $j\geq 1$, and sets of polynomials $\A_j=\A_j(\ell,c;(k_i)_{i\in I_{j-1}})=\{p_i:i\in I_j\}$, which are indexed by $I_j$, recursively as follows:
\begin{enumerate}
\item $I_0=\{0\}$, $I_1(k_0)=\{0,1\}^{k_0}\sm\{\ul{0}\}$, and
  \[
I_j((k_i)_{i\in I_{j-1}}):=\{0,1\}^{\{(i,r):i\in I_{j-1},r\in[k_i]\}}\sm\{\ul{0}\}
  \]
  for $j=2,\dots,\ell-1$, and
\item $\A_0(\ell,c):=\{c\}$, $\A_1(\ell,c;k_0):=\{(\ell ca_{0,1}^{(1)},\dots,\ell ca_{0,k_0}^{(1)})\cdot\omega:\omega\in I_1(k_0)\}$, and
  \[
\A_j(\ell,c;(k_i)_{i\in I_{j-1}}):=\{((\ell-(j-1))p_ia_{i,r}^{(j)})_{i\in I_{j-1},r\in[k_i]}\cdot\omega:\omega\in I_j\}
  \]
  for $j=2,\dots,\ell-1$.
\end{enumerate}
For example, when $\ell=3$, $c=1$, $k_0=2$, $k_{(0,1)}=k_{(1,1)}=1$, and $k_{(1,0)}=2$, we have $I_0=\{0\}$, $I_1(k_0)=\{(0,1),(1,0),(1,1)\}$,
\[
I_2(k_{(0,1)},k_{(1,0)},k_{(1,1)})=\{0,1\}^{\{((0,1),1),((1,0),1),((1,0),2),((1,1),1)\}}\setminus\{\ul{0}\},
\]
$\A_0(\ell,c)=\{1\}$, $\A_1(\ell,c;k_0)=\{3a_{0,1}^{(1)},3a_{0,2}^{(1)},3a_{0,1}^{(1)}+3a_{0,2}^{(1)}\}$, and
\begin{align*}
  \A_2(\ell,c;(k_{(0,1)},k_{(1,0)},k_{(1,1)}))=\{(6a_{0,1}^{(1)}a^{(2)}_{(1,0),1},6a_{0,2}^{(1)}a^{(2)}_{(0,1),1},6a_{0,2}^{(1)}a^{(2)}_{(0,1),2},6(a_{0,1}^{(1)}+a_{0,2}^{(1)})a_{(1,1),1}^{(2)})\cdot\omega:& \\
  \omega\in\{0,1\}^4\sm\{\ul{0}\}\}&.
\end{align*}

We will show that $\Lambda_{P_1,\dots,P_m}^{N,M}(f_0,\dots,f_\ell;\psi_{\ell+1},\dots,\psi_m)$ is controlled by an average of Gowers box norms of the form $\|\cdot\|_{\square^{|I_{\ell-1}|}_{(Q_i(\ul{a}))_{i\in I_{\ell-1}}}([N])}$, where $Q_i(\ul{a})=p_i(\ul{a})[\delta' M]$ for suitable $0<\delta'<1$. Note that it suffices to prove such a result in the case when $\deg{P_i}=i$ for each $i=1,\dots,m$, for any polynomial progression considered in Theorem~\ref{thm1.1} is a subprogression of such a progression. One may also assume that $\psi_{\ell+1}=\dots=\psi_m=1$, for the general case follows from this special case by the Cauchy--Schwarz inequality. We thus restrict to this situation in the following proposition for ease of notation.

\begin{proposition}\label{prop3.4}
Let $N,M>0$, $q\in\N$, and $P_1,\dots,P_\ell\in\Z[y]$ be polynomials with $(C,q)$-coefficients such that $\deg{P_i}=i$ for $i=1,\dots,\ell$ and $P_\ell$ has leading coefficient $c_\ell$. There exist positive integers $k_i\ll_{\ell}1$ for each $i\in I_{j}$ and $j=0,\dots,\ell-2$ such that the following holds. If $1/C\leq q^{\ell-1}M^{\ell}/N\leq C$, $f_0,\dots,f_\ell:\Z\to\C$ are $1$-bounded functions supported on the interval $[N]$,
\[
\left|\Lambda_{P_1,\dots,P_\ell}^{N,M}(f_0,\dots,f_\ell)\right|\geq\delta,
\]
and $\delta'\ll_{C,\ell}\delta^{O_\ell(1)}$, then we have
\[
\E_{\ul{a}\in A}\|f_\ell\|_{\square^{|I_{\ell-1}|}_{(p(\ul{a})[\delta'M])_{p\in \A_{\ell-1}}}([N])}\gg_{C,\ell}\delta^{O_\ell(1)},
\]
where $I_{\ell-1}:=I_{\ell-1}(\{k_i:i\in I_{\ell-2}\})$ and $\A_{\ell-1}:=\A_{\ell-1}(\ell,c_\ell;(k_i)_{i\in I_{\ell-2}})$ are defined as above and $A:=((-\delta' M,\delta' M)\cap\Z)^{\sum_{j=0}^{\ell-2}\sum_{i\in I_{j}}k_i}$.
\end{proposition}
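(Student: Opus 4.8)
The plan is to follow the Bergelson--Leibman PET induction scheme, implemented via repeated applications of the van der Corput inequality (in the form of Lemma~\ref{lem2.2}), but to carry it out with enough bookkeeping that the weights appearing after each differencing step are precisely of the form $p(\ul a)[\delta'M]$ for the polynomials $p$ recorded in the sets $\A_j$. First I would fix the reduction already indicated in the text: $\deg P_i = i$ for all $i$, and $\psi_{\ell+1}=\dots=\psi_m=1$, so that we are looking at $\Lambda^{N,M}_{P_1,\dots,P_\ell}(f_0,\dots,f_\ell)$. The strategy is to eliminate $f_0$ first (by a single van der Corput / Cauchy--Schwarz in the $x$-variable, shifting by $h_0\in[\delta'M]$ and using that $P_1$ is the lowest-degree polynomial so that the shift $h_0 P_1'$-type correction can be absorbed), then to iteratively eliminate $f_1,\dots,f_{\ell-1}$ one at a time. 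At the $j$th stage we have an average of $\Delta'$-type expressions in $f_\ell$ (and the surviving higher-index $f_i$) against a product of intervals $[\delta'M]$ indexed by $I_{j}$; applying van der Corput once more in a new variable kills $f_j$, introduces a fresh batch of shift variables $(h_{i,r})_{r\in[k_i]}$ for each surviving index $i\in I_{j-1}$, and --- crucially --- the new linear combinations of old and new shifts that appear inside the argument of $f_\ell$ are exactly the polynomials in $\A_{j}$, with the combinatorial index set growing from $I_{j-1}$ to $I_{j}=\{0,1\}^{\{(i,r)\}}\setminus\{\ul 0\}$. The factors $\ell c$, $(\ell-1)$, $(\ell-2),\dots$ in the definition of $\A_j$ come from the leading coefficients picked up when one differences a degree-$(\ell-j+1)$ polynomial $j$ times and the binomial/Vandermonde-type constants that result.

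The key technical points, in order: (1) \emph{Initial Cauchy--Schwarz.} Bound $|\Lambda|\ge\delta$ by Cauchy--Schwarz in $x$ (using $f_0$ is $1$-bounded and supported on $[N]$) to remove $f_0$ at the cost of a shift in a variable $h$ ranging over an interval of length $\asymp \delta' M$; here one uses $1/C\le q^{\ell-1}M^\ell/N\le C$ so that the normalization $1/N$ matches $1/M^\ell$ up to the constant $C$. (2) \emph{Linearization of each polynomial after differencing.} When one differences $f_\ell(x+P_\ell(y))$ in $y$ by a shift $h$, one gets $f_\ell(x+P_\ell(y+h))$ and $f_\ell(x+P_\ell(y))$; the difference $P_\ell(y+h)-P_\ell(y)$ is a polynomial in $y$ of degree $\ell-1$ with leading coefficient $\ell c_\ell h$ (up to lower-order terms controlled by the $(C,q)$-coefficient hypothesis). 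Iterating, after $\ell-1$ differencings the $y$-dependence of the argument of $f_\ell$ becomes \emph{linear} in $y$, which is what allows the final van der Corput to produce a genuine box norm in the $x$-variable rather than yet another polynomial average. (3) \emph{Matching the recursion.} One must check by induction on $j$ that the set of ``new slopes'' produced at step $j$ is precisely $\A_j(\ell,c_\ell;(k_i)_{i\in I_{j-1}})$ and that the index set is $I_j$; this is the combinatorial heart and is essentially the claim that a $j$-fold van der Corput on a product over $I_{j-1}$ produces a product over $\{0,1\}^{\{(i,r):i\in I_{j-1},\,r\in[k_i]\}}\setminus\{\ul0\}$, with the $\ul 0$ corner removed because the totally-undifferenced term has been Cauchy--Schwarz'd away (or absorbed into the main term). (4) \emph{Choice of $k_i$ and tracking $\delta'$.} Each van der Corput step squares the exponent and introduces an $O_\ell(1)$ loss, so after the bounded number ($\ell-1$) of steps the final lower bound is $\delta^{O_\ell(1)}$ provided $\delta' \ll_{C,\ell}\delta^{O_\ell(1)}$; the $k_i\ll_\ell 1$ count the number of shift variables introduced at each index, which is bounded because the number of differencing steps is bounded by $\ell$.

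The main obstacle I expect is point (3) together with the need to keep the weights \emph{exactly} in the prescribed product-of-intervals form $\prod_{p\in\A_{\ell-1}}[\delta'M]$ rather than some messier set: a naive PET induction produces box norms whose defining sets are affine images of boxes or are intervals of slightly varying lengths, and getting them to be literally $p(\ul a)[\delta'M]$ for the $p\in\A_{\ell-1}$ requires being careful at each van der Corput step to shift the summation variable appropriately (absorbing lower-order terms of the differenced polynomials into the $x$-variable, using the $(C,q)$-coefficient control to bound the error), to choose the ranges of the new $h_{i,r}$ to all be the same interval $(-\delta'M,\delta'M)\cap\Z$, and to pull the leading-coefficient scalars $p_i a^{(j)}_{i,r}$ out front so that they multiply the interval rather than getting buried inside. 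This is exactly the place where the text warns ``we must be more careful during the PET induction step than in previous works''. A secondary but routine difficulty is verifying the degree count --- that after exactly $\ell-1$ differencing steps the argument of $f_\ell$ is linear in $y$ --- which follows because $P_\ell$ has degree $\ell$ and each step drops the degree by one, but must be checked to interact correctly with the simultaneous elimination of $f_1,\dots,f_{\ell-1}$ (one needs the distinct-degree hypothesis $\deg P_i=i$ precisely so that at the step eliminating $f_j$ the relevant polynomial $P_j$ still has positive degree and can be linearized away without stalling).
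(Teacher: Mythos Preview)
Your high-level strategy---PET induction to reduce to linear polynomials, then convert to a box norm---matches the paper, but your model of how the induction unfolds has a genuine gap. You claim that at step $j$ a single van der Corput ``kills $f_j$'' and effects the transition $I_{j-1}\to I_j$. Neither half is right. Each degree reduction (from $\ell-j+1$ to $\ell-j$) requires \emph{many} van der Corput applications, not one: to drop the degree you must exhaust all $|I_{j-1}|$ distinct leading coefficients at the current top degree, and each time you pivot on one of them (Lemma~\ref{lem4.6}) you create lower-degree polynomials that must themselves be cleared away by further pivots (Lemma~\ref{lem4.4}) before you can continue. The parameters $k_i$ count the shift variables accumulated during this iterated cleanup---this is why $k_i\ll_\ell 1$ rather than $k_i=1$, and why your remark that the number of differencing steps is bounded by $\ell$ is false. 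Moreover, the functions $f_0,\dots,f_{\ell-1}$ are \emph{all} eliminated during the first degree reduction (Lemma~\ref{lem4.7}); the subsequent $\ell-2$ degree reductions (Lemma~\ref{lem4.8}) operate with only $f_\ell$ and $\overline{f_\ell}$ present.

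The paper's proof applies Lemma~\ref{lem4.9} (which is Lemma~\ref{lem4.7} once followed by Lemma~\ref{lem4.8} applied $\ell-2$ times) to reach linear polynomials with leading coefficients forming $\A_{\ell-1}$, and then Lemma~\ref{lem4.10} to obtain the box norm. The verification that the leading coefficients at each stage match the recursive definition of $\A_j$---your point~(3)---is carried out inside Lemmas~\ref{lem4.7} and~\ref{lem4.8} by tracking not only the weight vector but also the degree-$(d-1)$ coefficients of the top-degree polynomials, and by choosing the pivot index $i_0$ at each step according to a specific enumeration of the current leading coefficients (see the orderings introduced at the start of the proof of Lemma~\ref{lem4.8}). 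Without these deliberate pivot choices the emerging leading coefficients would not have the clean product form $\{(p_i a_{i,r})_{i,r}\cdot\omega:\omega\in I_j\}$ needed for the concatenation argument in Section~\ref{sec5}.
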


In Section~\ref{sec5}, we prove that the averages of Gowers box norms appearing in Proposition~\ref{prop3.4} are controlled by some $U^s$-norm with $s\ll_\ell 1$. The most important ingredient of this proof is the following theorem, which is our main concatenation result.
\begin{theorem}\label{thm3.5}
  Let $N,M_1,M_2>0$ with $M_2\leq M_1$ and $M_1M_2\leq N/|c|$ and $b_1,\dots,b_{s}\in[-CN/M_1,CN/M_1]\cap\Z$. If $f:\Z\to\C$ is a $1$-bounded function supported on the interval $[N]$ such that
\begin{equation}\label{eq3.2}
\E_{a\in[M_2]}\|f\|_{\square^s_{((ca+b_i)[M_1])_{i=1}^s}([N])}\geq\delta,
\end{equation}
and $\delta'\ll_{C,s}\delta^{O_s(1)}$, then there exists an $s'\ll_s 1$ such that
\[
\|f\|_{U^{s'}_{c[\delta'M_1M_2]}([N])}\gg_{C,s}\delta^{O_s(1)},
\]
provided that $M_1M_2\gg_{C,s}(\delta\delta')^{-O_s(1)}$.
\end{theorem}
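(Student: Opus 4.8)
The plan is to prove Theorem~\ref{thm3.5} by induction on the dimension $s$ of the box norm, peeling off one differencing direction at a time; the base case $s=1$ is genuinely Fourier-analytic, and the inductive step is a sequence of Cauchy--Schwarz manipulations whose purpose is to \emph{concatenate} the averaging range $[M_2]$ into the directions. Since $\delta'$ is only required to be a sufficiently small power of $\delta$, and the exponent losses accumulate additively over the $\ll_s 1$ stages of the induction, I can freely lose a factor $\delta^{O_s(1)}$ and shrink every relevant length by a factor $\delta^{O_s(1)}$ at each stage, and I may also pigeonhole $a$ into a subprogression of $[M_2]$ of comparable length whenever convenient. The hypotheses $M_2\le M_1$, $M_1M_2\le N/|c|$, and $b_i\in[-CN/M_1,CN/M_1]\cap\Z$ will be used precisely to guarantee that every arithmetic progression produced along the way --- the dilates $(ca+b_i)[M_1]$, their sumsets, and the final progression $c[\delta'M_1M_2]$ --- has diameter $O(N)$, so that the relevant functions stay supported in an interval of length $O(N)$ and the normalizations in Definition~\ref{def2.1} behave.

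\emph{Base case $s=1$.} Here \eqref{eq3.2} reads $\E_{a\in[M_2]}\frac1N\sum_x\big|\E_{t\in[M_1]}f(x+(ca+b_1)t)\big|^2\ge\delta^2$, and a Fourier computation identical to the one in the proof of Lemma~\ref{lem2.4} rewrites this as
\[
\frac1N\int_{\T}|\widehat f(\xi)|^2\Big(\E_{a\in[M_2]}\widehat{\mu_{M_1}}\big((ca+b_1)\xi\big)\Big)d\xi\ge\delta^2 ,
\]
where $\widehat{\mu_{M_1}}\ge 0$ is a Fej\'er kernel that is $\gg 1$ exactly when $\|\cdot\|\ll 1/M_1$. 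The inner average can be $\gg\delta^{O(1)}$ in the $L^1(|\widehat f|^2)$ sense only at frequencies $\xi$ for which $\|(ca+b_1)\xi\|\ll 1/M_1$ for a $\gg\delta^{O(1)}$-proportion of $a\in[M_2]$, and a standard Dirichlet/Vinogradov argument forces any such $\xi$ to lie within $O\big(\delta^{-O(1)}(|c|M_1M_2)^{-1}\big)$ of a rational with denominator $\ll\delta^{-O(1)}$. Plancherel then localizes a $\gg\delta^{O(1)}$-proportion of the $\ell^2$-mass of $f$ onto such a major arc, which is exactly the statement that $f$ correlates with a single linear phase on a progression of common difference $c$ and length $\gg\delta^{O(1)}M_1M_2$; running the Fourier identity behind Lemma~\ref{lem2.4} in the easy direction converts this to $\|f\|_{U^2_{c[\delta'M_1M_2]}([N])}\gg\delta^{O(1)}$, so $s'=2$ works.

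\emph{Inductive step.} Assume the statement in dimension $s-1$. Writing the last pair of directions as $(ca+b_s)t,(ca+b_s)t'$ with $t,t'\in[M_1]$ and using the defining identity
\[
\|f\|^{2^s}_{\square^s_{((ca+b_i)[M_1])_{i=1}^s}([N])}=\E_{t,t'\in[M_1]}\big\|\Delta'_{((ca+b_s)t,(ca+b_s)t')}f\big\|^{2^{s-1}}_{\square^{s-1}_{((ca+b_i)[M_1])_{i<s}}([N])},
\]
the hypothesis says that, after averaging over $a$, this inner $(s-1)$-dimensional box norm of the difference function $g_a:=\Delta'_{((ca+b_s)t,(ca+b_s)t')}f$ along $((ca+b_i)[M_1])_{i<s}$ is $\gg\delta^{O(1)}$. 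The obstacle --- and this is the heart of the matter --- is that $g_a$ still depends on the very parameter $a$ over which we are averaging, so the inductive hypothesis cannot be applied verbatim. I plan to break this entanglement by a further round of Cauchy--Schwarz together with the Gowers--Cauchy--Schwarz inequalities of Lemmas~\ref{lem2.2}--\ref{lem2.3}: duplicating $a$ into a pair (and iterating this a \emph{bounded} number $K\ll_s 1$ of times over carefully spaced values of $a$) combines the resulting box norms into a single box norm of bounded dimension whose directions are sumsets $(ca_1+b_i)[M_1]+\cdots+(ca_K+b_i)[M_1]$; because the numbers $ca_j+b_i$ can be arranged to be themselves roughly in arithmetic progression with step $\asymp c$, a bounded number of such dilates telescopes, after a $\delta^{O(1)}$ loss, into one genuine progression of common difference $\asymp c$ and length $\gg\delta^{O(1)}M_1M_2$. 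Simultaneously the $a$-dependence of $g_a$ is ``spent'' on the concatenation, leaving a box norm of a fixed $\Delta'$-difference of $f$ along directions of the required shape; removing that remaining $\Delta'$ by one last application of Definition~\ref{def2.1} (legitimate since $M_1M_2\le N/|c|$ keeps all shifts inside an interval of size $O(N)$) puts us in position to invoke the inductive hypothesis in dimension $s-1$, producing $\|f\|_{U^{s'}_{c[\delta'M_1M_2]}([N])}\gg\delta^{O(1)}$ for some $s'\ll_s 1$.

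I expect the genuine difficulty to be exactly the third paragraph: organizing the iterated Cauchy--Schwarz so that (i) the dimension of the box norm grows by only a bounded amount at each of the $\ll_s 1$ steps, rather than blowing up with $M_2$ --- naively concatenating all $M_2$ layers indexed by $a$ would be fatal --- and (ii) the arithmetic of the dilated progressions $(ca+b_i)[M_1]$, over a bounded number of well-chosen $a$, really does manufacture one progression of common difference a bounded multiple of $c$ and full length $\asymp M_1M_2$, with the overlaps controlled tightly enough that only a $\delta^{O(1)}$ factor is lost. Everything else is bookkeeping of the exponents and lengths; in particular, no inverse theorem beyond the $U^2$ statement of Lemma~\ref{lem2.4} is needed, since the whole argument is a chain of Cauchy--Schwarz inequalities terminating in the Fourier-analytic base case. (The analogous statement for general polynomial families, where the direction sets are no longer of this clean dilated-progression form, is what forces the extra work recorded in Theorem~\ref{thm6.2}.)
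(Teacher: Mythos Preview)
Your base case is sound and matches the paper's Lemma~\ref{lem5.4}. The inductive step, however, has a real gap exactly where you flag it.

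The claim that a bounded number of dilated progressions $(ca_j+b_i)[M_1]$ ``telescopes'' into a single progression of common difference $\asymp c$ does not hold. Since $b_i$ need not be divisible by $c$, the generalized arithmetic progression $(ca_1+b_i)[M_1]+(ca_2+b_i)[M_1]$ has effective step $\gcd(ca_1+b_i,ca_2+b_i)$, which is generically $1$, not $c$; the fact that the \emph{steps} $ca+b_i$ lie in an arithmetic progression of common difference $c$ as $a$ varies does not force their dilate-sumsets to. Repeated Cauchy--Schwarz on its own will not manufacture the step-$c$ progression you need out of these directions.

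The paper proceeds differently in two essential ways. First, it does \emph{not} induct directly on the form in~\eqref{eq3.2}. Instead it expands the box norm over $h_i,h_i'\in[M_1]$, views the result as a multilinear average in $a\in[M_2]$, and applies Lemma~\ref{lem4.10}; this yields an average over $\ul{h},\ul{h}'\in[M_1]^s$ of box norms along directions $(c(\ul{h}-\ul{h}')\cdot\omega)[\gamma' M_2]$ for $\omega\in\{0,1\}^s\setminus\{\ul{0}\}$. The $b_i$'s are absorbed into translations of $f$ and disappear from the directions, so every direction is now a genuine multiple of $c$, and the averaging parameter lives in the \emph{larger} set $[M_1]^s$. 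Only on this transformed object does the paper run its induction (Lemma~\ref{lem5.1}, packaged as Lemma~\ref{lem5.6}).

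Second---and this is the ingredient missing from your sketch---the inductive step of Lemma~\ref{lem5.1} is not a pure Cauchy--Schwarz chain. It rests on an inverse theorem for the two-dimensional box norm (Lemma~\ref{lem5.3}): if $\|g\|_{\square^2_{d_1[L],d_2[L]}}$ is large and $\gcd(d_1,d_2)$ is small, then $g$ correlates with a product $l\cdot r$ of functions almost invariant under $d_1$- and $d_2$-shifts respectively. Applied to two of the directions (with Lemma~\ref{lem5.2} ensuring the $\gcd$ is generically small), this trades those two box-norm directions for almost-invariant auxiliary functions; a carefully organized sequence of Cauchy--Schwarz exploiting the almost-invariance then eliminates them at the cost of $O_s(1)$ new differencing parameters in $c[\gamma'M_1M_2]$, and the induction hypothesis finishes. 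That $\gcd$-based structural lemma, not any telescoping of sumsets, is how the concatenated scale $M_1M_2$ actually appears; your outline would need both the preliminary role-swap and an analogue of Lemma~\ref{lem5.3} to be made to work.
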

Many averages of Gowers box norms appearing naturally can be controlled by global Gowers uniformity norms through repeated applications of Theorem~\ref{thm3.5}, so we expect that this result could be of independent interest. Another general concatenation result appearing later that may also be of independent interest is Lemma~\ref{lem5.1}.

In the special case when $M_1=M_2=N^{1/2}$, $c=1$, and $b_1,\dots,b_s=0$, after an application of Lemma~\ref{lem2.2}, Theorem~\ref{thm3.5} implies that the average $\E_{a\in[N^{1/2}]}\E_{x\in[N]}\E_{h_1,\dots,h_s\in[N^{1/2}]}\Delta_{ah_1,\dots,ah_s}f(x)$ of ``local Gowers uniformity norms'' (as defined in~\cite{TaoZiegler08}) is controlled by some $U^s$-norm, with polynomial bounds. This thus gives a quantitative version of Proposition~1.26 of~\cite{TaoZiegler16} for arbitrary $s$, though with a worse dependence of $s'$ on $s$.

We take advantage of the special structure of $\A_{\ell-1}$ to prove the following proposition using repeated applications of Theorem~\ref{thm3.5}, showing that averages of Gowers box norms of the form appearing in Proposition~\ref{prop3.4} are controlled by $U^s$-norms.%, giving a quantitative version of a special case of Theorem~1.24 of~\cite{TaoZiegler16}.
\begin{proposition}\label{prop3.6}
Let $N,M>0$, $q\in\N$, and $P_1,\dots,P_\ell\in\Z[y]$ be polynomials with $(C,q)$-coefficients such that $\deg{P_i}=i$ for $i=1,\dots,\ell$ and $P_\ell$ has leading coefficient $c_\ell$. There exists an $s\ll_{\ell}1$ such that the following holds. Let $I_{\ell-1}$, $\A_{\ell-1}$, and $A$ be as in Proposition~\ref{prop3.4}. If $1/C\leq q^{\ell-1}M^{\ell}/N\leq C$, $f:\Z\to\C$ is a $1$-bounded function supported on the interval $[N]$,
\[
\E_{\ul{a}\in A}\|f\|_{\square^{|I_{\ell-1}|}_{(p(\ul{a})[\delta' M])_{p\in \A_{\ell-1}}}([N])}\geq\delta, 
\]
and $\delta'\ll_{C,\ell}\delta^{O_\ell(1)}$, then we have
\[
\|f\|_{U^s_{\ell! c_\ell[\delta' M^\ell]}([N])}\gg_{C,\ell}\delta^{O_\ell(1)},
\]
provided that $N\gg_{C,\ell}(q/\delta\delta')^{O_\ell(1)}$.
\end{proposition}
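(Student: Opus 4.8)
The plan is to derive Proposition~\ref{prop3.6} from the concatenation result Theorem~\ref{thm3.5} (and possibly the auxiliary concatenation result Lemma~\ref{lem5.1}) by peeling the averaging variables $\ul a$ off one ``layer'' at a time, each peel costing one --- or a bounded number of --- applications of Theorem~\ref{thm3.5}. The point is that the ``all nonzero sign combinations'' description of the sets $I_j$, together with the fact that the coefficients appearing in $\A_j$ depend only on the index $i\in I_{j-1}$ (and not on $r$ or on the sign vector $\omega$), were engineered exactly so that at every stage the relevant average of box norms has the shape required by Theorem~\ref{thm3.5}. Concretely, I would prove by downward induction on $j=\ell-1,\dots,0$ a statement of roughly the shape: there are $s_j\ll_\ell1$, $e_j\ll_\ell1$, and a nonzero integer $c_j=(\ell-j)!\,c_\ell$ (so $|c_j|\asymp_\ell|c_\ell|$) with
\[
\E_{\ul a\in A^{(\leq j)}}\big\|f\big\|_{\square^{|I_j|}_{(p(\ul a)[(\delta')^{e_j}M^{\ell-j}])_{p\in\A_j(\ell,c_j;\dots)}}([N])}\gg_{C,\ell}\delta^{O_\ell(1)},
\]
where $A^{(\leq j)}$ is the sub-product of $A$ built only from the cubes indexing the variables at levels $1,\dots,j$. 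The case $j=\ell-1$ is the hypothesis of the proposition, and the case $j=0$, since $\A_0(\ell,c_0)=\{\ell!\,c_\ell\}$ and $|I_0|=1$, reads $\|f\|_{U^{s_0}_{\ell!\,c_\ell[(\delta')^{e_0}M^\ell]}([N])}\gg_{C,\ell}\delta^{O_\ell(1)}$, which (after the usual adjustment of the power of $\delta'$, permissible because only an upper bound on $\delta'$ is assumed) is the conclusion.

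For the step $j\to j-1$ one integrates out the level-$j$ variables. Writing $\E_{\ul a\in A^{(\leq j)}}=\E_{(\text{levels}<j)}\,\E_{(\text{level }j)}$ and keeping the level-$<j$ average on the outside, for each level-$j$ variable $a^{(j)}_{i_0,r_0}$ in turn one applies Theorem~\ref{thm3.5} with $a^{(j)}_{i_0,r_0}$ as the averaging variable: the directions of $\A_j$ that genuinely involve it are all affine in it with the \emph{same} leading coefficient $(\ell-(j-1))p_{i_0}$ --- the crucial structural point, holding precisely because that coefficient depends only on $i_0$ --- so this coefficient is the ``$c$'' of Theorem~\ref{thm3.5}, the remaining terms are the bounded shifts ``$b_i$'', and the directions \emph{not} involving $a^{(j)}_{i_0,r_0}$ are matched, via the pairing $\omega\leftrightarrow\omega'$ in which $\omega'$ agrees with $\omega$ except for a $1$ in the $(i_0,r_0)$-coordinate (built into the all-sign-combinations set $I_j$), together with Lemma~\ref{lem2.3} and the elementary monotonicity and pigeonholing facts for box norms. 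Since $p_{i_0}$ is a function of the variables at levels $<j$, which are still being averaged, one applies Theorem~\ref{thm3.5} for each value of those variables and recombines by Hölder, after discarding the small set of values on which $p_{i_0}$ vanishes or on which the inner box-norm average is too small to satisfy the size hypothesis. Each application multiplies the interval length by $\approx\delta'M$ (the range of $a^{(j)}_{i_0,r_0}$), raises $e$ by $O_\ell(1)$, and replaces the box norm by one of order $O_\ell(1)$; once all of level $j$ has been integrated out, an $\A_j(\ell,c_j)$-direction has become an $\A_{j-1}(\ell,c_j)$-direction scaled by $\ell-(j-1)$, and since $(\ell-(j-1))\cdot\A_{j-1}(\ell,c_j)=\A_{j-1}(\ell,(\ell-(j-1))c_j)$ with $(\ell-(j-1))c_j=(\ell-j+1)!\,c_\ell$, one recovers the inductive statement at $j-1$.

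I expect the main obstacle to be making each invocation of Theorem~\ref{thm3.5} genuinely legitimate, and this is where essentially all the work lies: (i) continually checking that the directions meeting the chosen averaging variable share a leading coefficient --- the entire reason $\A_{\ell-1}$ is defined with the coefficient of $a^{(j)}_{i,r}$ depending only on $i$ --- and that this coefficient stays nonzero and $\asymp_\ell|c_\ell|$ (up to the accumulating factorial) for all but a negligible set of lower-level parameters; (ii) absorbing the directions that miss the averaging variable, using exactly the all-sign-combinations structure of $I_j$ (so each such direction is paired with one that does meet it) together with Lemma~\ref{lem2.3}, while keeping every box norm at order $O_\ell(1)$; and (iii) propagating the quantitative side conditions of Theorem~\ref{thm3.5} through the $O_\ell(1)$ nested applications --- that all shifts lie in $[-CN/M_1,CN/M_1]$ (comfortable, the shifts being $O_\ell((\delta'M)^{\ell-1})$ while $N/M\asymp_{C,\ell}q^{\ell-1}M^{\ell-1}$), that $M_2\leq M_1$ and $M_1M_2\leq N/|c|$, and that $M_1M_2\gg_{C,\ell}(\delta\delta')^{-O_\ell(1)}$ stays valid at each stage, which is exactly what the hypothesis $N\gg_{C,\ell}(q/\delta\delta')^{O_\ell(1)}$ is for. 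A secondary, purely bookkeeping obstacle is tracking the orders $s_j$, the exponents $e_j$, and the powers of $\delta$ through the induction; since Theorem~\ref{thm3.5}, Lemma~\ref{lem2.3}, and the box-norm manipulations each cost only polynomially and the induction has $O_\ell(1)$ steps, the final bound is polynomial in $\delta$ and $\delta'$ as required.
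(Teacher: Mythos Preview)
Your approach is essentially the paper's: a downward induction on the layer $j$, peeling one level of averaging variables at a time via Theorem~\ref{thm3.5}, ending at a single $U^s$-norm with step $\ell!\,c_\ell$. The paper packages your ``for each $(i_0,r_0)$'' manoeuvre as Lemma~\ref{lem5.7}, whose proof is precisely what you describe --- freeze the directions not involving $a^{(j)}_{i_0,r_0}$, apply Theorem~\ref{thm3.5} to those that do (all sharing leading coefficient $p_{i_0}$, as you note), then unfreeze --- and the proof of Proposition~\ref{prop3.6} is then just $O_\ell(1)$ invocations of that lemma.

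One small correction to your inductive hypothesis: after a round of concatenation you do not get exactly one copy of each direction $p_i\in\A_j$ but rather $t_i\ll_\ell1$ copies (this is the $s'\ll_s1$ in Theorem~\ref{thm3.5}), so the box norm at level $j$ should read $\square^{\sum_i t_i}_{(p_i(\ul a)[\cdot])_{i\in I_j,\,r'\in[t_i]}}$ rather than $\square^{|I_j|}$. This is exactly why Lemma~\ref{lem5.7} is stated with input multiplicities $k_\omega$; your ``replaces the box norm by one of order $O_\ell(1)$'' shows you are aware of the growth, but the displayed inductive statement needs the multiplicities for the iteration to parse. Also, your ``pairing $\omega\leftrightarrow\omega'$'' language is slightly misleading: the directions with $\omega_{(i_0,r_0)}=0$ are not paired with anything but simply frozen into the function, and the all-sign-combinations structure of $I_j$ is used only to ensure that what remains after freezing is again of the required form over the smaller index set $J$.
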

Combining Propositions~\ref{prop3.4} and~\ref{prop3.6}, we thus deduce using the Cauchy--Schwarz inequality that $\Lambda_{P_1,\dots,P_m}^{N,M}(f_0,\dots,f_\ell;\psi_{\ell+1},\dots,\psi_m)$ is controlled by an average of $U^s$-norms.
\begin{theorem}\label{thm3.7}
Let $N,M>0$, $1\leq \ell\leq m$, and $P_1,\dots,P_m\in\Z[y]$ be polynomials such that $P_1,\dots,P_\ell$ have $(C,q)$-coefficients, $\deg{P_1}<\dots<\deg{P_m}$, and $P_\ell$ has leading coefficient $c_\ell$. There exists an $s\ll_{\deg{P_\ell}}1$ such that the following holds. If $1/C\leq q^{\deg{P_\ell}-1}M^{\deg{P_\ell}}/N\leq C$, $f_0,\dots,f_\ell:\Z\to\C$ are $1$-bounded functions supported on the interval $[N]$, $\psi_{\ell+1},\dots,\psi_m:\Z\to S^1$ are characters,
\[
\left|\Lambda_{P_1,\dots,P_m}^{N,M}(f_0,\dots,f_\ell;\psi_{\ell+1},\dots,\psi_m)\right|\geq\delta,
\]
and $\delta'\ll_{C,\deg{P_\ell}}\delta^{O_{\deg{P_\ell}}(1)}$, then we have
\[
\|f_\ell\|_{U^s_{(\deg{P_\ell})! c_\ell[\delta' M^{\deg{P_\ell}}]}([N])}\gg_{C,\deg{P_\ell}}\delta^{O_{\deg{P_\ell}}(1)},
\]
provided that $N\gg_{C,\deg{P_\ell}}(q/\delta\delta')^{O_{\deg{P_\ell}}(1)}$.
\end{theorem}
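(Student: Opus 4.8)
The plan is to deduce Theorem~\ref{thm3.7} from Propositions~\ref{prop3.4} and~\ref{prop3.6} by three reductions: dispose of the characters $\psi_{\ell+1},\dots,\psi_m$ with one application of Cauchy--Schwarz; pad the list of polynomials so that the degrees present are exactly $1,\dots,\deg{P_\ell}$, which is the shape of progression assumed in Propositions~\ref{prop3.4} and~\ref{prop3.6}; and then feed the resulting character-free average into those two propositions in turn. For the first reduction, write $\Psi(y):=\psi_{\ell+1}(P_{\ell+1}(y))\cdots\psi_m(P_m(y))$, a $1$-bounded function of $y$ alone, so that
\[
\Lambda_{P_1,\dots,P_m}^{N,M}(f_0,\dots,f_\ell;\psi_{\ell+1},\dots,\psi_m)=\E_{y\in[M]}\Psi(y)\cdot\f{1}{N}\sum_xf_0(x)\prod_{i=1}^\ell f_i(x+P_i(y)).
\]
Splitting $[N]$ into two intervals of length at most $N/2$ and passing to the best restriction of $f_0$ costs a factor $2$ in $\delta$ and lets us assume $f_0$ is supported on an interval of length at most $N/2$. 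Applying Cauchy--Schwarz in $y$ to discard $\Psi$, expanding the resulting square in the base point, and writing the second base point as $x+h$ (so that the summand becomes $\Delta_hf_0(x)\prod_{i=1}^\ell\Delta_hf_i(x+P_i(y))$), one gets
\[
\f{1}{N}\sum_{|h|<N/2}\left|\Lambda_{P_1,\dots,P_\ell}^{N,M}(\Delta_hf_0,\dots,\Delta_hf_\ell)\right|\gg\delta^2,
\]
and hence $|\Lambda_{P_1,\dots,P_\ell}^{N,M}(\Delta_hf_0,\dots,\Delta_hf_\ell)|\gg\delta^2$ for a $\gg\delta^2$-proportion of the $h$ with $|h|<N/2$, each $\Delta_hf_i$ being $1$-bounded and supported on $[N]$.

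Next I would pad the degrees: for each $d\in\{1,\dots,\deg{P_\ell}\}$ that is not $\deg{P_i}$ for any $i\le\ell$, insert the polynomial $q^{d-1}y^d$ (which has $(C',q)$-coefficients with $C':=\max(C,1)$) together with the $1$-bounded function $1$. This leaves $\Lambda_{P_1,\dots,P_\ell}^{N,M}$ unchanged and produces polynomials $Q_1,\dots,Q_{\deg{P_\ell}}$ with $(C',q)$-coefficients, $\deg{Q_d}=d$, $Q_{\deg{P_\ell}}=P_\ell$ of leading coefficient $c_\ell$, together with $1$-bounded functions $g_0,\dots,g_{\deg{P_\ell}}$ supported on $[N]$ and $g_{\deg{P_\ell}}=\Delta_hf_\ell$. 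Since the hypothesis $1/C\le q^{\deg{P_\ell}-1}M^{\deg{P_\ell}}/N\le C$ of Theorem~\ref{thm3.7} implies the corresponding hypothesis of Propositions~\ref{prop3.4} and~\ref{prop3.6} with the roles of $\ell$ and $C$ there played by $\deg{P_\ell}$ and $C'$, for each of the good $h$ Proposition~\ref{prop3.4} produces integers $k_i\ll_{\deg{P_\ell}}1$ and, for a suitable $\delta''\ll_{C,\deg{P_\ell}}\delta^{O_{\deg{P_\ell}}(1)}$,
\[
\E_{\ul a\in A}\|\Delta_hf_\ell\|_{\square^{|I_{\deg{P_\ell}-1}|}_{(p(\ul a)[\delta''M])_{p\in\A_{\deg{P_\ell}-1}}}([N])}\gg_{C,\deg{P_\ell}}\delta^{O_{\deg{P_\ell}}(1)},
\]
which Proposition~\ref{prop3.6} then upgrades to $\|\Delta_hf_\ell\|_{U^{s_0}_{(\deg{P_\ell})!c_\ell[\delta''M^{\deg{P_\ell}}]}([N])}\gg_{C,\deg{P_\ell}}\delta^{O_{\deg{P_\ell}}(1)}$ for some $s_0\ll_{\deg{P_\ell}}1$, provided $N\gg_{C,\deg{P_\ell}}(q/\delta\delta'')^{O_{\deg{P_\ell}}(1)}$. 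Writing $Q:=(\deg{P_\ell})!c_\ell[\delta''M^{\deg{P_\ell}}]$, raising these bounds to the power $2^{s_0}$, summing over the good $h$, and invoking the standard box-norm identity
\[
\|f_\ell\|^{2^{s_0+1}}_{\square^{s_0+1}_{Q,\dots,Q,[N]}([N])}=\sum_{|w|<N}\mu_N(w)\,\|\Delta_wf_\ell\|^{2^{s_0}}_{\square^{s_0}_{Q,\dots,Q}([N])}
\]
together with $\mu_N(w)\gg 1/N$ for $|w|<N/2$ and the non-negativity of the remaining terms, I would conclude $\|f_\ell\|_{\square^{s_0+1}_{Q,\dots,Q,[N]}([N])}\gg_{C,\deg{P_\ell}}\delta^{O_{\deg{P_\ell}}(1)}$.

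The hard part is the last step: turning this bound, a box norm with one direction at the oversized scale $[N]$ and the rest at scale $[\delta''M^{\deg{P_\ell}}]$, into a genuine uniformity norm $\|f_\ell\|_{U^s_{(\deg{P_\ell})!c_\ell[\delta'M^{\deg{P_\ell}}]}([N])}\gg_{C,\deg{P_\ell}}\delta^{O_{\deg{P_\ell}}(1)}$ with $s\ll_{\deg{P_\ell}}1$. The obstruction is a genuine scale mismatch: the difference $h$ thrown up by Cauchy--Schwarz must range over scale $N$ --- one cannot localize $f_0$ below scale $N$ without losing a power of $q$ that Theorem~\ref{thm3.7} does not permit --- whereas the box-norm directions delivered by Propositions~\ref{prop3.4} and~\ref{prop3.6} live at scale $M^{\deg{P_\ell}}\asymp N/q^{\deg{P_\ell}-1}$, so the oversized direction cannot simply be merged with the others and must be brought down on its own. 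I would do this with one further application of the quantitative concatenation machinery of Section~\ref{sec5} (Theorem~\ref{thm3.5} and Lemma~\ref{lem5.1}), which is also what produces the $O_{\deg{P_\ell}}(1)$ jump in the uniformity degree from $s_0$ to $s$. What remains is routine bookkeeping: composing the polynomial losses in $\delta,\delta',\delta''$ into bounds of the stated form, lining up the smallness hypotheses $N\gg_{C,\deg{P_\ell}}(q/\delta\delta')^{O_{\deg{P_\ell}}(1)}$, and checking that inserting the dummy polynomials $q^{d-1}y^d$ preserves both the $(C',q)$-coefficient property and the constraint $1/C'\le q^{\deg{P_\ell}-1}M^{\deg{P_\ell}}/N\le C'$.
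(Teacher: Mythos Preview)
Your overall plan---pad out the degrees so that Propositions~\ref{prop3.4} and~\ref{prop3.6} apply, use Cauchy--Schwarz to remove the characters $\psi_{\ell+1},\dots,\psi_m$, and then invoke those two propositions---is the paper's plan as well. The difference is in how the Cauchy--Schwarz step is executed, and the paper's version sidesteps the scale mismatch you correctly flag as the hard part.

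Rather than applying Cauchy--Schwarz in $y$ directly (which doubles the base point $x$ and produces a difference $h$ at scale $N$), the paper first inserts dummy averaging at the target scale: setting $c':=(\deg P_\ell)!\,c_\ell$, one makes the change of variables $x\mapsto x+c'z$ and averages over $z\in[\delta' M^{\deg P_\ell}]$, which is free since the $x$-sum runs over all of $\Z$. Cauchy--Schwarz in the $(x,y)$ variables then doubles $z$ instead of $x$, giving
\[
\left|\E_{z,z'\in[\delta' M^{\deg P_\ell}]}\Lambda^{N,M}_{P_1,\dots,P_\ell}\bigl(\Delta'_{c'(z,z')}f_0,\dots,\Delta'_{c'(z,z')}f_\ell\bigr)\right|\gg_{C,\deg P_\ell}\delta^2.
\]
Now the extra differencing parameter $c'(z,z')$ already lives in $Q^2$ with $Q:=c'[\delta' M^{\deg P_\ell}]$, so applying Propositions~\ref{prop3.4} and~\ref{prop3.6} to each $\Delta'_{c'(z,z')}f_\ell$ and averaging over $z,z'$ yields directly
\[
\|f_\ell\|_{U^{s_0+1}_Q([N])}^{2^{s_0+1}}=\E_{z,z'\in[\delta' M^{\deg P_\ell}]}\|\Delta'_{c'(z,z')}f_\ell\|_{U^{s_0}_Q([N])}^{2^{s_0}}\gg_{C,\deg P_\ell}\delta^{O_{\deg P_\ell}(1)}.
\]
No oversized direction ever appears.

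Your proposed patch---one more application of Theorem~\ref{thm3.5} or Lemma~\ref{lem5.1}---does not apply as written: both of those results take as input an \emph{average over a parameter $a$} of box norms whose differencing directions depend on $a$ through $(ca+b_i)[M_1]$, whereas you are left with a single box norm $\|f_\ell\|_{\square^{s_0+1}_{Q,\dots,Q,[N]}([N])}$ with one fixed oversized direction and no parameter to concatenate over. The mismatch can in fact be repaired by more elementary means (pigeonhole the $[N]$-direction first into a residue class modulo $c'$ and then into subintervals of length $\delta''M^{\deg P_\ell}$, and finish with one more Cauchy--Schwarz), but the paper's shift-and-average trick gets there in a single step.
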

We will next use the Cauchy--Schwarz inequality to deduce from Theorem~\ref{thm3.7} control of $\Lambda_{P_1,\dots,P_m}^{N,M}(f_0,\dots,f_\ell;\psi_{\ell+1},\dots,\psi_m)$ in terms of an average of $U^s$-norms of dual functions.
\begin{corollary}\label{cor3.8}
Let $N,M>0$, $q\in\N$, $1\leq \ell\leq m$, and $P_1,\dots,P_m\in\Z[y]$ be polynomials such that $P_1,\dots,P_\ell$ have $(C,q)$-coefficients, $\deg{P_1}<\dots<\deg{P_m}$, and $P_\ell$ has leading coefficient $c_\ell$. There exists an $s\ll_{\deg{P_\ell}}1$ such that the following holds. If $1/C\leq q^{\deg{P_\ell}-1}M^{\deg{P_\ell}}/N\leq C$, $f_0,\dots,f_\ell:\Z\to\C$ are $1$-bounded functions supported on the interval $[N]$ and $\psi_{\ell+1},\dots,\psi_m:\Z\to S^1$ are characters,
\[
\left|\Lambda_{P_1,\dots,P_m}^{N,M}(f_0,\dots,f_\ell;\psi_{\ell+1},\dots,\psi_m)\right|\geq\delta
\]
and $\delta'\ll_{\deg{P_\ell},C}\delta^{O_{\deg{P_\ell}}(1)}$, then we have
\[
\|F_\ell\|_{U^s_{(\deg{P_\ell})! c_\ell[\delta' M^{\deg{P_\ell}}]}([O_{\deg{P_\ell}}(CN)])}\gg_{\deg{P_\ell},C}\delta^{O_{\deg{P_\ell}}(1)},
\]
provided that $N\gg_{\deg{P_\ell},C}(q/\delta\delta')^{O_{\deg{P_\ell}}(1)}$, where $F_\ell$ is the dual function
\[
F_\ell(x):=\E_{y\in[M]}f_0(x-P_\ell(y))\cdots f_{\ell-1}(x+P_{\ell-1}(y)-P_\ell(y))\psi_{\ell+1}(P_{\ell+1}(y))\cdots \psi_{m}(P_m(y)).
\]
\end{corollary}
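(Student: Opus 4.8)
The plan is to observe that $\f{1}{N}\sum_{x}|F_\ell(x)|^2$ is itself one of the multilinear averages $\Lambda_{P_1,\dots,P_m}^{\cdot,M}$ controlled by Theorem~\ref{thm3.7}, with $\overline{F_\ell}$ inserted in the slot occupied by $f_\ell$, and then to apply Theorem~\ref{thm3.7} to it. First I would substitute $x\mapsto x-P_\ell(y)$ in the definition of $\Lambda_{P_1,\dots,P_m}^{N,M}$. Since $f_\ell(x-P_\ell(y)+P_\ell(y))=f_\ell(x)$, each $f_j(x+P_j(y))$ becomes $f_j(x+P_j(y)-P_\ell(y))$, and the character factors $\psi_i(P_i(y))$ are unchanged, the inner average over $y$ collapses to $F_\ell(x)$, giving
\[
\Lambda_{P_1,\dots,P_m}^{N,M}(f_0,\dots,f_\ell;\psi_{\ell+1},\dots,\psi_m)=\f{1}{N}\sum_{x\in\Z}f_\ell(x)F_\ell(x);
\]
running the identical substitution with $\overline{F_\ell}$ in place of $f_\ell$ yields
\[
\Lambda_{P_1,\dots,P_m}^{N,M}(f_0,\dots,f_{\ell-1},\overline{F_\ell};\psi_{\ell+1},\dots,\psi_m)=\f{1}{N}\sum_{x\in\Z}\overline{F_\ell(x)}F_\ell(x)=\f{1}{N}\sum_{x\in\Z}|F_\ell(x)|^2.
\]
Note $F_\ell$ is $1$-bounded, being an average of products of $1$-bounded functions and unit-modulus characters, and $F_\ell(x)\neq0$ forces $x\in[N]+P_\ell([M])$; using the $(C,q)$-coefficient hypothesis together with $q^{\deg P_\ell-1}M^{\deg P_\ell}\asymp_C N$ one has $|P_\ell(y)|\ll_{\deg P_\ell}CN$ for all $y\in[M]$, so $F_\ell$ is supported on an interval $[\tilde N]$ with $\tilde N\asymp_{C,\deg P_\ell}N$, while $f_0,\dots,f_{\ell-1}$ remain supported on $[N]\subseteq[\tilde N]$.

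Next I would apply Cauchy--Schwarz. Since $f_\ell$ is $1$-bounded and supported on $[N]$, the first identity gives
\[
\delta\leq\f{1}{N}\left|\sum_{x\in\Z}f_\ell(x)F_\ell(x)\right|\leq\f{1}{N}\|f_\ell\|_{\ell^2}\|F_\ell\|_{\ell^2}\leq N^{-1/2}\|F_\ell\|_{\ell^2},
\]
so that $\f{1}{N}\sum_{x}|F_\ell(x)|^2\geq\delta^2$; combining this with the second identity and $\tilde N\asymp_{C,\deg P_\ell}N$,
\[
\left|\Lambda_{P_1,\dots,P_m}^{\tilde N,M}(f_0,\dots,f_{\ell-1},\overline{F_\ell};\psi_{\ell+1},\dots,\psi_m)\right|=\f{1}{\tilde N}\sum_{x\in\Z}|F_\ell(x)|^2\gg_{C,\deg P_\ell}\delta^2.
\]
Now $P_1,\dots,P_m$ still have strictly increasing degrees with $P_1,\dots,P_\ell$ having $(C,q)$-coefficients, the functions $f_0,\dots,f_{\ell-1},\overline{F_\ell}$ are $1$-bounded and supported on $[\tilde N]$, and $q^{\deg P_\ell-1}M^{\deg P_\ell}/\tilde N\in[1/C',C']$ for some $C'\ll_{C,\deg P_\ell}1$, so Theorem~\ref{thm3.7} applies with $\tilde N$, $C'$, and $\overline{F_\ell}$ in the roles of $N$, $C$, and $f_\ell$. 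It produces an $s\ll_{\deg P_\ell}1$ with
\[
\|\overline{F_\ell}\|_{U^s_{(\deg P_\ell)!c_\ell[\delta'M^{\deg P_\ell}]}([\tilde N])}\gg_{\deg P_\ell,C}\delta^{O_{\deg P_\ell}(1)}
\]
whenever $\delta'\ll_{\deg P_\ell,C}\delta^{O_{\deg P_\ell}(1)}$ and $N\gg_{\deg P_\ell,C}(q/\delta\delta')^{O_{\deg P_\ell}(1)}$, and since the Gowers norms are invariant under complex conjugation, $\|\overline{F_\ell}\|_{U^s}=\|F_\ell\|_{U^s}$, which is the claimed bound with $[\tilde N]=[O_{\deg P_\ell}(CN)]$.

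I expect the only delicate point to be the bookkeeping in the last step: $F_\ell$ is not supported on $[N]$ but on the slightly longer interval $[\tilde N]$, so one must feed Theorem~\ref{thm3.7} the enlarged length $\tilde N$ while keeping the \emph{same} parameter $M$, and check that the constraint $q^{\deg P_\ell-1}M^{\deg P_\ell}/\tilde N\asymp 1$ is preserved; this works because $\tilde N\asymp_{C,\deg P_\ell}N$, at the cost of replacing $C$ by a constant depending on $C$ and $\deg P_\ell$. Apart from this, the argument is simply the elementary observation that $\f{1}{N}\sum_x|F_\ell(x)|^2$ \emph{equals} a multilinear polynomial-progression average to which Theorem~\ref{thm3.7} applies, so that a single application of Cauchy--Schwarz suffices and no further input is required.
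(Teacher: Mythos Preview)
Your argument is correct and is essentially the paper's own proof: identify $\Lambda_{P_1,\dots,P_m}^{N,M}(f_0,\dots,f_\ell;\psi_{\ell+1},\dots,\psi_m)=\tfrac{1}{N}\sum_x f_\ell(x)F_\ell(x)$, apply Cauchy--Schwarz to obtain $\Lambda_{P_1,\dots,P_m}^{N,M}(f_0,\dots,f_{\ell-1},\overline{F_\ell};\psi_{\ell+1},\dots,\psi_m)\geq\delta^2$, and feed this into Theorem~\ref{thm3.7} with $\overline{F_\ell}$ in the last slot. Your version is in fact slightly more careful than the paper's sketch, which writes $F_\ell$ rather than $\overline{F_\ell}$ in the displayed inequality and leaves the enlargement of the support interval and the corresponding adjustment of $C$ implicit.
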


The next step of the proof of Theorem~\ref{thm1.1} is to show our general degree-lowering result. %Recall from the Introduction that the idea of this argument is that if $s\geq 3$ and one has control of $\Lambda_{P_1,\dots,P_m}^{N,M}(f_0,\dots,f_\ell;\psi_{\ell+1},\dots,\psi_m)$ in terms of an average of $U^s$-norms of $F_\ell$, then one can deduce control in terms of an average of $U^{s-1}$-norms of $F_\ell$. Doing this $s-2$ times after applying Corollary~\ref{cor3.8} shows that $\Lambda_{P_1,\dots,P_m}^{N,M}(f_0,\dots,f_\ell;\psi_{\ell+1},\dots,\psi_m)$ is controlled by an average of $U^2$-norms of $F_\ell$. The precise statement of the degree-lowering result is as follows.

\begin{lemma}[Degree lowering for $\ell$]\label{lem3.9}
  Let $N,M>0$, $q\in\N$, $2\leq \ell\leq m$, $P_1,\dots,P_m\in\Z[y]$ be polynomials such that $P_1,\dots,P_\ell$ have $(C,q)$-coefficients, $\deg{P_1}<\dots<\deg{P_m}$, and $P_\ell$ has leading coefficient $c_\ell$ satisfying $1/C\leq |c_{\ell}/c|\leq C$, $f_0,\dots,f_\ell:\Z\to\C$ be $1$-bounded functions supported on the interval $[N]$, and $\psi_{\ell+1},\dots,\psi_m:\Z\to S^1$ be characters. Let $F_\ell$ be as in Corollary~\ref{cor3.8}. If $s\geq 3$, $1/C\leq |c|M^{\deg{P_\ell}}/N\leq C$, $0<\delta'\leq 1$,  and
\[
\|F_\ell\|_{U^s_{c[\delta'M^{\deg{P_\ell}}]}([CN])}\geq\delta,
\]
then
\[
\|F_\ell\|_{U^{s-1}_{c[\delta'M^{\deg{P_\ell}}]}([CN])}\gg_{C,\deg{P_{\ell}},s}(\delta\delta')^{O_{\deg{P_\ell},s}(1)},
\]
provided that $N\gg_{C,\deg{P_{\ell}},s}(q/\delta\delta')^{O_{\deg{P_\ell,s}}(1)}$.
\end{lemma}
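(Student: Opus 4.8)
The plan is to adapt the degree-lowering argument of Peluse--Prendiville to this setting. The starting hypothesis is $U^s$-control of the dual function $F_\ell$ with a small window $c[\delta' M^{\deg P_\ell}]$, and the goal is to drop the degree of the uniformity norm by one. First I would unpack the definition of the $U^s_Q$-norm: writing $Q=c[\delta'M^{\deg P_\ell}]$ and $H:=\delta' M^{\deg P_\ell}$, the hypothesis
\[
\E_{x}\E_{\ul h,\ul h'\in Q^s}\Delta'_{(h_1,h_1'),\dots,(h_s,h_s')}F_\ell(x)\geq\delta^{2^s}
\]
can, after changing variables to absorb the $h_i'$ into $x$, be rewritten (via Fourier inversion in the last variable, or by pigeonholing and invoking Lemma~\ref{lem2.4}) as saying that for many choices of the ``lower'' difference parameters $h_1,\dots,h_{s-1}$ in $c[H]$, the function $\Delta_{h_1,\dots,h_{s-1}}F_\ell$ correlates with a linear phase $e(\phi(h_1,\dots,h_{s-1})x)$, with the frequency $\phi$ varying with the $h_i$. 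The heart of degree lowering is to show that $\phi$ can be taken to be \emph{essentially constant} (independent of $h_1,\dots,h_{s-1}$), which upgrades the $s$-fold correlation to an $(s-1)$-fold one.

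**Second,** to control how $\phi$ depends on the $h_i$, I would exploit the arithmetic structure of $F_\ell$, namely that $F_\ell$ is itself a polynomial average of $1$-bounded functions, $F_\ell(x)=\E_{y\in[M]}f_0(x-P_\ell(y))\cdots\psi_m(P_m(y))$. The key point: $\Delta_{h}F_\ell$ is, after the shift $y\mapsto$ something, again controlled by $\Lambda$-type averages with the \emph{same} leading polynomial $P_\ell$ but with extra shifts; more usefully, one shows via van der Corput / Cauchy--Schwarz and the already-established control results (Theorem~\ref{thm3.7}, Corollary~\ref{cor3.8}) that if $\Delta_{h_1,\dots,h_{s-1}}F_\ell$ correlates with $e(\phi x)$ on a long interval then $\phi$ must lie close to a rational with small denominator --- or, more precisely, that two nearby choices of the tuple $(h_1,\dots,h_{s-1})$ give frequencies $\phi$ differing by something tiny. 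This is exactly where the ``major arc'' information about $F_\ell$ enters: the phase $\phi(h_1,\dots,h_{s-1})$ varies slowly (polynomially slowly) in the $h_i$ on a window of length $\asymp H = \delta' M^{\deg P_\ell}$, and because the window is so short relative to the scale $M^{\deg P_\ell}$ at which $F_\ell$ "lives", slow variation forces near-constancy. Concretely I would prove a $2$-step or bilinear statement: for $\gg\delta^{O(1)}$-many pairs $h_{s-1},h_{s-1}'$, the frequencies agree up to $O(1/H)$, which by a standard Weyl/geometry-of-numbers argument (pigeonhole on Bohr sets, as in Peluse--Prendiville) lets one replace $\phi(h_1,\dots,h_{s-1})$ by $\phi(h_1,\dots,h_{s-2})$ at the cost of losing a power of $\delta\delta'$.

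**Third,** having shown that the top difference parameter can be removed, I would feed this back into the definition of the Gowers norm: correlation of $\Delta_{h_1,\dots,h_{s-1}}F_\ell$ with a phase $e(\phi(h_1,\dots,h_{s-2})x)$ \emph{uniformly} in $h_{s-1}$ (after restricting to a positive-density set of good $h_{s-1}$, and using that $e(\phi x)$ is a character and so harmless in a box/uniformity norm) is precisely an inequality of the shape $\|F_\ell\,\overline{(\text{something } U^{s-1}\text{-structured})}\|$ being large, which by the Gowers--Cauchy--Schwarz inequality (Lemma~\ref{lem2.3}) and the fact that multiplying by a linear phase does not change a $U^{s-1}$-norm yields
\[
\|F_\ell\|_{U^{s-1}_{c[\delta'M^{\deg P_\ell}]}([CN])}\gg_{C,\deg P_\ell,s}(\delta\delta')^{O_{\deg P_\ell,s}(1)}.
\]
The requirement $s\geq 3$ enters because one needs at least one "spare" parameter ($h_1$) left over after removing the top one to still have an honest uniformity norm (for $s=2$ one would drop to $U^1$, which is handled separately), and the hypothesis $s\geq 3$ also ensures the inverse theorem being invoked is only ever the $U^2$-inverse theorem of Lemma~\ref{lem2.4} applied to the \emph{reduced} function, never a higher-degree inverse theorem.

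**The main obstacle** I anticipate is the bilinear/slow-variation step: showing that the linear frequency $\phi(h_1,\dots,h_{s-1})$ obtained from the $U^2$-correlation varies polynomially slowly in the difference parameters, and doing so with explicit polynomial dependence on $\delta$ and $\delta'$. This requires combining the $U^s$-control hypothesis with a "major arc lemma" for $F_\ell$ (the companion inductive statement alluded to in the introduction) that pins down the Fourier support of $F_\ell$ near rationals with controlled denominators; the interplay between the short window $[\delta' M^{\deg P_\ell}]$ and the intrinsic scale $M^{\deg P_\ell}$ of $F_\ell$ is delicate, and one must track the factors of $q$ carefully since the polynomials carry $(C,q)$-coefficients. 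The condition $N\gg_{C,\deg P_\ell,s}(q/\delta\delta')^{O(1)}$ is exactly what is needed so that all the pigeonholing and Weyl-sum estimates have enough room; getting the exponent to be genuinely polynomial (rather than, say, exponential in $s$) is where the bulk of the technical care goes, but the qualitative structure follows the Peluse--Prendiville template closely.
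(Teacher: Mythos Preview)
Your high-level outline—unpack $U^s$ as an average of $U^2$-norms of differenced copies of $F_\ell$, invoke Lemma~\ref{lem2.4} to extract a linear phase $\phi$ depending on the difference parameters, show via a major-arc input that $\phi$ has low-rank structure, and feed this back to obtain $U^{s-1}$-control—does match the paper's strategy. But there is a real gap at the second step, and it is precisely the place you flag as the ``main obstacle''.

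You propose to show that if $\Delta_{\ul h}F_\ell$ correlates with $e(\phi(\ul h)x)$ then $\phi(\ul h)$ itself is major-arc (or that $\phi$ varies slowly in $\ul h$), invoking Theorem~\ref{thm3.7} or Corollary~\ref{cor3.8}. This does not work as stated: $F_\ell$ contains the arbitrary characters $\psi_{\ell+1},\dots,\psi_m$, and differencing $F_\ell$ in $x$ does \emph{not} cancel them (each copy of $F_\ell$ carries its own $y$-average), so $\Delta_{\ul h}F_\ell$ is not a clean dual function to which any major-arc statement applies. The paper resolves this with Lemma~\ref{lem7.4}: one first expands the square in~$x$ (so the two copies of $F_\ell$ have independent $y$-variables) and then applies Cauchy--Schwarz to double each $h_i'$, which cancels the characters and leaves correlation of a genuine dual function $G_{\ell,\ul k}$ (built from $\Delta'_{\ul k}f_0,\dots,\Delta'_{\ul k}f_{\ell-1}$, no characters) with the \emph{alternating sum} $\psi(\ul k)=\sum_{\omega}(-1)^{|\omega|}\phi(\cdots)$. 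It is $\psi(\ul k)$, not $\phi$, that Lemma~\ref{lem3.10} (the major-arc lemma, applied with $m=\ell$) shows is close to a rational with small denominator. Neither Theorem~\ref{thm3.7} nor Corollary~\ref{cor3.8} is invoked directly here; the inductive input is Lemma~\ref{lem3.10} for~$\ell$.

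A second, smaller gap is in your third step. Once $\psi(\ul k)$ is pinned down (via Lemma~\ref{lem7.3} and pigeonholing) to be constant on a large set, the paper does not ``remove the top parameter'' iteratively; instead it writes $\psi(\ul k)=\sum_{i=1}^{s-2}\phi_i(\ul k)+O(1/M^{\deg P_\ell})$ where each $\phi_i$ is independent of $k_i^{(3)}$, and then applies Lemma~\ref{lem7.5} in one shot to conclude $\|F_\ell\|_{U^{s-1}}\gg(\delta\delta')^{O(1)}$. Your sketch of this step (``phase independent of $h_{s-1}$, integrate it out'') does not quite describe a valid mechanism. Also note the correct count: one differences $s-2$ times (not $s-1$) before applying the $U^2$-inverse theorem, so the phase $\phi$ depends on the pairs $(h_i,h_i')_{i=1}^{s-2}$.
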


Lemma~\ref{lem3.9} is labeled as ``Degree lowering for $\ell$'' because it is proved by induction on $\ell$ using the following lemma.

\begin{lemma}[Major arc lemma for $\ell$]\label{lem3.10}
Let $N,M>0$, $q\in\N$, $2\leq\ell\leq m$, $P_1,\dots,P_{m}\in\Z[y]$ be polynomials such that $P_1,\dots,P_\ell$ have $(C,q)$-coefficients, $\deg{P_1}<\dots<\deg{P_m}$, and $P_i$ has leading coefficient $c_i$ for $i=1,\dots,m$, and $\psi_{\ell},\dots,\psi_m:\Z\to S^1$ be characters with $\psi_i(x)=e(\alpha_i x)$ with $\alpha_i\in\T$ for $i=\ell,\dots,m$. Assume further that $1/C\leq |c|M^{\deg{P_\ell}}/N\leq C$. If there exist $1$-bounded functions $f_0,\dots,f_{\ell-1}:\Z\to\C$ supported on the interval $[N]$ such that
\[
\left|\f{1}{N/c}\sum_{x\in\Z}F_\ell(cx)\psi_\ell(cx)\right|\geq\delta,
\]
where $F_\ell$ is as in Corollary~\ref{cor3.8}, then there exists a positive integer $t\ll_{C,\deg{P_m}}\delta^{-O_{\deg{P_m}}(1)}$ and a $c'\ll_C(|cc_m|)^{O_{\deg{P_m}}(1)}$ such that
\[
\|tc'c_m\alpha_m\|\ll_{C,\deg{P_m}}\f{\delta^{-O_{\deg{P_m}}(1)}}{M^{\deg{P_m}}/c'},
\]
provided that $N\gg_{C,\deg{P_m}}(q/\delta)^{O_{\deg{P_m}}(1)}$.
\end{lemma}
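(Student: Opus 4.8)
The plan is to induct on $\ell$, reducing the major arc lemma for $\ell$ to the major arc and degree-lowering lemmas for $\ell-1$ together with the $U^s$-control result of Corollary~\ref{cor3.8}. The first step, valid for all $\ell\geq 2$, is to rewrite the hypothesis as a statement about a $\Lambda$-average of \emph{parameter} $\ell-1$. Expanding the definition of $F_\ell$ and removing the restriction to $c\Z$ (for instance via $1_{c\Z}(u)=|c|^{-1}\sum_{j=0}^{|c|-1}e(ju/c)$ and a pigeonhole in $j$), one finds $\beta\in\T$ with $\bigl|\f1N\sum_{x}F_\ell(x)e(\beta x)\bigr|\gg\delta/|c|$. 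Substituting $x\mapsto x+P_\ell(y)$ in the double sum defining this quantity replaces $e(\beta x)$ by $e(\beta P_\ell(y))$; absorbing the leftover $e(\beta x)$ into $f_0$ (call it $g_0$) and passing, by an averaging argument, to a subinterval of $x$'s of length $\widetilde N\asymp q^{\deg P_{\ell-1}-1}M^{\deg P_{\ell-1}}$ to align $M$ with the new window, the hypothesis becomes
\[
\bigl|\Lambda^{\widetilde N,M}_{P_1,\dots,P_m}(g_0,f_1,\dots,f_{\ell-1};\psi'_\ell,\psi_{\ell+1},\dots,\psi_m)\bigr|\gg\delta/|c|,\qquad \psi'_\ell(z)=e(\beta z).
\]
Since $P_\ell,\dots,P_m$ have strictly increasing degrees, the ``top'' character is still $\psi_m$ with frequency $\alpha_m$, and $\deg P_i<\deg P_m$ for all $i<m$; this is the only use of the distinct-degree hypothesis, and it is exactly what makes the rest work.

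For the inductive step ($\ell\geq 3$): applying Corollary~\ref{cor3.8} with parameter $\ell-1$ gives $\|F_{\ell-1}\|_{U^s}\gg(\delta/|c|)^{O(1)}$ for some $s\ll_{\deg P_{\ell-1}}1$, where $F_{\ell-1}$ is the dual function built from $g_0,f_1,\dots,f_{\ell-2}$ and $\psi'_\ell,\dots,\psi_m$, the $U^s$-norm being taken over the progression produced by that corollary. Since $\ell-1\geq 2$, the degree-lowering Lemma~\ref{lem3.9} for $\ell-1$ applies; iterating it at most $s-2$ times yields $\|F_{\ell-1}\|_{U^2}\gg(\delta/|c|)^{O(1)}$, and the $U^2$-inverse theorem (Lemma~\ref{lem2.4}, or its variant carrying an arithmetic-progression weight) then gives $\beta'\in\T$ with $\bigl|\E_{x}F_{\ell-1}(x)e(\beta'x)\bigr|\gg(\delta/|c|)^{O(1)}$. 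This is precisely the hypothesis of Lemma~\ref{lem3.10} for $\ell-1$, with common difference $1$ and $\psi_{\ell-1}(z)=e(\beta'z)$; invoking the inductive hypothesis -- and noting that $M$ has been replaced by $M'\asymp\widetilde N^{1/\deg P_{\ell-1}}\geq M$, which only strengthens the conclusion -- finishes the step once the accumulated powers of $|c|$ and $q$ are absorbed into $c'$ and the size hypothesis.

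The base case $\ell=2$ carries the real content. Here the reduction gives $\bigl|\E_{x\in[\widetilde N]}\E_{y\in[M]}g_0(x)f_1(x+P_1(y))e(Q(y))\bigr|\gg\delta/|c|$, where $Q$ is the polynomial with $e(Q(y))=\psi'_2(P_2(y))\psi_3(P_3(y))\cdots\psi_m(P_m(y))$, so that $\deg Q=\deg P_m$ and $Q$ has leading coefficient $\alpha_m c_m$. Applying Corollary~\ref{cor3.8} with parameter $1$ gives $\|F_1\|_{U^s}\gg(\delta/|c|)^{O(1)}$, $s\ll_{\deg P_1}1$, for the \emph{single-function} dual $F_1(x)=\E_{y\in[M]}g_0(x-P_1(y))e(Q(y))$. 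I would then use the Fourier factorisation
\[
\widehat{F_1}(\gamma)=\widehat{g_0}(\gamma)\cdot\Phi(\gamma),\qquad \Phi(\gamma):=\E_{y\in[M]}e\bigl(Q(y)-\gamma P_1(y)\bigr),
\]
in which, because $\deg P_1<\deg Q$, the exponential sum $\Phi(\gamma)$ has leading coefficient $\alpha_m c_m$ regardless of $\gamma$. Differencing $F_1$ replaces $\Phi$ by products of shifted copies of it (corresponding to finite differences of $Q$, whose leading coefficients remain nonzero multiples of $\alpha_m c_m$), while the $g_0$-contribution stays crudely bounded; hence Weyl's inequality shows that if $\alpha_m$ is \emph{not} major arc then $\|\widehat{\Delta_{h_1}\cdots\Delta_{h_k}F_1}\|_\infty$ is small after normalisation, uniformly in the $h_i$, forcing all the relevant $U^s$-norms of $F_1$ to be small -- contradicting the lower bound. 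Unwinding the contradiction quantitatively yields $|\Phi(\gamma)|\gg(\delta/|c|)^{O(1)}$ for some $\gamma$; Weyl's inequality -- followed, where differencing has been used, by the usual ``two nearby points'' pigeonhole turning a positive-density set of $h$ with $\|\theta h\|$ small into a bounded $t$ with $\|t\theta\|$ small -- then produces the asserted $t$ and $c'$ with $\|tc'c_m\alpha_m\|\ll\delta^{-O(1)}c'/M^{\deg P_m}$.

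I expect the base case to be the main obstacle: the delicate point is that differencing $F_1$ leaves its Fourier transform a product of an uncontrolled $g_0$-factor and an exponential sum whose leading coefficient is still pinned to $\alpha_m c_m$, so that a single application of Weyl's inequality (iterated over the differencing parameters) rules out the non-major-arc case -- and this is exactly where $\deg P_1<\deg P_m$ is needed. The rest is bookkeeping, but nontrivial: matching the auxiliary lengths $\widetilde N$ and moduli $M'$ at successive levels of the induction, carrying the $c\Z$-reduction so that the $|c|$-factors land only in $c'$ (and not in $t$), confining the $q$-dependence to $c'$ and to the size hypothesis $N\gg_{C,\deg P_m}(q/\delta)^{O(1)}$, and checking that the $U^2$-inverse and Weyl inputs tolerate the arithmetic-progression weights coming from Corollary~\ref{cor3.8}.
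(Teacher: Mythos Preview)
Your inductive scheme matches the paper's: reduce the $\ell$-hypothesis to a $\Lambda$-average at level $\ell-1$, apply Corollary~\ref{cor3.8}, iterate Lemma~\ref{lem3.9} down to $U^2$, invert, and invoke the case $\ell-1$. Two steps, however, fail as written. First, your reduction via $1_{c\Z}=|c|^{-1}\sum_je(j\cdot/c)$ loses a factor of $|c|$ in $\delta$. Since $|c|$ is not bounded (in the applications it is $\asymp q^{O(1)}$), this loss propagates through the induction and forces $t\ll(|c|/\delta)^{O(1)}$ rather than the required $t\ll_{C,\deg P_m}\delta^{-O(1)}$; the stray powers of $|c|$ cannot be ``absorbed into $c'$'' because $t$ and $c'$ are separate integers and $c'$ must not depend on $\delta$. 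The paper avoids this by instead splitting the $y$-average modulo $|c|$: write $y=cz+h$, pigeonhole in $h$ (no loss, since it is an average), and shift $x\mapsto x+(P_\ell(cz+h)-P_\ell(h))/c$. This keeps the bound $\gg\delta$ intact while moving all $c$-dependence into the new polynomials $P_i'$, which acquire $(O(C),q|c|)$-coefficients; the powers of $c$ then legitimately land in $c'$ via the leading coefficient $c^{\deg P_m}c_m$ of $P_m'$.

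Second, your base case does not go through. The factorisation $\widehat{F_1}(\gamma)=\widehat{g_0}(\gamma)\Phi(\gamma)$ is correct, but it does \emph{not} survive differencing: $\Delta_hF_1$ is a pointwise product of two shifts of $F_1$, so its Fourier transform is a convolution, not a product of a $\widehat{g_0}$-factor with finite-differenced $\Phi$'s. There is consequently no way to bound $\|F_1\|_{U^s}$ for $s>2$ by $\sup_\gamma|\Phi(\gamma)|$ as you claim, and degree-lowering is unavailable at level $\ell=1$. The paper's base case (Lemma~\ref{lem8.1}) bypasses $F_1$ entirely: from the two-function average $\frac{1}{N/c}\sum_x\E_zg_0'(x)g_1'(x+P_1'(z))e(\tilde Q(z))$ one applies van der Corput (Lemma~\ref{lem4.2}) $d=\deg P_1$ times in $z$. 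Since $P_1'$ has degree $d$, after $d$ differencings its contribution is constant in $z$ and all $g_0',g_1'$ factors are absorbed into the $x$-weight, leaving a pure exponential sum $\E_ze(Q(\underline a,z))$ whose leading coefficient is a multiple of $\alpha_mc_m$. Weyl's inequality followed by $d$ applications of Lemma~\ref{lem7.2} then gives the conclusion directly.
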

The proof of Lemma~\ref{lem3.10} for each $\ell$ is itself part of the inductive proof of Lemma~\ref{lem3.9}. We first prove Lemma~\ref{lem3.10} in the $\ell=2$ case, then show that Lemma~\ref{lem3.9} for $\ell\geq 2$ follows from Lemma~\ref{lem3.10} for $\ell$, and finally show that Lemma~\ref{lem3.10} for $\ell\geq 3$ follows from Lemmas~\ref{lem3.9} and~\ref{lem3.10} for $\ell-1$. Taken together, this shows that Lemmas~\ref{lem3.9} and~\ref{lem3.10} hold for each $\ell$.

As promised in the introduction, we now discuss why we must assume that $P_1,\dots,P_m$ have distinct degrees in Theorem~\ref{thm1.1}, instead of just requiring them to be linearly independent over $\mathbb{Q}$ as in~\cite{Peluse19}. The proof of the degree-lowering result in~\cite{Peluse19} is made simpler by the fact that there is only ever one ``major arc'' in the finite field setting (the trivial character) and a character of $\F_p$ is either equal to the trivial character or it is not. In contrast, the notion of major arc in the integer setting is more flexible. For the proof of Lemma~\ref{lem3.9}, we need the full strength of the conclusion of Lemma~\ref{lem3.10}: that $\alpha_m$ is within some factor of $M^{-\deg{P_m}}$ of a rational with small denominator. But if we relax the hypotheses of Lemma~\ref{lem3.10} to allow $P_1,\dots,P_m$ to be merely linearly independent, then one can only show that $\alpha_m$ is major arc in a quantitatively weaker sense: that $\alpha_m$ is within some factor of $M^{-\deg{P_1}}$ of a rational with small denominator. This is not strong enough to prove a corresponding degree-lowering result. Of course, if $P_1,\dots,P_m$ are not even linearly independent, the degree lowering phenomenon certainly does not occur even in the finite field setting.

For the final stage of the proof of Theorem~\ref{thm3.2}, we combine Corollary~\ref{cor3.8} with repeated applications of Lemma~\ref{lem2.4} and Lemma~\ref{lem3.9} for each $\ell\leq m$ to show that, when $\Lambda^{N,M}_{P_1,\dots,P_m}(f_0,\dots,f_m)$ is large, averages of related multilinear averages with successive $f_i$'s replaced by characters are also large. This is captured in the following lemma.

\begin{lemma}\label{lem3.11}
Let $N,M>0$, $q\in\N$, $2\leq\ell\leq m$, $P_1,\dots,P_m\in\Z[y]$ be polynomials such that $P_1,\dots,P_\ell$ have $(C,q)$-coefficients, $\deg{P_1}<\dots<\deg{P_m}$, and $P_\ell$ has leading coefficient $c_\ell$, $f_0,\dots,f_\ell:\Z\to\C$ be $1$-bounded functions supported on the interval $[N]$, and $\psi_{\ell+1},\dots,\psi_{m}:\Z\to S^1$ be characters. If $1/C\leq q^{\deg{P_\ell}-1}M^{\deg{P_\ell}}/N\leq C$ and
\[
\left|\Lambda_{P_1,\dots,P_m}^{N,M}(f_0,\dots,f_\ell;\psi_{\ell+1},\dots,\psi_m)\right|\geq\delta,
\]
then
\[
\E_{\substack{u,h=0,\dots,|c'|-1 \\ 0\leq w<(N/|c'|)/C'N'}}\left|\Lambda_{P_1^h,\dots,P_m^h}^{C'N',M'}(f_0^{u,h,w},\dots,f_{\ell-1}^{u,h,w};\psi_{\ell,u},\psi_{\ell+1},\dots,\psi_m)\right|\gg_{C,\deg{P_\ell}}\delta^{O_{\deg{P_\ell}}(1)}
\]
for some characters $\psi_{\ell,u}:\Z\to S^1$, where $C'\asymp_{\deg{P_\ell}}C$, $c':=(\deg{P_\ell})!c_\ell$, $M':=M/|c'|$, $N':=(M')^{\deg{P_{\ell-1}}}(q|c'|)^{\deg{P_{\ell-1}}-1}$,
\[
P_i^h(z):=\begin{cases}
\f{P_i(c'z+h)-P_i(h)}{c'} & i=1,\dots,\ell-1 \\
P_i(c'z+h)-P_i(h) & i=\ell,\dots,m
\end{cases},
\]
and
\[
f_i^{u,h,w}(x):=\begin{cases}
T_{c'C'N'w}T_{-P_\ell(h)}T_{-u}(f_0\psi_{\ell,u})(c'x)\cdot 1_{[C'N']}(x) & i=0 \\
T_{c'C'N'w}T_{P_i(h)-P_\ell(h)}T_{-u}f_i(c'x)\cdot 1_{[C'N']}(x) & i=1,\dots,\ell-1
\end{cases},
\]
provided that $N\gg_{C,\deg{P_\ell}}(q/\delta)^{O_{\deg{P_\ell}}(1)}$.
\end{lemma}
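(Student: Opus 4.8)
The plan is to run the degree-lowering pipeline once: pass from control of $\Lambda^{N,M}_{P_1,\dots,P_m}(f_0,\dots,f_\ell;\psi_{\ell+1},\dots,\psi_m)$ to a linear-phase correlation for the dual function $F_\ell$ on each residue class modulo $c':=(\deg{P_\ell})!c_\ell$, then recognize that correlation---after a chain of changes of variables---as the asserted average of the $\Lambda^{C'N',M'}_{P_1^h,\dots,P_m^h}(f_0^{u,h,w},\dots;\psi_{\ell,u},\dots)$. Fix $\delta':=c_{C,\deg{P_\ell}}\delta^{O_{\deg{P_\ell}}(1)}$, small enough for all invocations below. Since $1/C\le q^{\deg{P_\ell}-1}M^{\deg{P_\ell}}/N\le C$, Corollary~\ref{cor3.8} gives an $s\ll_{\deg{P_\ell}}1$ with $\|F_\ell\|_{U^s_{c'[\delta'M^{\deg{P_\ell}}]}([O_{\deg{P_\ell}}(CN)])}\gg_{C,\deg{P_\ell}}\delta^{O_{\deg{P_\ell}}(1)}$. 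Because $P_\ell$ has $(C,q)$-coefficients, $|c'|\asymp_{C,\deg{P_\ell}}q^{\deg{P_\ell}-1}$, so $1/C''\le|c'|M^{\deg{P_\ell}}/N\le C''$ and $1/C''\le|c_\ell/c'|\le C''$ for some $C''\asymp_{\deg{P_\ell}}C$, and Lemma~\ref{lem3.9} applies with $c'$ in place of its ``$c$''. Assuming $s\ge3$ (if $s=2$, skip to the next step), $s-2\ll_{\deg{P_\ell}}1$ applications of Lemma~\ref{lem3.9} yield
\[
\|F_\ell\|_{U^2_{c'[\delta'M^{\deg{P_\ell}}]}([O_{\deg{P_\ell}}(CN)])}\gg_{C,\deg{P_\ell}}\delta^{O_{\deg{P_\ell}}(1)},
\]
the hypothesis $N\gg(q/\delta\delta')^{O(1)}$ at each step following from $N\gg_{C,\deg{P_\ell}}(q/\delta)^{O_{\deg{P_\ell}}(1)}$ and the choice of $\delta'$.

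Every differencing parameter in this box norm is a multiple of $c'$, so writing the base variable as $c'z+u$ with $u\in\{0,\dots,|c'|-1\}$ splits it as
\[
\|F_\ell\|^4_{U^2_{c'[\delta'M^{\deg{P_\ell}}]}([O_{\deg{P_\ell}}(CN)])}\asymp_{C,\deg{P_\ell}}\E_u\|G_u\|^4_{U^2_{[\delta'M^{\deg{P_\ell}}]}(I_u)},
\]
where $G_u(z):=F_\ell(c'z+u)$ is $1$-bounded and supported on an interval $I_u$ of length $\asymp_{C,\deg{P_\ell}}M^{\deg{P_\ell}}$ (using $|c'|M^{\deg{P_\ell}}\asymp N$). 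Since $t\ge t^4$ on $[0,1]$, a $\gg_{C,\deg{P_\ell}}\delta^{O_{\deg{P_\ell}}(1)}$-dense set of $u$ has $\|G_u\|_{U^2_{[\delta'M^{\deg{P_\ell}}]}(I_u)}\gg_{C,\deg{P_\ell}}\delta^{O_{\deg{P_\ell}}(1)}$; for each such $u$, translating $I_u$ to $[L]$ with $L\asymp M^{\deg{P_\ell}}$ (so $[\delta'M^{\deg{P_\ell}}]=[\delta''L]$ with $\delta''\asymp\delta'$), Lemma~\ref{lem2.4} gives a $\beta_u\in\T$ with $|\E_z F_\ell(c'z+u)e(\beta_u z)|\gg_{C,\deg{P_\ell}}\delta^{O_{\deg{P_\ell}}(1)}$.

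It remains to expand $F_\ell$ and unwind. From its definition and the substitution $x\mapsto x-P_\ell(y)$, for any finitely supported $g$,
\[
\sum_v F_\ell(v)g(v)=\sum_x\E_{y\in[M]}g(x+P_\ell(y))f_0(x)\prod_{i=1}^{\ell-1}f_i(x+P_i(y))\prod_{j=\ell+1}^m\psi_j(P_j(y)),
\]
and taking $g(v):=|I_u|^{-1}1_{v\equiv u\,(c')}e(\beta_u(v-u)/c')$ makes the left side $\E_z F_\ell(c'z+u)e(\beta_u z)$. On the right, splitting $y=c'y'+h$ with $h\in\{0,\dots,|c'|-1\}$ makes the congruence $x+P_\ell(y)\equiv u\,(c')$ depend only on $h$, pinning $x$ to a fixed class modulo $|c'|$; using $P_i(c'y'+h)=c'P_i^h(y')+P_i(h)$ for $i\le\ell-1$ and $P_i(c'y'+h)=P_i^h(y')+P_i(h)$ for $i\ge\ell$, the argument of each $f_i$ becomes $c'$ times (reduced base variable $+\,P_i^h(y')$) plus a constant depending only on $(u,h)$, and $e(\beta_u(v-u)/c')$ factors as $e(\beta_u x')\,\psi_{\ell,u}(P_\ell^h(y'))$ times a unimodular $(u,h)$-constant, with $x'$ the reduced base variable and $\psi_{\ell,u}(w):=e((\beta_u/c')w)$. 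Tiling the range of $x'$ into blocks of length $C'N'$ indexed by $w$, translating each to $[C'N']$, and absorbing $e(\beta_u x')$, the cutoffs $1_{[C'N']}$, and the unimodular constants into the functions $f_0^{u,h,w},\dots,f_{\ell-1}^{u,h,w}$ and the character $\psi_{\ell,u}$ of the statement, one identifies $\E_z F_\ell(c'z+u)e(\beta_u z)$, up to a bounded factor, with $\E_{h,w}$ of unimodular multiples of $\Lambda^{C'N',M'}_{P_1^h,\dots,P_m^h}(f_0^{u,h,w},\dots,f_{\ell-1}^{u,h,w};\psi_{\ell,u},\psi_{\ell+1},\dots,\psi_m)$; the triangle inequality then gives $\E_{h,w}|\Lambda^{C'N',M'}_{P_1^h,\dots,P_m^h}(\cdots)|\gg_{C,\deg{P_\ell}}\delta^{O_{\deg{P_\ell}}(1)}$ for every good $u$, and averaging over the $\gg\delta^{O(1)}$-dense set of good $u$ finishes the proof. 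I expect this last paragraph to be the main obstacle: the pieces are individually routine, but organizing the changes of variables, the block tiling, the phase factorization, and the support/cutoff bookkeeping so that the explicit $f_i^{u,h,w}$, $P_i^h$, and the parameters $C',M',N'$ (chosen precisely so the progressions $P_i^h(y')$, $y'\in[M']$, sit at scale $N'$) emerge exactly as stated is where all the care lies. The genuinely hard analytic content---concatenation and degree lowering---is entirely packaged in Corollary~\ref{cor3.8} and Lemma~\ref{lem3.9}.
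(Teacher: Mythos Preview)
Your proposal is correct and follows essentially the same approach as the paper: apply Corollary~\ref{cor3.8}, iterate Lemma~\ref{lem3.9} down to $U^2$, split the $U^2$-norm over residues modulo $|c'|$, apply the $U^2$-inverse theorem (Lemma~\ref{lem2.4}) on each residue class, then expand $F_\ell$, split $y$ into residues modulo $|c'|$, change variables, and tile in $x$ by intervals of length $C'N'$. The only cosmetic difference is that you phrase the unwinding via a test-function identity $\sum_v F_\ell(v)g(v)$, whereas the paper performs the same change of variables directly inside the expanded sum; both produce the same bookkeeping and the same parameters $c',M',N',P_i^h,f_i^{u,h,w}$.
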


Note that if $P_1,\dots,P_{\ell-1}\in\Z[y]$ have $(C,q)$-coefficients, then $P_1^h,\dots,P_{\ell-1}^h\in\Z[y]$, as defined in Lemma~\ref{lem3.11}, have $(O_{\deg{P_\ell}}(C),c'q)$-coefficients for each $h\in[c']$. To prove Theorem~\ref{thm3.3}, we repeatedly apply Lemma~\ref{lem3.11} and van der Corput's inequality to deduce that if $|\Lambda^{N,M}_{P_1,\dots,P_m}(f_0,\dots,f_m)|\geq\delta$, then an average of multilinear averages of the form $\Lambda_{Q_1,\dots,Q_m}^{N',M'}(g_0,g_1;\psi_2,\dots,\psi_m)$ is large as well, where $g_1$ equals various shifts and scalings of $f_1$ and $\deg{Q_i}=\deg{P_i}-(\deg{P_1}-1)$. It is not hard to show that, usually, the phases $\psi_2,\dots,\psi_m$ must all be major arc, so that after passing to sufficiently short subprogressions modulo an integer of the form $q'q^{b}$ for some $q'\ll_{C,\deg{P_m}}\delta^{-O_{\deg{P_m}}(1)}$ and $b\ll_{\deg{P_1}}1$ and unraveling the definition of $g_1$, we are left with an average of the form appearing in Theorem~\ref{thm3.3}.

\section{Control by an average of Gowers box norms}\label{sec4}

As in previous work on the polynomial Szemer\'edi theorem, we will frequently use van der Corput's inequality, which we now recall. See, for example,~\cite{Montgomery94}.
\begin{lemma}[van der Corput's inequality]\label{lem4.1}
Let $M>H>0$ and $g:\Z\to\C$. We have
\[
\left|\E_{y\in[M]}g(y)\right|^2\leq\f{M+H}{M}\sum_{h\in\Z}\mu_{H}(h)\left[\f{1}{M}\sum_{y\in[M]\cap([M]-h)}\overline{g(y+h)}g(y)\right].
\]
\end{lemma}

As was mentioned in Section~\ref{sec3}, we will use repeated applications of the Cauchy--Schwarz and van der Corput inequalities to control $\Lambda_{P_1,\dots,P_m}^{N,M}$ by an average of Gowers box norms of the form appearing in Proposition~\ref{prop3.4}. To do this, we follow Bergelson and Leibman's PET induction scheme~\cite{BergelsonLeibman96}. Tao and Ziegler~\cite{TaoZiegler08,TaoZiegler18} have also used PET induction to prove that counts of polynomial progressions are controlled by averages of Gowers box norms in their work on polynomial progressions in the primes. Our argument differs in that we care about the precise structure of the average of Gowers box norms so that we can apply Theorem~\ref{thm3.5}. Thus, we will have to make more careful choices at certain points of the PET induction argument, and also keep track of more information.

We first record, for the sake of convenience, the most common way in which the Cauchy--Schwarz and van der Corput inequalities are combined in this section. Like Lemmas~\ref{lem4.4},~\ref{lem4.5}, and~\ref{lem4.6} to follow, the statement of Lemma~\ref{lem4.2} is long because of the amount of information we will want to keep track of, but its proof is short.
\begin{lemma}\label{lem4.2}
Let $N,M>0$, $I$ and $A\subset\Z^n$ be finite sets, $i_0\in I$, $\mu:\Z^n\to[0,\infty)$ be supported on $A$ with $\|\mu\|_{\ell^1}\leq 1$, $Q_i\in\Z[a_1,\dots,a_n][y]$ for each $i\in I$, and $f_{\ul{a}},f_i:\Z\to\C$ be $1$-bounded functions supported on the interval $[N]$ for each $\ul{a}\in A$ and $i\in I$. Assume that
\begin{equation}\label{eq4.1}
\min_{i\in I}\max_{\ul{a}\in A}\max_{y\in[M]}|Q_i(\ul{a},y)|\leq CN.
\end{equation}
If
\begin{equation}\label{eq4.2}
\E_{\ul{a}\in A}^\mu\left|\f{1}{N}\sum_{x\in\Z}\E_{y\in[M]}f_{\ul{a}}(x)\prod_{i\in I}f_i(x+Q_i(\ul{a},y))\right|^2\geq\gamma,
\end{equation}
then for all $\gamma'\ll_C\gamma$, we have
\[
\E_{\ul{a}'\in A'}^{\mu'}\f{1}{N}\sum_{x\in\Z}\E_{y\in[M]}f_{i_0}(x)\prod_{i'\in I'}g_{i'}(x+Q_{i'}'(\ul{a}',y))\gg \gamma,
\]
where
\begin{enumerate}
\item $I'=(I\times\{0,1\})\sm\{(i_0,0)\}$,
\item $A'=A\times((-\gamma' M,\gamma'M)\cap\Z)$,
\item $\mu'(\ul{a}')=\mu(a_1,\dots,a_n)\mu_{\gamma'M}(a_{n+1})$,
\item for each $i'=(i,\epsilon)\in I'$, we have
\[
Q_{i'}'(\ul{a}',y)=
Q_i(a_1,\dots,a_n,y+\epsilon a_{n+1})-Q_{i_0}(a_1,\dots,a_n,y),
\]
\item and for each $i'=(i,\epsilon)\in I'$, we have
\[
g_{i'}=\begin{cases}
f_{i} & \epsilon=0 \\
\overline{f_{i}} & \epsilon=1
\end{cases}.
\]
\end{enumerate}
\end{lemma}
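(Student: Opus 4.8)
The plan is to prove Lemma~\ref{lem4.2} by a direct combination of the Cauchy--Schwarz inequality (to insert the outer average over $\ul{a}$ into the inner sum over $x$) and van der Corput's inequality (to handle the average over $y\in[M]$), exactly as the prose promises. First I would rewrite the left-hand side of~\eqref{eq4.2} by expanding the square: $\left|\frac{1}{N}\sum_x\E_{y\in[M]}f_{\ul{a}}(x)\prod_{i\in I}f_i(x+Q_i(\ul{a},y))\right|^2$. Since each $f_{\ul{a}}$ is $1$-bounded and supported on $[N]$, and the whole expression is a product of a function of $x$ alone ($f_{\ul{a}}(x)$) times the average over $y$, I would pull $f_{\ul{a}}(x)$ out and apply Cauchy--Schwarz in $x$ against the indicator $1_{[N]}(x)$ (or rather against $|f_{\ul{a}}(x)|\le 1_{[N]}(x)$), bounding the square by $\frac{1}{N}\sum_x\left|\E_{y\in[M]}\prod_{i\in I}f_i(x+Q_i(\ul{a},y))\right|^2$. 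This removes the dependence on $f_{\ul{a}}$ entirely and is where the role of the distinguished index $i_0$ will enter: after this step the summand no longer involves $f_{i_0}$ in the "outside" position, so we are free to choose $i_0$ later.

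Next I would apply van der Corput's inequality (Lemma~\ref{lem4.1}) to the inner average $\E_{y\in[M]}\prod_{i\in I}f_i(x+Q_i(\ul{a},y))$, with shift parameter $H=\gamma'M$ for a suitably small $\gamma'\ll_C\gamma$. This produces, up to the harmless factor $(M+H)/M\le 2$, an average over $h$ weighted by $\mu_{\gamma'M}(h)$ of the "differenced" quantity $\frac{1}{M}\sum_{y}\overline{\prod_i f_i(x+Q_i(\ul{a},y+h))}\prod_i f_i(x+Q_i(\ul{a},y))$. Averaging back over $\ul{a}$ with weight $\mu$, reindexing $x\mapsto x+Q_{i_0}(\ul{a},y)$ (this is the point where $i_0$ is used: we shift so that the $i_0$ term becomes $f_{i_0}(x)$ in the $\epsilon=0$ branch and the surviving $\epsilon=1$ copy of $f_{i_0}$ becomes $\overline{f_{i_0}}(x+Q_{i_0}(\ul{a},y+h)-Q_{i_0}(\ul{a},y))$), and collecting the new index set $I'=(I\times\{0,1\})\sm\{(i_0,0)\}$, new parameter set $A'=A\times((-\gamma'M,\gamma'M)\cap\Z)$, new weight $\mu'(\ul a')=\mu(a_1,\dots,a_n)\mu_{\gamma'M}(a_{n+1})$, new polynomials $Q'_{i'}(\ul a',y)=Q_i(a_1,\dots,a_n,y+\epsilon a_{n+1})-Q_{i_0}(a_1,\dots,a_n,y)$, and new functions $g_{i'}=f_i$ or $\overline{f_i}$ according to $\epsilon$, yields precisely the claimed expression. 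One should also pass from the absolute value to the expression itself by noting that if $\left|\E^{\mu'}_{\ul a'\in A'}\cdots\right|\gg\gamma$ then replacing each $g_{i'}$ and the outer expression by a unimodular constant (absorbed into the statement, since the $g_{i'}$ are only required $1$-bounded) gives the non-absolute-value form; more simply, the quantity inside is already an average whose real part is $\gg\gamma$ after the usual conjugation-symmetry argument, so the displayed inequality follows.

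The only genuine bookkeeping point — and the step I expect to take the most care — is verifying that the support hypothesis~\eqref{eq4.1} is exactly what licenses the shift $x\mapsto x+Q_{i_0}(\ul a,y)$ to keep everything supported on an interval comparable to $[N]$, so that the normalizations $\frac{1}{N}\sum_x$ on both sides match up and no boundary loss worse than a constant $C$ occurs; this is why the hypothesis takes a $\min_{i\in I}$ over a $\max$, since we only need \emph{one} admissible choice of $i_0$, namely one achieving the minimum in~\eqref{eq4.1}. The estimate $\|\mu_{\gamma'M}\|_{\ell^1}=1$ guarantees $\|\mu'\|_{\ell^1}\le 1$, and $\gamma'\ll_C\gamma$ is used to absorb the $(M+H)/M$ factor and any $O(1)$ losses from Cauchy--Schwarz and from the $y\in[M]\cap([M]-h)$ vs.\ $y\in[M]$ discrepancy in van der Corput. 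Everything else is a routine expansion, so the proof is short despite the lengthy statement.
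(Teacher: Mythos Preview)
Your approach matches the paper's: Cauchy--Schwarz in $x$ to remove $f_{\ul a}$, van der Corput in $y$, then the shift $x\mapsto x-Q_{i_0}(\ul a,y)$. However, you have the role of~\eqref{eq4.1} backwards, and this is worth correcting. The shift in $x$ is a bijection of $\Z$ and requires no support control at all; the sum $\frac{1}{N}\sum_{x\in\Z}$ is over all integers both before and after, so nothing is lost there. Where~\eqref{eq4.1} is actually needed is precisely at the step you pass over lightly: extending the van der Corput range from $y\in[M]\cap([M]-h)$ to $y\in[M]$. The error per $(x,\ul a,h)$ is $O(\gamma')$, but you must then sum over $x\in\Z$ with weight $1/N$, and without a bound on the $x$-support that sum is infinite. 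Hypothesis~\eqref{eq4.1} says that some $Q_{i^*}$ (the one realizing the $\min$) satisfies $|Q_{i^*}(\ul a,y)|\le CN$ for all $\ul a\in A$, $y\in[M]$; since $f_{i^*}$ is supported on $[N]$, the product $g_{x,\ul a}(y)=\prod_i f_i(x+Q_i(\ul a,y))$ vanishes for all $y\in[M]$ unless $x$ lies in an interval of length $O(CN)$. Only then does the total extension error become $O(C\gamma')$, which $\gamma'\ll_C\gamma$ absorbs.

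Relatedly, $i_0$ is \emph{given arbitrarily} in the hypotheses and need not be the minimizer in~\eqref{eq4.1}; the proof works for any $i_0$ because the shift is free. The $\min$ in~\eqref{eq4.1} is purely an existential condition (``some $Q_i$ is bounded'') used to control the $x$-support, independent of which index you choose to pivot on. Finally, your remark about ``passing from the absolute value'' is unnecessary: the conclusion of the lemma carries no absolute value, and the expression you obtain after van der Corput and the shift is already $\gg\gamma$ without further adjustment.
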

\begin{proof}
For each $\ul{a}\in A$, we first apply the Cauchy--Schwarz inequality in the $x$ variable and use that $f_{\ul{a}}$ is $1$-bounded and supported on $[N]$ to bound the left-hand side of~\eqref{eq4.2} by
\[
\E_{\ul{a}\in A}^\mu\f{1}{N}\sum_{x\in\Z}\left|\E_{y\in[M]}\prod_{i\in I}f_i(x+Q_i(\ul{a},y))\right|^2.
\]
Applying van der Corput's inequality with $g_{x,\ul{a}}(y):=\prod_{i\in I}f_i(x+Q_{i}(\ul{a},y))$ and $H=\gamma' M$ for $0<\gamma'<1$ bounds the above by
\[
\ll\E_{\ul{a}\in A}^\mu\f{1}{N}\sum_{x\in\Z}\left[\sum_{a_{n+1}\in\Z}\mu_{\gamma' M}(a_{n+1})\f{1}{M}\sum_{y\in[M]\cap([M]-a_{n+1})}\overline{g_{x,\ul{a}}(y+a_{n+1})}g_{x,\ul{a}}(y)\right],
\]
where we have used the fact that $M+H=(1+\gamma')M\ll M$.

Now, note that $g_{x,\ul{a}}$ is $1$-bounded because the $f_i$'s are $1$-bounded and, for each $\ul{a}\in A$, $g_{x,\ul{a}}$ is identically zero for all $x\in\Z$ outside of a set of size $\ll CN$ by the assumption~\eqref{eq4.1} since each $f_i$ is supported on the interval $[N]$. Thus, recalling that $\mu_{\gamma' M}$ is supported on $(-\gamma' M,\gamma'M)$ and $\|\mu_{\gamma' M}\|_{\ell^1}\leq 1$, for each $a_{n+1}\in(-\gamma' M,\gamma' M)\cap\Z$ we may extend the sum over $y\in[M]\cap([M]-a_{n+1})$ to a sum over all of $[M]$ at the cost of an error of $O(C\gamma')$. Thus, as long as $\gamma'\ll C\gamma$, we have
\[
\E_{\ul{a}\in A'}^{\mu'}\f{1}{N}\sum_{x\in\Z}\E_{y\in[M]}\prod_{i\in I}\overline{f_i(x+Q_i(a_1,\dots,a_n,y+a_{n+1}))}f_i(x+Q_i(a_1,\dots,a_n,y))\gg\gamma.
\]
To conclude, we make the change of variables $x\mapsto x-Q_{i_0}(\ul{a},y)$.
\end{proof}

To describe the PET induction scheme, we need the notion of a weight vector. This is the $1$-dimensional case of the weight matrix of Bergelson and Leibman~\cite{BergelsonLeibman96}, who also consider more general multidimensional polynomial configurations.
\begin{definition}\label{def4.3}
Let $n\in\N$, $I$ be a finite set, and $Q_i\in\Z[a_1,\dots,a_n][y]$ for each $i\in I$. Set $\mathcal{Q}:=(Q_i)_{i\in I}$, and let $L(Q_i)$ denote the leading coefficient of $Q_i$ for each $i\in I$. The \textit{weight vector} of $\mathcal{Q}$ is defined to be
\[
V(\mathcal{Q}):=(\#\{L(Q_i):\deg{Q_i}=j,i\in I\})_{j=1}^\infty.
\]
We also define the \textit{degree} of $\Q$ to be $\max_{i\in I}\deg{Q_i}$.
\end{definition}

Clearly, the weight vector of any finite set of polynomials has only finitely many nonzero entries. One can define an ordering $\prec$ on the set of weight vectors by saying that $V(\mathcal{Q})\prec V(\mathcal{Q}')$ if there exists a $d\in\N$ such that $\#\{L(Q):\deg{Q}=d,Q\in\mathcal{Q}\}<\#\{L(Q'):\deg{Q'}=d,Q'\in\mathcal{Q}'\}$ and $\#\{L(Q):\deg{Q}=e,Q\in\mathcal{Q}\}=\#\{L(Q'):\deg{Q'}=e,Q'\in\mathcal{Q}'\}$ for all $e>d$. It is easy to see that $\prec$ is a well-ordering on the set of weight vectors. PET induction is simply an induction on the weight vector of collections of polynomials using the ordering $\prec$, with collections of linear polynomials forming the base case of the induction. This method is based on the fact that one can use the Cauchy--Schwarz and van der Corput inequalities to control an average over the polynomial configuration $(x+Q(y))_{Q\in\mathcal{Q}\cup\{0\}}$ by an average over a polynomial configuration $(x+Q'(y))_{Q'\in\mathcal{Q}'\cup\{0\}}$ with $V(\mathcal{Q}')\prec V(\mathcal{Q})$.

As was mentioned in Section~\ref{sec3}, if one can control $\Lambda_{P_1,\dots,P_\ell}^{N,M}(f_1,\dots,f_\ell)$ by an average of $U^s$-norms, then one can also control $\Lambda_{P_1,\dots,P_m}^{N,M}(f_1,\dots,f_\ell;\psi_{\ell+1},\dots,\psi_{m})$ by an average of $U^{s+1}$-norms for any characters $\psi_{\ell+1},\dots,\psi_m:\Z\to S^1$ by using the Cauchy--Schwarz inequality. The first goal of this section is to control $\Lambda_{P_1,\dots,P_\ell}^{N,M}(f_0,\dots,f_\ell)$ in terms of an average of averages over the linear configuration $(x+p(\ul{a})y)_{p\in\A_{\ell-1}\cup\{0\}}$, with $\A_{\ell-1}$ as in Proposition~\ref{prop3.4}. In order to verify that the linear configuration we get at the end of the PET induction argument has this particular form, it will be necessary to keep track of additional details besides the weight vector. In particular, we will keep track of the set of leading coefficients of polynomials of highest degree $d$ and the coefficients of their degree $d-1$ terms.

We will now state three basic lemmas on controlling averages over general progressions $(x+Q(y))_{Q\in\mathcal{Q}\cup\{0\}}$, which apply in different situations depending on the weight vector of $\mathcal{Q}$. These lemmas have long statements, but each proof is just an application of the Cauchy--Schwarz and van der Corput inequalities followed by a change of variables.

\begin{lemma}\label{lem4.4}
Let $N,M>0$, $I$ and $A\subset\Z^n$ be finite sets, $i_0\in I$, $\mu:\Z^n\to[0,\infty)$ be supported on $A$ with $\|\mu\|_{\ell^1}\leq 1$ and $\|\mu\|_{\ell^2}^2\leq C\f{1}{|A|}$, $Q_i\in\Z[a_1,\dots,a_n][y]$ for each $i\in I$, and $f_{\ul{a}},f_i:\Z\to\C$ be $1$-bounded functions supported on the interval $[N]$ for each $\ul{a}\in A$ and $i\in I$. Set $\Q:=(Q_i)_{i\in I}$ and let $d$ be the degree of $\Q$, $r= V(\Q)_d$, $\mathcal{C}$ denote the set of leading coefficients of degree $d$ polynomials in $\Q$, $c_{i_0}$ be the leading coefficient of $Q_{i_0}$, and $d'$ be the smallest index such that $V(\mathcal{Q})_{d'}\neq 0$. Assume further that
\begin{enumerate}
\item $1\leq d'<d$,
\item there exists an $s\in\N$ such that, for all $c\in\mathcal{C}$, there are $s$ degree $d$ polynomials $Q$ in $\Q$ with leading coefficient $c$, each having the form
\[
c(a_1,\dots,a_n)y^{d}+c'_{Q}(a_1,\dots,a_n)y^{d-1}+\text{lower degree terms},
\]
where the coefficients $c_{Q}'(a_1,\dots,a_n)$ are all distinct,
\item $\deg{Q_{i_0}}=d'$,
\item and
\[
\max_{i\in I}\max_{\ul{a}\in A}\max_{y\in[M]}|Q_i(\ul{a},y)|\leq C'N.
\]
\end{enumerate}
If
\begin{equation}\label{eq4.3}
\left|\E_{\ul{a}\in A}^\mu\f{1}{N}\sum_{x\in\Z}\E_{y\in[M]}f_{\ul{a}}(x)\prod_{i\in I}f_i(x+Q_i(\ul{a},y))\right|\geq\gamma,
\end{equation}
then for all $\gamma'\ll_{C,C'}\gamma^2$, we have
\[
\E_{\ul{a}'\in A'}^{\mu'}\f{1}{N}\sum_{x\in\Z}\E_{y\in[M]}f_{i_0}(x)\prod_{i'\in I'}g_{i'}(x+Q_{i'}'(\ul{a},y))\gg_C\gamma^2,
\]
where
\begin{enumerate}
\item $I'=(I\times\{0,1\})\sm\{(i_0,0)\}$,
\item $A'=A\times((-\gamma' M,\gamma'M)\cap\Z)$,
\item $\mu'(\ul{a}')=\f{1_{A}(a_1,\dots,a_n)}{|A|}\mu_{\gamma'M}(a_{n+1})$,
\item for $i'\in I'$, we have $Q'_{i'}(\ul{a}',y)=Q_i(\ul{a},y+\epsilon a_{n+1})-Q_{i_0}(\ul{a},y)$,
\item the set of leading coefficients of degree $d$ polynomials in $\Q':=(Q'_{i'})_{i'\in I'}$ is $\mathcal{C}$,
\item for all $c\in\mathcal{C}$, there are $2s$ degree $d$ polynomials in $\Q'$ with leading coefficient $c$, and for each $i'=(i,\epsilon)\in I'$ with $\deg{Q_{i}}=d$ and $Q_i$ having leading coefficient $c$, the polynomial $Q'_{i'}$ has the form
\begin{align*}
c(a_1,\dots,a_n)y^d+[c'_{Q_{i}}(a_1,\dots,a_n)+\epsilon d c(a_1,\dots,a_n)a_{n+1}-&1_{d'=d-1}c_{i_0}(a_1,\dots,a_n)]y^{d-1} \\
&+\text{lower degree terms},
\end{align*}
so that the coefficients of the degree $d-1$ terms of these polynomials are still distinct,
\item we have
\[
V(\Q')=(n_1,\dots,n_{d'-1},V(\Q)_{d'}-1,V(\Q)_{d'+1},\dots,V(\Q)_d,0,\dots),
\]
where $n_1+\dots+n_{d'-1}<|I'|=2|I|-1$,
\item and, for $i'=(i,\epsilon)\in I'$, we have
\[
g_{i'}=\begin{cases}
f_i & \epsilon=0 \\
\overline{f_i} & \epsilon=1
\end{cases}.
\]
\end{enumerate}
\end{lemma}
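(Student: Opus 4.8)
\textbf{Overall approach.} This is a concrete instance of the PET induction step: apply the Cauchy--Schwarz inequality in the $x$-variable to remove $f_{\ul a}$, then apply van der Corput's inequality in the $y$-variable with window $H = \gamma' M$, and finally change variables $x\mapsto x-Q_{i_0}(\ul a,y)$ to re-center at the index $i_0$. In fact, Lemma~\ref{lem4.2} already packages precisely this sequence of manipulations, so the plan is to \emph{deduce Lemma~\ref{lem4.4} from Lemma~\ref{lem4.2}} and then verify the bookkeeping claims (5)--(7) about the new weight vector by direct computation. The hypothesis $\|\mu\|_{\ell^2}^2\leq C/|A|$ is what lets us replace the measure $\mu'$ produced by Lemma~\ref{lem4.2} — which would be $\mu(a_1,\dots,a_n)\mu_{\gamma'M}(a_{n+1})$ — by the flat measure $1_A(a_1,\dots,a_n)/|A|$ times $\mu_{\gamma'M}(a_{n+1})$ at the cost of another Cauchy--Schwarz.

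\textbf{Key steps, in order.}
\begin{enumerate}
\item Square \eqref{eq4.3} to get $\left|\E_{\ul a\in A}^\mu \frac1N\sum_x\E_{y\in[M]}f_{\ul a}(x)\prod_{i\in I}f_i(x+Q_i(\ul a,y))\right|^2\geq\gamma^2$, and apply the Cauchy--Schwarz inequality in $\ul a$ with respect to $\mu$, using $\|\mu\|_{\ell^1}\leq 1$, to pass to $\E_{\ul a\in A}^\mu\left|\frac1N\sum_x\E_{y\in[M]}f_{\ul a}(x)\prod_{i\in I}f_i(x+Q_i(\ul a,y))\right|^2\geq\gamma^2$. This is exactly hypothesis \eqref{eq4.2} of Lemma~\ref{lem4.2} with $\gamma$ there equal to $\gamma^2$ here, and hypothesis (4) of the present lemma gives \eqref{eq4.1} with the constant $C'$.
\item Invoke Lemma~\ref{lem4.2} (with $\gamma'\ll_{C'}\gamma^2$): this yields
\[
\E_{\ul a'\in \tilde A}^{\tilde\mu}\f1N\sum_{x}\E_{y\in[M]}f_{i_0}(x)\prod_{i'\in I'}g_{i'}(x+Q'_{i'}(\ul a',y))\gg\gamma^2,
\]
with $I'=(I\times\{0,1\})\setminus\{(i_0,0)\}$, $\tilde A = A\times((-\gamma'M,\gamma'M)\cap\Z)$, $\tilde\mu(\ul a')=\mu(a_1,\dots,a_n)\mu_{\gamma'M}(a_{n+1})$, the $Q'_{i'}$ as in item (4), and the $g_{i'}$ as in item (8).
\item Replace $\tilde\mu$ by $\mu'$. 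Since the $g_{i'}$ and $f_{i_0}$ are $1$-bounded and all the relevant summands in $x$ are supported on a set of size $\ll_{C'} N$ (again by hypothesis (4)), the inner sum over $x$ and $y$ is a $1$-bounded function of $\ul a'$ up to a factor $O_{C'}(1)$; so by Cauchy--Schwarz in $(a_1,\dots,a_n)$ using $\|\mu\|_{\ell^2}^2\leq C/|A|$ we may pass from averaging against $\mu(a_1,\dots,a_n)$ to averaging against $1_A(a_1,\dots,a_n)/|A|$, losing only an $O_C(1)$ factor. This gives the displayed conclusion with $\mu'$ as in item (3) and implied constant depending on $C$.
\item Verify the structural claims (5)--(7). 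For $i'=(i,\epsilon)\in I'$ with $\deg Q_i=d$, the leading term of $Q_i(\ul a,y+\epsilon a_{n+1})$ is unchanged (still $c(\ul a)y^d$) and subtracting $Q_{i_0}$, which has degree $d'\le d$, affects the $y^{d-1}$-coefficient only when $d'=d-1$; expanding $(y+\epsilon a_{n+1})^d$ contributes the extra term $\epsilon d\, c(\ul a)\,a_{n+1}\,y^{d-1}$. This gives the stated form in item (6) and, since the original degree-$(d-1)$ coefficients $c'_{Q_i}$ were distinct (hypothesis (2)) and the $\epsilon d c(\ul a)a_{n+1}$ shift depends only on $\epsilon\in\{0,1\}$ and on $c$, the new degree-$(d-1)$ coefficients remain distinct within each leading-coefficient class; the count doubles from $s$ to $2s$. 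Hence the set of degree-$d$ leading coefficients is still $\mathcal C$ (item (5)) and $V(\Q')_d=V(\Q)_d=r$. For degrees below $d$: the polynomials $Q_i$ with $d'\le\deg Q_i<d$ each spawn two polynomials $Q'_{(i,0)},Q'_{(i,1)}$ of the same degree, except that $Q_{i_0}$ (degree exactly $d'$) is subtracted from everything and itself appears as the ``$(i_0,1)$'' term $Q_{i_0}(\ul a,y+a_{n+1})-Q_{i_0}(\ul a,y)$, whose degree drops to $d'-1$ or lower; this is the source of the ``$V(\Q)_{d'}-1$'' and of the arbitrary entries $n_1,\dots,n_{d'-1}$ with $n_1+\dots+n_{d'-1}<|I'|$. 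This establishes item (7).
\end{enumerate}

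\textbf{Main obstacle.} The analytic content is entirely routine given Lemma~\ref{lem4.2}; the only place requiring genuine care is step~4, specifically checking that the degree-$(d-1)$ coefficients of the new degree-$d$ polynomials in $\Q'$ stay distinct. One must confirm that the shift $\epsilon d\, c(\ul a)\, a_{n+1}$ (with $\epsilon\in\{0,1\}$) cannot accidentally identify two of the previously-distinct coefficients $c'_{Q_i}(\ul a)$; since $c'_{Q'_{(i,0)}}=c'_{Q_i}$ (minus the common $1_{d'=d-1}c_{i_0}$ shift) and $c'_{Q'_{(i,1)}}=c'_{Q_i}+dc\,a_{n+1}$ (minus the same common shift), within a fixed leading-coefficient class $c$ the $2s$ new coefficients are $\{c'_{Q}\}\cup\{c'_{Q}+dc\,a_{n+1}\}$ over the $s$ original $Q$'s, and these are distinct \emph{as polynomials in $(\ul a,a_{n+1})$} — which is all the subsequent PET argument needs. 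Tracking the low-degree entries $n_1,\dots,n_{d'-1}$ requires no precision beyond the bound $n_1+\dots+n_{d'-1}<|I'|$, which is immediate since there are $|I'|=2|I|-1$ polynomials total in $\Q'$.
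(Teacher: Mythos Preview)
Your approach is essentially the paper's, but step~3 contains a genuine error. You cannot ``pass from averaging against $\mu(a_1,\dots,a_n)$ to averaging against $1_A(a_1,\dots,a_n)/|A|$'' by Cauchy--Schwarz after the fact. What Cauchy--Schwarz gives you is the \emph{upper} bound
\[
\Bigl|\sum_{\ul a}\mu(\ul a)\,K(\ul a)\Bigr|^2\le \|\mu\|_{\ell^2}^2\sum_{\ul a\in A}|K(\ul a)|^2\le C\,\E_{\ul a\in A}|K(\ul a)|^2,
\]
which lower-bounds $\E_{\ul a\in A}|K|^2$, not $\E_{\ul a\in A}K$. Since the inner expression $K(\ul a)=\sum_{a_{n+1}}\mu_{\gamma'M}(a_{n+1})H(\ul a,a_{n+1})$ is not nonnegative, a large $L^2$ norm over $A$ says nothing about its mean over $A$. (Concretely: take $\mu$ uniform on a subset $S\subset A$ of size $|A|/C$, $K=1$ on $S$ and $K=-1$ on $A\setminus S$; then $\sum\mu K=1$ while $\E_AK=2/C-1<0$.)

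The fix is simply to reorder steps~1--3 to match the paper: apply Cauchy--Schwarz in $\ul a$ \emph{against counting measure} at the outset, i.e.
\[
\gamma^2\le\Bigl|\sum_{\ul a}\mu(\ul a)G(\ul a)\Bigr|^2\le\|\mu\|_{\ell^2}^2\sum_{\ul a\in A}|G(\ul a)|^2\le C\,\E_{\ul a\in A}|G(\ul a)|^2,
\]
which immediately produces the flat average $\E_{\ul a\in A}|G(\ul a)|^2\gg_C\gamma^2$. Now Lemma~\ref{lem4.2} can be invoked with the flat measure $1_A/|A|$ in place of $\mu$, and its output measure is already the desired $\mu'$; no step~3 is needed. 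Your step~4 verification of items (5)--(7) is fine.
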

\begin{proof}
We expand the definition of $\E^\mu$ to write the left-hand side of~\eqref{eq4.3} as
\[
\left|\sum_{\ul{a}\in A}\mu(\ul{a})\left[\f{1}{N}\sum_{x\in\Z}\E_{y\in[M]}f_{\ul{a}}(x)\prod_{i\in I}f_i(x+Q_i(\ul{a},y))\right]\right|\geq\gamma,
\]
and apply the Cauchy--Schwarz inequality in the $\ul{a}$ variable to deduce that
\[
\E_{\ul{a}\in A}\left|\f{1}{N}\sum_{x\in\Z}f_{\ul{a}}(x)\E_{y\in[M]}\prod_{i\in I}f_i(x+Q_i(\ul{a},y))\right|^2\gg_C\gamma^2,
\]
using the assumption $\|\mu\|_{\ell^2}^2\leq C\f{1}{|A|}$.

We now apply Lemma~\ref{lem4.2} to conclude. Indeed, if $Q_i$ has degree $d$ and leading coefficient $c$, then, by the binomial theorem, $Q_i(a_1,\dots,a_n,y+\epsilon a_{n+1})$ equals
\[
c(a_1,\dots,a_n)y^d+[c_{Q_i}'(a_1,\dots,a_n)+\epsilon d c(a_1,\dots,a_n)a_{n+1}]y^{d-1}+\text{lower degree terms}.
\]
In addition, if $Q_{i}$ has degree $>d'$, then $Q_{(i,\epsilon)}$ (as defined in Lemma~\ref{lem4.2}) has the same degree and leading coefficient as $Q_{i}$, if $Q_i$ has degree $d'$ and leading coefficient equal to $c_{i_0}$, then $Q_{(i,\epsilon)}$ has degree $\leq d'-1$, and if $Q_{i}$ has degree $d'$ and leading coefficient $c_{i}\neq c_{i_0}$, then $Q_{(i,\epsilon)}$ also has degree $d'$ and has leading coefficient $c_{i}-c_{i_0}$, thus confirming conclusion~{(7)} of the lemma.
\end{proof}

%Lemma~\ref{lem4.4} may be used, for example, to control an average over the polynomial progression $x,x+y,x+y^3$ in terms of an average over the progression $x,x+a_1,x+y^3-y,x+y^3+3a_1y^2+(3a_1^2-1)y+a_1^3$.

\begin{lemma}\label{lem4.5}
Let $N,M>0$, $I$ and $A\subset\Z^n$ be finite sets, $i_0\in I$, $\mu:\Z^n\to[0,\infty)$ be supported on $A$ with $\|\mu\|_{\ell^1}\leq 1$ and $\|\mu\|_{\ell^2}^2\leq C\f{1}{|A|}$, $Q_i\in\Z[a_1,\dots,a_n][y]$ for each $i\in I$, and $f_{\ul{a}},f_i:\Z\to\C$ be $1$-bounded functions supported on the interval $[N]$ for each $\ul{a}\in A$ and $i\in I$. Set $\Q:=(Q_i)_{i\in I}$, and let $d$ be the degree of $\Q$ and $r=V(\Q)_d$. Assume further that
\begin{enumerate}
\item $d>1$ and $r=1$,
\item $V(\Q)_{d'}=0$ for all $d'<d$,
\item the polynomials $Q\in\Q$ each have the form
\[
c(a_1,\dots,a_n)y^d+c_{Q}'(a_1,\dots,a_n)y^{d-1}+\text{lower degree terms},
\]
where the coefficients $c_Q'(a_1,\dots,a_n)$ are all distinct,
\item and
\[
\max_{i\in I}\max_{\ul{a}\in A}\max_{y\in[M]}|Q_i(\ul{a},y)|\leq C'N.
\]
\end{enumerate}
If
\[
\left|\E_{\ul{a}\in A}^\mu\f{1}{N}\sum_{x\in\Z}\E_{y\in[M]}f_{\ul{a}}(x)\prod_{i\in I}f_i(x+Q_{i}(\ul{a},y))\right|\geq\gamma,
\]
then for all $\gamma'\ll_{C,C'}\gamma^2$, we have
\[
\E_{\ul{a}'\in A'}^{\mu'}\f{1}{N}\sum_{x\in\Z}\E_{y\in[M]}f_{i_0}(x)\prod_{i'\in I'}g_{i'}(x+Q_{i'}'(\ul{a},y))\gg_C\gamma^2,
\]
where
\begin{enumerate}
\item $I'=(I\times\{0,1\})\sm\{(i_0,0)\}$,
\item $A'=A\times((-\gamma' M,\gamma'M)\cap\Z)$,
\item $\mu'(\ul{a}')=\f{1_A(a_1,\dots,a_n)}{|A|}\mu_{\gamma'M}(a_{n+1})$,
\item for $i'=(i,\epsilon)\in I'$, we have $Q'_{i'}(\ul{a}',y)=Q_i(\ul{a},y+\epsilon a_{n+1})-Q_{i_0}(\ul{a},y)$,
\item the set $\Q':=(Q_{i'}')_{i'\in I'}$ consists of $2|I|-1$ degree $d-1$ polynomials, each with distinct leading coefficient, and the set of such coefficients is 
\[
\{c'_{Q_i}(a_1,\dots,a_n)+\epsilon d c(a_1,\dots,a_n)a_{n+1}-c_{Q_{i_0}}'(a_1,\dots,a_n): (i,\epsilon)\in I'\},
\]
\item we have
\[
V(\Q')=(\overbrace{0,\dots,0}^{d-2},2|I|-1,0,\dots),
\]
\item and for $i'=(i,\epsilon)\in I'$, we have
\[
g_{i'}=\begin{cases}
f_i & \epsilon=0 \\
\overline{f_i} & \epsilon=1
\end{cases}.
\]
\end{enumerate}
\end{lemma}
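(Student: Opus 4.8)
The plan is to mirror the proof of Lemma~\ref{lem4.4}: one Cauchy--Schwarz in the $\ul{a}$ variable, then an application of Lemma~\ref{lem4.2}, then polynomial bookkeeping via the binomial theorem. The only genuinely new feature compared to Lemma~\ref{lem4.4} is that hypotheses~(1)--(3) force \emph{every} polynomial in $\Q$ to have degree exactly $d$ and the \emph{same} leading coefficient $c=c(a_1,\dots,a_n)$ (there are no polynomials of lower degree since $V(\Q)_{d'}=0$ for $d'<d$, and $r=1$ means there is only one degree-$d$ leading coefficient), so that subtracting $Q_{i_0}$ during the van der Corput step cancels the $y^d$ terms and drops the degree to $d-1$.

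First I would expand the definition of $\E^\mu$ and apply Cauchy--Schwarz in $\ul{a}$, using $\|\mu\|_{\ell^2}^2\leq C/|A|$, to pass from the hypothesis to
\[
\E_{\ul{a}\in A}\left|\f{1}{N}\sum_{x\in\Z}f_{\ul{a}}(x)\E_{y\in[M]}\prod_{i\in I}f_i(x+Q_i(\ul{a},y))\right|^2\gg_C\gamma^2.
\]
This is exactly hypothesis~\eqref{eq4.2} of Lemma~\ref{lem4.2} with the uniform measure $1_A/|A|$ in place of $\mu$ and with $\gamma$ replaced by a $C$-dependent constant times $\gamma^2$; condition~\eqref{eq4.1} there is supplied by hypothesis~(4), with its $C'$ playing the role of the constant in Lemma~\ref{lem4.2}. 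Applying Lemma~\ref{lem4.2} then delivers conclusions~(1)--(4) and~(7) verbatim, conclusion~(3) because the output measure produced by Lemma~\ref{lem4.2} is the product of $1_A/|A|$ in the first $n$ coordinates with $\mu_{\gamma'M}$ in the last, provided $\gamma'\ll_{C,C'}\gamma^2$.

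It remains to establish conclusions~(5) and~(6), which amounts to computing the degrees and leading coefficients of $Q'_{i'}(\ul{a}',y)=Q_i(\ul{a},y+\epsilon a_{n+1})-Q_{i_0}(\ul{a},y)$ for $i'=(i,\epsilon)\in I'$. By hypotheses~(1)--(3) each $Q_i$ equals $cy^d+c'_{Q_i}y^{d-1}+(\text{lower degree terms})$ with common leading coefficient $c$, so the binomial theorem gives $Q_i(\ul{a},y+\epsilon a_{n+1})=cy^d+(c'_{Q_i}+\epsilon dc\,a_{n+1})y^{d-1}+(\text{lower degree terms})$, and after subtracting $Q_{i_0}(\ul{a},y)$ the $y^d$ terms cancel, leaving $(c'_{Q_i}+\epsilon dc\,a_{n+1}-c'_{Q_{i_0}})y^{d-1}+(\text{lower degree terms})$; this gives the set of leading coefficients claimed in~(5). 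The one point needing a short separate argument --- and, such as it is, the main obstacle --- is that these $2|I|-1$ coefficients are \emph{distinct and nonzero} as elements of $\Z[a_1,\dots,a_{n+1}]$, so that each $Q'_{i'}$ genuinely has degree $d-1$ and $V(\Q')$ is as stated. For nonvanishing: when $\epsilon=1$ the coefficient has $a_{n+1}$-coefficient $dc\neq 0$ (here $d\geq 2$ and $c$, being the leading coefficient of each $Q_i$, is a nonzero element of $\Z[a_1,\dots,a_n]$); when $\epsilon=0$ it is $c'_{Q_i}-c'_{Q_{i_0}}$ with $i\neq i_0$ since $(i_0,0)\notin I'$, hence nonzero by the distinctness assumed in~(3). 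For distinctness: $c'_{Q_i}+\epsilon dc\,a_{n+1}=c'_{Q_{i'}}+\epsilon' dc\,a_{n+1}$ forces $c'_{Q_i}-c'_{Q_{i'}}=(\epsilon'-\epsilon)dc\,a_{n+1}$, and since the left side does not involve $a_{n+1}$ while the right side has degree $1$ in $a_{n+1}$ unless $\epsilon=\epsilon'$, we get $\epsilon=\epsilon'$ and then $c'_{Q_i}=c'_{Q_{i'}}$, so $i=i'$ by~(3). Since $d>1$, the nonzero entry of $V(\Q')$ sits in position $d-1\geq1$, giving~(6). Everything past the Cauchy--Schwarz step is routine; the only things to watch are which pairs $(i,\epsilon)$ survive in $I'$ and the use of $c\neq 0$.
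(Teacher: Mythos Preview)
Your proposal is correct and follows exactly the same approach as the paper, which simply states ``Apply the Cauchy--Schwarz inequality and Lemma~\ref{lem4.2} in exactly the same manner as in the proof of Lemma~\ref{lem4.4}.'' You have filled in the polynomial bookkeeping (distinctness and nonvanishing of the new leading coefficients) that the paper leaves implicit, and done so correctly.
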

\begin{proof}
Apply the Cauchy--Schwarz inequality and Lemma~\ref{lem4.2} in exactly the same manner as in the proof of Lemma~\ref{lem4.4}.
\end{proof}

%Continuing the example above, Lemma~\ref{lem4.5} may be used to control an average over $x,x+a_1,x+y^3-y,x+y^3+3a_1y^2+(3a_1^2-1)y$ in terms of an average over the progression $x,x+3a_1y^2+3a_1^2y,x+3a_2y^2+3a_2^2y,x+3(a_1+a_2)y^2+3(a_2^2+a_1^2+2a_1a_2)y$ (here we have absorbed the constant (in $y$) terms into the definitions of the $f_{\ul{a}}$'s for the sake of simplicity).

\begin{lemma}\label{lem4.6}
Let $N,M>0$, $I$ and $A\subset\Z^n$ be finite sets, $i_0\in I$, $\mu:\Z^n\to[0,\infty)$ be supported on $A$ with $\|\mu\|_{\ell^1}\leq 1$ and $\|\mu\|_{\ell^2}^2\leq C\f{1}{|A|}$, $Q_i\in\Z[a_1,\dots,a_n][y]$ for each $i\in I$, and $f_{\ul{a}},f_i:\Z\to\C$ be $1$-bounded functions supported on the interval $[N]$ for each $\ul{a}\in A$ and $i\in I$. Set $\Q:=(Q_i)_{i\in I}$ and let $d$ be the degree of $\Q$, $r= V(\Q)_d$, $\mathcal{C}$ denote the set of leading coefficients of degree $d$ polynomials in $\Q$, and $c_{i_0}$ be the leading coefficient of $Q_{i_0}$. Assume further that
\begin{enumerate}
\item $d>1$ and $r>1$,
\item $V(\Q)_{d'}=0$ for all $d'<d$,
\item there exists an $s\in\N$ such that, for all $c\in\mathcal{C}$, there are $s$ degree $d$ polynomials $Q$ in $\Q$ with leading coefficient $c$, each having the form
\[
c(a_1,\dots,a_n)y^{d}+c'_{Q}(a_1,\dots,a_n)y^{d-1}+\text{lower degree terms},
\]
where the coefficients $c_{Q}'(a_1,\dots,a_n)$ are all distinct,
\item and
\[
\max_{i\in I}\max_{\ul{a}\in A}\max_{y\in[M]}|Q_i(\ul{a},y)|\leq C'N.
\]
\end{enumerate}
If
\[
\left|\E_{\ul{a}\in A}^\mu\f{1}{N}\sum_{x\in\Z}\E_{y\in[M]}f_{\ul{a}}(x)\prod_{i\in I}f_i(x+Q_i(\ul{a},y))\right|\geq\gamma,
\]
then for all $\gamma'\ll_{C,C'}\gamma^2$, we have
\[
\E_{\ul{a}'\in A'}^{\mu'}\f{1}{N}\sum_{x\in\Z}\E_{y\in[M]}f_{i_0}(x)\prod_{i'\in I'}g_{i'}(x+Q_{i'}'(\ul{a},y))\gg_C\gamma^2,
\]
where
\begin{enumerate}
\item $I'=(I\times\{0,1\})\sm\{(i_0,0)\}$,
\item $A'=A\times((-\gamma' M,\gamma'M)\cap\Z)$,
\item $\mu'(\ul{a}')=\f{1_A(a_1,\dots,a_n)}{|A|}\mu_{\gamma'M}(a_{n+1})$,
\item for $i'=(i,\epsilon)\in I'$, we have $Q'_{i'}(\ul{a}',y)=Q_i(\ul{a},y+\epsilon a_{n+1})-Q_{i_0}(\ul{a},y)$,
\item the set of leading coefficients of degree $d$ polynomials in $\Q':=(Q'_{i'})_{i'\in I'}$ is $\{c-c_{i_0}:c\in\mathcal{C}\}\sm\{0\}$,
\item for each $c\in\mathcal{C}\sm\{c_{i_0}\}$ there are $2s$ degree $d$ polynomials in $\Q'$ with leading coefficient $c-c_{i_0}$, and for $i'=(i,\epsilon)\in I'$ with $\deg{Q_{i}}=d$ and $Q_i$ having leading coefficient $c$, the polynomial $Q'_{i'}(\ul{a}',y)$ has the form
\begin{align*}
  (c-c_{i_0})(a_1,\dots,a_n)y^d+[c'_{Q_{i}}(a_1,\dots,a_n)+\epsilon d c(a_1,\dots,a_n)&a_{n+1}-c_{Q_{i_0}}'(a_1,\dots,a_n)]y^{d-1}\\
  &+\text{lower degree terms},
\end{align*}
so that the coefficients of the degree $d-1$ terms of these polynomials are still distinct,
\item we have
\[
V(\Q')=(n_1,\dots,n_{d-1},V(\Q)_d-1,0,\dots),
\]
where $n_1+\dots+n_{d'-1}<|I'|=2|I|-1$,
\item and for $i'=(i,\epsilon)\in I'$, we have
\[
g_{i'}=\begin{cases}
f_i & \epsilon=0 \\
\overline{f_i} & \epsilon=1
\end{cases}.
\]
\end{enumerate}
\end{lemma}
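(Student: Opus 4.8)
The plan is to prove Lemma~\ref{lem4.6} in precisely the same manner as Lemmas~\ref{lem4.4} and~\ref{lem4.5}, namely by reducing to the unweighted average via the Cauchy--Schwarz inequality, invoking Lemma~\ref{lem4.2} once, and then extracting the structural conclusions by a binomial expansion. First I would expand the definition of $\E^\mu$ and apply the Cauchy--Schwarz inequality in the $\ul{a}$ variable, using the hypothesis $\|\mu\|_{\ell^2}^2\leq C/|A|$, to obtain
\[
\E_{\ul{a}\in A}\left|\f{1}{N}\sum_{x\in\Z}f_{\ul{a}}(x)\E_{y\in[M]}\prod_{i\in I}f_i(x+Q_i(\ul{a},y))\right|^2\gg_C\gamma^2.
\]
This is hypothesis~\eqref{eq4.2} of Lemma~\ref{lem4.2} with $\mu$ replaced by the uniform measure $1_A/|A|$ and with $\gamma$ there a constant multiple of $\gamma^2$; hypothesis~\eqref{eq4.1} of Lemma~\ref{lem4.2} is a consequence of assumption~(4) (with $C'$ in the role of $C$). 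Applying Lemma~\ref{lem4.2} with the given index $i_0$ and with $\gamma'\ll_{C,C'}\gamma^2$ then yields directly conclusions~(1)--(4) and~(8), together with the lower bound $\gg_C\gamma^2$ and the formula $Q'_{i'}(\ul{a}',y)=Q_i(\ul{a},y+\epsilon a_{n+1})-Q_{i_0}(\ul{a},y)$ for $i'=(i,\epsilon)$.

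It remains to verify the structural conclusions~(5)--(7), which is bookkeeping. By assumption~(2) every polynomial of $\Q$ has degree exactly $d$, so in particular $Q_{i_0}$ has degree $d$ with leading coefficient $c_{i_0}\in\mathcal{C}$. For $Q_i$ of leading coefficient $c$, the binomial theorem gives
\[
Q_i(a_1,\dots,a_n,y+\epsilon a_{n+1})=c\,y^d+\bigl[c'_{Q_i}+\epsilon d\,c\,a_{n+1}\bigr]y^{d-1}+\text{lower order terms},
\]
where I suppress the arguments $a_1,\dots,a_n$; subtracting $Q_{i_0}(\ul{a},y)=c_{i_0}y^d+c'_{Q_{i_0}}y^{d-1}+\dots$ then cancels the $y^d$ term exactly when $c=c_{i_0}$ (leaving a polynomial of degree $\leq d-1$) and otherwise leaves a degree-$d$ polynomial with leading coefficient $c-c_{i_0}$ and degree $d-1$ coefficient $c'_{Q_i}+\epsilon d\,c\,a_{n+1}-c'_{Q_{i_0}}$. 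This gives conclusion~(5) and the displayed formula in~(6); the count $2s$ in~(6) holds because each of the $s$ polynomials of $\Q$ with a fixed leading coefficient $c\in\mathcal{C}\sm\{c_{i_0}\}$ contributes two polynomials to $\Q'$ (for $\epsilon=0,1$). For the weight vector in~(7): the set $\{c-c_{i_0}:c\in\mathcal{C}\}\sm\{0\}$ has $r-1=V(\Q)_d-1$ elements, and the polynomials in $\Q'$ of degree $\leq d-1$ are exactly those arising from the $s$ indices $i$ with $Q_i$ of leading coefficient $c_{i_0}$, giving $2(s-1)$ from the $s-1$ such indices other than $i_0$ plus one more from $(i_0,1)$; since $r>1$ forces $|I|\geq 2s$, their number $2s-1$ is less than $|I'|=2|I|-1$.

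The only step needing care is checking that the degree $d-1$ coefficients of the degree-$d$ polynomials in $\Q'$ sharing a common leading coefficient remain pairwise distinct as polynomials in $a_1,\dots,a_n,a_{n+1}$, which is what keeps the hypotheses available for the next step of the PET induction. Fixing such a leading coefficient $c-c_{i_0}$, the corresponding polynomials come from the $s$ indices $i$ with $Q_i$ of leading coefficient $c$, and their degree $d-1$ coefficients are $c'_{Q_i}-c'_{Q_{i_0}}$ (for $\epsilon=0$) and $c'_{Q_i}+d\,c(a_1,\dots,a_n)\,a_{n+1}-c'_{Q_{i_0}}$ (for $\epsilon=1$). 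The $s$ polynomials $c'_{Q_i}$ are distinct by assumption~(3), and the $\epsilon=0$ expressions are free of $a_{n+1}$ while the $\epsilon=1$ ones are not (here $d\,c(a_1,\dots,a_n)$ is a nonzero element of $\Z[a_1,\dots,a_n]$, since we work in characteristic zero and $c$ is a leading coefficient), so all $2s$ expressions are distinct. I expect no real obstacle beyond this; everything else is a direct transcription of the argument for Lemma~\ref{lem4.4}, specialized to the case $V(\Q)_{d'}=0$ for all $d'<d$ (so that $Q_{i_0}$ itself has degree $d$).
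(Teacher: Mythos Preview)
Your proposal is correct and follows exactly the same approach as the paper, which simply says ``As with the previous lemma, the proof is the same as that of Lemma~\ref{lem4.4}.'' You have in fact supplied considerably more detail than the paper does, carefully verifying the distinctness of the degree $d-1$ coefficients and the weight-vector bookkeeping that the paper leaves implicit.
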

\begin{proof}
As with the previous lemma, the proof is the same as that of Lemma~\ref{lem4.4}.
\end{proof}
% Continuing the example, Lemma~\ref{lem4.6} may be used to control an average over $x,x+3a_1y^2+3a_1^2y,x+3a_2y^2+3a_2^2y,x+3(a_1+a_2)y^2+3(a_2^2+a_1^2+2a_1a_2)y$ in terms of an average over the progression
% \begin{align*}
%   &x,x+3(a_1-a_2)y^2+3(a_1^2-a_2^2)y, x+3a_1y^2+3(a_1^2+2a_1a_2)y, \\
%   &x+3(a_1^2+2a_1a_2+2a_2a_3)y,x+3(a_1-a_2)y^2+3(a_1^2-a_2^2+2a_1a_3)y,\\
%   &x+3a_1y^2+3(a_1^2+2a_1a_2+2(a_1+a_2)a_3)y,
% \end{align*}
% again absorbing the constant (in $y$) terms into the definition of the $f_{\ul{a}}$'s for the sake of simplicity.

 The next two lemmas are proved by many applications of the previous three lemmas, with the choice of $i_0$ in many uses of these lemmas being particularly important. Recall that the set $\A_{\ell-1}$ was defined recursively. Correspondingly, the proof that the average $\Lambda_{P_1,\dots,P_\ell}^{N,M}(f_1,\dots,f_\ell)$ is controlled by an average of averages over the linear progression $(x+p(\ul{a})y)_{p\in\A_{\ell-1}\cup\{0\}}$  proceeds iteratively. Lemma~\ref{lem4.7} produces the initial situation that we will apply Lemma~\ref{lem4.8} to repeatedly.

\begin{lemma}\label{lem4.7}
Let $N,M>0$ and $P_1,\dots,P_\ell\in\Z[y]$ be polynomials with $(C,q)$-coefficients such that $\deg{P_i}=i$ for $i=1,\dots,\ell$ and $P_\ell$ has leading coefficient $c_\ell$. If $1/C\leq q^{\ell-1}M^\ell/N\leq C$, $f_0,\dots,f_\ell:\Z\to\C$ are $1$-bounded functions supported on the interval $[N]$,
\[
\left|\Lambda_{P_1,\dots,P_\ell}^{N,M}(f_0,\dots,f_\ell)\right|\geq\gamma,
\]
and $\gamma'\ll_{C,\ell}\gamma^{O_\ell(1)}$, then we have
\[
\E_{\ul{a}\in A}^{\mu}\f{1}{N}\sum_{x\in\Z}\E_{y\in[M]}f_\ell(x)\prod_{i\in I}f_{i}'(x+Q_{i}(\ul{a},y))\gg_{C,\ell}\gamma^{O_\ell(1)},
\]
where
\begin{enumerate}
\item $I=\{0,1\}^t\sm\{\ul{0}\}$ for some $t\ll_\ell 1$,
\item $A=((-\gamma' M,\gamma' M)\cap\Z)^t$,
\item $\mu(a_1,\dots,a_t)=\f{1_A(a_1,\dots,a_t)}{(2\lfloor \gamma' M\rfloor+1)^{t-1}}\mu_{\gamma' M}(a_t)$,
\item the collection $\Q:=(Q_i)_{i\in I}$ consists only of polynomials of degree $\ell-1$, each of which has distinct leading coefficient, and the set of such leading coefficients is
\[
\{(\ell c_\ell a_1,\dots,\ell c_\ell a_t)\cdot\omega : \omega\in I\},
\]
\item we have
\[
\max_{i\in I}\max_{\ul{a}\in A}\max_{y\in[M]}|Q_i|(\ul{a},y)\ll_{C,\ell}N,
\]
\item and $f_i'$ equals either $f_\ell$ or $\overline{f_\ell}$ for all $i\in I$.
\end{enumerate}
\end{lemma}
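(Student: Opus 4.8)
The proof of Lemma~\ref{lem4.7} proceeds by running Bergelson--Leibman's PET induction scheme on the collection $\Q^{(0)}:=(P_1,\dots,P_{\ell-1})$ (the polynomial $P_\ell$ plays the role of the ``$f_0$'' in Lemmas~\ref{lem4.4}--\ref{lem4.6}, since we will ultimately make a change of variables $x\mapsto x-P_\ell(y)$ to transfer all the action onto $f_\ell$). The plan is as follows. First I would apply the Cauchy--Schwarz inequality in $x$ (using $1$-boundedness of $f_0$ and its support in $[N]$), followed by van der Corput in $y$ (as packaged in Lemma~\ref{lem4.2}), and then change variables $x\mapsto x-P_\ell(y)$. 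This eliminates $f_0$ and replaces the configuration governed by $\Q^{(0)}$ with one governed by polynomials of the form $P_i(y+\epsilon a)-P_\ell(y)$; crucially, since $\deg P_i = i < \ell = \deg P_\ell$ for $i\le \ell-1$ and $\deg P_\ell=\ell$, the polynomial $P_\ell(y+\epsilon a)-P_\ell(y)$ has degree $\ell-1$ with leading coefficient $\ell c_\ell a$ (times $\epsilon$), so the new top-degree $d=\ell-1$ and the degree-$\ell-1$ part is nonempty. After this first step we are squarely in the setting where the degree of the collection strictly exceeds the degree of the ``$i_0$'' polynomial we plan to remove, so I would then iterate Lemmas~\ref{lem4.4},~\ref{lem4.5}, and~\ref{lem4.6} according to the weight vector, always choosing $i_0$ to be a polynomial of the \emph{lowest} degree $d'$ present in the current collection (this is what lets us use Lemma~\ref{lem4.4} when $d'<d$). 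Each application strictly decreases the weight vector in the ordering $\prec$, replaces $\gamma$ by $\gamma^2$, appends one new averaging variable $a$, and doubles (up to $\pm 1$) the size of the index set; since the weight vector is well-ordered and we never increase the top degree, the process terminates after $O_\ell(1)$ steps with a collection consisting purely of linear polynomials.

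The key point requiring care — and the main obstacle — is verifying that when we reach the linear stage, the collection $\Q$ has the precise form claimed in conclusion~(4): it consists of polynomials of degree exactly $\ell-1$... wait, I must restate. Actually conclusion~(4) of Lemma~\ref{lem4.7} asserts the \emph{final} collection consists of degree-$(\ell-1)$ polynomials with distinct leading coefficients, the set of leading coefficients being $\{(\ell c_\ell a_1,\dots,\ell c_\ell a_t)\cdot\omega:\omega\in\{0,1\}^t\setminus\{\ul 0\}\}$. To obtain this, I would arrange the PET induction so that we reduce all polynomials of degree $<\ell-1$ (including the genuinely lower-degree $P_1,\dots,P_{\ell-2}$ and any that drop down during the reduction) down to nothing, \emph{while} the degree-$(\ell-1)$ polynomials — all descendants of $P_\ell$ — never get touched as an $i_0$, only ever having new shift-variables substituted into them. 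The binomial-theorem bookkeeping in the conclusions of Lemmas~\ref{lem4.4} and~\ref{lem4.6} (tracking leading coefficients and degree-$(d-1)$ coefficients) is exactly what guarantees that after all the lower-degree reductions, the degree-$(\ell-1)$ polynomials that remain have leading coefficients of the shape $\ell c_\ell(\epsilon_1 a_1+\dots+\epsilon_t a_t)$ with the $\omega=(\epsilon_1,\dots,\epsilon_t)$ ranging over $\{0,1\}^t\setminus\{\ul 0\}$, and that these are distinct as polynomials in $\ul a$ (distinctness of the degree-$(d-1)$ coefficients is maintained at every step, which is what keeps them from colliding). The bound $\max_i\max_{\ul a}\max_{y}|Q_i(\ul a,y)|\ll_{C,\ell}N$ in conclusion~(5) follows at each step from the analogous bound at the previous step together with $|a|<\gamma'M\le M$, $y\le M$, the $(C,q)$-coefficient hypothesis on the $P_i$, and the normalization $q^{\ell-1}M^\ell\asymp_C N$; one just checks that substituting $y\mapsto y+\epsilon a$ and subtracting $P_\ell(y)$ (or later $Q_{i_0}(\ul a,y)$) only changes the size by a bounded factor.

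For the measure and index-set bookkeeping in conclusions~(1)--(3): the number $t$ of shift-variables introduced equals the number of PET reduction steps, which is $O_\ell(1)$ since the weight vector of $(P_1,\dots,P_{\ell-1})$ is bounded in terms of $\ell$ and each step strictly decreases it; the index set after $j$ steps has the form $(I_{j-1}\times\{0,1\})\setminus\{(i_0,0)\}$, and one checks inductively that starting from $I=\{1,\dots,\ell-1\}$ this stabilizes to a set of the form $\{0,1\}^t\setminus\{\ul 0\}$ (the combinatorial content being that removing $(i_0,0)$ at each doubling is exactly what produces the punctured cube). The weight $\mu$ is a product of the counting measures $\mu_{\gamma'M}$ coming from van der Corput, except that in Lemmas~\ref{lem4.4}--\ref{lem4.6} we replace the first such factor by the flat measure $1_A/|A|$ on the earlier coordinates (this is legitimate because $\|\mu\|_{\ell^2}^2\le C/|A|$ is what those lemmas require as input, and it is preserved), which after all the steps gives $\mu(a_1,\dots,a_t)=\big(1_A(\ul a)/(2\lfloor\gamma'M\rfloor+1)^{t-1}\big)\mu_{\gamma'M}(a_t)$ as claimed. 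Finally, all the loss is of the form $\gamma\mapsto\gamma^{2}$ per step over $O_\ell(1)$ steps, so the final lower bound is $\gg_{C,\ell}\gamma^{O_\ell(1)}$, and the constraint $\gamma'\ll_{C,\ell}\gamma^{O_\ell(1)}$ comes from demanding $\gamma'\ll_{C,C'}\gamma^2$ at each step with the implicit constants $C'$ controlled (via conclusion~(5)) uniformly in terms of $C$ and $\ell$.
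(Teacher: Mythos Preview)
Your overall strategy (PET induction via repeated applications of Lemmas~\ref{lem4.4}--\ref{lem4.6}) is the right one, but your first step contains a concrete error that derails the bookkeeping. You apply Lemma~\ref{lem4.2} with $i_0=\ell$ (Cauchy--Schwarz to remove $f_0$, van der Corput, then shift $x\mapsto x-P_\ell(y)$) and conclude that ``the new top-degree $d=\ell-1$''. This is false: for $i<\ell$ the polynomial $P_i(y+\epsilon a)-P_\ell(y)$ has degree $\ell$, since the $-P_\ell(y)$ term is not cancelled. So after your first step the collection still has top degree $\ell$, with all the degree-$\ell$ polynomials sharing leading coefficient $-c_\ell$, and the \emph{only} descendant of $P_\ell$ (namely $P_\ell(y+a)-P_\ell(y)$) sits at degree $\ell-1$, the \emph{bottom}. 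Your own rule ``always choose $i_0$ of lowest degree'' would then immediately select it, contradicting your later claim that the degree-$(\ell-1)$ descendants of $P_\ell$ ``never get touched as an $i_0$''. Relatedly, setting $\Q^{(0)}=(P_1,\dots,P_{\ell-1})$ is not correct: once you shift by $P_\ell$, the collection becomes $\{-P_\ell,P_1-P_\ell,\dots,P_{\ell-1}-P_\ell\}$, not $(P_1,\dots,P_{\ell-1})$.

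The paper's proof fixes this by making the opposite choice at the outset: it takes $\Q_0=\{P_1,\dots,P_\ell\}$ (so $P_\ell$ stays in the collection), $f_{\ul a}=f_0$, and $i_{0,0}=1$ (the polynomial $P_1$, of \emph{lowest} degree). Iterating Lemma~\ref{lem4.4} with $i_0$ always of minimal degree $<\ell$ clears out everything of degree $<\ell$ while the descendants of $P_\ell$ remain at degree $\ell$ with common leading coefficient $c_\ell$; conclusion~(6) of Lemma~\ref{lem4.4} tracks their degree-$(\ell-1)$ coefficients as $c'_\ell-\epsilon(k)c_{\ell-1}+\ell c_\ell(a_1,\dots,a_k)\cdot\omega$. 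Only once $V(\Q_{t-1})=(0,\dots,0,1)$ (at position $\ell$) is Lemma~\ref{lem4.5} applied once, with $i_0=(\ell,\ul 0)$, and this single step drops the degree to $\ell-1$ and produces the leading-coefficient set $\{\ell c_\ell(a_1,\dots,a_t)\cdot\omega:\omega\in\{0,1\}^t\setminus\{\ul 0\}\}$ claimed in conclusion~(4). In short: keep $P_\ell$ at the top throughout the Lemma~\ref{lem4.4} phase, and let the single final Lemma~\ref{lem4.5} step do the degree drop; do not shift by $P_\ell$ at the start.
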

In this lemma and those to follow, if $Q=a_dy^d+\dots+a_0\in\C[y]$ is any polynomial, then $|Q|$ denotes the polynomial $|a_d|y^d+\dots+|a_0|$.
\begin{proof}
The proof proceeds by applying Lemma~\ref{lem4.4} some number of times depending on $\ell$, and then Lemma~\ref{lem4.5} once. Suppose that $P_\ell$ has degree $\ell-1$ coefficient $c'_{\ell}$ and $P_{\ell-1}$ has leading coefficient $c_{\ell-1}$. Set $J_0=[\ell]$, $A_0=\{0\}$, $\mu_0=1_{\{0\}}$, $\Q_0=\{P_1,\dots,P_\ell\}$, $\mathcal{C}'_0=\{c'_\ell\}$, $i_{0,0}=1$, and $g_{j,0}=f_j$ for $j=1,\dots,\ell$. We apply Lemma~\ref{lem4.4} repeatedly to produce a sequence of $t-1\ll_\ell 1$ finite sets $J_k$ and $A_k$, measures $\mu_k$, collections of polynomials $\Q_k\subset\Z[a_1,\dots,a_k][y]$, sets $\mathcal{C}_k'\subset\Z[a_1,\dots,a_k]$ of coefficients of the degree $\ell-1$ term of degree $\ell$ polynomials in $\Q_k$, elements $i_{0,k}\in J_k$, and $1$-bounded functions $g_{j,k}$ for each $j\in J_k$ satisfying
\begin{enumerate}
\item $J_k=((J_{k-1}\sm\{j\in J_{k-1}:\deg{Q_j}=0\})\times\{0,1\})\sm\{(i_{0,k-1},0)\}$ for $k=1,\dots,t-1$,
\item $A_k=((-\gamma'M,\gamma'M)\cap\Z)^k$ for $k=1,\dots,t-1$,
\item $\mu_k(a_1,\dots,a_k)=\f{1_{A_{k-1}}(a_1,\dots,a_{k-1})}{(2\lfloor\gamma' M\rfloor+1)^{k-1}}\mu_{\gamma'M}(a_k)$ for $k=1,\dots,t-1$,
\item $\Q_k=(Q_j)_{j\in J_k}$ for $k=1,\dots,t-1$, where, for $j=(j',\epsilon)\in J_k$, we have
\[
Q_j(a_1,\dots,a_k,y)=Q_{j'}(a_1,\dots,a_{k-1},y+\epsilon a_k)-Q_{i_{0,k-1}}(a_1,\dots,a_{k-1},y),
\]
\item $\mathcal{C}_k'=\{c'_\ell-\epsilon(k)c_{\ell-1}+\ell c_\ell(a_1,\dots,a_k)\cdot\omega:\omega\in\{0,1\}^k\}$ for $k=1,\dots,t-1$, where $\epsilon(k)=1$ if $1\ll_\ell k\leq t-1$ and $\epsilon(k)=0$ otherwise,
\item for $j=(j',\epsilon)\in J_k$, we have $g_{j,k}$ equal to either $g_{j',k-1}$ or $\overline{g_{j',k-1}}$,% and $g_{j,k}$ equals $f_\ell$ or $\overline{f_\ell}$ whenever $\deg{Q_j}=\ell$,
\item $i_{0,k}\in J_k$ is the index of any nonconstant (in $y$) polynomial of smallest degree in $\Q_k$ for $k=1,\dots,t-1$, and $i_{0,t-1}\in J_{t-1}$ is the index $(\ell,\ul{0})$,
\item and
\[
V(\Q_{t-1})=(\overbrace{0,\dots,0}^{\ell-1},1,0,\dots),
\]
\end{enumerate}
such that
\[
\E_{\ul{a}\in A_k}^{\mu_k}\f{1}{N}\sum_{x\in\Z}\E_{y\in[M]}f_{\ul{a},k}(x)\prod_{\substack{j\in J_k \\ \deg{Q_j}\neq 0}}g_{j,k}(x+Q_j(a_1,\dots,a_k,y))\gg_{k}\gamma^{O_k(1)},
\]
where
\[
f_{\ul{a},k}(x)=g_{i_{0,k-1},k-1}(x)\prod_{\substack{j\in J_k \\ \deg{Q_j}= 0}}g_{j,k}(x+Q_j(a_1,\dots,a_k,y))
\]
for all $k=1,\dots,t-1$, provided that $\gamma'\ll_{C,\ell}\gamma^{O_\ell(1)}$. Indeed, we have that $\|\mu_k\|_{\ell^2}^2\leq\f{1}{|A_{k-1}|\gamma'M}\leq\f{3}{|A_k|}$ for each $k=1,\dots,t-1$, and to check that the condition
\begin{equation}\label{eq4.4}
\max_{j\in J_k}\max_{\ul{a}\in A_k}\max_{y\in[M]}|Q_j(\ul{a},y)|\ll_{C,\ell}N
\end{equation}
holds for each application of Lemma~\ref{lem4.4}, note that
\[
\max_{i=1,\dots,\ell}\sup_{y\in[-cM,cM]}|P_i(y)|\leq \ell c^\ell C^3N
\]
for any $c\in\N$ by the assumptions that $P_1,\dots,P_\ell$ have $(C,q)$-coefficients, $\deg{P_i}=i$ for $i=1,\dots,\ell$, and $q^{\ell-1}M^\ell\leq CN$, which implies that~\eqref{eq4.4} holds by the recursive definition of the $Q_j$'s and the triangle inequality.

Note that $\Q_{t-1}$ consists only of constant polynomials (in $y$) and polynomials of degree $\ell$ (in $y$), we have $J_{t-1}\sm\{j\in J_{t-1}:\deg{Q_j}=0\}=\{\ell\}\times\{0,1\}^{t-1}$, $i_{0,t-1}$ is the index of the degree $\ell$ polynomial in $\Q_{t-1}$ whose degree $\ell-1$ term has coefficient $c'_\ell-c_{\ell-1}$, and $g_{j,t-1}$ equals either $f_\ell$ or $\overline{f_\ell}$ for every $j\in J_k$ such that $\deg{Q_j}=\ell$. We may thus apply Lemma~\ref{lem4.5} with $J_{t-1}\sm\{j\in J_{t-1}:\deg{Q_j}=0\}$, $A_{t-1}$, $\mu_{t-1}$, $i_{0,t-1}$, $f_{\ul{a},t-1}$, and $f_j=g_{j,t-1}$ for each $j\in J_{t-1}\sm\{j\in J_{t-1}:\deg{Q_j}=0\}$, again assuming that $\gamma'\ll_{C,\ell}\gamma^{O_\ell(1)}$. The conclusion of the lemma then follows after relabeling indices in $[\ell]\times\{0,1\}^{t}\sm\{(\ell,\ul{0})\}$ by the corresponding elements of $\{0,1\}^t\sm\{\ul{0}\}$. The bound on $|Q_i|(\ul{a},y)$ follows in the same manner as~\eqref{eq4.4} using the triangle inequality.
\end{proof}

Lemma~\ref{lem4.7} may be used, for example, to control the progression $x,x+y,x+y^3$ in terms of averages over the progression $x,x+3a_1y^2+3a_1^2y,x+3a_2y^2+3a_2^2y,x+3(a_1+a_2)y^2+3(a_1^2+a_2^2+2a_1a_2)y$, where we have absorbed the constant (in $y$) terms into the definitions of the $f_{\ul{a}}$'s for the sake of simplicity.

\begin{lemma}\label{lem4.8}
Let $N,M>0$, $I$ and $A\subset([-M,M]\cap\Z)^n$ be finite sets, $\mu:\Z^n\to[0,\infty)$ be supported on $A$ with $\|\mu\|_{\ell^1}\leq 1$ and $\|\mu\|_{\ell^2}^2\leq C\f{1}{|A|}$, $Q_i\in\Z[a_1,\dots,a_n][y]$ be degree $d\geq 2$ polynomials for each $i\in I$, $\mathcal{C}$ be the set of leading coefficients of polynomials in $\Q:=(Q_i)_{i\in I}$ with $m:=|\mathcal{C}|$, and $f,f_i:\Z\to\C$ be $1$-bounded functions supported on the interval $[N]$ for each $i\in I$. Assume further that
\begin{enumerate}
\item $I$ and $\mathcal{C}$ have the form $I=\{0,1\}^J\sm\{\ul{0}\}$ and
\begin{equation}\label{eq4.5}
\mathcal{C}=\{(c_j^{0}(a_1,\dots,a_n))_{j\in J}\cdot\omega:\omega\in I\}
\end{equation}
for some finite set $J$ and polynomials $c_j^0\in\Z[a_1,\dots,a_n]$,
\item $m=|I|$, so that the leading coefficients of elements of $\Q$ are all distinct,
\item we have
\[
  \max_{i\in I}\max_{\ul{a}\in A}\max_{y\in[M]}|Q_i|(\ul{a},y)\leq CN,
\]
\item and $f_i$ equals either $f$ or $\overline{f}$ for each $i\in I$.
\end{enumerate}
If
\[
\left|\E_{\ul{a}\in A}^\mu\f{1}{N}\sum_{x\in\Z}\E_{y\in[M]}f(x)\prod_{i\in I}f_i(x+Q_i(\ul{a},y))\right|\geq\gamma
\]
and $\gamma'\ll_{C,d,m}\gamma^{O_{d,m}(1)}$, then we have
\[
\E_{\ul{a}\in A'}^{\mu'}\f{1}{N}\sum_{x\in\Z}\E_{y\in[M]}f(x)\prod_{i'\in I'}f_{i'}'(x+Q_i'(\ul{a}',y))\gg_{C,d,m}\gamma^{O_{d,m}(1)},
\]
where
\begin{enumerate}
\item $I'=\{0,1\}^{\{(i,r):i\in I,r\in[k_i]\}}\sm\{\ul{0}\}$ for some $k_i\ll_{d,m} 1$ for each $i\in I$,
\item $A'=A\times ((-\gamma' M,\gamma'M)\cap\Z)^{\sum_{i\in I}k_i}$,
\item $\mu'(\ul{a},(a_{i,r})_{i\in I,r\in[k_i]})=\f{1_{A'}(\ul{a},(a_{i,r})_{i\in I,r\in[k_i]})}{|A|(2\lfloor\gamma' M\rfloor+1)^{\sum_{i\in I}k_i-1}}\mu_{\gamma'M}(a_{j,k_{j}})$ for some $j\in I$,
\item $\Q':=(Q_{i'}')_{i'\in I'}$ consists only of polynomials of degree $d-1$, each of which has distinct leading coefficient, and the set of such leading coefficients is
\[
\{(dc_i(a_1,\dots,a_n)a_{i,r})_{i\in I,r\in[k_i]}\cdot\omega:\omega\in I'\},
\]
\item we have
\[
\max_{i'\in I'}\max_{\ul{a}'\in A'}\max_{y\in[M]}|Q_{i'}'|(\ul{a}',y)\ll_{C,d,m} N,
\]
\item and $f_{i'}'$ equals either $f$ or $\overline{f}$ for every $i'\in I'$.
\end{enumerate}
\end{lemma}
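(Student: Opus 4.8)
The plan is to prove Lemma~\ref{lem4.8} by running the Bergelson--Leibman PET induction, applying Lemmas~\ref{lem4.4},~\ref{lem4.5}, and~\ref{lem4.6} in a sequence carefully tailored to the special form~\eqref{eq4.5} of $\mathcal C$, so as to lower the common degree $d$ of the polynomials in $\Q$ to $d-1$ while producing exactly the advertised structure. The induction is on $|J|$ (equivalently on $m=2^{|J|}-1$). In the base case $|J|=1$ the set $I$ is a singleton and $\Q$ is a single degree-$d$ polynomial $Q_i$, and one application of Lemma~\ref{lem4.5} (with $i_0$ the unique index) replaces it by one degree-$(d-1)$ polynomial, whose leading coefficient the binomial theorem identifies as $d\,c_i(\ul a)a_{i,1}$; this is the asserted form with $k_i=1$ and $I'=\{0,1\}^{\{(i,1)\}}\sm\{\ul 0\}$.

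For the general step I would iterate two moves. Since every polynomial of $\Q$ currently has degree $d$ and there are $r=|\mathcal C|>1$ distinct leading coefficients, Lemma~\ref{lem4.6} applies with a pivot $i_0$ chosen --- here exploiting the shape~\eqref{eq4.5} --- so that $\mathcal C$ becomes $\{c-c_{i_0}:c\in\mathcal C\}\sm\{\ul 0\}$ in such a way that the surviving top-degree leading coefficients still have the shape~\eqref{eq4.5}: this drops $r$ by one, doubles the multiplicity of each surviving leading coefficient, and spawns degree-$(d-1)$ ``junk'' polynomials whose leading coefficients are, by the binomial theorem, $d\,c_{i_0}(\ul a)$ times a fresh shift variable together with shifts of the degree-$(d-1)$ coefficients of those polynomials that shared the leading coefficient $c_{i_0}$. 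One then disposes of the junk by repeated use of Lemma~\ref{lem4.4}, which always pivots on a polynomial of smallest current degree, so the junk cascades down through degrees $d-2,d-3,\dots,1$ and finally $0$, where degree-$0$ polynomials are absorbed into the leading function exactly as in the proof of Lemma~\ref{lem4.7}; crucially, each such Lemma~\ref{lem4.4} step leaves the set of top-degree leading coefficients unchanged (conclusion~(5)) and alters the degree-$(d-1)$ coefficients of the top-degree polynomials only in a controlled way --- in particular, clearing a degree-$(d-1)$ junk polynomial subtracts its leading coefficient, of the form $d\,c_{i_0}(\ul a)(\text{shift})$, from those coefficients, which is the mechanism by which the \emph{original} coefficients $c_i(\ul a)$, rather than their differences, get recorded. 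Iterating ``one Lemma~\ref{lem4.6} step, then clear all junk by Lemma~\ref{lem4.4}'' brings $r$ down to $1$, leaving $k\ll_{d,m}1$ degree-$d$ polynomials with a single common leading coefficient (which one arranges, via the pivot choices, to be an original $c_{i^{\ast}}(\ul a)$) and pairwise distinct degree-$(d-1)$ coefficients; a final Lemma~\ref{lem4.5} step then lowers every degree to $d-1$, and its conclusions~(5)--(7) produce $2k-1$ degree-$(d-1)$ polynomials with distinct leading coefficients and no lower-degree terms. As elsewhere in the section, the change of variables $x\mapsto x-Q_{i_0}(\ul a,y)$ is performed inside each invocation of Lemma~\ref{lem4.2}, so the leading function of the conclusion is always $f_{i_0}$, equal to $f$ or $\ol f$.

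Three items of bookkeeping are propagated. First, the weight vector strictly decreases in the well-ordering $\prec$ at every step (by conclusion~(7) of whichever of Lemmas~\ref{lem4.4}--\ref{lem4.6} is applied), and by the usual PET count --- each step at most doubles the number of polynomials and the number of steps is governed by the initial weight vector --- the total number of van der Corput steps, hence every multiplicity $k_i$, is $\ll_{d,m}1$; correspondingly $I'$, $A'$, and $\mu'$ acquire the stated forms, with $\|\mu'\|_{\ell^1}\le 1$ and $\|\mu'\|_{\ell^2}^2\ll 1/|A'|$ maintained at each stage so the hypotheses of the next lemma hold. Second, the coefficient bound $\max_{i'}\max_{\ul a'}\max_{y\in[M]}|Q_{i'}'|(\ul a',y)\ll_{C,d,m}N$ propagates by the triangle inequality exactly as in Lemma~\ref{lem4.7}: each shift variable lies in $(-\gamma'M,\gamma'M)$, so a number of shifts and differences bounded in $(d,m)$, applied to polynomials with $|Q_i|(\ul a,y)\le CN$ on $A\times[M]$, keeps every coefficient within a bounded-in-$(d,m)$ multiple of $CN$. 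Third, every $f_{i'}'$ stays equal to $f$ or $\ol f$, since at each step the new functions are conjugates of old ones.

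I expect the main obstacle to be the design and verification of the pivot sequence: one must choose the pivots $i_0$ so that after each Lemma~\ref{lem4.6} step the surviving top-degree leading coefficients still have the shape~\eqref{eq4.5} (so the iteration can continue) and the last surviving one is an original $c_i(\ul a)$, while simultaneously arranging that the degree-$(d-1)$ coefficients of the degree-$d$ polynomials persisting to the final Lemma~\ref{lem4.5} step are exactly the distinct $\{0,1\}$-combinations of generators $d\,c_i(\ul a)a_{i,r}$ (up to a common absorbed additive shift), with pairwise distinct degree-$(d-1)$ coefficients maintained throughout so that no two leading coefficients collide at the end. Concretely, one tracks, for each original $Q_i$, the number $k_i$ of times a fresh shift variable is attached --- via a Lemma~\ref{lem4.4} step clearing junk of leading coefficient $d\,c_i(\ul a)(\text{shift})$ --- to a descendant of $Q_i$ surviving to the bottom, and checks that these contributions enter additively and independently across distinct $i$; conclusion~(5) of Lemma~\ref{lem4.5} then delivers leading coefficients of the asserted all-subsets form $\{(d\,c_i(\ul a)a_{i,r})_{i\in I,r\in[k_i]}\cdot\omega:\omega\in I'\}$ with $I'=\{0,1\}^{\{(i,r):i\in I,r\in[k_i]\}}\sm\{\ul 0\}$. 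Everything else is a routine iteration of the Cauchy--Schwarz and van der Corput inequalities already packaged in Lemmas~\ref{lem4.2}--\ref{lem4.6}.
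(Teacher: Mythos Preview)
Your overall strategy matches the paper's: iterate Lemma~\ref{lem4.6} once followed by enough applications of Lemma~\ref{lem4.4} to clear the resulting lower-degree junk, repeat this $m-1$ times to bring $r$ down to $1$, then apply Lemma~\ref{lem4.5} once. The bookkeeping of $I'$, $A'$, $\mu'$, the size bounds, and the fact that each $f_{i'}'$ is $f$ or $\overline f$ is as you describe.

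However, your account of the pivot design --- which you rightly flag as the main obstacle --- contains a misconception. The surviving top-degree leading coefficients do \emph{not} retain the shape~\eqref{eq4.5} after a Lemma~\ref{lem4.6} step: if the pivot has leading coefficient $c_1=(c_j^0)_j\cdot e_{j_1}$, then subtracting it from $c_k=(c_j^0)_j\cdot\omega_k$ gives a $-1$ coefficient on $c_{j_1}^0$ whenever $(\omega_k)_{j_1}=0$. The paper does not try to preserve~\eqref{eq4.5}. Instead it fixes an enumeration $c_1,\dots,c_m$ of $\mathcal C$ with $|\omega_k|$ nondecreasing (so $c_m=(c_j^0)_j\cdot\ul 1$), and after $j$ iterations simply records the surviving top-degree leading coefficients as $\{c_{j+1}-c_j,\dots,c_m-c_j\}$ and the degree-$(d-1)$ coefficients of the polynomials with leading coefficient $c_k-c_j$ as
\[
\mathcal C_j^{(k)}=\Big\{(c_k'-c_j')+\big(d(c_k-c_s)\,a_{s,r}\big)_{0\le s\le j,\ r\in[k_{s,j}]}\cdot\omega\Big\},
\]
the factor $c_k-c_s$ arising because at the stage when the shift $a_{s,r}$ was introduced the leading coefficient in question was $c_k-c_s$. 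The decisive observation --- replacing your ``maintain~\eqref{eq4.5}'' plan --- is that at the final stage $k=m$ the quantities $c_m-c_s$, as $s$ ranges over $0,\dots,m-1$ (with $c_0:=0$), run over \emph{exactly} the original $c_i$'s, because $c_m$ corresponds to $\ul 1$ and $\ul 1-\omega_s$ is a bijection of $\{0,1\}^J\setminus\{\ul 1\}$ with $\{0,1\}^J\setminus\{\ul 0\}$. This, not the junk-clearing subtraction you suggest, is why the original $c_i$ appear in the conclusion. The paper also orders the degree-$(d-1)$ pivots inside each junk-clearing phase (now by \emph{decreasing} $|\omega|$ within $\mathcal C_j^{(j+1)}$) so that the constant offset telescopes to $c_k'-c_{j+1}'$. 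Finally, the ``induction on $|J|$'' framing is not used; the paper runs a single $(m-1)$-round iteration.
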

\begin{proof}
The proof proceeds by applying Lemma~\ref{lem4.5} once after repeating the following $m-1$ times: apply Lemma~\ref{lem4.6} once, and then Lemma~\ref{lem4.4} as many times as necessary with careful choices of distinguished index $i_0$ to produce a bound in terms of an average over a polynomial progression involving only polynomials of degree $d$. Each repetition of this procedure reduces the number of distinct leading coefficients of polynomials of degree $d$ by one.

We first enumerate the elements $c_1,\dots,c_m$ of $\mathcal{C}$ by picking any ordering such that if $k\leq k'$, then $c_k(\ul{a})=(c_j^0(\ul{a}))_{j\in J}\cdot\omega$ and $c_{k'}(\ul{a})=(c_j^0(\ul{a}))_{j\in J}\cdot\omega'$ with $|\omega|\leq |\omega'|$. This means that $c_m(\ul{a})=\sum_{j\in J}c_j^0(\ul{a})$. Enumerate the elements $Q_1,\dots,Q_m$ of $\Q$ similarly, so that $Q_i$ has leading coefficient $c_i(\ul{a})$, and let $c_i'(\ul{a})$ denote the coefficient of the degree $d-1$ term of $Q_i$ for each $i=1,\dots,m$. Set $c_0(\ul{a}):=0$.

Let $I_0=[m]$, $A_0=A$, $\mu_0=\mu$, $\Q_0=\Q$, $\mathcal{C}_0=\mathcal{C}$, $\mathcal{C}_0^{(k)}=\{c_k'\}$ for each $k=1,\dots,m$, and $i_{0,0}=1$. We will show that applying Lemma~\ref{lem4.6} and then Lemma~\ref{lem4.4} repeatedly produces a sequence of $m-1$ finite sets $I_j$ and $A_j$, measures $\mu_j$ supported on $A_j$, sets $\Q_j=(Q_{i,j})_{i\in I_j}$ of degree $d$ polynomials with set of leading coefficients $\mathcal{C}_j$, sets $\mathcal{C}_j^{(k)}$ of the coefficients of the degree $d-1$ terms of polynomials in $\Q_j$ with leading coefficient $c_k-c_j$ for each $k=j+1,\dots,m$, and elements $i_{0,j}\in I_j$ satisfying
\begin{enumerate}
\item $I_j=\{j+1,\dots,m\}\times\{0,1\}^{\{(s,r):0\leq s\leq j,r\in[k_{s,j}]\}}$ for some $k_{s,j}\ll_{d,m}1$ for each $0\leq s\leq j$ and $j=1,\dots,m-1$, where $k_{0,j}=1$,
\item $A_j=A_{j-1}\times((-\gamma' M,\gamma'M)\cap\Z)^{k_{j,j}+1}$ for $j=1,\dots,m-1$,
\item $\mu_j(\ul{a},(a_{s,r})_{0\leq s\leq j,r\in[k_{s,j}]})=\f{1_{A_j}(\ul{a},(a_{s,r})_{0\leq s\leq j,r\in[k_{s,j}]})}{|A_{j-1}|(2\lfloor \gamma' M\rfloor+1)^{k_{j,j}}}\mu_{\gamma'M}(a_{j,k_{j,j}})$ for $j=1,\dots,m-1$,
\item $\mathcal{C}_j=\{c_{j+1}-c_j,\dots,c_m-c_j\}$ for $j=1,\dots,m-1$ and, for $i=(s,\omega)\in I_j$, the polynomial $Q_{i,j}\in\Q_j$ has leading coefficient $c_s-c_j$,
\item $\mathcal{C}^{(k)}_j=\{(c_k'-c_j')(\ul{a})+(d(c_k-c_s)(\ul{a})a_{s,r})_{0\leq s\leq j,r\in[k_{s,j}]}\cdot\omega:\omega\in\{0,1\}^{\{(s,r):0\leq s\leq j,r\in[k_{s,j}]\}}\}$ for each $k=j+1,\dots,m$ and $j=1,\dots,m-1$,
\item we have
\[
\max_{i\in I_j}\max_{\ul{a}_j\in A_j}\max_{y\in[M]}|Q_{i,j}|(\ul{a}_j,y)\ll_{C,d,j}N
\]
for $j=1,\dots,m-1$,
\item and $i_{0,j}\in I_j$ equals the index such that $Q_{i_{0,j},j}$ has leading coefficient $c_{j+1}-c_j$ and degree $d-1$ coefficient
\[
c_{j,0}'(\ul{a},(a_{s,r})_{0\leq s\leq j,r\in[k_{s,j}]}):=(c_{j+1}'-c_j')(\ul{a})+d\sum_{\substack{0\leq s\leq j \\ r\in[k_{s,j}]}}(c_{j+1}-c_s)(\ul{a})a_{s,r}
\]
for $j=1,\dots,m-2$, and $i_{0,m-1}\in I_{m-1}$ equals the index such that $Q_{i_{0,m-1},m-1}$ has degree $d-1$ coefficient $(c'_{m}-c'_{m-1})(\ul{a})$
\end{enumerate}
such that
\[
\E_{\ul{a}\in A_j}^{\mu_j}\f{1}{N}\sum_{x\in\Z}\E_{y\in[M]}f_{\ul{a},j}(x)\prod_{i\in I_j}f'_{i}(x+Q_{i}(\ul{a},y))\gg_{C,d,j}\gamma^{O_{d,j}(1)},
\]
where $f_{\ul{a},j}$ is $1$-bounded for each $\ul{a}\in A_j$ and $f'_i$ equals $f$ or $\overline{f}$ for each $i\in I_j$, provided that $\gamma'\ll_{C,d,m}\gamma^{O_{d,m}(1)}$. Before showing that such a sequence of sets, measures, and elements exist, note that if $\gamma'\ll_{C,d,m}\gamma^{O_{d,m}(1)}$, then the conclusion of the lemma follows from one application of Lemma~\ref{lem4.5} when $j=m-1$, for as $s$ ranges over $0\leq s\leq m-1$, the polynomials $c_{m}-c_{s}$ range over all of the $c_i$'s by the assumption~\eqref{eq4.5} and our choice of enumeration $c_1,\dots,c_m$.

It remains to prove that the above sequence exists. As was mentioned earlier, for each $j=1,\dots,m-1$ this will follow from one application of Lemma~\ref{lem4.6} and then repeated applications of Lemma~\ref{lem4.4}, as in the proof of Lemma~\ref{lem4.7}. Let us assume then that $I_j,A_j,\mu_j,\Q_j,\mathcal{C}_j,\mathcal{C}_j^{(k)}$ for $k=j+1,\dots,m$, and $i_{0,j}$ satisfying the above conditions exist for some $j=0,\dots,m-2$. We first apply Lemma~\ref{lem4.6}, which we may do assuming that $\gamma'\ll_{C,d,m}\gamma^{O_{d,m}(1)}$, to get that
\[
\E_{\ul{a}\in A_{j,0}}^{\mu_{j,0}}\f{1}{N}\sum_{x\in\Z}\E_{y\in[M]}f(x)\prod_{i\in I_{j,0}}f_i(x+Q_{i,j,0}(\ul{a},y))\gg_{C,d,j}\gamma^{O_{d,j}(1)},
\]
where
\begin{enumerate}
\item $I_{j,0}=(I_j\times\{0,1\})\sm\{(i_{0,j},0)\}$,
\item $A_{j,0}=A_j\times((-\gamma'M,\gamma'M)\cap\Z)$,
\item $\mu_{j,0}(\ul{a})=\f{1_{A_{j,0}}(\ul{a})}{|A_j|}\mu_{\gamma'M}(a_{j,0})$,
\item $\Q_{j,0}:=(Q_{i,j,0})_{i\in I_{j,0}}$ has set of leading coefficients of degree $d$ polynomials, $\mathcal{C}_{j+1}$
\item $\mathcal{C}_{j,0}^{(k)}$, the set of coefficients of the degree $d-1$ terms of the degree $d$ polynomials in $\Q_{j,0}$ with leading coefficient $c_k-c_{j+1}$, equals
\begin{align*}
\bigg\{(c_k'-c_{j+1}')(\ul{a})-d\sum_{\substack{0\leq s\leq j \\ r\in[k_{s,j}]}}(c_{j+1}-c_s)(\ul{a})a_{s,r}+(d(c_k-c_s)&(\ul{a})a_{s,r})_{0\leq s\leq j,r\in[k_{s,j,0}]}\cdot\omega \\
&:\omega\in\{0,1\}^{\{(s,r):0\leq s\leq j,r\in[k_{s,j,0}]\}}\bigg\}
\end{align*}
for all $k=j+2,\dots,m$, where $k_{s,j,0}=k_{s,j}$ when $s<j$ and $k_{j,j,0}=k_{j,j}+1$,
\item we have
\[
\max_{i\in I_{j,0}}\max_{\ul{a}\in A_{j,0}}\max_{y\in[M]}|Q_{i,j,0}|(\ul{a},y)\ll_{C,d,j}N,
\]
\item and $f_i$ equals either $f$ or $\overline{f}$ for all $i\in I_{j,0}$.
\end{enumerate}
Let $\Q_{j,0}'$ denote the subset of $\Q_{j,0}$ consisting of polynomials of degree $d-1$. By our assumptions on $\Q_j$, the set of leading coefficients of elements of $\Q_{j,0}'$ is
\begin{align*}
\mathcal{C}_{j,0}':=&\{ c-c_{j,0}': c\in \mathcal{C}_j^{(j+1)}\}\sm\{0\} \\
=&\{(d(c_{j+1}-c_s)(\ul{a})a_{s,r})_{0\leq s\leq j,r\in[k_{s,j}]}\cdot(\omega-\ul{1}):\omega\in\{0,1\}^{\{(s,r):0\leq s\leq j,r\in[k_{s,j}]\}}\sm\{\ul{1}\}\}.
\end{align*}
Note that if $Q_i\in\Q_{j,0}'$, then $i$ has the form $i=(j+1,\omega)\in I_{j,0}$.

Next, we set $m':=|\mathcal{C}_{j}^{(j+1)}\sm\{c_{j,0}'\}|$ and enumerate the elements $c_{j,1}',\dots,c_{j,m'}'$ of $\mathcal{C}_{j}^{(j+1)}\sm\{c_{j,0}'\}$ by picking any ordering such that if $k\leq k'$, then
\[
c'_{j,k}(\ul{a},(a_{s,r})_{0\leq s\leq j,r\in[k_{s,j}]})=(c_{j+1}'-c_j')(\ul{a})+(d(c_{j+1}-c_s)(\ul{a})a_{s,r})_{0\leq s\leq j,r\in[k_{s,j}]}\cdot\omega
\]
and
\[
c'_{j,k'}(\ul{a},(a_{s,r})_{0\leq s\leq j,r\in[k_{s,j}]})=(c_{j+1}'-c_j')(\ul{a})+(d(c_{j+1}-c_s)(\ul{a})a_{s,r})_{0\leq s\leq j,r\in[k_{s,j}]}\cdot\omega'
\]
with $|\omega|\geq|\omega'|$ (note that this inequality goes in the opposite direction of the one used for the enumeration of elements of $\mathcal{C}$). This means that $c'_{j,m'}=c_{j+1}'-c_j'$.

Finally, to verify that we can indeed apply Lemma~\ref{lem4.4} repeatedly as in the proof of Lemma~\ref{lem4.7}, we note that if $K$ is any finite set, $B=((-\gamma' M,\gamma' M)\cap\Z)^u$ with $u\in\N$ and $0<\gamma'\leq 1$, $P_k\in\Z[b_1,\dots,b_u][y]$ for each $k\in K$ is a polynomial of degree at most $d$,
\[
\max_{k\in K}\max_{\ul{b}\in B}\max_{y\in [M]}|P_k|(\ul{b},y)\leq DN,
\]
and $k_0\in K$, then
\[
\max_{(k,\epsilon)\in(K\times\{0,1\})\sm\{(k_0,0)\}}\max_{\ul{b}\in B\times ((-\gamma'M,\gamma' M)\cap\Z)}\max_{y\in[M]}|P'_{k,\epsilon}|(\ul{b},y)\ll_d DN,
\]
where $P_{k,\epsilon}'(\ul{b},y):=P_k(b_1,\dots,b_u,y+\epsilon b_{u+1})-P_{k_0}(b_1,\dots,b_u,y)$. To see this, just note that
\[
|P'_{k,\epsilon}|(\ul{b},y)\leq |P_k|(b_1,\dots,b_u,y+\epsilon b_{u+1})+|P_{k_0}|(b_1,\dots,b_u,y)\leq |P_k|(b_1,\dots,b_u,y+\epsilon b_{u+1})+DN
\]
and
\[
|P_k|(b_1,\dots,b_u,y+\epsilon b_{u+1})\leq|P_k|(b_1,\dots,b_u,2M)\leq 2^dDN
\]
for all $\ul{b}\in B\times((-\gamma'M,\gamma'M)\cap\Z)$ and $y\in[M]$.

We now assume that $\gamma'\ll_{C,d,m}\gamma^{O_{d,m}(1)}$ and apply Lemma~\ref{lem4.4} repeatedly ($t_{j'}\ll_{d,m}1$ times for each $j'$) to produce a sequence of $m'$ finite sets $I_{j,j'}$ and $A_{j,j'}$, measures $\mu_{j,j'}$ supported on $A_{j,j'}$, and sets of polynomials $\Q_{j,j'}$ and $\Q_{j,j'}'$ satisfying
\begin{enumerate}
\item $I_{j,j'}=(I_{j,j'-1}\sm \{i\in I_{j,j'-1}:Q_{i,j,j'-1}\in\Q_{j,j'-1}\text{ and }Q_{i,j,j'-1}\text{ has leading coefficient }c_{j,j'}'-c_{j,j'-1}'\})\times\{0,1\}^{t_{j'}}$ for some $t_{j'}\ll_{d,m}1$ for $j'=1,\dots,m'$, 
\item $A_{j,j'}=A_{j,j'-1}\times ((-\gamma'M,\gamma'M)\cap\Z)^{t_{j'}}$ for $j'=1,\dots,m'$,
\item $\mu_{j,j'}(\ul{a},(a_{s,r})_{0\leq s\leq j+1,r\in[k_{s,j,j'}]})=\f{1_{A_{j,j'}}(\ul{a},(a_{s,r})_{0\leq s\leq j+1,r\in[k_{s,j,j'}]})}{|A_{j,j'-1}|(2\lfloor\gamma' M\rfloor+1)^{t_{j'}-1}}\mu_{\gamma'M}(a_{j+1,k_{s,j,j'}})$, where $k_{s,j,j'}=k_{s,j}$ for $s<j$, $k_{j,j,j'}=k_{j,j}+1$, and $k_{j+1,j,j'}=k_{j+1,j,j'-1}+O_{d,m}(1)$ for $j'=1,\dots,m'$,
\item  $\Q_{j,j'}'$ consists of all degree $d-1$ polynomials in $\Q_{j,j'}$
\item the set of leading coefficients of degree $d$ polynomials in $\Q_{j,j'}$ is $\mathcal{C}_{j+1}$,
\item $\Q_{j,j'}'$ has set of leading coefficients $\mathcal{C}_{j,j'}'$,
\item $\Q_{j,j'}$ has set of coefficients of degree $d-1$ terms of polynomials of degree $d$ with leading coefficient $c_k-c_{j+1}$ equal to $\mathcal{C}_{j,j'}^{(k)}$ for each $k=j+2,\dots,m$,
\item $\mathcal{C}_{j,j'}^{(k)}$ is equal to
\[
\{c'_{k}-c_{j}'-c'_{j,j'}+(d(c_k-c_s)(a_1,\dots,a_n)a_{s,r})_{0\leq s\leq j+1,r\in[k_{s,j,j'}]}\cdot\omega:\omega\in\{0,1\}^{\{(s,r):0\leq s\leq j+1,r\in[k_{s,j,j'}]\}}\}
\]
for all $k=j+2,\dots,m$ and $j'=1,\dots,m'$,
\item and
\[
\max_{i\in I_{j,j'}}\max_{\ul{a}\in A_{j,j'}}\max_{y\in[M]}|Q_{i,j,j'}|(\ul{a},y)\ll_{C,d,j+1} N
\]
\end{enumerate}
such that
\[
\E_{\ul{a}\in A_{j,j'}}^{\mu_{j,j'}}\f{1}{N}\sum_{x\in\Z}\E_{y\in[M]}f_{\ul{a},j,j'}(x)\prod_{i\in I_{j,j'}}f_i(x+Q_{i,j,j'}(\ul{a},y))\gg_{C,d,j+1}\gamma^{O_{d,j+1}(1)},
\]
where $f_{\ul{a},j,j'}$ is $1$-bounded for every $\ul{a}\in A_{j,j'}$ and $f_i$ equals either $f$ or $\overline{f}$ for every $i\in I_{j,j'}$, by picking $i_0$ corresponding to elements of $\Q'_{j,j'-1}$ with leading coefficient equal to $c_{j',j}'-c_{j'-1,j}'$ for each application of Lemma~\ref{lem4.4}. We then take $I_{j+1}=I_{j,m'}$, $A_{j+1}=A_{j,m'}$, $\mu_{j+1}=\mu_{j,m'}$, and $\Q_{j+1}=\Q_{j,m'}$.
\end{proof}

Continuing the example from after Lemma~\ref{lem4.7}, Lemma~\ref{lem4.8} may be used to control an average over the progression $x,x+3a_1y^2+3a_1^2y,x+3a_2y^2+3a_2^2y,x+3(a_1+a_2)y^2+3(a_1^2+a_2^2+2a_1a_2)y$ in terms of an average over progressions of the form
\begin{equation}\label{eq4.6}
(x+[(6(a_1+a_2)b_1,6a_1b_2,6a_1b_3,6a_2b_4,\cdots,6a_2b_{11},)\cdot\omega]y)_{\omega\in\{0,1\}^{11}}.
\end{equation}

Lemmas~\ref{lem4.7} and~\ref{lem4.8} combined show that $\Lambda_{P_1,\dots,P_\ell}^{N,M}(f_0,\dots,f_\ell)$ is controlled by an average of averages over the linear progression $(x+p(\ul{a})y)_{p\in\A_{\ell-1}\cup\{0\}}$.
\begin{lemma}\label{lem4.9}
Let $N,M>0$ and $P_1,\dots,P_\ell\in\Z[y]$ be polynomials with $(C,q)$-coefficients such that $\deg{P_i}=i$ for $i=1,\dots,\ell$ and $P_\ell$ has leading coefficient $c_\ell$. Let $I_j$ and $\A_j$ for $j=0,\dots,\ell-1$ be defined as in Section~\ref{sec3} with $c_\ell$ playing the role of $c$. There exist $k_i\ll_{\ell}1$ for all $i\in I_j$ and $j=0,\dots,\ell-2$ such that the following holds. If $1/C\leq q^{\ell-1}M^\ell/N\leq C$, $f_0,\dots,f_\ell:\Z\to\C$ are $1$-bounded functions supported on $[N]$,
\[
\left|\Lambda_{P_1,\dots,P_\ell}^{N,M}(f_0,\dots,f_\ell)\right|\geq\gamma,
\]
and $\gamma'\ll_{C,\ell}\gamma^{O_\ell(1)}$, then we have
\[
\E_{\ul{a}\in A}^\mu\f{1}{N}\sum_{x\in\Z}\E_{y\in[M]}f_\ell(x)\prod_{i\in I_{\ell-1}}f_i'(x+L_i(\ul{a},y))\gg_{C,\ell}\gamma^{O_\ell(1)},
\]
where
\begin{enumerate}
\item $A=((-\gamma' M,\gamma' M)\cap\Z)^{\sum_{j=1}^{\ell-2}\sum_{i\in I_{j}}k_i}$,
\item $\mu((a_{i,r}^{(j)})_{0\leq j\leq \ell,i\in I_{j-1},r\in[k_i]})=\f{1_{A}(\ul{a})}{(2\lfloor\gamma' M\rfloor+1)^{-1+\sum_{j=0}^{\ell-2}\sum_{i\in I_{j}}k_i}}\mu_{\gamma'M}(a_{i,k_{i}}^{(\ell-1)})$ for some $i\in I_{\ell-2}$,
\item $L_i\in\Z[\ul{a}][y]$ is a linear (in $y$) polynomial with leading coefficient equal to $p_i(\ul{a})\in \A_{\ell-1}$ for all $i\in I_{\ell-1}$,
\item we have
\[
\max_{i\in I_{\ell-1}}\max_{\ul{a}\in A}\max_{y\in[M]}|L_i|(\ul{a},y)\ll_{C,\ell}N,
\]
\item and $f_i'$ equals $f_\ell$ or $\overline{f_\ell}$ for all $i\in I_{\ell-1}$.
\end{enumerate}
\end{lemma}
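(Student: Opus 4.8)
The plan is to deduce Lemma~\ref{lem4.9} from one application of Lemma~\ref{lem4.7} followed by $\ell-2$ successive applications of Lemma~\ref{lem4.8} (we take $\ell\geq2$, as is in any case needed to invoke Lemma~\ref{lem4.7}). The recursive definitions of the index sets $I_j$ and the polynomial sets $\A_j$ from Section~\ref{sec3} are designed to be exactly the transformation effected by Lemma~\ref{lem4.8}: it passes from an average over degree $\ell-j$ polynomials indexed by $I_j$ whose leading coefficients run over $\A_j$ to an average over degree $\ell-j-1$ polynomials indexed by $I_{j+1}$ whose leading coefficients run over $\A_{j+1}$, the factor $\ell-j$ appearing in the definition of $\A_{j+1}$ being precisely the factor picked up from differentiating $y^{\ell-j}$. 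Iterating this until the degree drops to $\ell-(\ell-1)=1$ then yields an average over linear polynomials indexed by $I_{\ell-1}$ with leading coefficients running over $\A_{\ell-1}$, which is the assertion. (For $\ell=2$ only the single application of Lemma~\ref{lem4.7} is needed, since $\A_1$ already consists of linear polynomials.)

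First I would invoke Lemma~\ref{lem4.7}, whose hypotheses coincide with those of Lemma~\ref{lem4.9}. For $\gamma'\ll_{C,\ell}\gamma^{O_\ell(1)}$ it produces, with lower bound $\gg_{C,\ell}\gamma^{O_\ell(1)}$, an average $\E^\mu_{\ul a}\f1N\sum_{x\in\Z}\E_{y\in[M]}f_\ell(x)\prod_{i\in I}f_i'(x+Q_i(\ul a,y))$ in which $\Q=(Q_i)_{i\in I}$ is a family of degree $\ell-1$ polynomials with pairwise distinct leading coefficients, $I=\{0,1\}^{k_0}\sm\{\ul{0}\}=I_1(k_0)$ for some $k_0\ll_\ell1$, the set of leading coefficients is exactly $\A_1(\ell,c_\ell;k_0)$ after relabelling the auxiliary variables $a_1,\dots,a_{k_0}$ as $a_{0,1}^{(1)},\dots,a_{0,k_0}^{(1)}$, the weight satisfies $\|\mu\|_{\ell^1}\leq1$ and $\|\mu\|_{\ell^2}^2\ll\f1{|A|}$ (as recorded inside that proof), one has $\max_{i}\max_{\ul a}\max_{y\in[M]}|Q_i|(\ul a,y)\ll_{C,\ell}N$, each $f_i'$ equals $f_\ell$ or $\overline{f_\ell}$, and $A\subset([-M,M]\cap\Z)^{k_0}$ because $\gamma'\leq1$. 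This is an admissible input for Lemma~\ref{lem4.8} with $d=\ell-1$.

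Next I would iterate: for $j=1,\dots,\ell-2$, assuming inductively an average of the above shape over degree $\ell-j$ polynomials indexed by $I_j$ with leading coefficients running over $\A_j$, I apply Lemma~\ref{lem4.8}. Its conclusion supplies integers $k_i\ll_\ell1$ for $i\in I_j$ (using $|I_j|\ll_\ell1$ and degree $\leq\ell$) and an average over degree $\ell-j-1$ polynomials indexed by $I'=\{0,1\}^{\{(i,r):i\in I_j,\,r\in[k_i]\}}\sm\{\ul{0}\}=I_{j+1}$ with set of leading coefficients $\{((\ell-j)p_i a_{i,r})_{i\in I_j,\,r\in[k_i]}\cdot\omega:\omega\in I_{j+1}\}$, which is exactly $\A_{j+1}(\ell,c_\ell;(k_i)_{i\in I_j})$ after renaming the new variables $a_{i,r}^{(j+1)}$. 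One verifies inductively that $|I_{j+1}|\ll_\ell1$; that the new weight again obeys $\|\mu\|_{\ell^1}\leq1$ and $\|\mu\|_{\ell^2}^2\ll\f1{|A|}$, directly from the formula in conclusion~(3) of Lemma~\ref{lem4.8}; that $\max_i\max_{\ul a}\max_{y\in[M]}|Q_i|(\ul a,y)\ll_{C,\ell}N$ is preserved, the implied constant being multiplied by an $O_\ell(1)$ factor at each step (as in the $2^dDN$ estimate in the proof of Lemma~\ref{lem4.8}); that the function sitting at $x$ stays $f_\ell$ while every other function is $f_\ell$ or $\overline{f_\ell}$; and that $A$ remains inside a box $([-M,M]\cap\Z)^n$ since $\gamma'\leq1$. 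Because there are only $\ell-2\ll_\ell1$ steps, each costing an $O_\ell(1)$ power of the running parameter and requiring $\gamma'$ to be at most a fixed power of $\gamma$, the final lower bound is still $\gg_{C,\ell}\gamma^{O_\ell(1)}$ for $\gamma'\ll_{C,\ell}\gamma^{O_\ell(1)}$; and after the last step the polynomials $L_i$ are linear with leading coefficients exactly $\A_{\ell-1}$, which is the lemma.

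The main difficulty is not any individual inequality but the \emph{bookkeeping}: one must check that the recursion $I_j,\A_j\mapsto I_{j+1},\A_{j+1}$ of Section~\ref{sec3} literally is the input--output relation of Lemma~\ref{lem4.8} (matching both the combinatorial structure of the index sets and the precise shape of the coefficient sets, including the $\ell-j$ factor), and that the four pieces of auxiliary data carried along---the number of polynomials, the $\ell^1$- and $\ell^2$-sizes of the weight, the sup-norm bound on the polynomials over $A\times[M]$, and the $f_\ell$/$\overline{f_\ell}$-shape of the functions---all stay under control across all $\ell-2$ steps with constants depending only on $C$ and $\ell$. The tracking of the parameter $\gamma$ is then routine, and the only hypothesis used beyond $1$-boundedness is $1/C\leq q^{\ell-1}M^\ell/N\leq C$, needed exactly as in Lemma~\ref{lem4.7} to bound $|P_i|$ on $[M]$ and so to initialize the sup-norm bound on the $Q_i$'s.
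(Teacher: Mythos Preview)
Your proposal is correct and follows exactly the same approach as the paper: the paper's entire proof reads ``Apply Lemma~\ref{lem4.7} once and then Lemma~\ref{lem4.8} $(\ell-2)$ times.'' Your write-up simply spells out the bookkeeping that this one-line proof leaves implicit, in particular that the input--output relation of Lemma~\ref{lem4.8} matches the recursion $I_j,\A_j\mapsto I_{j+1},\A_{j+1}$ (including the factor $d=\ell-j$) and that the auxiliary hypotheses on the weight, the sup-norm bound, and the shape of the functions are preserved at each step with $O_\ell(1)$ losses.
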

\begin{proof}
Apply Lemma~\ref{lem4.7} once and then Lemma~\ref{lem4.8} $(\ell-2)$ times.
\end{proof}

Controlling the averages of linear progressions appearing in Lemma~\ref{lem4.9} by Gowers box norms is standard, and just requires $|I_{\ell-1}|-1$ more applications of the Cauchy--Schwarz and van der Corput inequalities.
\begin{lemma}\label{lem4.10}
Let $N,M>0$, $L_1,\dots,L_m\in\Z[y]$ be linear polynomials with zero constant term such that $L_i$ has leading coefficient $c_i$, and $f_0,\dots,f_m:\Z\to\C$ be $1$-bounded functions supported on the interval $[N]$. Assume further that
\[
\max_{i=1,\dots,m}\max_{y\in[M]}|L_i|(y)\leq CN.
\]
If
\[
\left|\Lambda_{L_1,\dots,L_m}^{N,M}(f_0,\dots,f_m)\right|\geq\gamma,
\]
and $\gamma'\ll_{C,m}\gamma^{O_m(1)}$, then we have
\[
\|f_m\|_{\square^m_{Q_0,\dots,Q_{m-1}}([N])}\gg_m\gamma^{O_m(1)},
\]
where $Q_0=c_m[\gamma' M]$ and $Q_i=(c_m-c_i)[\gamma'M]$ for $i=1,\dots,m-1$.
\end{lemma}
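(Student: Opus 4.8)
The plan is to strip off $f_0,f_1,\dots,f_{m-1}$ one at a time by iterated Cauchy--Schwarz and van der Corput, so that what remains is $f_m$ differenced $m$ times --- once against each of $f_0,\dots,f_{m-1}$ --- in precisely the directions $c_m$ and $c_m-c_1,\dots,c_m-c_{m-1}$, each at scale $\gamma'M$. This is the standard reduction of a linear average to a box norm, and the only real content is the bookkeeping needed to land on exactly the box norm in the statement. First I would apply Cauchy--Schwarz in the $x$-variable against $f_0$ (which is $1$-bounded and supported on $[N]$) to replace the hypothesis $|\Lambda^{N,M}_{L_1,\dots,L_m}(f_0,\dots,f_m)|\geq\gamma$ by
\[
\f{1}{N}\sum_{x\in\Z}\Bigl|\E_{y\in[M]}\prod_{i=1}^m f_i(x+c_iy)\Bigr|^2\geq\gamma^2 .
\]

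I would then run an elimination loop. At each stage one has an inequality of the shape $\E^\mu_{\ul h}\f{1}{N}\sum_{x\in\Z}|\E_{y\in[M]}\prod_i G^{\ul h}_i(x+d_iy)|^2\geq\gamma_0$, where $\ul h$ is a tuple of shift variables averaged with $\|\mu\|_{\ell^1}=1$, each $G^{\ul h}_i$ is $1$-bounded and supported on an interval of length $\ll_C N$, and the $d_i$ are the current slopes (the initial instance being the display above, with $\ul h$ empty, $G_i=f_i$ and $d_i=c_i$). Applying van der Corput (Lemma~\ref{lem4.1}) to the inner average with $H=\gamma'M$ introduces a new variable $h$ distributed as $\mu_{\gamma'M}$; after extending the truncated $y$-sum back to $[M]$ at a cost of $O(C\gamma')$, exactly as in the proof of Lemma~\ref{lem4.2}, and collapsing each conjugate pair into the $1$-bounded function $g^{\ul h,h}_i(z):=\Delta'_{(d_ih,0)}G^{\ul h}_i(z)$, the left-hand side becomes $\E^{\mu'}_{\ul h,h}\f{1}{N}\sum_{x\in\Z}\E_{y\in[M]}\prod_i g^{\ul h,h}_i(x+d_iy)\gg\gamma_0$. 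Translating $x\mapsto x-d_1y$ makes $g^{\ul h,h}_1$ independent of $y$, and a Cauchy--Schwarz in the joint variable $(x,\ul h,h)$ against this factor --- valid because its $\ell^1$-mass is $\ll 1$, its support having length $\leq N+|d_1|\gamma'M\ll N$ once $\gamma'\ll_{C,m}1$ --- discards one function and restores the same shape, now with slopes $d_i-d_1$ and lower bound $\gg\gamma_0^2$. Running this $m-1$ times, at the $k$-th pass translating away the function built from $f_k$, eliminates $f_1,\dots,f_{m-1}$ and leaves a single function $G^{\ul h}$ at slope $c_m-c_{m-1}$ with lower bound $\gg\gamma^{2^m}$; tracking the slopes through the loop shows $G^{h_1,\dots,h_{m-1}}=\Delta'_{(c_mh_1,0),((c_m-c_1)h_2,0),\dots,((c_m-c_{m-2})h_{m-1},0)}f_m$. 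A final van der Corput in $y$ with $H=\gamma'M$, followed by $x\mapsto x-(c_m-c_{m-1})y$, then yields
\[
\f{1}{N}\sum_{x\in\Z}\E_{h_1,\dots,h_m}\Bigl[\prod_{k=1}^m\mu_{\gamma'M}(h_k)\Bigr]\Delta'_{(c_mh_1,0),((c_m-c_1)h_2,0),\dots,((c_m-c_{m-1})h_m,0)}f_m(x)\gg\gamma^{2^m}
\]
once the accumulated $O(C\gamma')$ errors are absorbed, which is legitimate as soon as $\gamma'\ll_{C,m}\gamma^{O_m(1)}$.

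It remains to recognise the left-hand side of the last display as $\|f_m\|^{2^m}_{\square^m_{Q_0,\dots,Q_{m-1}}([N])}$ with $Q_0=c_m[\gamma'M]$ and $Q_i=(c_m-c_i)[\gamma'M]$ for $i=1,\dots,m-1$. Expanding the definition of the box norm and translating $x$ by $-(u_0'+\dots+u_{m-1}')$ converts $\E_{u_j,u_j'\in Q_j}\Delta'_{(u_0,u_0'),\dots,(u_{m-1},u_{m-1}')}f_m(x)$ into $\E_{u_j,u_j'\in Q_j}\Delta'_{(u_0-u_0',0),\dots,(u_{m-1}-u_{m-1}',0)}f_m(x)$, and since $u_0-u_0'=c_m(j-j')$ with $j,j'\in[\gamma'M]$ while $u_j-u_j'=(c_m-c_j)(j-j')$ with $j,j'\in[\gamma'M]$ for $1\leq j\leq m-1$, this coincides with the displayed average by the definition of $\mu_{\gamma'M}$. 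Taking $2^m$-th roots gives $\|f_m\|_{\square^m_{Q_0,\dots,Q_{m-1}}([N])}\gg_m\gamma$, which is stronger than the asserted bound.

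The main obstacle is purely organisational. One must verify at every stage of the loop that the functions stay $1$-bounded and supported on intervals of length $\ll_C N$, so that the Cauchy--Schwarz mass bounds and the van der Corput truncation errors all remain of size $O(C\gamma')$ and get absorbed; and one must keep the slopes straight so that the shift parameters fed into $f_m$ emerge as $c_mh_1,(c_m-c_1)h_2,\dots,(c_m-c_{m-1})h_m$ in exactly that order rather than some permutation. Choosing, at the step that removes $f_j$, to translate by the slope of the function built from $f_j$ --- equivalently, running the Cauchy--Schwarz/van der Corput steps so that the distinguished index cycles through $1,\dots,m-1$ --- is what makes the differencing directions match $Q_0,\dots,Q_{m-1}$ on the nose.
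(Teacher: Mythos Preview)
Your proof is correct and follows essentially the same route as the paper's: iterated Cauchy--Schwarz and van der Corput (packaged in the paper as repeated applications of Lemma~\ref{lem4.2}) to strip off $f_0,f_1,\dots,f_{m-1}$ in turn, accumulating on $f_m$ precisely the differencing directions $c_m,(c_m-c_1),\dots,(c_m-c_{m-1})$ at scale $\gamma'M$. The only cosmetic difference is that the paper unravels $\mu_{\gamma'M}$ into a pair $(h_j,h_j')\in[\gamma'M]^2$ at each step (via the change of variables $y\mapsto y+h_j'$), whereas you keep the single weighted variable $h_j\sim\mu_{\gamma'M}$ throughout and perform the identification with the box norm only at the very end.
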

\begin{proof}
This will follow from $m-1$ applications of Lemma~\ref{lem4.2}, but applied in a slightly different manner than in the proofs of the other lemmas in this section. When $\gamma'\ll_C\gamma^2$ we have, by Lemma~\ref{lem4.2}, that
\[
\E_{h_0,h_0'\in [\gamma'M]}\f{1}{N}\sum_{x}\E_{y\in[M]}\Delta_{c_1h_0,c_1h_0'}'f_1(x)\prod_{i=2}^m\Delta_{c_ih_0,c_ih_0'}'f_i(x+(L_i-L_1)(y))\gg\gamma^2
\]
by unraveling the definition of $\mu_{\gamma' M}$ and making the change of variables $y\mapsto y+h_0'$. Next, we apply Lemma~\ref{lem4.2} again to the quantity inside of the average $\E_{h_0,h_0'\in [\gamma'M]}$ above and then use the Cauchy--Schwarz inequality (instead of applying Lemma~\ref{lem4.2} to the entire quantity in the left-hand side above, as we did before). Repeating this $m-2$ more times yields the conclusion of the lemma, since $L_i-L_j$ has leading coefficient $c_i-c_j$ for all $i,j\in[m]$.
\end{proof}
Finishing our example, we see that Lemma~\ref{lem4.10} can be used to control~\eqref{eq4.6}, and thus the progression $x,x+y,x+y^3$, in terms of an average over $a_1,a_2,b_1,\dots,b_{11}$ of the norm $\|\cdot\|_{\square^{2^{11}-1}_{(Q_\omega)_{\ul{0}\neq\omega\in\{0,1\}^{11}}}([N])}$, where
\[
Q_\omega=((6(a_1+a_2)b_1,6a_1b_2,6a_1b_3,6a_2b_4,\cdots,6a_2b_{11},)\cdot\omega)[\gamma'M]
\]
for each nonzero $\omega\in\{0,1\}^{11}$.

Now we can prove Proposition~\ref{prop3.4}.
\begin{proof}[Proof of Proposition~\ref{prop3.4}]
By Lemma~\ref{lem4.9}, we have that
\[
\E_{\ul{a}\in A}^\mu\f{1}{N}\sum_{x\in\Z}\E_{y\in[M]}f_\ell(x)\prod_{i\in I_{\ell-1}}f_i'(x+L_i(\ul{a},y))\gg_{\ell,C}\delta^{O_\ell(1)}
\]
when $\delta'\ll_{C,\ell}\delta^{O_\ell(1)}$, where $A$, $I_{\ell-1}$, $\A_{\ell-1}$, $f_i'$ for $i\in I_{\ell-1}$, and $L_i$ for $i\in I_{\ell-1}$ are as in the conclusion of Lemma~\ref{lem4.9}.

Set $m:=|I_{\ell-1}|$ and enumerate the elements $p_1,\dots,p_{m}$ of $\A_{\ell-1}$ by picking any ordering such that if $k\leq k'$, then $p_k=(p_i(\ul{a})a_{i,r}^{(\ell-1)})_{i\in I_{\ell-2},r\in[k_i]}\cdot\omega$ and $p_{k'}=(p_i(\ul{a})a_{i,r}^{(\ell-1)})_{i\in I_{\ell-2},r\in[k_i]}\cdot\omega'$ with $|\omega|\leq |\omega'|$. This means that $p_{m}=\sum_{i\in I_{\ell-2},r\in[k_i]}p_i(\ul{a})a_{i,r}^{(\ell-1)}$. Enumerate the $L_k$'s in the same manner, so that $L_k$ has leading coefficient $p_k$. Denote the constant term of $L_k$ by $p'_k$ for each $k\in[m]$ as well.

We now apply Lemma~\ref{lem4.2} once to deduce that
\[
\E_{\substack{\ul{a}\in A \\ h_0,h_0'\in [\delta' M]}}\Lambda_{p_{2}(\ul{a})y,\dots,p_{m}(\ul{a})y}^{N,M}(T_{p_1'(\ul{a})}\Delta_{p_1(\ul{a})(h_0,h_0')}'f_1'(x),\dots,T_{p_{m}'(\ul{a})}\Delta_{p_m(\ul{a})(h_0,h_0')}'f_m'(x))\gg_{C,\ell}\delta^{O_\ell(1)},
\]
assuming that $\delta'\ll_{C,\ell}\delta^{O_\ell(1)}$. We now apply, for each fixed $\ul{a}\in A$ and $(h_0,h_0')\in[\delta'M]^2$, Lemma~\ref{lem4.10} to $\Lambda_{p_{2}(\ul{a})y,\dots,p_{m}(\ul{a})y}^{N,M}(T_{p_1'(\ul{a})}\Delta_{p_1(\ul{a})(h_0,h_0')}'f_1'(x),\dots,T_{p_{m}'(\ul{a})}\Delta_{p_m(\ul{a})(h_0,h_0')}'f_m'(x))$ to get that
\[
\E_{\ul{a}\in A}\|T_{p_{m}'(\ul{a})}f_\ell\|_{\square^{|I_{\ell-1}|}_{(p(\ul{a})[\delta' M])_{p\in\A_{\ell-1}}}([N])}\gg_{C,\ell}\delta^{O_\ell(1)},
\]
again assuming that $\delta'\ll_{C,\ell}\delta^{O_\ell(1)}$ and recalling our choice of enumeration of elements of $\A_{\ell-1}$. To conclude, we note that $\|T_{p_{m}'(\ul{a})}f_\ell\|_{\square^{|I_{\ell-1}|}_{(p(\ul{a})[\delta' M])_{p\in\A_{\ell-1}}}([N])}=\|f_\ell\|_{\square^{|I_{\ell-1}|}_{(p(\ul{a})[\delta' M])_{p\in\A_{\ell-1}}}([N])}$ for each $\ul{a}\in A$ by making the change of variables $x\mapsto x-p_m'(\ul{a})$ inside of the definition of the Gowers box norm.
\end{proof}

\section{Concatenation}\label{sec5}

The main ingredient in the proof of Theorem~\ref{thm3.5} is the following result, whose proof will occupy the first part of this section.
\begin{lemma}\label{lem5.1}
Let $N,M_1,M_2>0$ with $M_2\leq M_1$ and $M_1M_2\leq N/|c|$, $b_1,\dots,b_s\in\Z$, and $f:\Z\to\C$ be a $1$-bounded function supported on the interval $[N]$. If $\gcd(a+b_i,a+b_j)\ll_s1/\gamma''$ for all distinct $i,j\in[s]$ and $|a+b_1|\geq\gamma''M_1$ for all but a $O_s(\gamma'')$ proportion of $a\in[M_1]$,
\[
\E_{a\in[M_1]}\|f\|_{\square^s_{(c(a+b_i)[M_2])_{i=1}^{s}}([N])}\geq\gamma,
\]
and $\gamma',\gamma''\ll_{s}\gamma^{O_s(1)}$, then there exists an $s'\ll_s 1$ such that
\[
\|f\|_{U^{s'}_{c[\gamma'M_1M_2]}([N])}\gg_s\gamma^{O_s(1)},
\]
provided that $M_1M_2\gg_{s}(\gamma\gamma')^{-O_s(1)}$.
\end{lemma}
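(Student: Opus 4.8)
\textbf{Proof plan for Lemma~\ref{lem5.1}.} The plan is to induct on $s$, concatenating the box directions one at a time against the average over $a$. Throughout I use the ``multiplicative derivative'' form of the box norm,
\[
\|f\|^{2^s}_{\square^s_{(c(a+b_i)[M_2])_{i=1}^s}([N])}=\f{1}{N}\sum_{y\in\Z}\E_{\substack{v_i,v_i'\in[M_2]\\ i=1,\dots,s}}\Delta'_{(c(a+b_1)v_1,\,c(a+b_1)v_1'),\dots,(c(a+b_s)v_s,\,c(a+b_s)v_s')}f(y).
\]
First I would raise the hypothesis to the power $2^s$ by H\"older, then discard the $O_s(\gamma'')$-proportion of $a\in[M_1]$ with $|a+b_1|<\gamma''M_1$; since the box norms are $1$-bounded and $\gamma''\ll_s\gamma^{O_s(1)}$, the surviving set $A\subseteq[M_1]$ still has $\E_{a\in A}\|f\|^{2^s}_{\square^s_{(c(a+b_i)[M_2])_i}([N])}\gg_s\gamma^{O_s(1)}$, with the bounds $\gcd(a+b_i,a+b_j)\ll_s1/\gamma''$ in force on $A$ and $|a+b_1|\ge\gamma''M_1$ there.

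\emph{Base case $s=1$.} Here $\|f\|^2_{\square^1_{c(a+b_1)[M_2]}([N])}=\f{1}{N}\sum_{y}|\E_{t\in[M_2]}f(y+c(a+b_1)t)|^2$, and expanding the square and applying Fourier inversion turns the hypothesis into $\f{1}{N}\int_{\T}|\widehat{f}(\xi)|^2\,\E_{a\in A}|\E_{t\in[M_2]}e(c(a+b_1)t\xi)|^2\,d\xi\gg\gamma^{O(1)}$. As $\f{1}{N}\|\widehat{f}\|_{L^2}^2\le1$, on a set of frequencies carrying $\gg\gamma^{O(1)}$ of the $\widehat{f}$-mass a $\gg\gamma^{O(1)}$-proportion of $a\in A$ satisfy $\|c(a+b_1)\xi\|\ll\gamma^{-O(1)}/M_2$. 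Taking differences $a-a'$ of two such $a$ (both with $|a+b_1|,|a'+b_1|\ge\gamma''M_1$) and applying a standard Dirichlet/pigeonhole argument forces $\xi$ to within $\ll\gamma^{-O(1)}/(M_1M_2)$ of a rational $p/(cq)$ with $q\ll\gamma^{-O(1)}$; since $M_1M_2\gg_s(\gamma\gamma')^{-O_s(1)}$ this shows $f$ correlates with the phase $e(px/(cq))$ on a positive $\gamma^{O(1)}$-portion of $[N]$, and hence $\|f\|_{U^2_{c[\gamma'M_1M_2]}([N])}\gg\gamma^{O(1)}$ by the elementary converse to the $U^2$-inverse theorem (using that multiplication by a linear phase leaves the $U^2$-norm unchanged and $\|g\|_{U^2_Q}\ge\|g\|_{U^1_Q}\gg|\E_{x\in[N]}g(x)|$ for $Q=c[\gamma'M_1M_2]$ once $\gamma'$ is small enough, exactly as in the proof of Lemma~\ref{lem2.4} run in reverse). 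Thus $s=1$ holds with $s'=2$.

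\emph{Inductive step.} Assuming the result for $s-1$, I would peel off the $s$-th direction,
\[
\|f\|^{2^s}_{\square^s_{(c(a+b_i)[M_2])_i}([N])}=\E_{v_s,v_s'\in[M_2]}\big\|\Delta'_{(c(a+b_s)v_s,\,c(a+b_s)v_s')}f\big\|^{2^{s-1}}_{\square^{s-1}_{(c(a+b_i)[M_2])_{i=1}^{s-1}}([N])},
\]
so that, after averaging over $a\in A$, the family $\{\Delta'_{(w,w')}f\}$ is controlled by an $a$-average of box norms along the first $s-1$ directions, with $w,w'$ still tied to $c(a+b_s)[M_2]$. The plan is to concatenate this $s$-th pair of shifts by a Cauchy--Schwarz and van der Corput argument in the variables $(a,v_s,v_s')$ paralleling the base case, so as to replace the constraint $w,w'\in c(a+b_s)[M_2]$, $a\in A$, by $w,w'\in c[\gamma'M_1M_2]$ at the cost of one extra box direction; here the coprimality bounds $\gcd(a+b_i,a+b_j)\ll_s1/\gamma''$ are what prevent degenerate coincidences among the linear forms $a\mapsto c(a+b_i)$, so that the sum over $a$ genuinely spreads the $s$-th shift over $c[\gamma'M_1M_2]$ while only mildly perturbing the spectator directions. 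One is then left with $f$ controlled by an $a$-average of $\square^{s-1}_{(c(a+b_i)[M_2])_{i<s}}$-norms of the functions $\Delta'_{(w,w')}f$, $w,w'\in c[\gamma'M_1M_2]$; applying the inductive hypothesis to $\Delta'_{(w,w')}f$ for a typical such pair gives $\|\Delta'_{(w,w')}f\|_{U^{s''}_{c[\gamma'M_1M_2]}([N])}\gg_s\gamma^{O_s(1)}$ with $s''\ll_s1$, and unravelling $\Delta'$ and absorbing the extra pair of directions yields $\|f\|_{U^{s''+1}_{c[\gamma'M_1M_2]}([N])}\gg_s\gamma^{O_s(1)}$, closing the induction with $s'\ll_s1$.

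\emph{Main obstacle.} The crux is the concatenation in the inductive step: decoupling the $a$-dependence of the peeled direction from that of the $s-1$ spectator directions, while simultaneously preserving the product scale $M_1M_2$ for the peeled shift and keeping the box structure of the spectators intact. This is precisely where the coprimality hypotheses $\gcd(a+b_i,a+b_j)\ll_s1/\gamma''$ and the non-degeneracy $|a+b_1|\ge\gamma''M_1$ are used, and where the various losses — from extending truncated $v$-sums to full ranges, from approximate-identity errors, and from the quality of the rational approximations produced — accumulate and force $\gamma',\gamma''\ll_s\gamma^{O_s(1)}$ and $M_1M_2\gg_s(\gamma\gamma')^{-O_s(1)}$.
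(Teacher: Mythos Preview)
Your induction-on-$s$ skeleton and your $s=1$ base case are in the right spirit (the paper handles $s=1$ in Lemma~\ref{lem5.4} by a purely physical-side Cauchy--Schwarz argument rather than going through Fourier and Dirichlet, but the outcome is the same $U^2$-control). The genuine gap is in your inductive step.

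After peeling off the $s$-th direction you have
\[
\E_{a\in A}\E_{v_s,v_s'\in[M_2]}\big\|\Delta'_{(c(a+b_s)v_s,\,c(a+b_s)v_s')}f\big\|^{2^{s-1}}_{\square^{s-1}_{(c(a+b_i)[M_2])_{i<s}}([N])},
\]
and now the function being measured and the $s-1$ spectator directions depend on the \emph{same} variable $a$. Your plan is to run a Cauchy--Schwarz/van der Corput in $(a,v_s,v_s')$ ``paralleling the base case'' so that the peeled shifts become $w,w'\in c[\gamma'M_1M_2]$ independent of $a$, leaving an expression to which the inductive hypothesis applies. But any Cauchy--Schwarz that acts on $a$ to concatenate the $s$-th shift simultaneously scrambles the spectator directions $c(a+b_i)[M_2]$, $i<s$, which also move with $a$; you do not land on an average over $a$ of $\square^{s-1}$-norms of a function $\Delta'_{(w,w')}f$ with $(w,w')$ decoupled from $a$. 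In the base case there are no spectators, which is exactly why the analogy breaks. You flag this decoupling as the ``main obstacle'' but supply no mechanism to perform it, and a bare Cauchy--Schwarz does not.

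The paper's mechanism is not one-direction-at-a-time peeling. In the step $s\to s+1$ it first applies a two-dimensional box-norm inverse theorem (Lemma~\ref{lem5.3}) to the last \emph{two} directions $c(a+b_s)[M_2]$ and $c(a+b_{s+1})[M_2]$: using the gcd hypothesis, this replaces those two $\Delta'$-differences by a product $l_a(x)r_a(x)$ of $1$-bounded functions, each almost invariant under shifts in one of the two progressions. A Cauchy--Schwarz then doubles $a$ to $a,a'$; the almost-invariance of $l_a,l_{a'},r_a,r_{a'}$ lets one insert four extra averagings and apply Gowers--Cauchy--Schwarz to eliminate them. After a further ``structured function'' approximation argument one arrives at an average, over both $a$ and $a'$, of box norms of $f$ in the directions $(c(a+b_i)[\gamma'M_2])_{i\le s}$ and $(c(a'+b_i)[\gamma'M_2])_{i\le s}$ together with a few global $c[\gamma'M_1M_2]$-directions, and the induction hypothesis is then applied \emph{twice} (once in $a$, once in $a'$). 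This two-directions-at-a-time structure is also why the paper needs $s=2$ as a separate base case (Lemma~\ref{lem5.5}).
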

Before beginning the proof of Lemma~\ref{lem5.1}, we record a couple of lemmas.
\begin{lemma}\label{lem5.2}
Let $M>0$. For all but a $O_s(\gamma)$-proportion of $s$-tuples $(a_1,\dots,a_s)\in[M]^s$, we have that
\[
\gcd((a_1,\dots,a_s)\cdot\omega,(a_1,\dots,a_s)\cdot\omega')<\gamma\1
\]
for all distinct $\omega,\omega'\in\{0,1\}^s\sm\{\ul{0}\}$, and for all but a $O_s(\gamma)$-proportion of pairs of $s$-tuples $(a_1,\dots,a_s,b_1,\dots,b_s)\in[M]^{2s}$, we have that
\[
\gcd((a_1-b_1,\dots,a_s-b_s)\cdot\omega,(a_1-b_1,\dots,a_s-b_s)\cdot\omega')<\gamma\1
\]
for all distinct $\omega,\omega'\in\{0,1\}^s\sm\{\ul{0}\}$.
\end{lemma}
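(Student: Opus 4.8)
The plan is to prove each assertion by a union bound over the $\binom{2^s-1}{2}\ll_s 1$ pairs $\{\omega,\omega'\}$, reducing in each case to the standard estimate that two ``random'' linear forms rarely share a large common factor. I will spell out the first assertion; the second is essentially the same.

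Fix distinct $\omega,\omega'\in\{0,1\}^s\sm\{\ul{0}\}$. Since $\omega\neq\omega'$ there is an index $i$ with $\{\omega_i,\omega_i'\}=\{0,1\}$, and after possibly interchanging $\omega$ and $\omega'$ (harmless since $\gcd$ is symmetric) we may take $\omega_i=1$ and $\omega_i'=0$; since $\omega'\neq\ul{0}$ there is then an index $j\neq i$ with $\omega_j'=1$. The $2\times 2$ submatrix of the matrix with rows $\omega,\omega'$ on columns $i,j$ is $\begin{pmatrix}1&\omega_j\\0&1\end{pmatrix}$, of determinant $1$. Hence, for any $d\in\N$, fixing the coordinates $a_k$ with $k\neq i,j$ and reducing the congruences $d\mid\ul{a}\cdot\omega$ and $d\mid\ul{a}\cdot\omega'$ modulo $d$ determines $(a_i\bmod d,a_j\bmod d)$ uniquely, so that
\[
\#\{\ul{a}\in[M]^s:\ d\mid\ul{a}\cdot\omega\ \text{and}\ d\mid\ul{a}\cdot\omega'\}\leq M^{s-2}\left(\frac{M}{d}+1\right)^2.
\]

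Since each $a_k\geq 1$ and $\omega,\omega'\neq\ul{0}$, the forms $\ul{a}\cdot\omega,\ul{a}\cdot\omega'$ lie in $[1,sM]$; in particular we may assume $\gamma\1\leq sM$, as otherwise no $\ul{a}$ has $\gcd(\ul{a}\cdot\omega,\ul{a}\cdot\omega')\geq\gamma\1$. When that gcd is $\geq\gamma\1$, it is an integer $d\in[\gamma\1,sM]$ dividing both forms, so summing the displayed bound over such $d$, with $(M/d+1)^2\ll M^2/d^2$ for $d\leq M$ and $(M/d+1)^2\ll 1$ for $d>M$, gives
\[
\#\{\ul{a}\in[M]^s:\ \gcd(\ul{a}\cdot\omega,\ul{a}\cdot\omega')\geq\gamma\1\}\ll_s M^{s-2}(\gamma M^2+M)\ll_s\gamma M^s,
\]
the last step absorbing $M^{s-1}$ into $\gamma M^s$ via $\gamma\1\leq sM$. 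Summing over the $O_s(1)$ pairs and dividing by $|[M]^s|\asymp M^s$ proves the first claim.

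For the second claim I would repeat the computation with $c_k:=a_k-b_k$ in place of $a_k$, fixing all of $\ul{a},\ul{b}$ except $a_i$ and $a_j$; the same unimodular minor pins down $(a_i\bmod d,a_j\bmod d)$, so the number of bad $2s$-tuples for a fixed modulus $d$ is $\leq M^{2s-2}(M/d+1)^2$. The one new point is that $\ul{c}\cdot\omega$ or $\ul{c}\cdot\omega'$ may now vanish, in which case the gcd equals the absolute value of the other form; but the set of $(\ul{a},\ul{b})\in[M]^{2s}$ with $\ul{c}\cdot\omega=0$ or $\ul{c}\cdot\omega'=0$ has size $\ll_s M^{2s-1}\ll_s\gamma M^{2s}$ (again assuming $\gamma\1\leq sM$, else the claim is vacuous), which is acceptable. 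Combining with the divisor sum over $\gamma\1\leq d\leq sM$ and a union bound over pairs completes the argument. The only step needing any care is this last bookkeeping around vanishing forms; everything else is the standard divisor-sum estimate, kept clean by the fact that the relevant $2\times2$ minors of these $0/1$-matrices have determinant exactly $1$.
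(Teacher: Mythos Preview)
Your proof is correct and follows essentially the same route as the paper: a union bound over the $O_s(1)$ pairs $\{\omega,\omega'\}$, together with the observation that for each such pair there exist two coordinates on which the linear map $(a_i,a_j)\mapsto(\ul{a}\cdot\omega,\ul{a}\cdot\omega')$ is unimodular. The only difference is packaging: the paper phrases this as ``each point in the range is hit with multiplicity at most $M^{s-2}$'' and then invokes the standard fact that $\gcd(a,a')\geq\gamma^{-1}$ for at most an $O(\gamma)$-proportion of $(a,a')\in[sM]^2$, whereas you carry out the divisor sum $\sum_{d\geq\gamma^{-1}}$ directly.
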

\begin{proof}
These statements follow easily from the union bound and the fact that $\gcd(a,a')<\ve\1$ for all but a $O(\ve)$-proportion of $a,a'\in[M]$. Indeed, for each pair of distinct $\omega,\omega'\in\{0,1\}^s\sm\{\ul{0}\}$, the pair $((a_1,\dots,a_s)\cdot\omega,(a_1,\dots,a_s)\cdot\omega')$ ranges over a subset of $[sM]^2$ of density $\geq 1/s^2$ as $a_1,\dots,a_s$ ranges over $[M]$, and this pair hits each point in its range with multiplicity at most $M^{s-2}$. Thus, the total number of $s$-tuples $(a_1,\dots,a_s)\in[M]^s$ for which $\gcd((a_1,\dots,a_s)\cdot\omega,(a_1,\dots,a_s)\cdot\omega')\geq\gamma\1$ is $\ll\gamma s^2 M^s$. We conclude the first statement by taking the union bound over all $\ll_s 1$ pairs of distinct $\omega,\omega'\in\{0,1\}^s\sm\{\ul{0}\}$. The proof of the second statement is essentially the same.
\end{proof}

As in~\cite{PelusePrendiville19}, we will also need an inverse theorem for certain two-dimensional Gowers box norms. The one we prove next holds in greater generality than the inverse theorem in~\cite{PelusePrendiville19}, at the cost of a slightly weaker conclusion.
\begin{lemma}\label{lem5.3}
Let $N,M_1,M_2>0$ with $M_2\leq M_1$ and $M_1M_2\leq N/m$ and suppose that $|c|,|d|\in[M_1]$ with $|c|\geq\gamma_1M_1m$ and $\gcd(c,d)=m$. Let $f:\Z\to\C$ be a $1$-bounded function supported on the interval $[N]$. If
\[
\|f\|_{\square^2_{c[\gamma_2M_2],d[\gamma_2 M_2]}([N])}\geq\gamma
\]
and $0<\gamma_3<\gamma_2\leq\gamma_1\leq 1$, then there exist $1$-bounded functions $l,r:\Z\to\C$ satisfying
\[
\#\{x\in[N]:l(x)\neq l(x+dz)\text{ for some }z\in[\gamma_3 M_2]\}\ll\f{\gamma_3}{\gamma_2}N
\]
and
\[
\#\{x\in[N]:r(x)\neq r(x+cy)\text{ for some }y\in[\gamma_3 M_2]\}\ll\f{\gamma_3}{\gamma_2}N
\]
such that

\[
\left|\f{1}{N}\sum_{x\in\Z}f(x)l(x)r(x)\right|\geq\gamma^4.
\]
\end{lemma}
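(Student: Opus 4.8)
The plan is to prove Lemma~\ref{lem5.3} by a standard Fourier-analytic argument adapted to the two-dimensional box norm, using the coprimality condition $\gcd(c,d)=m$ to run a change of variables that turns the box structure into genuine multiplicative independence. First I would unfold the definition of $\|f\|_{\square^2_{c[\gamma_2M_2],d[\gamma_2M_2]}([N])}$, which after the usual change of variables (shifting $x$ by $-cy'-dz'$) reads
\[
\f{1}{N}\sum_{x\in\Z}\sum_{y,y',z,z'\in\Z}\Delta'_{(cy,cy'),(dz,dz')}f(x)\,\mu\text{-type weights}\geq\gamma^4,
\]
where the weights are $\mu_{\gamma_2M_2}$ in the $cy$- and $dz$-directions. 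The goal is to produce functions $l$ and $r$ with $l$ nearly invariant under shifts by $d[\gamma_3M_2]$ and $r$ nearly invariant under shifts by $c[\gamma_3M_2]$, and correlation $\geq\gamma^4$ with $f$.

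The key steps, in order: (1) Apply the Gowers--Cauchy--Schwarz inequality (Lemma~\ref{lem2.3} in the $s=2$ case, or directly Cauchy--Schwarz twice) to peel off one pair of differencing parameters. Concretely, write the box norm to the fourth power as an average over $y,y'$ of an inner average that looks like a one-dimensional box norm in the $d$-direction of the function $\Delta'_{(cy,cy')}f$; by pigeonholing in $(y,y')$ there is a positive-proportion set of good pairs. (2) For a good pair $(y,y')$, the inner one-dimensional quantity is $\gg\gamma^{O(1)}$, which says $\|\Delta'_{(cy,cy')}f\|_{U^1_{d[\gamma_2M_2]}}$ is large; a one-dimensional $U^1$-inverse step (essentially the Fourier argument already used in the proof of Lemma~\ref{lem2.4}, or just the observation that a large $U^1$-norm means the function correlates with its own average over short progressions) lets one find, for each good pair, a ``phase'' structure. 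The cleaner route is: average back over $(y,y')$ and use Cauchy--Schwarz in $x$ to realize the whole expression as $\f1N\sum_x f(x)\,\overline{F_1(x)}\,F_2(x)$-type object where $F_1$ is built from averaging $f$ over $c$-shifts and $F_2$ from averaging over $d$-shifts; this is where the definition of $l$ and $r$ as appropriately smoothed/averaged dual-type functions comes from. (3) Define $r(x):=$ (normalized) average of $\overline{f}$-data over a $c[\gamma_2M_2]$-progression and $l(x):=$ average over a $d[\gamma_2M_2]$-progression, chosen so that $\f1N\sum_x f(x)l(x)r(x)\geq\gamma^4$ follows by re-collapsing the Cauchy--Schwarz steps, and verify the near-invariance: shifting $x$ by $dz$ with $z\in[\gamma_3M_2]$ changes the averaging window defining $l$ by a $O(\gamma_3/\gamma_2)$-proportion of its mass, giving the claimed bound $\ll(\gamma_3/\gamma_2)N$ on the exceptional set, and symmetrically for $r$.

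**The main obstacle** I anticipate is step (2)--(3): making the ``box norm large $\Rightarrow$ correlation with a product of one-sided-invariant functions'' implication quantitative \emph{and} getting the invariance of $l,r$ under shifts by the \emph{shorter} length $\gamma_3M_2$ rather than the full $\gamma_2M_2$. The honest near-invariance one gets for free is under shifts by a tiny fraction of $\gamma_2M_2$; squeezing it down to $\gamma_3M_2$ with $\gamma_3<\gamma_2$ and a clean $O(\gamma_3/\gamma_2)$ exceptional-set bound requires choosing the averaging windows carefully (e.g.\ averaging over $[\gamma_2M_2]$ and noting that translating by $[\gamma_3M_2]$ overlaps in a $(1-O(\gamma_3/\gamma_2))$-fraction). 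The hypothesis $\gcd(c,d)=m$ with $M_1M_2\le N/m$ is what guarantees the two progression directions $c[\gamma_2M_2]$ and $d[\gamma_2M_2]$ stay inside $[N]$ (after the shifts) and do not interact badly, so I would keep track of that constraint throughout; the condition $|c|\geq\gamma_1M_1m$ ensures the $c$-direction is genuinely long (not degenerate) so that the one-dimensional inverse step in that direction has content. I expect no deep ideas are needed beyond this — it is the same mechanism as in~\cite{PelusePrendiville19} but with the weaker conclusion ($l,r$ only \emph{nearly} invariant, rather than exactly invariant on an arithmetic progression), which is precisely what makes the greater generality (arbitrary $c,d$ rather than a special shape) affordable.
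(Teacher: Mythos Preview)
Your plan has a genuine gap in steps~(2)--(3). Unfolding the box norm and shifting gives
\[
\gamma^4\leq\frac{1}{N}\sum_{x}f(x)\,\E_{y,y',z,z'}\overline{f(x+c(y-y'))}\,\overline{f(x+d(z-z'))}\,f(x+c(y-y')+d(z-z')),
\]
and the dual object on the right is \emph{not} of the form $l(x)r(x)$ with $l$ depending only on the $d$-direction and $r$ only on the $c$-direction: the cross term $f(x+c(y-y')+d(z-z'))$ couples the two averages. Defining $l,r$ as averages of $\overline f$ over $d$- and $c$-progressions, as you suggest, produces a three-point correlation $\frac1N\sum_x f(x)l(x)r(x)$ that is not controlled by the four-point box norm, and no amount of Cauchy--Schwarz in $x$ turns four factors into a product of two one-sided-invariant functions without either losing the $\gamma^4$ or reintroducing a coupled term. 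The near-invariance argument you sketch for averaging windows is fine, but it is attached to the wrong objects.

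What the paper actually does --- and what your plan misses --- is to use the coprimality hypothesis $\gcd(c,d)=m$ not as a side condition but as the engine of the proof. After reducing to $m=1$, every integer $x$ can be written \emph{uniquely} as $x=cy+dz$ with $y\in\Z$ and $z\in[|c|]$, so $f$ becomes a function on a genuine product $\Z\times[|c|]$ in which the $c$- and $d$-shifts act on separate coordinates. In these coordinates the box norm literally is a two-variable average $\E_{y,y',z,z'}f(y,z)\overline{f(y',z)}\,\overline{f(y,z')}f(y',z')$ (after tiling each coordinate into blocks of length $\gamma_2M_2$), and a single pigeonhole in $(y',z')$ produces $l(x):=\overline{f(y,z')}$ and $r(x):=\overline{f(y',z)}$ with correlation exactly $\geq\gamma^4$. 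The near-invariance then comes from counting block boundaries, which is where the size hypothesis $|c|\geq\gamma_1M_1m$ enters (it guarantees each block has enough room). You should rework the argument around this change of variables; the Fourier/averaging route you outline does not reach the conclusion.
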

\begin{proof}
  By splitting $\Z$ up into progressions modulo $m$ and arguing as in the proof of Corollary~5.4 of~\cite{PelusePrendiville19}, it suffices to prove the $m=1$ case of the lemma. So, we assume for the remainder of the proof that $m=1$.

Since $c$ and $d$ are relatively prime, every $x\in\Z$ can be expressed uniquely as $x=cy+dz$ with $y\in\Z$ and $z\in[|c|]$. Thus, $\|f\|_{\square^2_{c[\gamma_2M_2],d[\gamma_2M_2]}([N])}^4$ can be written as
  \begin{align*}
    \f{1}{N}\sum_{\substack{u\in\Z \\ v\in[c]}}\E_{y,y',z,z'\in[\gamma_2M_2]}[&f(c(y+u)+d(z+v))\overline{f(c(y'+u)+d(z+v))}\\
    &\overline{f(c(y+u)+d(z'+v))}f(c(y'+u)+d(z'+v))].
  \end{align*}
  We split $\Z$ and $[|c|]$ up into intervals of length $\gamma_2M_2$ to write the above as
    \begin{align*}
      \f{1}{N/\gamma_2^2M_2^2}\sum_{\substack{u''\in\Z \\ 0\leq v''<|c|/\gamma_2M_2}}\E_{y,y',z,z',u',v'\in[\gamma_2M_2]}[&f(c(y+u'+\gamma_2M_2u'')+d(z+v'+\gamma_2M_2v''))\\
      &\overline{f(c(y'+u'+\gamma_2M_2u'')+d(z+v'+\gamma_2M_2v''))}\\
                                                                                                                      &\overline{f(c(y+u'+\gamma_2M_2u'')+d(z'+v'+\gamma_2M_2v''))}\\
      &f(c(y'+u'+\gamma_2M_2u'')+d(z'+v'+\gamma_2M_2v''))],
    \end{align*}
    using the fact that $|c|\geq\gamma_2M_2$. By the pigeonhole principle, there thus exist $y',z',u',v'\in[\gamma_2M_2]$ such that
\begin{align*}
\gamma^4\leq      \f{1}{N/\gamma_2^2M_2^2}\sum_{\substack{u''\in\Z \\ 0\leq v''<|c|/\gamma_2M_2}}\E_{y,z\in[\gamma_2M_2]}[&T_{c(u'+\gamma_2M_2u'')+d(v'+\gamma_2M_2v'')}f(cy+dz)\\
      &\overline{T_{c(u'+\gamma_2M_2u'')+d(v'+\gamma_2M_2v'')}f(cy'+dz)}\\
                                                                                                                      &\overline{T_{c(u'+\gamma_2M_2u'')+d(v'+\gamma_2M_2v'')}f(cy+dz')}\\
      &T_{c(u'+\gamma_2M_2u'')+d(v'+\gamma_2M_2v'')}f(cy'+dz')].
\end{align*}

Fix such $y',z',u',$ and $v'$. For each pair of integers $u''$ and $0\leq v''<|c|/\gamma_2M_2$, we define $1$-bounded functions $L_{u'',v''},R_{u'',v''}:[\gamma_2M_2]\to\C$ by setting
\[
  L_{u'',v''}(y):=\overline{T_{c(u'+\gamma_2M_2u'')+d(v'+\gamma_2M_2v'')}f(cy+dz')}
\]
and
\[
  R_{u'',v''}(z):=\overline{T_{c(u'+\gamma_2M_2u'')+d(v'+\gamma_2M_2v'')}f(cy'+dz)}\cdot T_{c(u'+\gamma_2M_2u'')+d(v'+\gamma_2M_2v'')}f(cy'+dz').
\]
We can then define $l_0,r_0:\Z\to\C$ by setting, for each $x\in\Z$ with $x=c(y+\gamma_2M_2y'')+d(z+\gamma_2M_2z'')$ for $y,z\in[\gamma_2M_2]$, $y''\in\Z$, and $0\leq z''<c/\gamma_2M_2$ an integer, $l_0(x):=L_{y'',z''}(y)$ and $r_0(x)=R_{y'',z''}(z)$. Then the above tells us that
\begin{equation}\label{eq5.1}
  \f{1}{N}\sum_{x\in\Z}f(x+cu'+dv')l_0(x)r_0(x)\geq\gamma^4.
\end{equation}

Next, we will show that
\[
\#\{x\in(-2N,2N)\cap\Z:l_0(x)\neq l_0(x+dw)\text{ for some }w\in[\gamma_3M_2]\}\ll\f{\gamma_3}{\gamma_2}N.
\]
By our definition of $l_0$, the left-hand side of the above is exactly the number of $x\in(-2N,2N)\cap\Z$ that can be written as $x=c(y+\gamma_2M_2y'')+d(z+\gamma_2M_2z'')$ with $y\in[\gamma_2M_2]$, $z\in[(\gamma_2-\gamma_3)M_2,\gamma_2M_2]$, $y''\in\Z$, and $0\leq z''<|c|/\gamma_2M_2$ an integer. The number of possible choices for $(y,z)$ is bounded by $\gamma_2\gamma_3M_2^2$. To count the number of possible choices for $(y'',z'')$ for each fixed pair $(y,z)$, note that since $|cy+dz|\ll\gamma_2N$ and the map $\Z\times([0,|c|/\gamma_2M_2)\cap\Z)\ni(y'',z'')\mapsto cy''+dz''$ is injective, the number of possible choices is bounded by the number of integers $0\leq z''<|c|/\gamma_2M_2$ and $w''\in[-O(N/\gamma_2M_2),O(N/\gamma_2M_2)]$ such that $dz''-w''$ is divisible by $c$. This quantity is bounded by $\ll (|c|/\gamma_2M_2)(N/\gamma_2M_2c)$, so that the number of possible $(y'',z'')$ is $\ll N/(\gamma_2M_2)^2$. We conclude that the number of such possible $(y,z,y'',z'')$ is $\ll\f{\gamma_3}{\gamma_2}N$. The same argument shows the corresponding bound for $r_0$.

To conclude, we make the change of variables $x\mapsto x-(cu'+dv')$ in~\eqref{eq5.1} and set $l(x):=l_0(x-(cu'+dv'))$ and $r(x):=r_0(x-(cu'+dv'))$, and note that since $|cu'+dv'|\ll N$, $x-(cu'+dv')\in(-2N,2N)$ whenever $x\in[N]$.
\end{proof}

The proof of Lemma~\ref{lem5.1} proceeds by induction on $s$. We first prove the $s=1$ and $s=2$ cases as separate lemmas.
\begin{lemma}[$s=1$ case of Lemma~\ref{lem5.1}]\label{lem5.4}
Let $N,M_1,M_2>0$ with $M_2\leq M_1$, $b\in\Z$, and $f:\Z\to\C$ be a $1$-bounded function supported on the interval $[N]$. If
\[
\E_{a\in[M_1]}\|f\|_{\square^1_{c(a+b)[M_2]}([N])}\geq\gamma
\]
and $0<\gamma'\leq 1$, then
\[
\|f\|_{U^2_{c[\gamma'M_1M_2]}([N])}\gg\gamma^{O(1)},
\]
provided that $M_1M_2\gg\gamma^{-O(1)}$.
\end{lemma}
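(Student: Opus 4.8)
The plan is to move to the Fourier transform and show that the hypothesis forces $\widehat{f}$ to concentrate a positive proportion of its $\ell^2$-mass near a bounded family of rationals with denominator $\ll\gamma^{-O(1)}$; modulating out the relevant linear phase then makes $f$ correlate with a long interval average, and a standard comparison of $\square^1$- and $U^2$-norms finishes the job.

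\emph{Reduction to $c=1$.} First I would split the sum over $x$ in the definitions of $\|f\|_{\square^1_{c(a+b)[M_2]}([N])}$ and $\|f\|_{U^2_{c[\gamma'M_1M_2]}([N])}$ according to the residue of $x$ modulo $c$. Writing $f_j(x):=f(cx+j)$ for $0\le j<|c|$ and letting $N_j\asymp N/|c|$ be the size of the support of $f_j$ (so $\sum_j N_j=N$), one gets
\[
\|f\|^2_{\square^1_{c(a+b)[M_2]}([N])}=\f{1}{N}\sum_j N_j\|f_j\|^2_{\square^1_{(a+b)[M_2]}([N_j])},\qquad \|f\|^4_{U^2_{c[L]}([N])}=\f{1}{N}\sum_j N_j\|f_j\|^4_{U^2_{[L]}([N_j])}.
\]
By the Cauchy--Schwarz inequality the hypothesis gives $\f{1}{N}\sum_j N_j\,\E_{a\in[M_1]}\|f_j\|^2_{\square^1_{(a+b)[M_2]}([N_j])}\ge\gamma^2$, so the $j$ with $\E_{a\in[M_1]}\|f_j\|_{\square^1_{(a+b)[M_2]}([N_j])}\gg\gamma^{O(1)}$ have total $N_j$-weight $\gg\gamma^2$; applying the $c=1$ case of the lemma to each such $f_j$ (note $M_1M_2\le N/|c|\ll N_j$) and summing recovers $\|f\|_{U^2_{c[\gamma'M_1M_2]}([N])}\gg\gamma^{O(1)}$. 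It is important here that the hypothesis is itself an average over residue classes, so that a \emph{positive proportion} of the $f_j$ inherit the bound, not just one of them.

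\emph{The case $c=1$.} By Cauchy--Schwarz, $\E_{a\in[M_1]}\|f\|^2_{\square^1_{(a+b)[M_2]}([N])}\ge\gamma^2$, which by Plancherel reads $\f{1}{N}\int_{\T}|\widehat{f}(\xi)|^2w(\xi)\,d\xi\ge\gamma^2$, where $w(\xi):=\E_{a\in[M_1]}|\E_{h\in[M_2]}e((a+b)h\xi)|^2\in[0,1]$. Since $\f{1}{N}\|f\|_{\ell^2}^2\le1$, the set $G:=\{\xi:w(\xi)\ge\gamma^2/2\}$ satisfies $\f{1}{N}\int_G|\widehat f|^2\ge\gamma^2/2$. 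For $\xi\in G$, the Fej\'er kernel bound $|\E_{h\in[M_2]}e(\phi h)|\le(2M_2\|\phi\|)^{-1}$ forces $\|(a+b)\xi\|\le(\gamma M_2)^{-1}$ for $\gg\gamma^2M_1$ values of $a\in[M_1]$; then the standard fact that a frequency taking small values on a positive proportion of an interval must lie close to a rational with small denominator yields a positive integer $q_\xi\ll\gamma^{-O(1)}$ with $\|q_\xi\xi\|\ll\gamma^{-O(1)}/(M_1M_2)$. Pigeonholing over $q_\xi\in\{1,\dots,\lfloor\gamma^{-O(1)}\rfloor\}$ and then over the residue $p$ for which $|\xi-p/q_0|\ll\gamma^{-O(1)}/(q_0M_1M_2)$ produces $q_0\ll\gamma^{-O(1)}$, an integer $p_0$, and $G_0\subseteq G$ with $\f{1}{N}\int_{G_0}|\widehat f|^2\gg\gamma^{O(1)}$ and $|\xi-p_0/q_0|\ll\gamma^{-O(1)}/(M_1M_2)$ for all $\xi\in G_0$.

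\emph{Transfer.} Finally I would set $h(x):=e(-p_0x/q_0)f(x)$, which is $1$-bounded and supported on $[N]$, has $\|h\|_{U^2_{[L]}([N])}=\|f\|_{U^2_{[L]}([N])}$ by the modulation-invariance of the $U^2$-norm, and satisfies $\widehat h(\xi)=\widehat f(\xi+p_0/q_0)$, so $\f{1}{N}\int_{|\xi|\ll\gamma^{-O(1)}/(M_1M_2)}|\widehat h|^2\gg\gamma^{O(1)}$. Taking $L:=\lfloor\gamma'M_1M_2\rfloor$, for $\xi$ in this range $|\E_{m\in[L]}e(m\xi)|\ge1-O(\gamma'\gamma^{-O(1)})\ge 1/2$ once the $\gamma$-powers in the previous step are chosen small enough relative to the power implicit in $L$, whence
\[
\|h\|^2_{\square^1_{[L]}([N])}=\f{1}{N}\int_{\T}|\widehat h(\xi)|^2\,|\E_{m\in[L]}e(m\xi)|^2\,d\xi\gg\gamma^{O(1)}.
\]
A routine Cauchy--Schwarz argument (applying Cauchy--Schwarz in the extra shift variable and then Jensen) gives $\|h\|_{U^2_{[L]}([N])}\gg\|h\|_{\square^1_{[L]}([N])}$ with an absolute implied constant, and combining everything yields $\|f\|_{U^2_{[\gamma'M_1M_2]}([N])}\gg\gamma^{O(1)}$.

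I expect the main obstacle to be this last step, and more precisely the parameter bookkeeping it requires: one must arrange the $\gamma$-exponents so that the arcs on which $\widehat f$ concentrates are narrow compared with $1/(\gamma'M_1M_2)$ (this is where the hypothesis $M_1M_2\gg\gamma^{-O(1)}$ and the quantitative relationship between $\gamma$ and $\gamma'$ enter), and one must push the reduction to $c=1$ through with only polynomial-in-$\gamma$ losses. The Fourier estimates themselves are entirely routine once this accounting is in place.
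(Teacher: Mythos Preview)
Your approach works but is genuinely different from the paper's. The paper never reduces to $c=1$ and never identifies major arcs explicitly. Instead it applies Cauchy--Schwarz three times to symmetrise the shift $c(a+b)(h'-h)$ into $c(a''-a)(h'-h)-c(a'''-a')(h'''-h'')$, thereby eliminating $b$ altogether; this produces $\f{1}{N}\sum_x\sum_w f(x)\overline{f(x+cw)}\mu(w)\gg\gamma^8$ for a probability weight $\mu$ supported on $[-2M_1M_2,2M_1M_2]$. Fourier inversion together with an elementary counting bound $\int_\T|\widehat\mu|\ll\gamma^{-O(1)}/(M_1M_2)$ (using gcd control as in Lemma~5.2) then yields a single frequency $\xi$ with $\bigl|\f{1}{N}\sum_x\E_{|w|\le 2M_1M_2}f(x)\overline{f(x+cw)}e(\xi w)\bigr|\gg\gamma^{O(1)}$; splitting the $w$-average into blocks of length $\gamma'M_1M_2$, pigeonholing, inserting an extra $x$-average, and applying Lemma~2.2 finish. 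Your route via Plancherel and the Vinogradov-type Lemma~7.2 is arguably more structural---it actually names the obstructing phase $p_0/q_0$---while the paper's is more self-contained (no Vinogradov lemma, no residue-class splitting, and $c$ is carried through uniformly).

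One small gap to close: as written, your transfer step needs the arc width $\gamma^{-C}/(M_1M_2)$ on which $\widehat h$ concentrates to be $\ll 1/L=1/(\gamma'M_1M_2)$, i.e.\ $\gamma'\ll\gamma^{C}$; but the lemma allows any $\gamma'\in(0,1]$, and the exponent $C$ coming out of Lemma~7.2 is fixed, not a free parameter you can tune. The fix is to pigeonhole once more: subdivide the arc around $p_0/q_0$ into $O(\gamma^{-C})$ subarcs of width $1/(10L)$, pick a centre $\xi_0$ (not necessarily rational) carrying $\f{1}{N}\int_{|\xi-\xi_0|\le 1/(10L)}|\widehat f|^2\gg\gamma^{O(1)}$, and modulate by $e(-\xi_0 x)$ instead of $e(-p_0x/q_0)$. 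The $U^2$-norm is invariant under arbitrary linear modulation, so the rest of your argument goes through unchanged. This extra pigeonhole is morally the same manoeuvre as the split-into-blocks step the paper performs at the end of its proof.
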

\begin{proof}
Applying the Cauchy--Schwarz inequality to the average over $a\in[M_1]$ and expanding the definition of $\|f\|_{\square^1_{c(a+b)[M_2]}([N])}^2$, we have that
\[
\E_{a\in[M_1]}\f{1}{N}\sum_{x\in\Z}\E_{h,h'\in[M_2]}f(x+c(a+b)h)\overline{f(x+c(a+b)h')}\geq\gamma^2.
\]
Making the change of variables $x\mapsto x-c(a+b)h$ and swapping the order of summation, we get from the above that
\[
\f{1}{N}\sum_{x\in\Z}f(x)\left(\E_{a\in[M_1]}\E_{h,h'\in[M_2]}\overline{f(x+c(a+b)[h'-h])}\right)\geq\gamma^2.
\]
Since $f$ is $1$-bounded and supported on $[N]$, we have by another application of the Cauchy--Schwarz inequality and change of variables that
\[
\f{1}{N}\sum_{x\in\Z}\E_{a,a'\in[M_1]}\E_{h,h',h'',h'''\in[M_2]}f(x)\overline{f(x+c(a+b)[h'-h]-c(a'+b)[h'''-h''])}\geq\gamma^4,
\]
and then, by one more application of the Cauchy--Schwarz inequality and a change of variables, that
\[
\f{1}{N}\sum_{x\in\Z}\E_{a,a',a'',a'''\in[M_1]}\E_{h,h',h'',h'''\in[M_2]}f(x)\overline{f(x+c(a''-a)[h'-h]-c(a'''-a')[h'''-h''])}\geq\gamma^{8}.
\]

Note that $|h'-h|,|h'''-h''|>\gamma^{9}M_2$ for all but a $O(\gamma^9)$ proportion of $(h,h',h'',h''')\in[M_2]^4$ and, by Lemma~\ref{lem5.2}, we have $\gcd(h'-h,h'''-h'')<\gamma^{-9}$ for all but a $O(\gamma^9)$ proportion of $(h,h',h'',h''')\in[M_2]^4$. Thus, it follows from the above that
\[
\f{1}{N}\sum_{x\in\Z}\E_{a,a',a'',a'''\in[M_1]}\E_{\substack{h,h',h'',h'''\in[M_2] \\|h'-h|,|h'''-h''|>\gamma^9M_2 \\\gcd(h'-h,h'''-h'')<\gamma^{-9}}}f(x)\overline{f(x+c(a''-a)[h'-h]-c(a'''-a')[h'''-h''])}
\]
is $\gg\gamma^{8}$. We can write this as
\[
\f{1}{N}\sum_{x\in\Z}\sum_{w\in\Z}f(x)\overline{f(x+cw)}\mu(w)\gg\gamma^8,
\]
where
\[
\mu(w):=\E_{a,a',a'',a'''\in[M_1]}\E_{\substack{h,h',h'',h'''\in[M_2] \\|h'-h|,|h'''-h''|>\gamma^9M_2 \\\gcd(h'-h,h'''-h'')<\gamma^{-9}}}1_{w=(a''-a)[h'-h]-(a'''-a')[h'''-h'']}.
\]
Note that $\mu$ is supported on the interval $[-2M_1M_2,2M_1M_2]\cap\Z$.

By Fourier inversion, we have
\[
\int_{\T}\widehat{\mu}(\xi)\left(\f{1}{N}\sum_{x\in\Z}\sum_{|w|\leq 2M_1M_2}f(x)\overline{f(x+cw)}e(\xi w)\right)d\xi\gg\gamma^8,
\]
so that
\[
\left(\int_\T|\widehat{\mu}(\xi)|d\xi\right)\cdot\left(\max_{\xi\in\T}\left|\f{1}{N}\sum_{x\in\Z}\sum_{|w|\leq 2M_1M_2}f(x)\overline{f(x+cw)}e(\xi w)\right|\right)\gg\gamma^8.
\]
Now, note that
\[
  \mu=\E_{\substack{h,h',h'',h'''\in[M_2] \\|h'-h|,|h'''-h''|>\gamma^9M_2 \\\gcd(h'-h,h'''-h'')<\gamma^{-9}}}\nu_{\ul{h}}*\tilde{\nu}_{\ul{h}},
\]
where $\nu_{\ul{h}}(w)=\E_{a,a'\in[M_1]}1_{w=a[h'-h]-a'[h'''-h'']}$ and $\tilde{\nu}_{\ul{h}}(w)=\nu_{\ul{h}}(-w)$. Thus we have
\[
  \int_\T|\widehat{\mu}(\xi)|d\xi= \E_{\substack{h,h',h'',h'''\in[M_2] \\|h'-h|,|h'''-h''|>\gamma^9M_2 \\\gcd(h'-h,h'''-h'')<\gamma^{-9}}}\int_{\T}|\widehat{\nu_{\ul{h}}}(\xi)|^2d\xi= \E_{\substack{h,h',h'',h'''\in[M_2] \\|h'-h|,|h'''-h''|>\gamma^9M_2 \\\gcd(h'-h,h'''-h'')<\gamma^{-9}}}\sum_{w\in\Z}|\nu_{\ul{h}}(\xi)|^2,
\]
by Parseval's identity. Expanding the definition of $\nu_{\ul{h}}$, the above equals
\[
\E_{\substack{h,h',h'',h'''\in[M_2] \\|h'-h|,|h'''-h''|>\gamma^9M_2 \\\gcd(h'-h,h'''-h'')<\gamma^{-9}}}\f{\#\{a,a',a'',a'''\in[M_1]:(a''-a)[h'-h]=(a'''-a')[h'''-h'']}{M_1^4},
\]
which is bounded above by $\f{1}{M_1^4}\cdot M_1^2\cdot \f{M_1}{\gamma^{18}M_2}=\gamma^{-18}\f{1}{M_1M_2}$, using the assumption $M_1\geq M_2$.

Also note that, for each $\xi\in\T$, the quantity $\left|\f{1}{N}\sum_{x\in\Z}\E_{|w|\leq 2M_1M_2}f(x)\overline{f(x+cw)}e(\xi w)\right|$ is bounded above by $1+2\left|\f{1}{N}\sum_{x\in\Z}\E_{w\in[2M_1M_2]}f(x)\overline{f(x+cw)}e(\xi w)\right|$ since $f$ is $1$-bounded and supported on $[N]$.

Putting our two observations together, splitting the average over $[2M_1M_2]$ up into averages over intervals of length $\gamma'M_1M_2$, and using the pigeonhole principle, we thus deduce that there exists a $w'\in[2/\gamma']$ for which
\[
\left|\f{1}{N}\sum_{x\in\Z}\E_{w\in[\gamma'M_1M_2]}f(x)\overline{T_{cw'\gamma'M_1M_2}f(x+cw)}e(\xi w)\right|\gg\gamma^{O(1)},
\]
assuming that $M_1M_2\gg\gamma^{-O(1)}$. Inserting extra averaging in the $x$ variable by shifting by elements of $c[\gamma'M_1M_2]$ and applying the triangle inequality, we deduce from the above that
\[
\f{1}{N}\sum_{x\in\Z}\left|\E_{z,w\in[\gamma'M_1M_2]}f(x+cz)\overline{T_{cw'\gamma'M_1M_2}f(x+cz+cw)}e(\xi w)\right|\gg\gamma^{O(1)}.
\]
It now follows from Lemma~\ref{lem2.2} that $\|T_{cw'\gamma'M_1M_2}f\|_{U^2_{c[\gamma'M_1M_2]}([N])}\gg\gamma^{O(1)}$. To conclude, we make the change of variables $x\mapsto x-cw'\gamma'M_1M_2$ in the definition of the Gowers box norm.
\end{proof}

The $s=2$ case of Lemma~\ref{lem5.1} is a generalization of Lemma~5.5 of~\cite{PelusePrendiville19} (with a slightly weaker conclusion, getting $U^5$-control instead of $U^4$-control), and thus its proof closely follows the corresponding proof from~\cite{PelusePrendiville19}.

\begin{lemma}[$s=2$ case of Lemma~\ref{lem5.1}]\label{lem5.5}
Let $N,M_1,M_2>0$ with $M_2\leq M_1$ and $M_1M_2\leq N/c$, $b_1,b_2\in\Z$, and $f:\Z\to\C$ be a $1$-bounded function supported on the interval $[N]$. If $\gcd(a+b_1,a+b_2)\leq 1/\gamma''$ and $|a+b_1|>\gamma''M_1$ for all but a $O(\gamma'')$ proportion of $a\in[M_1]$,
\[
\E_{a\in[M_1]}\|f\|_{\square^2_{c(a+b_1)[M_2],c(a+b_2)[M_2]}([N])}\geq\gamma,
\]
$\gamma'\ll(\gamma\gamma'')^{O(1)}$, and $\gamma''\ll\gamma^{O(1)}$, then
\[
\|f\|_{U^5_{c[\gamma'M_1M_2]}([N])}\gg\gamma^{O(1)},
\]
provided that $M_1M_2\gg(\gamma\gamma')^{-O(1)}$.
\end{lemma}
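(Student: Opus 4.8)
The plan is to mimic the proof of Lemma~5.5 of~\cite{PelusePrendiville19}, modified to accommodate the shifts $b_1,b_2$ and the gcd hypothesis; working at this level of generality is what costs the weaker conclusion $U^5$ in place of $U^4$.

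\emph{Reduction to many good $a$.} Applying the Cauchy--Schwarz inequality in $a$ and using $\|f\|_{\square^2}\leq 1$, the hypothesis gives $\E_{a\in[M_1]}\|f\|^4_{\square^2_{c(a+b_1)[M_2],c(a+b_2)[M_2]}([N])}\geq\gamma^4$. The set of $a\in[M_1]$ with $\gcd(a+b_1,a+b_2)>1/\gamma''$ or $|a+b_1|\leq\gamma'' M_1$ has density $O(\gamma'')\ll\gamma^{O(1)}$, so discarding it and pigeonholing yields a set $\mathcal{G}\subset[M_1]$ with $|\mathcal{G}|\gg\gamma^{O(1)}M_1$ on which $\gcd(a+b_1,a+b_2)\leq 1/\gamma''$, $|a+b_1|>\gamma''M_1$, and $\|f\|_{\square^2_{c(a+b_1)[M_2],c(a+b_2)[M_2]}([N])}\gg\gamma^{O(1)}$. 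For each $a\in\mathcal{G}$ I would apply Lemma~\ref{lem5.3} with $c(a+b_1),c(a+b_2)$ in the roles of $c,d$, first splitting into residue classes modulo $\gcd(a+b_1,a+b_2)$ to reduce to the coprime case exactly as in the proof of Lemma~\ref{lem5.3} (so that the hypothesis $M_1M_2\leq N/|c|$ suffices); the bound $|a+b_1|>\gamma''M_1$ supplies the size condition on $|c(a+b_1)|$ once $\gamma_1$ is taken to be a suitable constant multiple of $\gamma''$. This produces $1$-bounded $l_a,r_a:\Z\to\C$, with $l_a$ invariant under translation by elements of $c(a+b_2)[\gamma_3M_2]$ and $r_a$ invariant under translation by elements of $c(a+b_1)[\gamma_3M_2]$, each off a set of $\ll(\gamma_3/\gamma_2)N$ values of $x$, such that $\bigl|\f{1}{N}\sum_{x\in\Z}f(x)l_a(x)r_a(x)\bigr|\gg\gamma^{O(1)}$.

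\emph{Concatenating the two invariance directions.} This is the heart of the matter. For each $a\in\mathcal{G}$, the $c(a+b_1)$-invariance of $r_a$ together with a change of variables and one application of the Cauchy--Schwarz inequality in $x$ give $\f{1}{N}\sum_{x}\bigl|\E_{y\in[\gamma_3M_2]}f(x+c(a+b_1)y)l_a(x+c(a+b_1)y)\bigr|^2\gg\gamma^{O(1)}$; expanding the square, changing variables, and then using the $c(a+b_2)$-invariance of $l_a$ to insert an average over $c(a+b_2)[\gamma_3M_2]$, one strips off the $l_a$'s in the same fashion. Averaging over $a\in\mathcal{G}$ against unimodular weights and iterating this combination of the Cauchy--Schwarz inequality with changes of variables (producing, as in the proof of Lemma~\ref{lem5.4}, several copies of the variable $a$), one arrives at a bound of the form $\f{1}{N}\sum_{x\in\Z}\sum_{w\in\Z}f(x)\overline{f(x+cw)}\mu(w)\gg\gamma^{O(1)}$, where $\mu$ is a nonnegative weight supported on $[-O(M_1M_2),O(M_1M_2)]\cap\Z$ built out of products of differences of parameters from $[M_1]$ with differences of parameters from $[\gamma_3M_2]$. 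The errors generated by the merely approximate invariances (each of relative size $\ll\gamma_3/\gamma_2$) and by the passage to $\mathcal{G}$ are absorbed by taking $\gamma_2,\gamma_3$ to be sufficiently small powers of $\gamma$ and $\gamma''$; this is what forces the constraints $\gamma'\ll(\gamma\gamma'')^{O(1)}$ and $\gamma''\ll\gamma^{O(1)}$.

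\emph{Fourier analysis and conclusion.} From this point I would argue exactly as in the proof of Lemma~\ref{lem5.4}. Fourier inversion and the triangle inequality reduce matters to bounding $\int_{\T}|\widehat{\mu}(\xi)|\,d\xi$, which is $\ll(\gamma\gamma'')^{-O(1)}/(M_1M_2)$ by Parseval's identity together with the gcd estimates of Lemma~\ref{lem5.2} (these ensure that two of the difference parameters feeding into $\mu$ are coprime and of size $\gg\gamma^{O(1)}M_2$ outside a negligible set). Splitting the sum over $w$ into windows of length $\asymp\gamma'M_1M_2$, pigeonholing, inserting extra averaging in $x$, and applying Lemma~\ref{lem2.2} a bounded number of times then upgrade the resulting single-frequency correlation to $\|f\|_{U^5_{c[\gamma'M_1M_2]}([N])}\gg\gamma^{O(1)}$, provided $M_1M_2\gg(\gamma\gamma')^{-O(1)}$. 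The delicate point throughout is the concatenation step: one must orchestrate the Cauchy--Schwarz applications and changes of variables so that the two $M_2$-scale invariance directions genuinely combine with the $M_1$-scale freedom in $a$ into uniformity at the product scale $M_1M_2$, while keeping the many error terms from the approximate invariances and from discarding bad $a$ negligible. This bookkeeping determines the parameter hierarchy in the statement, and it is also where the loss of a degree relative to~\cite{PelusePrendiville19} — hence the $U^5$ rather than $U^4$ conclusion — enters, through the weaker conclusion of the box-norm inverse theorem Lemma~\ref{lem5.3}.
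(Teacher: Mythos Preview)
There is a genuine gap at the concatenation step. After using the almost-invariance of $r_a$ and one Cauchy--Schwarz you correctly reach $\f{1}{N}\sum_x\E_{y,y'\in[\gamma_3M_2]}(fl_a)(x)\overline{(fl_a)(x+c(a+b_1)(y'-y))}\gg\gamma^{O(1)}$, but ``stripping off the $l_a$'s in the same fashion'' --- inserting an average over $c(a+b_2)[\gamma_3M_2]$ and applying Cauchy--Schwarz again --- simply returns $\|f\|^4_{\square^2_{c(a+b_1)[\gamma_3M_2],c(a+b_2)[\gamma_3M_2]}([N])}\gg\gamma^{O(1)}$, which is the starting hypothesis at a smaller scale: the argument is circular. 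Doubling $a$ via Cauchy--Schwarz does not immediately help either, since $l_a,r_a$ depend on $a$ and so cannot be pulled outside that average the way the single function $f$ is in the proof of Lemma~\ref{lem5.4}. Consequently your claimed output $\f{1}{N}\sum_x\sum_w f(x)\overline{f(x+cw)}\mu(w)$ is unjustified; and in any event such a two-point correlation would, via the Fourier argument of Lemma~\ref{lem5.4}, yield only $U^2$-control, while Lemma~\ref{lem2.2} (Gowers--Cauchy--Schwarz) cannot raise $U^2$ to $U^5$.

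The paper breaks the circularity by \emph{not} stripping $l_{a+b_2}$ at this stage. One replaces $r_{a+b_1}$ by the dual-type function $r'_{a+b_1}(x):=\E_w^{\mu_{\gamma'M_2}}(fl_{a+b_2})(x+(a+b_1)w)$, doubles $a$ via Cauchy--Schwarz, and then exploits the almost-invariance of the \emph{three} remaining auxiliaries $l_{a+b_2},\,l_{a'+b_2},\,r'_{a'+b_1}$ (each in its own direction) together with Lemma~\ref{lem2.2} to get
\[
\E_{a,a'\in U_{b_1,b_2}}\|r'_{a+b_1}\|^8_{\square^3_{(a+b_2)[\gamma'M_2],(a'+b_2)[\gamma'M_2],(a'+b_1)[\gamma'M_2]}([N])}\gg\gamma^{O(1)}.
\]
From here one follows~\cite{PelusePrendiville19} (now with the $a'$-directions genuinely decoupled from $a$) to reach $\E_a\|fl_{a+b_2}\|_{U^3_{[\gamma'M_1M_2]}([N])}\gg\gamma^{O(1)}$; only then is $l_{a+b_2}$ stripped, producing an average over three pairs $(h_i,h_i')\in[\gamma'M_1M_2]^2$ of quantities $\E_a\|\Delta'_{(h_i,h_i')_{i=1}^3}f\|_{\square^1}$ to which Lemma~\ref{lem5.4} applies. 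The three outer differencings together with the $U^2$ coming from Lemma~\ref{lem5.4} are what give the $U^5$.
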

\begin{proof}
By splitting $\Z$ up into arithmetic progressions modulo $c$ and arguing as in the proof of Corollary~5.6 of~\cite{PelusePrendiville19}, it suffices to prove the result in the $c=1$ case. In the $c=1$ case, the proof of Lemma~5.5 of~\cite{PelusePrendiville19} goes through with a small number of changes. Since that proof is seven pages long, we will mostly just indicate the differences. These differences mainly arise from the fact that $M_1$ and $M_2$ can have very different sizes in this lemma, while in the corresponding lemma in~\cite{PelusePrendiville19}, $M_1=M_2=N^{1/2}$.

With a view towards applying Lemma~\ref{lem5.3}, let $U_{b_1,b_2}$ denote the set of all $a\in[M_1]$ such that $|a+b_1|>\gamma''M_2$ and $\gcd(a+b_1,a+b_2)\leq 1/\gamma''$, so that $|U_{b_1,b_2}|=(1-O(\gamma''))M_1$ by hypothesis. The set $U_{b_1,b_2}$ will play the same role as the set $U_b$ does in the proof in~\cite{PelusePrendiville19}.  By applying Lemma~\ref{lem5.3} with $c=a+b_1$, $d=a+b_2$, and $\gamma_1=\gamma_2=(\gamma'')^2$, we then get that
\begin{equation}\label{eq5.2}
\E_{a\in U_{b_1,b_2}}\f{1}{N}\sum_{x\in\Z}f(x)l_{a+b_2}(x)r_{a+b_1}(x)\gg\gamma^{O(1)},
\end{equation}
where
\[
\#\{x\in[N]:l_{a+b_2}(x)\neq l_{a+b_2}(x+(a+b_2)z)\text{ for some }z\in[\ve M_2/(\gamma'')^2]\}\ll\f{\ve}{(\gamma'')^2}N
\]
and
\[
\#\{x\in[N]:r_{a+b_1}(x)\neq l_{a+b_1}(x+(a+b_1)y)\text{ for some }y\in[\ve M_2/(\gamma'')^2]\}\ll\f{\ve}{(\gamma'')^2}N
\]
for every $0<\ve\leq(\gamma'')^2$. Since $f$ is supported on $[N]$, we may assume without loss of generality that $l_{a+b_2}$ and $r_{a+b_1}$ are supported on $[N]$ as well.

Inserting extra averaging in the $x$ variable in the left-hand side of~\eqref{eq5.2} by shifting by elements of $(a+b_1)[\gamma'M_2]$, taking advantage of the almost-invariance of $r_{a+b_1}$ under shifts from this progression, and then applying the Cauchy--Schwarz inequality once, we can assume that~\eqref{eq5.2} holds (with a worse implied constant in the exponent of $\gamma$ on the right-hand side) with $r_{a+b_1}$ replaced by the function $r_{a+b_1}'(x):=\E_{w}^{\mu_{\gamma'M_2}}f(x+(a+b_1)w)l_{a+b_2}(x+(a+b_1)w)$ for each $a\in U_{b_1,b_2}$. As in~\cite{PelusePrendiville19}, we then apply the Cauchy--Schwarz inequality to double the $a$ variable, take advantage of the almost-invariance of $l_{a+b_2}$, $l_{a'+b_2}$, and $r'_{a'+b_1}$ again to insert extra averaging by elements of $(a+b_2)[\gamma'M_2]$,  $(a'+b_2)[\gamma'M_2]$, and  $(a'+b_1)[\gamma'M_2]$, respectively, and then use Lemma~\ref{lem2.2} to get that
\[
\E_{a,a'\in U_{b_1,b_2}}\|r'_{a+b_1}\|^8_{\square^3_{(a+b_2)[\gamma' M_2],(a'+b_2)[\gamma' M_2],(a'+b_1)[\gamma' M_2]}([N])}\gg\gamma^{O(1)},
\]
assuming that $\gamma'\ll\gamma^{O(1)}$.

One can then continue to argue in an almost-identical manner as in~\cite{PelusePrendiville19}, with the only differences being that we use Lemma~\ref{lem2.2} in place of the version of the Gowers--Cauchy--Schwarz inequality used in~\cite{PelusePrendiville19} and, instead of the measures $\nu_{a,a',\gamma_i}$ (using the notation of that paper) being supported on an interval of length on the order of $N$, they are supported on an interval of length on the order of $M_1M_2$, to get that
\[
\E_{a\in[M_1]}\|fl_{a+b_2}\|_{U^3_{[\gamma'M_1M_2]}([N])}\gg\gamma^{O(1)}.
\]

Taking advantage of the almost-invariance of $l_{a+b_2}$ and applying the Cauchy--Schwarz inequality as in the end of the proof of Lemma~5.5 of~\cite{PelusePrendiville19}, the above inequality implies that
\[
\E_{h_1,h_1',h_2,h_2',h_3,h_3'\in[\gamma'M_1M_2]}\left[\E_{a\in[M_1]}\|\Delta_{(h_1,h_1'),(h_2,h_2'),(h_3,h_3')}'f\|_{\square^1_{(a+b_1)[\gamma'M_1M_2]}([N])}\right]\gg\gamma^{O(1)}.
\]
We can then apply Lemma~\ref{lem5.4} to the inner average to conclude.
\end{proof}

Now we can finally prove Lemma~\ref{lem5.1} in general.
\begin{proof}[Proof of Lemma~\ref{lem5.1}]
  The proof of the lemma proceeds by induction on $s$, with the $s=1$ and $s=2$ cases handled in Lemmas~\ref{lem5.4} and~\ref{lem5.5}, respectively. So suppose that the result holds for a general $s\geq 2$, and assume that $b_1,\dots,b_{s+1}\in\Z$ satisfy the hypotheses of the lemma. Let $f:\Z\to\C$ be a $1$-bounded function supported on $[N]$ such that $\E_{a\in[M_1]}\|f\|_{\square_{(c(a+b_i)[M_2])_{i=1}^{s+1}}^{s+1}([N])}\geq\gamma$.

For each $a\in[M_1]$ and $\ul{h},\ul{h}'\in[M_2]^{s-1}$, we define the function $g_{a,\ul{h},\ul{h}'}:\Z\to\C$ by
  \[
\Delta'_{(c(a+b_i)(h_i,h_i'))_{i=1}^{s-1}}f(x)=f\left(x+\sum_{i=1}^{s-1}c(a+b_i)h_i\right)g_{a,\ul{h},\ul{h}'}(x).
  \]
Note that $g_{a,\ul{h},\ul{h}'}$ is $1$-bounded since $f$ is $1$-bounded. Since $\gcd(a+b_{s},a+b_{s+1})<1/\gamma''$ for all but a $O_s(\gamma'')$-proportion of the $a\in[M_1]$, we can thus apply Lemma~\ref{lem5.3} deduce that
\begin{equation}\label{eq5.3}
\left|\E_{\substack{a\in[M_1] \\h_1,\dots,h_{s-1}\in[M_2] \\ h_1',\dots,h_{s-1}'\in[M_2]}}\f{1}{N}\sum_{x\in\Z}f(x+\sum_{i=1}^{s-1}c(a+b_i)h_i)g_{a,\ul{h},\ul{h}'}(x)l_{a,\ul{h},\ul{h}'}(x)r_{a,\ul{h},\ul{h}'}(x)\right|\gg\gamma^{O_s(1)},
\end{equation}
where, for all $a\in[M_1]$ and $\ul{h},\ul{h}'\in[M_2]^{s-1}$, we have
\[
 \#\{x\in[N]:r_{a,\ul{h},\ul{h}'}(x)\neq r_{a,\ul{h},\ul{h}'}(x+(a+b_{s+1})z)\text{ for some }y\in[\ve M_2/(\gamma'')^2]\}\ll_s\f{\ve}{(\gamma'')^2}N
\]
and
\[
 \#\{x\in[N]:l_{a,\ul{h},\ul{h}'}(x)\neq l_{a,\ul{h},\ul{h}'}(x+(a+b_{s})z)\text{ for some }z\in[\ve M_2/(\gamma'')^2]\}\ll_s\f{\ve}{(\gamma'')^2}N
\]
for all $0<\ve<(\gamma'')^2$. (For the $O(\gamma'')$ proportion of $a\in[M_1]$ not satisfying the size or greatest common divisor hypotheses, we can just take $r_{a,\ul{h},\ul{h}'}$ and $l_{a,\ul{h},\ul{h}'}$ to be identically zero.)

We rearrange the left-hand side of~\eqref{eq5.3} as
\[
\left|\f{1}{N}\sum_{x\in\Z}\E_{h_1,\dots,h_{s-1}\in[M_2]}f(x+\sum_{i=1}^{s-1}c(a+b_i)h_i)\left(\E_{\substack{a\in[M_1] \\ h_1',\dots,h_{s-1}'\in[M_2]}}g_{a,\ul{h},\ul{h}'}(x)l_{a,\ul{h},\ul{h}'}(x)r_{a,\ul{h},\ul{h}'}(x)\right)\right|,
\]
and then apply the Cauchy--Schwarz inequality to get that
\[
\E_{\substack{a,a'\in[M_1]\\h_1,\dots,h_{s-1}\in[M_2] \\ h_1',\dots,h_{s-1}'\in[M_2] \\ k_1',\dots,k_{s-1}'\in[M_2]}}\f{1}{N}\sum_{x\in\Z}g_{a,\ul{h},\ul{h}'}(x)\overline{g_{a',\ul{h},\ul{k}'}(x)}l_{a,\ul{h},\ul{h}'}(x)\overline{l_{a',\ul{h},\ul{k}'}(x)}r_{a,\ul{h},\ul{h}'}(x)\overline{r_{a',\ul{h},\ul{k}'}(x)}\gg_s\gamma^{O_s(1)},
\]
using that $f$ is $1$-bounded and supported on $[N]$. By the pigeonhole principle, there exists $\ul{h}\in[M_2]^{s-1}$ such that
\begin{equation}\label{eq5.4}
\E_{\substack{a,a'\in[M_1] \\ h_1',\dots,h_{s-1}'\in[M_2] \\ k_1',\dots,k_{s-1}'\in[M_2]}}\f{1}{N}\sum_{x\in\Z}g_{a,\ul{h},\ul{h}'}(x)\overline{g_{a',\ul{h},\ul{k}'}(x)}l_{a,\ul{h},\ul{h}'}(x)\overline{l_{a',\ul{h},\ul{k}'}(x)}r_{a,\ul{h},\ul{h}'}(x)\overline{r_{a',\ul{h},\ul{k}'}(x)}\gg_s\gamma^{O_s(1)}.
\end{equation}
Fix this $\ul{h}$.

Since the quantity inside of the averages on the left-hand side of~\eqref{eq5.4} is $\ll_s 1$ for all $a,a'\in[M_1]$ and $\ul{h}',\ul{k}'\in[M_2]^{s-1}$,  we have that this quantity is $\gg_s\gamma^{O_s(1)}$ for a $\gg_s\gamma^{O_s(1)}$ proportion of $a,a'\in[M_1]$ and $\ul{h}',\ul{k}'\in[M_2]^{s-1}$. For such $a,a',\ul{h}',\ul{k}'$, we have that
\begin{align*}
\gamma^{O_s(1)}\ll_s\f{1}{N}\sum_{x\in\Z}\E_{\ell_1,\dots,\ell_4\in[\gamma' M_2]}&(g_{a,\ul{h},\ul{h}'}\overline{g_{a',\ul{h},\ul{k}'})}(x+(a+b_s,a'+b_s,a+b_{s+1},a'+b_{s+1})\cdot\ul{\ell}) \\
  &l_{a,\ul{h},\ul{h}'}(x+(a'+b_s,a+b_{s+1},a'+b_{s+1})\cdot(\ell_2,\ell_3,\ell_4)) \\
  &\overline{l_{a',\ul{h},\ul{k}'}(x+(a+b_s,a+b_{s+1},a'+b_{s+1})\cdot(\ell_1,\ell_3,\ell_4))} \\
  &r_{a,\ul{h},\ul{h}'}(x+(a+b_s,a'+b_s,a'+b_{s+1})\cdot(\ell_1,\ell_2,\ell_4)) \\
  &\overline{r_{a',\ul{h},\ul{k}'}(x+(a+b_s,a'+b_s,a+b_{s+1})\cdot(\ell_1,\ell_2,\ell_3))},
\end{align*}
by almost-invariance of $l_{a,\ul{h},\ul{h}'}(x),l_{a',\ul{h},\ul{k}'}(x),r_{a,\ul{h},\ul{h}'}(x),$ and $r_{a',\ul{h},\ul{k}'}(x)$ under shifts by elements of their corresponding progressions, and then, using Lemma~\ref{lem2.2}, we thus deduce that
\[
\E_{\substack{\ell_1,\dots,\ell_4\in[\gamma' M_2] \\\ell_1',\dots,\ell_4'\in[\gamma' M_2] }}\f{1}{N}\sum_{x\in\Z}\Delta'_{(a+b_s)(\ell_1,\ell_1'),(a'+b_s)(\ell_2,\ell_2'),(a+b_{s+1})(\ell_3,\ell_3'),(a'+b_{s+1})(\ell_4,\ell_4')}(g_{a,\ul{h},\ul{h}'}\overline{g_{a',\ul{h},\ul{k}'}})(x)\gg\gamma^{O_s(1)},
\]
assuming that $\gamma'\ll_s\gamma^{O_s(1)}$.

Expanding the definition of $g_{a,\ul{h},\ul{h}'}$ and $g_{a',\ul{h},\ul{k}'}$ and using that the $\Delta'$ operator distributes over products of functions, it follows that the quantity
\begin{align*} 
  \E_{\substack{a,a'\in[M_1]\\ h_1',\dots,h_{s-1}'\in[M_2]\\k_1',\dots,k_{s-1}'\in[M_2]\\\ell_1,\dots,\ell_{4}\in[\gamma' M_2] \\ \ell_1',\dots,\ell_{4}'\in[\gamma' M_2]}}\f{1}{N}\sum_{x\in\Z}\prod_{\substack{\omega\in\{0,1\}^{s-1} \\ \omega\neq\ul{0}}}[&f_{a,a',\ul{h},\ul{\ell},\ul{\ell}',\omega}(x+(c(a+b_i)_{i=1}^{s-1}h_i')\cdot\omega)\cdot \\
  &\overline{f_{a,a',\ul{h},\ul{\ell},\ul{\ell}',\omega}'(x+(c(a'+b_i)_{i=1}^{s-1}k_i')\cdot\omega)}]
\end{align*}
is $\gg_s\gamma^{O_s(1)}$, where 
\[
f_{a,a',\ul{h},\ul{\ell},\ul{\ell}',\omega}(x):=\Delta'_{(a+b_s)(\ell_1,\ell_1'),(a'+b_s)(\ell_2,\ell_2'),(a+b_{s+1})(\ell_3,\ell_3'),(a'+b_{s+1})(\ell_4,\ell_4')}f(x+(c(a+b_i)h_i)_{i=1}^{s-1}\cdot(\ul{1}-\omega))
\]
and
\[
f_{a,a',\ul{h},\ul{\ell},\ul{\ell}',\omega}'(x):=\Delta'_{(a+b_s)(\ell_1,\ell_1'),(a'+b_s)(\ell_2,\ell_2'),(a+b_{s+1})(\ell_3,\ell_3'),(a'+b_{s+1})(\ell_4,\ell_4')}f(x+(c(a'+b_i)h_i)_{i=1}^{s-1}\cdot(\ul{1}-\omega)).
\]
Taking the averages over $h_2',\dots,h_{s-1}'\in[M_2]$ and $k_2',\dots,k_{s-1}'\in[M_2]$ inside, we can rewrite the average above as 
\begin{align*}
  \E_{\substack{a,a'\in[M_1]\\ h_1',k_1'\in[M_2]\\\ell_1,\dots,\ell_{4}\in[\gamma' M_2] \\ \ell_1',\dots,\ell_{4}'\in[\gamma' M_2]}}\f{1}{N}\sum_{x\in\Z}[f_{a,a',\ul{h},\ul{\ell},\ul{\ell}',\omega_0}(x+c(a+b_1)h_1')\overline{f_{a,a',\ul{h},\ul{\ell},\ul{\ell}',\omega_0}'(x+c(a'+b_1)k_1')}\cdot& \\
\D_{a,a',h_1',\ul{h},\ul{\ell},\ul{\ell}'}(x)\D_{a,a',k_1',\ul{h},\ul{\ell},\ul{\ell}'}'(x)&],
\end{align*}
where $\omega_0=(1,0,\dots,0)$ and $\D_{a,a',h_1',\ul{h},\ul{\ell},\ul{\ell}'}(x)$ and $\D_{a,a',k_1',\ul{h},\ul{\ell},\ul{\ell}'}'(x)$ equal
\[
\E_{h_2',\dots,h_{s-1}'\in[M_2]}\prod_{\substack{\omega\in\{0,1\}^{s-2}\\ \omega\neq\ul{0}}}(T_{c(a+b_1)h_1'}f_{a,a',\ul{h},\ul{\ell},\ul{\ell}',1\omega}\cdot f_{a,a',\ul{h},\ul{\ell},\ul{\ell}',0\omega})(x+(c(a+b_i)h_i')_{i=2}^{s-1}\cdot\omega)
\]
and
\[
\E_{k_2',\dots,k_{s-1}'\in[M_2]}\prod_{\substack{\omega\in\{0,1\}^{s-2}\\ \omega\neq\ul{0}}}(T_{c(a'+b_1)k_1'}f'_{a,a',\ul{h},\ul{\ell},\ul{\ell}',1\omega}\cdot f'_{a,a',\ul{h},\ul{\ell},\ul{\ell}',0\omega})(x+(c(a'+b_i)k_i')_{i=2}^{s-1}\cdot\omega),
\]
respectively.

Note that, by Lemma~\ref{lem2.2}, if $g:\Z\to\C$ is any function supported on the interval $[N]$ such that $\left|\f{1}{N}\sum_{x\in\Z}f(x)\D_{a,a',h_1',\ul{h},\ul{\ell},\ul{\ell}'}(x)\right|\geq\delta$, then $\|f\|_{\square^{s-2}_{c(a+b_2)[M_2],\dots,c(a+b_{s-1})[M_2]}([N])}\geq\delta$. In this situation, we say that $\D_{a,a',h_1',\ul{h},\ul{\ell}}$ is \textit{structured} for the norm $\|\cdot\|_{\square^{s-2}_{c(a+b_2)[M_2],\dots,c(a+b_{s-1})[M_2]}([N])}$. Similarly, $\D'_{a,a',k_1',\ul{h},\ul{\ell},\ul{\ell}'}$ is structured for the norm $\|\cdot\|_{\square^{s-2}_{c(a'+b_2)[M_2],\dots,c(a'+b_{s-1})[M_2]}([N])}$. Using that $\D_{a,a',h_1',\ul{h},\ul{\ell},\ul{\ell}'}$ is structured for $\|\cdot\|_{\square^{s-2}_{c(a+b_2)[M_2],\dots,c(a+b_{s-1})[M_2]}([N])}$ for every $a,a'\in[M_1]$, $h_1'\in[M_2]$, and $\ul{\ell},\ul{\ell}'\in[\gamma' M_2]^4$, we thus deduce that
\begin{align*}
\gamma^{O_s(1)}\ll_s\E_{\substack{a,a'\in[M_1] \\ h_1',k_1'\in[M_2] \\ \ell_1,\dots,\ell_4\in[\gamma' M_2] \\ \ell_1',\dots,\ell_4'\in[\gamma' M_2] \\ h''_2,\dots,h_{s-1}''\in[M_2] \\ h'''_2,\dots,h_{s-1}'''\in[M_2]}}\f{1}{N}\sum_{x\in\Z}\bigg[\Delta'_{(c(a+b_i)(h_i'',h_i'''))_{i=2}^{s-1}}f_{a,a',\ul{h},\ul{\ell},\omega_0}(x+c(a+b_1)h_1')&\\
\overline{\Delta'_{(c(a+b_i)(h_i'',h_i'''))_{i=2}^{s-1}}f_{a,a',\ul{h},\ul{\ell},\omega_0}'(x+c(a'+b_1)k_1')}&\\
\Delta'_{(c(a+b_i)(h_i'',h_i'''))_{i=2}^{s-1}}\D_{a,a',k_1',\ul{h},\ul{\ell},\ul{\ell}'}'(x)&\bigg].
\end{align*}

We now analyze, for each $a,a'\in[M_1]$, $k'_1\in[M_2]$, and $\ul{\ell},\ul{\ell}'\in[\gamma'M_2]^4$, the function $\Delta'_{(c(a+b_i)(h_i'',h_i'''))_{i=2}^{s-1}}\D_{a,a',k_1',\ul{h},\ul{\ell},\ul{\ell}'}'(x)$, which equals
\begin{equation}\label{eq5.5}
\E_{\substack{k_{2}^{\omega'},\dots,k_{s-1}^{\omega'}\in[M_2] \\ \omega'\in\{0,1\}^{s-2}}}\prod_{\substack{\omega,\omega'\in\{0,1\}^{s-2}\\ \omega\neq\ul{0}}}f'_{a,a',k_1',\ul{h},\ul{h}'',\ul{h}''',\ul{\ell},\ul{\ell}',\omega,\omega'}(x+(c(a'+b_i)_{i=2}^{s-1}k_i^{\omega'})\cdot\omega),
\end{equation}
where $f'_{a,a',k_1',\ul{h},\ul{h}'',\ul{h}''',\ul{\ell},\ul{\ell}',\omega,\omega'}(x)$ equals
\[
(T_{c(a'+b_1)k_1'}f'_{a,a',\ul{h},\ul{\ell},\ul{\ell}',1\omega}\cdot f'_{a,a',\ul{h},\ul{\ell},\ul{\ell}',0\omega})(x+(c(a+b_i)h_i))_{i=2}^{s-1}\cdot\omega'+(c(a+b_i)h_i''')_{i=2}^{s-1}\cdot(\ul{1}-\omega')).
\]
It is not hard to show that any function of the form~\eqref{eq5.5} can be approximated by an average of structured functions for the norm $\|\cdot\|_{\square^{s-2}_{c(a'+b_2)[\gamma' M_2],\dots,c(a'+b_{s-1})[\gamma' M_2]}([N])}$. More specifically, any function of the form
\[
\D(x):=\E_{\substack{k_{1}^{\omega'},\dots,k_{t}^{\omega'}\in[M_2] \\ \omega'\in\{0,1\}^{t}}}\prod_{\substack{\omega,\omega'\in\{0,1\}^{t}\\ \omega\neq\ul{0}}}f_{\omega,\omega'}(x+(c(a'+b_i)_{i=1}^tk_i^{\omega'})\cdot\omega)
\]
can be approximated by
\[
\cE(x):=\E_{\substack{k_{1}^{\omega'},\dots,k_{t}^{\omega'}\in[M_2] \\ \omega'\in\{0,1\}^{t}}}\E_{k_1^0,\dots,k_{1}^0\in[\gamma 'M_2]}\prod_{\substack{\omega,\omega'\in\{0,1\}^{t}\\ \omega\neq\ul{0}}}f_{\omega,\omega',\ul{k^{\omega'}}}'(x+(c(a'+b_i)_{i=1}^tk_i^{0})\cdot\omega),
\]
where $f_{\omega,\omega',\ul{k^{\omega'}}}'(x):=f_{\omega,\omega'}(x+(c(a'+b_i')k_1^{\omega'})\cdot\omega)$, assuming that $\gamma'\ll\gamma^{O_s(1)}$ and all of the $f_{\omega,\omega'}$'s are $1$-bounded and supported on an interval of length $\ll N$.

Indeed, to see that $\cE$ approximates $\D$, we make the change of variables $k_i^{\omega'}\mapsto k_i^{\omega'}+k_i^0$ for each $\omega'\in\{0,1\}^{t}$ and $i=1,\dots,t$ and average over $k_1^{0},\dots,k_t^{0}\in[\gamma' M_2]$ to get that $\D(x)$ equals
\[
\E_{k_1^{0},\dots,k_t^{0}\in[\gamma' M_2]}\sum_{\substack{k_{1}^{\omega'},\dots,k_{t}^{\omega'}\in\Z \\ \omega'\in\{0,1\}^{t}}}\prod_{\substack{\omega'\in\{0,1\}^t \\ i=1,\dots,t}}\f{1_{[M_2]}(k_i^{\omega'}+k_i^0)}{M_2}\prod_{\substack{\omega,\omega'\in\{0,1\}^{t}\\ \omega\neq\ul{0}}}f_{\omega,\omega'}(x+(c(a'+b_i)(k_i^{\omega'}+k_i^{0}))_{i=1}^t\cdot\omega).
\]
Note that, for every $x\in\Z$, one can replace each $1_{[M_2]}(k_{i}^{\omega'}+k_i^0)$ above with $1_{[M_2]}(k_i^{\omega'})$, at the cost of an error of size $O(\gamma')$, for the functions $1_{[M_2]}(\cdot)$ and $1_{[M_2]}(\cdot+k_i^0)$ are equal outside of a set of size $O(\gamma' M_2)$. Hence, $\cE(x)=\D(x)+O_{t}(\gamma')$ for all $x\in\Z$. Note too that $\cE(x)$ and $\D(x)$ are supported on intervals of size $\ll N$, so that they are in fact both equal to $0$ outside of a set of size $\ll N$. As a consequence, we have that $\|\D-\cE\|_{\ell^1}\ll_t\gamma'N$.

In the particular situation we care about, the above argument implies that there exists a finite set $W$ for which
\begin{align*}
\E_{\substack{a,a'\in[M_1] \\ h_1',k_1'\in[M_2] \\ \ell_1,\dots,\ell_4\in[\gamma' M_2] \\ \ell_1',\dots,\ell_4'\in[\gamma' M_2] \\ h''_2,\dots,h_{s-1}''\in[M_2] \\ h'''_2,\dots,h_{s-1}'''\in[M_2] \\ w\in W}}\f{1}{N}\sum_{x\in\Z}\bigg[&\Delta'_{(c(a+b_i)(h_i'',h_i'''))_{i=2}^{s-1}}f_{a,a',\ul{h},\ul{\ell},\ul{\ell}',\omega_0}(x+c(a+b_1)h_1')\\
&\overline{\Delta'_{(c(a+b_i)(h_i'',h_i'''))_{i=2}^{s-1}}f_{a,a',\ul{h},\ul{\ell},\ul{\ell}',\omega_0}'(x+c(a'+b_1)k_1')}\D_{a,a',k_1',\ul{h},\ul{h}'',\ul{h}''',\ul{\ell},\ul{\ell}',w}'(x)\bigg]
\end{align*}
is $\gg_s\gamma^{O_s(1)}$, where each $\D_{a,a',k_1',\ul{h},\ul{h}'',\ul{h}''',\ul{\ell},\ul{\ell}',w}'$ is structured for $\|\cdot\|_{\square^{s-2}_{c(a'+b_2)[\gamma'M_2],\dots,c(a'+b_{s-1})[\gamma'M_2]}([N])}$. As a consequence, we get that
\begin{align*}
\E_{\substack{a,a'\in[M_1] \\ h_1',k_1'\in[M_2] \\ \ell_1,\dots,\ell_4\in[\gamma' M_2] \\ \ell_1',\dots,\ell_4'\in[\gamma' M_2] \\ h''_2,\dots,h_{s-1}''\in[M_2] \\ h'''_2,\dots,h_{s-1}'''\in[M_2] \\ k''_2,\dots,k_{s-1}''\in[\gamma' M_2] \\ k'''_2,\dots,k_{s-1}'''\in[\gamma' M_2]}}\f{1}{N}\sum_{x\in\Z}\bigg[\Delta'_{(c(a+b_i)(h_i'',h_i'''))_{i=2}^{s-1},(c(a'+b_i)(k_i'',k_i'''))_{i=2}^{s-1}}f_{a,a',\ul{h},\ul{\ell},\ul{\ell}',\omega_0}(x+c(a+b_1)h_1')&\\
\overline{\Delta'_{(c(a+b_i)(h_i'',h_i'''))_{i=2}^{s-1},(c(a'+b_i)(k_i'',k_i'''))_{i=2}^{s-1}}f_{a,a',\ul{h},\ul{\ell},\ul{\ell}',\omega_0}'(x+c(a'+b_1)k_1')}&\bigg]
\end{align*}
is $\gg_s\gamma^{O_s(1)}$. Making the change of variables $x\mapsto x-c(a'+b_1)k_1'$, and arguing as in the proof of Lemma~\ref{lem5.4}, it follows that
\[
\E_{\substack{a,a'\in[M_1] \\ k_1,k_1',k_2,k_2'\in[\gamma' M_1M_2] \\ \ell_1,\dots,\ell_4\in[\gamma'M_2] \\ \ell_1',\dots,\ell_4'\in[\gamma' M_2] \\ h''_2,\dots,h_{s-1}''\in[M_2] \\ h'''_2,\dots,h_{s-1}'''\in[M_2] \\ k''_2,\dots,k_{s-1}''\in[\gamma' M_2] \\ k'''_2,\dots,k_{s-1}'''\in[\gamma' M_2]}}\f{1}{N}\sum_{x\in\Z}\bigg[\Delta'_{(c(a+b_i)(h_i'',h_i'''))_{i=2}^{s-1},(c(a'+b_i)(k_i'',k_i'''))_{i=2}^{s-1},c(k_1,k_1'),c(k_2,k_2')}f_{a,a',\ul{h},\ul{\ell},\ul{\ell}',\omega_0}(x)\bigg]
\]
is $\gg_{s}\gamma^{O_s(1)}$, provided that $M_1M_2\gg_{s}(\gamma\gamma')^{O_s(1)}$. Recalling the definition of $f_{a,a',\ul{h},\ul{\ell},\ul{\ell}',\omega_0}$, making the change of variables $x\mapsto x-(c(a+b_i)h_i)\cdot(0,1,\dots,1)$ in the above, using the pigeonhole principle to restrict the $h_i''$'s and $h_i'''$'s to lie in intervals of length $\gamma'M_2$, applying Lemma~\ref{lem2.3}, and making a change of variables in $x$ now yields
\begin{align*}
\gamma^{O_s(1)}&\ll_s\E_{\substack{a,a'\in[M_1] \\ k_1,k_1',k_2,k_2'\in[\gamma' M_1M_2] \\ h_1'',h''_2,\dots,h_{s}''\in[\gamma' M_2] \\ h'''_1,\dots,h_{s}'''\in[\gamma' M_2] \\ k''_1,\dots,k_{s}''\in[\gamma' M_2] \\ k'''_1,\dots,k_{s}'''\in[\gamma' M_2]}}\f{1}{N}\sum_{x\in\Z}\Delta'_{(c(a+b_i)(h_i'',h_i'''))_{i=1}^{s},(c(a'+b_i)(k_i'',k_i'''))_{i=1}^{s},c(k_1,k_1'),c(k_2,k_2')}f(x) \\
&=\E_{\substack{a'\in[M_1] \\ k_1,k_1',k_2,k_2'\in[\gamma' M_1M_2] \\ k''_1,\dots,k_{s}''\in[\gamma' M_2] \\ k'''_1,\dots,k_{s}'''\in[\gamma' M_2]}}\left[\E_{a\in[M_1]}\|\Delta'_{(c(a'+b_i)(k_i'',k_i'''))_{i=1}^{s},c(k_1,k_1'),c(k_2,k_2')}f\|_{\square^{s}_{(c(a+b_i)[\gamma' M_2])_{i=1}^{s}}([N])}^{2^s}\right].
\end{align*}
We conclude by applying the induction hypothesis twice.
\end{proof}

For the sake of convenience, we record next how to combine Lemmas~\ref{lem5.1} and~\ref{lem5.2} for use in the proof of Theorem~\ref{thm3.5}.
\begin{lemma}\label{lem5.6}
Let $N,M_1,M_2>0$ with $M_2\leq M_1$ and $M_1M_2\leq N/c$ and $f:\Z\to\C$ be a $1$-bounded function supported on the interval $[N]$. If
\[
\E_{\substack{h_1,\dots,h_s\in[M_1] \\ h_1',\dots,h_s'\in[M_1]}}\|f\|_{\square^{2^s-1}_{((c(\ul{h}-\ul{h}')\cdot\omega)[M_2])_{\ul{0}\neq\omega\in\{0,1\}^s}}([N])}\geq\gamma
\]
and $\gamma'\ll_s\gamma^{O_s(1)}$, then there exists an $s'\ll_s 1$ such that
\[
\E_{\substack{h_1,\dots,h_{s-1}\in[M_1] \\ h_1',\dots,h_{s-1}'\in[M_1] \\ \ell_1,\dots,\ell_{s'}\in[\gamma'M_1M_2] \\ \ell'_1,\dots,\ell'_{s'}\in[\gamma'M_1M_2]}}\|\Delta'_{(c(\ell_i,\ell_i'))_{i=1}^{s'}}f\|_{\square^{2^{s-1}-1}_{((c(\ul{h}-\ul{h}')\cdot\omega)[M_2])_{\ul{0}\neq\omega\in\{0,1\}^{s-1}}}([N])}\gg_s\gamma^{O_s(1)},
\]
provided that $M_1M_2\gg_s(\gamma\gamma')^{-O_s(1)}$.
\end{lemma}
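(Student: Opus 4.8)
The plan is to deduce Lemma~\ref{lem5.6} from Lemma~\ref{lem5.1} (with $2^{s-1}$ in the role of the number of differencing directions) together with Lemma~\ref{lem5.2}, the main tool being the elementary composition identity for box norms: for any finite families $Q_0=(Q_{0,j})_j$, $Q_1=(Q_{1,l})_l$ of finite subsets of $\Z$ and any finitely supported $g:\Z\to\C$,
\[
\|g\|_{\square^{|Q_0|+|Q_1|}_{Q_0,Q_1}([N])}^{2^{|Q_0|+|Q_1|}}=\E_{h_j,h_j'\in Q_{0,j}}\|\Delta'_{(h_j,h_j')_j}g\|_{\square^{|Q_1|}_{Q_1}([N])}^{2^{|Q_1|}},
\]
which is immediate from Definition~\ref{def2.1} and the fact that the operators $\Delta'_{(h,h')}$ commute. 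First I would raise the hypothesis to the power $2^{2^s-1}$ (legitimate by Jensen's inequality, as the box norm is nonnegative) and split $\{0,1\}^s\sm\{\ul 0\}$ according to the last coordinate: writing $\omega=(\tilde\omega,\epsilon)$ with $\tilde\omega\in\{0,1\}^{s-1}$, $\epsilon\in\{0,1\}$, and abbreviating $\ul h=(h_1,\dots,h_{s-1})$, $\ul h'=(h_1',\dots,h_{s-1}')$, the $2^{s-1}-1$ directions with $\epsilon=0$ form the family $Q_0:=((c(\ul h-\ul h')\cdot\tilde\omega)[M_2])_{\ul0\neq\tilde\omega\in\{0,1\}^{s-1}}$ --- which is exactly the family appearing in the conclusion --- while the $2^{s-1}$ directions with $\epsilon=1$ form $Q_1:=((c((\ul h-\ul h')\cdot\tilde\omega+(h_s-h_s')))[M_2])_{\tilde\omega\in\{0,1\}^{s-1}}$. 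Applying the composition identity with $Q_0$ extracted as $\Delta'$'s gives
\[
\gamma^{O_s(1)}\ll_s\E_{\substack{h_1,\dots,h_s\in[M_1]\\ h_1',\dots,h_s'\in[M_1]}}\E_{(g_\omega,g_\omega')_\omega\in Q_0}\|\Delta'_{(g_\omega,g_\omega')_\omega}f\|_{\square^{2^{s-1}}_{Q_1}([N])}^{2^{2^{s-1}}}.
\]

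Next I would supply the arithmetic hypotheses of Lemma~\ref{lem5.1}. Fix $\gamma'':=\gamma^{C_s}$ with $C_s$ a large enough constant. By the second part of Lemma~\ref{lem5.2} applied to the differences $h_i-h_i'$ (with parameter $\gamma''$), together with the trivial fact that $|(h_1-h_1',\dots,h_s-h_s')\cdot\omega|\geq\gamma''M_1$ for all but an $O_s(\gamma'')$-proportion of the tuples, for all but an $O_s(\gamma'')$-proportion of $(h_1,\dots,h_s,h_1',\dots,h_s')\in[M_1]^{2s}$ the quantities $a:=h_s-h_s'$ and $b_{\tilde\omega}:=(\ul h-\ul h')\cdot\tilde\omega$ satisfy $\gcd(a+b_{\tilde\omega},a+b_{\tilde\omega'})<(\gamma'')^{-1}$ for all distinct $\tilde\omega,\tilde\omega'\in\{0,1\}^{s-1}$ and $|a+b_{\ul0}|=|a|\geq\gamma''M_1$. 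Discarding these bad tuples costs only $O_s(\gamma'')$, which is negligible once $\gamma''\ll_s\gamma^{O_s(1)}$; pigeonholing then produces a family $\mathcal G$ of tuples $(h_1,\dots,h_{s-1},h_1',\dots,h_{s-1}',(g_\omega,g_\omega')_\omega)$ of proportion $\gg_s\gamma^{O_s(1)}$ for each of which
\[
\E_{h_s,h_s'\in[M_1]}\Big[\|\Delta'_{(g_\omega,g_\omega')_\omega}f\|_{\square^{2^{s-1}}_{(c(a+b_{\tilde\omega})[M_2])_{\tilde\omega}}([N])}^{2^{2^{s-1}}}\cdot\mathbf 1_{\text{arithmetic conditions hold for }a}\Big]\gg_s\gamma^{O_s(1)}.
\]
For each such tuple the function $\Delta'_{(g_\omega,g_\omega')_\omega}f$ is $1$-bounded and, since $|g_\omega|,|g_\omega'|\leq s|c|M_1M_2\leq sN$, supported on an interval of length $\ll_s N$; after translating it into $[N_0]$ with $N\leq N_0\ll_s N$ I would apply Lemma~\ref{lem5.1} (its $s$ equal to $2^{s-1}$, its $b_i$'s equal to the $b_{\tilde\omega}$'s with the index $\tilde\omega=\ul0$ listed first, its $N$ equal to $N_0$, its bad-proportion parameter equal to $\gamma''$), using that the average over $h_s,h_s'$ above dominates, up to an absolute constant, $\E_{a\in[M_1]}\|\Delta'_{(g_\omega,g_\omega')_\omega}f\|_{\square^{2^{s-1}}_{(c(a+b_{\tilde\omega})[M_2])_{\tilde\omega}}([N_0])}^{2^{2^{s-1}}}$ --- the weight on $a=h_s-h_s'$ induced by $h_s,h_s'\in[M_1]$ being comparable, on a sub-interval of $[M_1]$ of comparable length, to the uniform weight, and the summand being nonnegative. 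Lemma~\ref{lem5.1} then yields an $s'\ll_s1$ (which we may take the same for all tuples) with $\|\Delta'_{(g_\omega,g_\omega')_\omega}f\|_{U^{s'}_{c[\gamma'M_1M_2]}([N])}\gg_s\gamma^{O_s(1)}$ for every tuple in $\mathcal G$ --- the passage from $[N_0]$ back to $[N]$ only helping, as $N_0\geq N$ --- provided $M_1M_2\gg_s(\gamma\gamma')^{-O_s(1)}$.

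To conclude, observe that for every tuple $(h_1,\dots,h_{s-1},h_1',\dots,h_{s-1}',(g_\omega,g_\omega')_\omega)$ one has
\[
\E_{(\ell_i,\ell_i')_{i=1}^{s'}\in[\gamma'M_1M_2]}\f1N\sum_{x\in\Z}\Delta'_{(g_\omega,g_\omega')_\omega,(c(\ell_i,\ell_i'))_{i=1}^{s'}}f(x)=\|\Delta'_{(g_\omega,g_\omega')_\omega}f\|_{U^{s'}_{c[\gamma'M_1M_2]}([N])}^{2^{s'}}\geq0,
\]
so averaging this nonnegative quantity over all such tuples and using that it is $\gg_s\gamma^{O_s(1)}$ on the proportion-$\gg_s\gamma^{O_s(1)}$ family $\mathcal G$ gives $\E_{\ul h,\ul h'\in[M_1]^{s-1}}\E_{(g_\omega,g_\omega')_\omega\in Q_0}\E_{(\ell_i,\ell_i')_i\in[\gamma'M_1M_2]}\f1N\sum_x\Delta'_{(g_\omega,g_\omega')_\omega,(c(\ell_i,\ell_i'))_i}f(x)\gg_s\gamma^{O_s(1)}$. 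By the composition identity once more, the left-hand side equals
\[
\E_{\substack{h_1,\dots,h_{s-1}\in[M_1]\\ h_1',\dots,h_{s-1}'\in[M_1]\\ \ell_1,\dots,\ell_{s'}\in[\gamma'M_1M_2]\\ \ell_1',\dots,\ell_{s'}'\in[\gamma'M_1M_2]}}\|\Delta'_{(c(\ell_i,\ell_i'))_{i=1}^{s'}}f\|_{\square^{2^{s-1}-1}_{((c(\ul h-\ul h')\cdot\omega)[M_2])_{\ul0\neq\omega\in\{0,1\}^{s-1}}}([N])}^{2^{2^{s-1}-1}},
\]
and since the box norm of a $1$-bounded function supported on an interval of length $\leq N$ is itself $1$-bounded, replacing the $2^{2^{s-1}-1}$-th power by the first power only decreases this average, yielding the bound claimed in the lemma. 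The one substantive point --- routine given these tools --- is the verification of the greatest-common-divisor and size hypotheses of Lemma~\ref{lem5.1} via Lemma~\ref{lem5.2}, together with the bookkeeping linking the composition identity, the averages over $h_s,h_s'$, and the family $Q_0$ of the conclusion; no difficulty beyond those already encountered in the proof of Lemma~\ref{lem5.1} arises.
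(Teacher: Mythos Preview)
Your argument is the same route as the paper's: raise to a power via H\"older/Jensen, split the $2^s-1$ directions according to the last coordinate of $\omega$, peel off the $2^{s-1}-1$ directions with $\omega_s=0$ as $\Delta'$-operators (the paper does this by fully expanding the box norm rather than invoking a composition identity, but this is the same thing), verify the gcd and size hypotheses via Lemma~\ref{lem5.2}, apply Lemma~\ref{lem5.1} to the remaining $2^{s-1}$ directions with $\omega_s=1$, and then reassemble by nonnegativity.

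The one step that does not work as written is your passage from the average over $h_s,h_s'\in[M_1]$ to the hypothesis $\E_{a\in[M_1]}\|\cdot\|\geq\gamma$ of Lemma~\ref{lem5.1}. Your claim that the $(h_s,h_s')$-average ``dominates'' $\E_{a\in[M_1]}$ is the wrong direction for what you need, and the triangular weight $\mu_{M_1}$ on $a=h_s-h_s'$ is supported on $(-M_1,M_1)$, not $[M_1]$; comparability of $\mu_{M_1}$ to the uniform weight on a subinterval does not by itself transfer your lower bound to $\E_{a\in[M_1]}$. The clean fix, which is precisely what the paper does, is to pigeonhole in $h_s'$ as well: with $h_1,\dots,h_{s-1},h_1',\dots,h_{s-1}',h_s'$ and the $(g_\omega,g_\omega')$ all fixed, set $a:=h_s\in[M_1]$ and $b_{\tilde\omega}:=-h_s'+(\ul h-\ul h')\cdot\tilde\omega$, so that $a+b_{\tilde\omega}=(h_1-h_1',\dots,h_s-h_s')\cdot(\tilde\omega,1)$. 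The arithmetic hypotheses of Lemma~\ref{lem5.1} then follow from Lemma~\ref{lem5.2} by a Markov argument: since the bad set in $[M_1]^{2s}$ has proportion $O_s(\gamma'')$, for all but an $O_s((\gamma'')^{1/2})$-proportion of the outer tuples (including $h_s'$) the conditions fail for at most an $O_s((\gamma'')^{1/2})$-proportion of $h_s\in[M_1]$. With this adjustment your proof goes through.
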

\begin{proof}
 Using H\"older's inequality and expanding the definition of the Gowers box norm gives
  \[
\E_{\substack{h_1,\dots,h_s\in[M_1] \\ h_1',\dots,h_s'\in[M_1]}}\f{1}{N}\sum_{x\in\Z}\E_{\substack{k_\omega,k_\omega'\in[M_2] \\ \ul{0}\neq\omega\in\{0,1\}^s}}\Delta'_{((c(\ul{h}-\ul{h}')\cdot\omega)(k_\omega,k_\omega'))_{\ul{0}\neq\omega\in\{0,1\}^s}}f(x)\geq\gamma^{O_s(1)}.
  \]
For all but a $O_s(\gamma^{O_s(1)})$ proportion of $h_1,\dots,h_{s-1},h_1',\dots,h_{s}'$, we have $|h_s-h_{s}'+(h_1-h_1',\dots,h_{s-1}-h_{s-1}')\cdot\omega|>\gamma^{O_s(1)}M_1$ for every $\omega\in\{0,1\}^{s-1}$ for all but a $O_s(\gamma^{O_s(1)})$-proportion of $h_s\in[M_1]$ and, by Lemma~\ref{lem5.2}, we have
\[
\gcd(h_s-h_s'+(h_1-h_1',\dots,h_{s-1}-h_{s-1}')\cdot\omega,h_s-h_s'+(h_1-h_1',\dots,h_{s-1}-h_{s-1}')\cdot\omega')<\gamma^{-O_s(1)}
\]
for every pair of distinct $\omega,\omega'\in\{0,1\}^{s-1}$ for all but a $O_s(\gamma^{O_s(1)})$-proportion of $h_s\in[M_1]$. For such $h_1,\dots,h_{s-1},h_1',\dots,h_{s}'\in[M_1]$ we apply Lemma~\ref{lem5.1} with $h_s$ playing the role of $a$, $b_\omega=-h_s'+(h_1-h_1',\dots,h_{s-1}-h_{s-1}')\cdot\omega$ for each $\omega\in\{0,1\}^{s-1}$, and the function $\Delta_{((c(\ul{h}-\ul{h}')\cdot\omega'')(k_{\omega''0},k_{\omega''0}'))_{\ul{0}\neq\omega''\in\{0,1\}^{s-1}}}'f$ playing the role of $f$. This yields
\[
\E_{\substack{k_{\omega''0},k_{\omega''0}'\in[M_2] \\ \ul{0}\neq\omega''\in\{0,1\}^{s-1}}}\E_{\substack{h_1,\dots,h_{s-1}\in[M_1] \\ h_1',\dots,h_{s-1}'\in[M_1]}}\|\Delta_{((c(\ul{h}-\ul{h}')\cdot\omega'')(k_{\omega''0},k_{\omega''0}'))_{\ul{0}\neq\omega''\in\{0,1\}^{s-1}}}'f\|^{2^{s'}}_{U^{s'}_{c[\gamma'M_1M_2]}([N])}\gg_s\gamma^{O_s(1)}
\]
for some $s'\ll_s 1$ by the positivity of Gowers box norms. Expanding the definition of the $U^{s'}$-norm shows that the left-hand side above equals
\[
\E_{\substack{k_{\omega''0},k_{\omega''0}'\in[M_2] \\ \ul{0}\neq\omega''\in\{0,1\}^{s-1}}}\E_{\substack{h_1,\dots,h_{s-1}\in[M_1] \\ h_1',\dots,h_{s-1}'\in[M_1]\\ \ell_1,\dots,\ell_{s'}\in[\gamma'M_1M_2] \\ \ell'_1,\dots,\ell'_{s'}\in[\gamma'M_1M_2]}}\f{1}{N}\sum_{x\in\Z}\Delta'_{(c(\ell_i,\ell_i'))_{i=1}^{s'}}\Delta_{((c(\ul{h}-\ul{h}')\cdot\omega'')(k_{\omega''0},k_{\omega''0}'))_{\ul{0}\neq\omega''\in\{0,1\}^{s-1}}}'f(x),
\]
and then using that the operators $\Delta'_{(c(\ell_i,\ell_i'))_{i=1}^{s'}}$ and $\Delta_{((c(\ul{h}-\ul{h}')\cdot\omega'')(k_{\omega''0},k_{\omega''0}'))_{\ul{0}\neq\omega''\in\{0,1\}^{s-1}}}'$ commute gives the conclusion of the lemma.
\end{proof}

Now we can prove Theorem~\ref{thm3.5}.
\begin{proof}[Proof of Theorem~\ref{thm3.5}]
 For each pair of $s$-tuples $\ul{h},\ul{h}'\in[M_1]^{s}$, we associate linear polynomials $L_{\ul{h},\ul{h}',\omega}\in\Z[a]$ with $L_{\ul{h},\ul{h}',\omega}(a):=c(\ul{h}\cdot\omega+\ul{h}'\cdot(\ul{1}-\omega))a$ and $1$-bounded functions $f_{\ul{h},\ul{h}',\omega}:\Z\to\C$ with $f_{\ul{h},\ul{h}',\omega}:=T_{(b_1h_1,\dots,b_sh_s)\cdot\omega+(b_1h_1',\dots,b_sh_s')\cdot(\ul{1}-\omega)}f$ for each $\omega\in\{0,1\}^s$. Enumerate the polynomials $L_1,\dots,L_{2^s}$ in $\{L_{\ul{h},\ul{h}',\omega}:\omega\in \{0,1\}^s\}$ and corresponding functions $f_1,\dots,f_{2^s}$ in $\{f_{\ul{h},\ul{h}',\omega}:\omega\in \{0,1\}^s\}$ by picking any ordering such that $L_{2^s}=L_{\ul{h},\ul{h}',\ul{1}}$, so that the assumption~\eqref{eq3.2} implies that
  \[
    \E_{\substack{h_1,\dots,h_s\in[M_1] \\ h_1',\dots,h_s'\in[M_1]}}\Lambda^{O_s(CN),M_2}_{L_1,\dots,L_{2^s}}(1,f_1,\dots,f_{2^s})\geq\delta^{O_s(1)}.
  \]
Then, since $|c(\ul{h}\cdot\omega+\ul{h}'(\ul{1}-\omega))a|\ll_s N$ for all $a\in[M_2]$ and $\ul{h},\ul{h}'\in[M_1]$, we can apply Lemma~\ref{lem4.10} to deduce that
\[
\E_{h_1,\dots,h_s\in[M_1]}\|f\|_{\square^{2^s-1}_{((c(\ul{h}-\ul{h}')\cdot\omega)[\gamma' M_2])_{\omega\in I}}([N])}^{2^{2^s-1}}\gg_{C,s}\delta^{O_s(1)},
\]
provided $\delta'\ll_{C,s}\delta^{O_s(1)}$. The conclusion of the lemma now follows by $s$ applications of Lemma~\ref{lem5.6}.
\end{proof}

The following lemma shows how Theorem~\ref{thm3.5} can be used to control averages of Gowers box norms of the type appearing in Proposition~\ref{prop3.4} in terms of averages of Gowers box norms in which some of the differencing directions $p(\ul{a})$ are replaced by directions $p'(\ul{a})$ of smaller degree depending on fewer entries of $\ul{a}$. We will then prove Proposition~\ref{prop3.6} by applying this lemma many times.
\begin{lemma}\label{lem5.7}
Let $N,M_1,M_2>0$ with $M_2\leq M_1$ and $M_1M_2\leq N/|c|$, $I$ and $A\subset \Z^n$ be finite sets, $p_i\in\Z[a_1,\dots,a_n]$ for each $i\in I$, and $f_{\ul{a}}:\Z\to\C$ for each $\ul{a}\in A$ be $1$-bounded functions supported on the interval $[N]$. Let $k_i\in\N$ for each $i\in I$, set $t:=\sum_{i\in I}k_i$, define finite sets $A':=((-M_2,M_2)\cap\Z)^t$, $I':=\{0,1\}^{\{(i,r): i\in I,r\in[k_i]\}}\sm\{\ul{0}\}$, and $\A'\subset\Z[a_1,\dots,a_n][a_{i,r}:i\in I,r\in[k_i]]$ by
\[
\A':=\{(p_i(a_1,\dots,a_n)a_{i,r})_{i\in I,r\in[k_i]}\cdot\omega:\omega\in I'\},
\]
and set $p'_{\omega}(a_1,\dots,a_n,(a_{i,r})_{i\in I,r\in[k_i]}):=(p_i(a_1,\dots,a_n)a_{i,r})_{i\in I,r\in[k_i]}\cdot\omega$ for each $\omega\in I'$. Further assume that
\begin{equation}\label{eq5.6}
\max_{i\in I}\max_{\ul{a}\in A}|p_i(\ul{a})|M_1M_2\leq CN.
\end{equation}
Let $k_\omega\in\N$ for each $\omega\in I'$. If
\[
\E_{\ul{a}\in A}\E_{\ul{a}'\in A'}\|f_{\ul{a}}\|_{\square^{\sum_{\omega\in I'}k_\omega}_{(p'_{\omega}(\ul{a},\ul{a}')[M_1])_{\omega\in I',r'\in[k_\omega]}}([N])}\geq\gamma
\]
and $\gamma'\ll_{C,t,(k_\omega)_{\omega\in I'}}\gamma^{O_{t}(1)}$, then for every $(i_0,r_0)\in I\times[k_{i_0}]$, we have
\[
\E_{\ul{a}\in A}\E_{\ul{b}\in B}\|f_{\ul{a}}\|_{\square^{t'+\sum_{\omega\in J}k'_\omega}_{(p_{i_0}(\ul{a})[\gamma'M_1M_2])_{u\in[t']},(q_{\omega}(\ul{a},\ul{b})[M_1])_{\omega\in J,r'\in[k'_{\omega}]}}([N])}\gg_{C,t,(k_\omega)_{\omega\in I'}}\gamma^{O_{t,(k_\omega)_{\omega\in I'}}(1)},
\]
where
\begin{enumerate}
\item $B:=((-M_2,M_2)\cap\Z)^{t-1}$,
\item $J:=\{0,1\}^{\{(i,r):i\in I,r\in[k_i]\}\sm\{(i_0,r_0)\}}\sm\{\ul{0}\}$ for some $t'\ll_{t,(k_\omega)_{\omega\in I'}} 1$,
\item and, for $\omega\in J$, we have $q_\omega:=p'_{\omega'}$ and $k_\omega':=k_{\omega'}$, where 
\[
\omega'_{(i,r)}:=\begin{cases}\omega_{(i,r)} & (i,r)\neq(i_0,r_0)\\
0 & (i,r)=(i_0,r_0)\end{cases},
\]
\end{enumerate}
provided that $M_1M_2\gg_{C,t,(k_\omega)_{\omega\in I'}}(\gamma\gamma')^{-O_{t,(k_\omega)_{\omega\in I'}}(1)}$.
\end{lemma}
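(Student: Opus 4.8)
The plan is to isolate the single differencing direction associated with the pair $(i_0,r_0)$ and apply Theorem~\ref{thm3.5} to that direction, using the structure of $\A'$ to see that the set of directions appearing in the box norm breaks cleanly into a part depending on $a_{i_0,r_0}$ and a part that does not. First I would split the index set $I'=\{0,1\}^{\{(i,r)\}}\setminus\{\ul 0\}$ according to whether the $(i_0,r_0)$-coordinate of $\omega$ is $1$ or $0$: writing $\omega=(1,\omega')$ or $\omega=(0,\omega')$ with $\omega'$ ranging over $\{0,1\}^{\{(i,r)\}\setminus\{(i_0,r_0)\}}$, the direction $p'_\omega(\ul a,\ul a')$ equals $p_{i_0}(\ul a)a_{i_0,r_0}+p'_{\omega'}(\ul a,\cdot)$ in the first case and $p'_{\omega'}(\ul a,\cdot)$ in the second (where $p'_{\ul 0}=0$, corresponding to a trivial differencing). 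So after relabeling, the box norm hypothesis says (using positivity of box norms to keep only the copies indexed by $r'$, or Hölder to fold the multiplicities $k_\omega$) that an average over $\ul a\in A$, over the ``passive'' variables $(a_{i,r})_{(i,r)\ne(i_0,r_0)}$, and over $a:=a_{i_0,r_0}\in(-M_2,M_2)$ of a box norm of $f_{\ul a}$ in directions $(c'(a+b_\omega)[M_1])$ together with some fixed directions $(q_\omega[M_1])$ is $\gg_{C,t}\gamma^{O_t(1)}$, where $c'=p_{i_0}(\ul a)$ and $b_\omega$ are determined by the passive variables and $p'_{\omega'}$.

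Next I would expand the box norm in the fixed directions, pull those differencing operators onto $f_{\ul a}$ via Lemma~\ref{lem2.2}/Lemma~\ref{lem2.3} (exactly as in Lemma~\ref{lem5.6}), so that for a density-$\gg\gamma^{O(1)}$ set of choices of the passive variables and fixed differencing parameters we are left with $\E_{a\in[M_2]}\|g\|_{\square^{|I'_{1}|}_{(p_{i_0}(\ul a)(a+b_\omega)[M_1])_\omega}([N])}\gg\gamma^{O_t(1)}$ for a $1$-bounded $g$ supported on $[O_t(CN)]$ (here $g$ is $f_{\ul a}$ with the $q_\omega$-differences applied, and $I'_1$ indexes the $\omega$ with $(i_0,r_0)$-coordinate $1$). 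Shifting $a$ by $b_{\omega_0}$ for a distinguished $\omega_0$ we can put this in the form $\E_{a\in[M_2]}\|g\|_{\square^s_{(p_{i_0}(\ul a)(a+\tilde b_i)[M_1])_i}([N])}\gg\gamma^{O_t(1)}$, with $|\tilde b_i|\ll_t CN/M_1$ by the size hypothesis~\eqref{eq5.6}. This is precisely the hypothesis of Theorem~\ref{thm3.5} (with $c\rightsquigarrow p_{i_0}(\ul a)$, $M_1\rightsquigarrow M_1$, $M_2\rightsquigarrow M_2$, noting $|p_{i_0}(\ul a)|M_1M_2\le CN$), so it yields $\|g\|_{U^{s'}_{p_{i_0}(\ul a)[\gamma' M_1M_2]}([N])}\gg_{C,t}\gamma^{O_t(1)}$ for some $s'\ll_t 1$; unwinding the $U^{s'}$-norm gives an average over $\ell_1,\dots,\ell_{s'},\ell'_1,\dots,\ell'_{s'}\in[\gamma'M_1M_2]$ of $\frac1N\sum_x\Delta'_{(p_{i_0}(\ul a)(\ell_u,\ell'_u))_u}g(x)$. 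Reinstating the $q_\omega$-differences that were pulled off earlier (they commute with the new ones) and averaging back over the previously-fixed parameters, I would repackage this as the box norm appearing in the conclusion, with $t':=s'$ and $J$, $q_\omega$, $k'_\omega$ as defined; absorbing the shift $b_{\omega_0}$ back into $x$ via a change of variables inside the box norm completes the identification. Tracking the admissibility windows ($\gamma'\ll_{C,t,(k_\omega)}\gamma^{O_t(1)}$, $M_1M_2\gg(\gamma\gamma')^{-O_t(1)}$) through the applications of Lemma~\ref{lem2.3} and Theorem~\ref{thm3.5} is routine bookkeeping.

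The main obstacle I expect is purely organizational rather than conceptual: one has to choose the enumeration and the distinguished $\omega_0$ so that the direction set really does factor as ``$p_{i_0}(\ul a)\cdot(\text{affine in }a)$'' plus ``$a$-independent directions'', and one must be careful that $p_{i_0}(\ul a)$ plays the role of $c$ uniformly in $\ul a$ — in particular that the quantitative hypothesis $M_1M_2\le N/|p_{i_0}(\ul a)|$ needed for Theorem~\ref{thm3.5} follows from~\eqref{eq5.6}, and that the constants emerging from Theorem~\ref{thm3.5} depend only on $t$ and $(k_\omega)$, not on $\ul a$ or the $b_\omega$'s. A secondary subtlety is handling the degenerate $\omega'=\ul 0$ case (trivial differencing) and the $O_t(\gamma^{O_t(1)})$-proportion of bad $\ul a$ where $p_{i_0}(\ul a)$ is small or the gcd/size conditions fail, which is dealt with exactly as in the proof of Lemma~\ref{lem5.6} by setting the relevant auxiliary functions to zero there.
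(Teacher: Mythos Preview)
Your proposal is correct and follows essentially the same route as the paper's proof: split $I'$ according to whether $\omega_{(i_0,r_0)}$ is $0$ or $1$, absorb the $\omega_{(i_0,r_0)}=0$ differences into a function $g$ (the paper calls it $g_{\ul a,\ul m}$), and observe that the remaining average over $a_{i_0,r_0}$ is exactly of the shape~\eqref{eq3.2} with $c=p_{i_0}(\ul a)$, so Theorem~\ref{thm3.5} applies directly. One minor over-complication: you do not need to worry separately about gcd or size conditions on the $b_\omega$'s --- those are hypotheses of Lemma~\ref{lem5.1}, not of Theorem~\ref{thm3.5}, and Theorem~\ref{thm3.5} handles them internally; all you need here is the bound $|b_{\ul a,\omega}|\ll CN/M_1$, which follows from~\eqref{eq5.6}.
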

For example, Lemma~\ref{lem5.7} allows us to control the average
\[
\E_{a_1\in A}\E_{|a_{0,1}|,|a_{0,2}|<M_2}\|f_a\|_{\square^3_{a_1a_{0,1}[M_1],a_1a_{0,2}[M_1],(a_1a_{0,1}+a_1a_{0,2})[M_1]}([N])}
\]
in terms of an average of the form
\[
\E_{a_1\in A}\E_{\substack{\ell_1,\dots,\ell_{t'}\in[\gamma'M_1M_2] \\ \ell_1',\dots,\ell_{t'}'\in[\gamma'M_1M_2]}}\E_{|a_{0,2}|<M_2}\|\Delta'_{a_1(\ell_1,\ell_1'),\dots,a_1(\ell_{t'},\ell_{t'}')}f_a\|_{\square^1_{a_1a_{0,2}[M_1]}([N])}
\]
for some $t'\ll 1$.
\begin{proof}
Since $\|f_{\ul{a}}\|_{\square^{\sum_{\omega\in I'}k_\omega}_{(p'_{\omega}(\ul{a},\ul{a}')[M_1])_{\omega\in I',r'\in[k_\omega]}}([N])}\leq 1$ for all $\ul{a}\in A$ and $\ul{a}'\in A'$, it follows that for at least a $\gg\gamma$ proportion of $\ul{a}\in A$ and $(a_{i,r})_{i\in I,r\in[k_i],(i,r)\neq(i_0,r_0)}\in((-M_2,M_2)\cap\Z)^{t-1}$ we have
\[
\E_{|a_{i_0,r_0}|< M_2}\|f_{\ul{a}}\|_{\square^{\sum_{\omega\in I'}k_\omega}_{(p'_{\omega}(\ul{a},\ul{a}')[M_1])_{\omega\in I',r'\in[k_\omega]}}([N])}\gg\gamma.
\]
Expanding the definition of the Gowers box norm, we have that
\begin{equation}\label{eq5.7}
\E_{|a_{i_0,r_0}|< M_2}\f{1}{N}\sum_{x\in\Z}\E_{\substack{h_{\omega,r'},h_{\omega,r'}'\in[M_1] \\ \omega\in I',r'\in[k_\omega]}}\Delta'_{(p'_{\omega}(\ul{a},\ul{a}')(h_{\omega,r'} ,h'_{\omega,r'}))_{\omega\in I',r'\in[k_\omega]}}f_{\ul{a}}(x)\gg\gamma^{O_{t,(k_\omega)_{\omega\in I'}}(1)},
\end{equation}
which is of the form that Theorem~\ref{thm3.5} can be applied to. Indeed, the left-hand side of~\eqref{eq5.7} can be written as
\[
\E_{\substack{m_{\omega',r'},m_{\omega',r'}'\in[M_1] \\ \omega'\in I', \omega'_{(i_0,r_0)}=0\\ r'\in[k_{\omega'}]}}\E_{|a_{i_0,r_0}|\leq M_2}\f{1}{N}\sum_{x\in\Z}\E_{\substack{h_{\omega,r''},h_{\omega',r''}\in[M_1] \\ \omega\in I',\omega_{(i_0,r_0)}=1 \\ r''\in[k_\omega]}}\Delta'_{((p_{i_0}(\ul{a})a_{i_0,r_0}+b_{\ul{a},\omega})(h_{\omega,r''},h_{\omega,r''}'))_{\omega\in I',\omega_{(i_0,r_0)}=1,r''\in[k_\omega]}}g_{\ul{a},\ul{m}}(x),
\]
where
\[
b_{\ul{a},\omega}=(p_{i}(\ul{a})a_{i,r})_{i\in I,r\in[k_i]}\cdot\omega-p_{i_0}(\ul{a})a_{i_0,r_0}
\]
and
\[
  g_{\ul{a},\ul{m}}=\Delta'_{(((p_{i}(\ul{a})a_{i,r})_{i\in I,r\in[k_i]}\cdot\omega')(m_{\omega',r'},m_{\omega',r'}'))_{\omega'\in I',\omega'_{i_0,r_0}=0, r'\in[k_{\omega'}]}}f_{\ul{a}}.
\]
The conclusion of the lemma now follows from Theorem~\ref{thm3.5}.
\end{proof}

We can now finally prove Proposition~\ref{prop3.6}. As mentioned above, this will be done by applying Lemma~\ref{lem5.7} many times. To illustrate how Lemma~\ref{lem5.7} will be applied, we will show how to control  an average of norms of the form $\|\cdot\|_{\square^{|I_2|}_{p_i(\ul{a})[N^{1/3}]}([N])}$ by a global $U^s$-norm for some $s\ll 1$, where $I_2$ and $(p_i)_{i\in I_2}=\mathcal{A}_2(3,1;(1,2,1))$ are as in the example between Theorem~\ref{thm3.3} and Proposition~\ref{prop3.4}.

Assuming that $f:\Z\to\C$ is $1$-bounded and supported on the interval $[N]$ and that
\[
\E_{\substack{a_{0,1},a_{0,2},a_{(1,0),1}\leq N^{\f{1}{3}}\\ a_{(0,1),1},a_{(0,1),2},a_{(1,1),1}\leq N^{\f{1}{3}}}}\|f\|_{\square^{15}_{((6(a_{0,1}a_{(1,0),1},a_{0,2}a_{(0,1),1},a_{0,2}a_{(0,1),2},(a_{0,1}+a_{0,2})a_{(1,1),1})\cdot\omega)[N^{\f{1}{3}}])_{\omega\in\{0,1\}^4\setminus\{\ul{0}\}}}([N])}\geq\gamma,
\]
we apply Lemma~\ref{lem5.7} with $(i_0,r_0)=((1,0),1)$ to deduce that
\[
\E_{\substack{\ell_{1},\dots,\ell_{s_1}\leq\gamma'N^{\f{2}{3}} \\ \ell_{1}',\dots,\ell_{s_1}'\leq\gamma'N^{\f{2}{3}} \\a_{0,1},a_{0,2},a_{(0,1),1}\leq N^{\f{1}{3}}\\ a_{(0,1),2},a_{(1,1),1}\leq N^{\f{1}{3}}}}\|\Delta'_{6a_{0,1}(\ell_1,\ell_1'),\dots,6a_{0,1}(\ell_{s_1},\ell_{s_1}')}f\|_{\square^{7}_{((6(a_{0,2}a_{(0,1),1},a_{0,2}a_{(0,1),2},(a_{0,1}+a_{0,2})a_{(1,1),1})\cdot\omega)[N^{\f{1}{3}}])_{\omega\in\{0,1\}^3\setminus\{\ul{0}\}}}([N])}\gg\gamma^{O(1)}
\]
for some $s_1\ll 1$ when $\gamma'\ll\gamma^{O(1)}$. For each fixed $\ell_1,\dots,\ell_{s_1},\ell_1',\dots,\ell_{s_1}'$, we apply Lemma~\ref{lem5.7} with $(i_0,r_0)=((0,1),2)$ to get that 
\[
\E_{\substack{\ell_{1},\dots,\ell_{s_1}\leq\gamma'N^{\f{2}{3}} \\ \ell_{1}',\dots,\ell_{s_1}'\leq\gamma'N^{\f{2}{3}} \\m_{1},\dots,m_{s_2}\leq\gamma'N^{\f{2}{3}} \\ m_{1}',\dots,m_{s_2}'\leq\gamma'N^{\f{2}{3}} \\a_{0,1},a_{0,2}\leq N^{\f{1}{3}}\\ a_{(0,1),1},a_{(1,1),1}\leq N^{\f{1}{3}}}}\|\Delta'_{\substack{6a_{0,1}(\ell_1,\ell_1'),\dots,6a_{0,1}(\ell_{s_1},\ell_{s_1}'),\\6a_{0,2}(m_1,m_1'),\dots,6a_{0,2}(m_{s_2},m_{s_2}')}}f\|_{\square^{3}_{((6(a_{0,2}a_{(0,1),1},(a_{0,1}+a_{0,2})a_{(1,1),1})\cdot\omega)[N^{\f{1}{3}}])_{\omega\in\{0,1\}^2\setminus\{\ul{0}\}}}([N])}\gg\gamma^{O(1)}
\]
for some $s_2\ll 1$ when $\gamma'\ll\gamma^{O(1)}$ and argue similarly with $(i_0,r_0)=((0,1),1)$ and $(i_0,r_0)=((1,1),1)$ to deduce that
\[
\f{1}{N}\sum_{x}\E_{\substack{\ell_{1},\dots,\ell_{s_1}, \ell_{1}',\dots,\ell_{s_1}'\leq\gamma'N^{\f{2}{3}} \\m_{1},\dots,m_{s_2},m_{1}',\dots,m_{s_2}'\leq\gamma'N^{\f{2}{3}} \\u_{1},\dots,u_{s_3},u_{1}',\dots,u_{s_3}'\leq\gamma'N^{\f{2}{3}} \\v_{1},\dots,v_{s_4}, v_{1}',\dots,v_{s_4}'\leq\gamma'N^{\f{2}{3}} \\a_{0,1},a_{0,2}\leq N^{\f{1}{3}}\\}}\Delta'_{\substack{6a_{0,1}(\ell_1,\ell_1'),\dots,6a_{0,1}(\ell_{s_1},\ell_{s_1}'),\\6a_{0,2}(m_1,m_1'),\dots,6a_{0,2}(m_{s_2},m_{s_2}')\\6a_{0,2}(u_1,u_1'),\dots,6a_{0,2}(u_{s_3},u_{s_3}'),\\6(a_{0,1}+a_{0,2})(v_1,v_1'),\dots,6(a_{0,1}+a_{0,2})(v_{s_4},v_{s_4}')}}f(x)\gg\gamma^{O(1)}
\]
for some $s_3,s_4\ll 1$ and $\gamma'\ll\gamma^{O(1)}$. We write the above as
\[
\E_{a_{0,1},a_{0,2}\leq N^{\f{1}{3}}}\|f\|_{\square^{s_1+s_2+s_3+s_4}_{((6(a_{0,1},a_{0,2})\cdot\omega)[\gamma'N^{\f{2}{3}}])_{\omega\in\{0,1\}^2\setminus\{\ul{0}\},r'\in[k_\omega]}}([N])}\gg\gamma^{O(1)}
\]
with $k_{(1,0)}=s_1$, $k_{(0,1)}=s_2+s_3$, and $k_{(1,1)}=s_4$, and apply Lemma~\ref{lem5.7} twice more with $(i_0,r_0)=(0,2)$ and then $(i_0,r_0)=(0,1)$ to deduce that $\|f\|_{U^{s}_{[\gamma''N]}([N])}\gg\gamma^{O(1)}$ for $\gamma''\ll\gamma^{O(1)}$ and some $s\ll 1$.
\begin{proof}[Proof of Proposition~\ref{prop3.6}]
By applying Lemma~\ref{lem5.7} $\sum_{i\in I_{\ell-1}}k_i$ times, once with $(i_0,r_0)=(i,r)$ for each $i\in I_{\ell-2}$ and $r\leq k_i$, we get that
\begin{equation}\label{eq5.8}
\E_{\ul{a}\in A}\|f\|_{\square^{\sum_{i\in I_{\ell-2}}t_i}_{(p_i(\ul{a})[\delta' M^{2}])_{i\in I_{\ell-2},r'\in[t_{i}]}}([N])}\gg_{C,\ell}\delta^{O_\ell(1)},
\end{equation}
where $1\leq t_{i}\ll_\ell 1$ for each $i\in I_{\ell-2}$, assuming that $\delta'\ll_{C,\ell}\delta^{O_\ell(1)}$. More generally, whenever $j=1,\dots,\ell-1$, from
\[
\E_{\ul{a}\in A}\|f\|_{\square^{\sum_{i\in I_{\ell-j}}t_i}_{(p_i(\ul{a})[\delta'M^j])_{i\in I_{\ell-j},r'\in[t_i]}}([N])}\geq \gamma
\]
one can deduce
\[
\E_{\ul{a}\in A}\|f\|_{\square^{\sum_{i\in I_{\ell-(j+1)}}t_i}_{(p_i(\ul{a})[\delta'M^{(j+1)}])_{i\in I_{\ell-(j+1)},r'\in[t_i]}}([N])}\gg_{C,\ell} \gamma^{O_\ell(1)},
\]
where $1\leq t_i\ll_{\ell}1$ for each $i\in I_{\ell-(j+1)}$, by applying Lemma~\ref{lem5.7} once with $(i_0,r_0)=(i,r)$ for each $i\in I_{\ell-(j+1)}$ and $r\leq k_i$. Starting from~\eqref{eq5.8} and repeating this implication $\ell-2$ more times gives the conclusion of the proposition.
\end{proof}

\section{Control by uniformity norms}\label{sec6}

In this section, we combine the results of Sections~\ref{sec4} and~\ref{sec5} to control the general average $\Lambda_{P_1,\dots,P_m}^{N,M}(f_0,\dots,f_\ell;\psi_{\ell+1},\dots,\psi_m)$ in terms of $U^s$-norms of $f_\ell$ and $F_\ell$. We will also state and prove Theorem~\ref{thm6.2}, the control result for general polynomial progressions mentioned in the introduction.

Theorem~\ref{thm3.7} follows almost immediately from the results already proven.
\begin{proof}[Proof of Theorem~\ref{thm3.7}]
  Set $c':=(\deg{P_\ell})!c_\ell$. By making the change of variables $x\mapsto x+c'z$ in the definition of $\Lambda^{N,M}_{P_1,\dots,P_m}$ and averaging over $z\in[\delta'M^{\deg{P_\ell}}]$, we have that
  \[
    \left|\E_{y\in[M]}\psi_{\ell+1}(P_{\ell+1}(y))\cdots\psi_m(P_m(y))\f{1}{N}\sum_{x\in\Z}\left(\E_{z\in[\delta'M^\ell]}f_0(x+c'z)\cdots f_\ell(x+c'z+P_\ell(y))\right)\right|\geq\delta.
  \]
  By one application of the Cauchy--Schwarz inequality in the $x$ and $y$ variables, we thus get  \[
\left|\E_{z,z'\in[\delta' M^\ell]}\Lambda^{N,M}_{P_1,\dots,P_\ell}(\Delta'_{c'(z,z')}f_0,\dots,\Delta'_{c'(z,z')}f_\ell)\right|\gg_{C,\deg{P_\ell}}\delta^2,
\]
so it follows from Propositions~\ref{prop3.4} and~\ref{prop3.6} that
\[
\E_{z,z'\in[\delta'M^\ell]}\f{1}{N}\sum_{x\in\Z}\E_{\substack{h_i,h_i'\in[\delta'M^\ell] \\ i=1,\dots,s}}\Delta_{c'(h_1,h_1'),\dots,c'(h_s,h_{s'})}(\Delta_{c'(z,z')}f)(x)\gg_{C,\deg{P_\ell}}\delta^{O_{\deg{P_\ell}}(1)}
\]
for some $s\ll_\ell 1$, which gives the conclusion of the theorem.
\end{proof}

We now deduce control for $\Lambda_{P_1,\dots,P_m}^{N,M}(f_0,\dots,f_\ell;\psi_{\ell+1},\dots,\psi_m)$ in terms of $U^s$-norms of dual functions by using the Cauchy--Schwarz inequality once and then applying Theorem~\ref{thm3.7}.
\begin{proof}[Proof of Corollary~\ref{cor3.8}]
Note that $\Lambda_{P_1,\dots,P_m}^{N,M}(f_0,\dots,f_\ell;\psi_{\ell+1},\dots,\psi_m)=\f{1}{N}\sum_{x}f_{\ell}(x)F_\ell(x)$, so that an application of the Cauchy--Schwarz inequality gives
\[
\left|\Lambda_{P_1,\dots,P_m}^{N,M}(f_0,\dots,f_{\ell-1},F_{\ell};\psi_{\ell+1},\dots,\psi_m)\right|\geq\delta^2.
\]
Corollary~\ref{cor3.8} now follows from Theorem~\ref{thm3.7} with $F_\ell$ (which is a $1$-bounded function supported on an interval of the form $[O_{\deg{P_\ell}}(CN)]$) playing the role of $f_\ell$.
\end{proof}

\subsection{Control for general polynomial progressions} In this subsection, we prove the following result, whose proof largely follows the proofs of Propositions~\ref{prop3.4} and~\ref{prop3.6}.
\begin{theorem}\label{thm6.2}
Let $N,M>0$, $P_1,\dots,P_m\in\Z[y]$ be polynomials such that $\deg{P_1}\leq\dots\leq\deg{P_m}$ and each $P_i$ has leading coefficient $c_i$. There exists an $s\ll_{\deg{P_1},\dots,\deg{P_m}}1$ such that the following holds. If $m':=\#\{i\in[m-1]:\deg{P_i}=\deg{P_m}\}$, $1/C\leq |c_i|M^{\deg{P_m}}/N\leq C$ for each $m-m'\leq i\leq m$, all of the coefficients of $P_1,\dots,P_m$ have absolute value bounded by $C|c_m|$, $f_0,\dots,f_m:\Z\to\C$ are $1$-bounded functions supported on the interval $[N]$,
\[
|\Lambda_{P_1,\dots,P_m}(f_0,\dots,f_m)|\geq\delta,
\]
and $\delta'\ll_{C,\deg{P_1},\dots,\deg{P_m}}\delta^{O_{\deg{P_1},\dots,\deg{P_m}}(1)}$, then we have
\[
\|f_m\|_{\square^s_{Q_1,\dots,Q_s}([N])}\gg_{C,\deg{P_1},\dots,\deg{P_m}}\delta^{O_{\deg{P_1,\dots,\deg{P_m}}}(1)},
\]
where each $Q_i$ equals $(\deg{P_m})!c_m[\delta'M^{\deg{P_m}}]$ or $(\deg{P_m})!(c_m-c_j)[\delta'M^{\deg{P_m}}]$ for some $m-m'\leq j<m$, provided that $N\gg_{C,\deg{P_1},\dots,\deg{P_m}}(|c_m|/\delta\delta')^{O_{\deg{P_1},\dots,\deg{P_m}}(1)}$.
\end{theorem}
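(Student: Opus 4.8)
The strategy is to reduce Theorem~\ref{thm6.2} to the distinct-degree case that the machinery of Sections~\ref{sec4} and~\ref{sec5} handles, by first peeling off all the polynomials whose degree is strictly less than $\deg{P_m}$, and then running a PET-induction/concatenation argument on the top-degree block $P_{m-m'},\dots,P_m$, which all share degree $d:=\deg{P_m}$ but have distinct leading coefficients $c_{m-m'},\dots,c_m$ (distinctness of leading coefficients within a fixed degree is the genuine input here, replacing the distinct-degree hypothesis). The key structural observation is that the averages of Gowers box norms produced by PET induction on a collection with several polynomials of top degree $d$ and distinct leading coefficients still fall into the scope of Theorem~\ref{thm3.5}, provided one is careful (as in Lemmas~\ref{lem4.4}--\ref{lem4.8}) to track leading coefficients and the coefficients of the degree-$(d-1)$ terms so that the relevant greatest-common-divisor and non-degeneracy hypotheses of Lemma~\ref{lem5.1}/Lemma~\ref{lem5.2} are met.

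\textbf{Step 1: dispose of the low-degree polynomials.} Starting from $|\Lambda_{P_1,\dots,P_m}(f_0,\dots,f_m)|\ge\delta$, apply the change of variables $x\mapsto x+c'z$ with $c':=d!\,c_m$, average over $z\in[\delta'M^d]$, and apply Cauchy--Schwarz in $(x,y)$ exactly as in the proof of Theorem~\ref{thm3.7}; this replaces each $f_i$ by $\Delta'_{c'(z,z')}f_i$ and shows it suffices to control $\Lambda_{P_1,\dots,P_m}(g_0,\dots,g_m)$ for $1$-bounded $g_i$. Now perform PET induction à la Bergelson--Leibman, using Lemmas~\ref{lem4.2} and~\ref{lem4.4}: at each stage choose the distinguished index $i_0$ to be one of minimal degree among the nonconstant polynomials, which kills one polynomial of the lowest surviving degree while only mildly perturbing the degree-$(d-1)$ coefficients of the top-degree ones (this is precisely the $1_{d'=d-1}$ correction term recorded in conclusion~(6) of Lemma~\ref{lem4.4}). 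After $O_{\deg{P_1},\dots,\deg{P_{m-m'-1}}}(1)$ steps one arrives at an average over a polynomial collection all of whose nonconstant members have degree exactly $d$, with $m'+1$ distinct leading coefficients, each appearing with some bounded multiplicity $s_0\ll 1$ and with distinct degree-$(d-1)$ coefficients within each leading-coefficient class — i.e.\ exactly the hypotheses of Lemma~\ref{lem4.6}.

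\textbf{Step 2: collapse the top-degree block and concatenate.} Run the procedure of Lemma~\ref{lem4.8} — alternating Lemma~\ref{lem4.6} (to reduce the number of distinct top leading coefficients by one) with repeated Lemma~\ref{lem4.4} — $m'$ times, then a final Lemma~\ref{lem4.5}, to descend to an average of averages over a \emph{linear} configuration $(x+p(\ul a)y)_{p\in\A\cup\{0\}}$ whose linear coefficients $p(\ul a)$ have the multilinear form $\{(c^0_j(\ul a))_{j\in J}\cdot\omega:\omega\in\{0,1\}^{|J|}\sm\{\ul 0\}\}$, where the $c^0_j$ are (up to the constant $d!$) among the $c_m-c_i$. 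Then Lemma~\ref{lem4.10} converts this into an average of Gowers box norms $\E_{\ul a}\|f_m\|_{\square^{|\A|}_{(p(\ul a)[\delta'M^{?}])_p}([N])}$ of precisely the form appearing in Proposition~\ref{prop3.4}, and the argument of Proposition~\ref{prop3.6} — repeated application of Lemma~\ref{lem5.7}, which invokes the main concatenation result Theorem~\ref{thm3.5} — upgrades this to control by a single $U^s$-norm of $f_m$ with $s\ll_{\deg{P_1},\dots,\deg{P_m}}1$, hence (expanding the $U^s$-norm and using positivity of box norms, or keeping it at the last concatenation step) to the claimed box-norm estimate $\|f_m\|_{\square^s_{Q_1,\dots,Q_s}([N])}\gg\delta^{O(1)}$ with $Q_i\in\{d!\,c_m[\delta'M^d],\,d!(c_m-c_j)[\delta'M^d]\}$.

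\textbf{Main obstacle.} The delicate point is \emph{Step 1 combined with the bookkeeping needed for Step 2}: one must guarantee that the polynomials of top degree $d$ emerge from the PET induction with leading coefficients that are still distinct \emph{and} are exactly the differences $d!(c_m-c_j)$ up to predictable perturbations, and with degree-$(d-1)$ coefficients that remain pairwise distinct within each leading-coefficient class, so that the non-degeneracy and gcd hypotheses of Lemmas~\ref{lem5.2} and~\ref{lem5.1} hold for a $1-O(\gamma'')$ proportion of the relevant parameters. This forces the same careful choices of distinguished index $i_0$ made in the proofs of Lemmas~\ref{lem4.7} and~\ref{lem4.8}, now in a setting where the top degree is not forced by a single polynomial; verifying that the size bound $\max|Q_i|(\ul a,y)\ll CN$ is preserved throughout (using that all coefficients of the $P_i$ are $O(C|c_m|)$ and $|c_i|M^d\asymp_C N$ for the top block) is routine by the triangle-inequality argument of Lemma~\ref{lem4.7} but must be tracked. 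Everything else is a direct transcription of the distinct-degree argument, so no new analytic ideas are required beyond the concatenation theory already developed.
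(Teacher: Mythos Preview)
Your approach is essentially the paper's; the paper packages your Step~1 together with the first degree-drop of Step~2 into a single lemma (Lemma~\ref{lem6.3}, a direct analogue of Lemma~\ref{lem4.7} for the case of several top-degree polynomials), and then applies Lemma~\ref{lem4.8} $(\deg P_m-2)$ times, Lemma~\ref{lem4.10} once, and Lemma~\ref{lem5.7} repeatedly. Two corrections to your writeup: the initial Cauchy--Schwarz in $z,z'$ you import from Theorem~\ref{thm3.7} is unnecessary here (that step existed only to eliminate the characters $\psi_{\ell+1},\dots,\psi_m$, which are absent in Theorem~\ref{thm6.2}---the paper explicitly omits it), and your Step~2 as written only drops the degree by one: alternating Lemma~\ref{lem4.6} with repeated Lemma~\ref{lem4.4} ``$m'$ times'' followed by a single Lemma~\ref{lem4.5} is precisely the content of Lemma~\ref{lem6.3} and yields degree-$(d-1)$ polynomials, not a linear configuration, so you must then iterate Lemma~\ref{lem4.8} a further $(\deg P_m-2)$ times (as in Lemma~\ref{lem4.9}) before Lemma~\ref{lem4.10} is applicable.
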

If $c_{m-(m'-1)},\dots,c_m$ are uniformly bounded, or, more generally, are of the form $c_i'q$ for bounded $c_i'$, then it follows easily from Theorem~\ref{thm6.2} that $\Lambda_{P_1,\dots,P_m}(f_0,\dots,f_m)$ is controlled by a $U^s$-norm of $f_m$. To prove Theorem~\ref{thm6.2}, all we need beyond the results of Sections~\ref{sec4} and~\ref{sec5} is a more general version of Lemma~\ref{lem4.7}, which we now prove.

\begin{lemma}\label{lem6.3}
Let $N,M>0$ and $P_1,\dots,P_m\in\Z[y]$ be polynomials such that $\deg{P_1}\leq\dots\leq\deg{P_m}$ and each $P_i$ has leading coefficient $c_i$. If $m':=\#\{i\in[m-1]:\deg{P_i}=\deg{P_m}\}$, $1/C\leq |c_i|M^{\deg{P_m}}/N\leq C$ for each $m-m'\leq i\leq m$, all of the coefficients of $P_1,\dots,P_m$ have absolute value bounded by $C|c_m|$, $f_0,\dots,f_m:\Z\to\C$ are $1$-bounded functions supported on the interval $[N]$,
  \[
|\Lambda_{P_1,\dots,P_m}(f_0,\dots,f_m)|\geq\gamma,
\]
and $\gamma'\ll_{C,\deg{P_1},\dots,\deg{P_m}}\gamma^{O_{\deg{P_1},\dots,\deg{P_m}}(1)}$, then we have
\[
\E_{\ul{a}\in A}^{\mu}\f{1}{N}\sum_{x\in\Z}\E_{y\in[M]}f_m(x)\prod_{i\in I}f'_i(x+Q_i(\ul{a},y))\gg_{C,\deg{P_1},\dots,\deg{P_m}}\gamma^{O_{\deg{P_1},\dots,\deg{P_m}}(1)},
\]
where
\begin{itemize}
\item $I=\{0,1\}^t\sm\{\ul{0}\}$ for some $t\ll_{\deg{P_1},\dots,\deg{P_{m}}}1$,
\item $A=((-\gamma' M,\gamma' M)\cap\Z)^t$,
\item $\mu(a_1,\dots,a_t)=\f{1_A(a_1,\dots,a_t)}{(2\lfloor\gamma' M\rfloor+1)^{t-1}}\mu_{\gamma'M}(a_t)$,
\item the collection $\mathcal{Q}:=(Q_i)_{i\in I}$ consists only of polynomials of degree $\deg{P_m}-1$, each of which has distinct leading coefficient, and the set of such leading coefficients is
  \[
    \{((\deg{P_m})d_1a_1,\dots,(\deg{P_m})d_ta_t)\cdot\omega:\omega\in I\},
  \]
  where each $d_i$ equals $c_m$ or $c_m-c_j$ for some $m-m'\leq j<m$,
\item we have
  \[
\max_{i\in I}\max_{\ul{a}\in A}\max_{y\in[M]}|Q_i|(\ul{a},y)\ll_{C,\deg{P_1},\dots,\deg{P_m}}N,
  \]
\item and $f_i'$ equals either $f_m$ or $\overline{f_m}$ for all $i\in I$.
\end{itemize}
\end{lemma}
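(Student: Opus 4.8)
The plan is to prove Lemma~\ref{lem6.3} by the PET induction argument used for Lemma~\ref{lem4.7}, with Lemma~\ref{lem4.6} inserted exactly as in the proof of Lemma~\ref{lem4.8} in order to handle the $m'+1$ polynomials $P_{m-m'},\dots,P_m$ of maximal degree $d:=\deg P_m$, which in this generality need not share a common leading coefficient. As in Section~\ref{sec4}, one tracks at each stage the weight vector together with the leading coefficients of the degree-$d$ polynomials and the coefficients of their degree-$(d-1)$ terms, so that the hypotheses of Lemmas~\ref{lem4.2},~\ref{lem4.4},~\ref{lem4.5}, and~\ref{lem4.6} remain satisfied.

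Concretely, I would argue in three phases. First, as in the proof of Lemma~\ref{lem4.7}, apply Lemma~\ref{lem4.4} repeatedly, pivoting at each stage on a nonconstant-in-$y$ polynomial of smallest degree, until every descendant of the lower-degree polynomials $P_1,\dots,P_{m-m'-1}$ has been reduced to a constant in $y$ and absorbed into the $\mu$-weighted factor, leaving only degree-$d$ descendants of $P_{m-m'},\dots,P_m$, with pairwise distinct degree-$(d-1)$ coefficients within each leading-coefficient class and the same number of polynomials in each class. (When $\deg P_1=d$ this phase begins with one application of Lemma~\ref{lem4.6} and then proceeds by Lemma~\ref{lem4.4}.) Second, as in the proof of Lemma~\ref{lem4.8}, apply Lemma~\ref{lem4.6} $m'$ times, interleaved with further applications of Lemma~\ref{lem4.4} to clear the lower-degree polynomials it spawns, each time pivoting on a descendant of some $P_j$ with $m-m'\le j<m$, so as to reduce the number of distinct leading coefficients among the degree-$d$ polynomials from $m'+1$ to $1$. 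Third, finish with a single application of Lemma~\ref{lem4.5}, pivoting on the surviving descendant of $P_m$; this cancels the degree-$d$ terms, puts $f_m$ at position $x$, and produces a family of degree-$(d-1)$ polynomials with pairwise distinct leading coefficients. After relabelling the resulting index set by $\{0,1\}^t\sm\{\ul 0\}$, conclusion~(5) of Lemma~\ref{lem4.5} shows that these leading coefficients form the set $\{((\deg P_m)d_1a_1,\dots,(\deg P_m)d_ta_t)\cdot\omega:\omega\in I\}$, and by keeping track of the degree-$(d-1)$ coefficients through the previous two phases one checks that each $d_i$ is $c_m$ or $c_m-c_j$ for some $m-m'\le j<m$.

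The size bound $\max_i\max_{\ul a}\max_y|Q_i|(\ul a,y)\ll_{C,\deg P_1,\dots,\deg P_m}N$ needed to invoke Lemmas~\ref{lem4.4}--\ref{lem4.6} is verified, exactly as in Lemma~\ref{lem4.7}, by the triangle inequality applied to the recursive definition of the $Q_i$'s, using that every coefficient of $P_1,\dots,P_m$ has absolute value at most $C|c_m|$ and $|c_m|M^d\asymp_C N$, so $|P_i(y)|\ll_C N$ for $y\in[CM]$; the weights $\mu$ satisfy $\|\mu\|_{\ell^1}\le 1$ and $\|\mu\|_{\ell^2}^2\le C/|A|$ at every stage, as in Lemma~\ref{lem4.7}.

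The main obstacle is the bookkeeping in the second and third phases: one must order the pivots --- lowest-degree polynomials first, then the top-degree polynomials other than $P_m$, then $P_m$ last --- so that every term picked up by the degree-$(d-1)$ coefficient of a surviving degree-$d$ polynomial, whether from a van der Corput shift in Lemma~\ref{lem4.4} or~\ref{lem4.6} or from subtracting a top-degree pivot in Lemma~\ref{lem4.6}, is either a constant or $d$ times a leading coefficient of the form $c_m$ or $c_m-c_j$, with no spurious multiplier $c_j$ ($j<m$) surviving into the final box norm rather than being absorbed into the $\mu$-weighted factor. This is the only place where the argument truly departs from the combination of the proofs of Lemmas~\ref{lem4.7} and~\ref{lem4.8}, and it is precisely why Lemmas~\ref{lem4.4}--\ref{lem4.6} were formulated to carry the leading- and subleading-coefficient data of the top-degree polynomials.
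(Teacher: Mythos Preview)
Your proposal is correct and follows essentially the same three-phase approach as the paper: clear the lower-degree polynomials via repeated Lemma~\ref{lem4.4} as in the proof of Lemma~\ref{lem4.7}, then reduce the number of top-degree leading coefficients to one by alternating Lemma~\ref{lem4.6} with Lemma~\ref{lem4.4} as in the proof of Lemma~\ref{lem4.8}, and finish with a single application of Lemma~\ref{lem4.5}. Your count of $m'$ applications of Lemma~\ref{lem4.6} in the second phase is the right one (after phase one there are $m'+1$ distinct leading coefficients $c_{m-m'},\dots,c_m$, so $m'$ reductions are needed; the paper's ``$m'-1$'' appears to be a slip), and your identification of the bookkeeping---that the surviving degree-$d$ class is always the $P_m$-class, whose current leading coefficient is successively $c_m,\,c_m-c_{\sigma(1)},\dots$, so every van der Corput shift contributes a factor $d\cdot c_m$ or $d\cdot(c_m-c_j)$ to the subleading coefficient---is exactly the mechanism the paper defers to the proof of Lemma~\ref{lem4.8}.
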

\begin{proof}
Arguing as in the proof of Lemma~\ref{lem4.7}, we apply Lemma~\ref{lem4.4} $t_0\ll_{\deg{P_1},\dots,\deg{P_{m-m'-1}}}1$ times to deduce that
  \[
\E_{\ul{a}\in A_{0}}^{\mu_0}\f{1}{N}\sum_{x\in\Z}\E_{y\in[M]}f_{\ul{a},0}(x)\prod_{\substack{j\in J_0 \\ \deg{Q_j}\neq 0}}g_{j,0}(x+Q_j(a_1,\dots,a_{t_1},y))\gg_{t_0}\gamma^{O_{t_0}(1)},
\]
where $J_0\subset[m]\times\{0,1\}^{t_0}$, $A_0=((-\gamma'M,\gamma'M)\cap\Z)^{t_0}$, $\mu_1(a_1,\dots,a_{t_0})=\f{1_{A_0}(a_1,\dots,a_{t_0})}{(2\lfloor\gamma' M\rfloor+1)^{t_0-1}}\mu_{\gamma'M}(a_{t_{0}})$, $\mathcal{Q}_0:=(Q_0)_{j\in J_{0}}$ consists only of polynomials of degree $\deg{P_m}$ and constant (in $y$) polynomials, the leading coefficients of degree $\deg{P_m}$ polynomials in $\mathcal{Q}_0$ are $c_{m-m'},\dots,c_m$, there are $2^{t_0}$ polynomials of degree $\deg{P_m}$ in $\mathcal{Q}_0$ with leading coefficient equal to $c_i$ for each $m-m'\leq i\leq m$ with set of degree $\deg{P_m}-1$ coefficients equal to $\{(c_ia_1,\dots,c_ia_{t_0})\cdot\omega:\omega\in\{0,1\}^{t_0}\}$, $f_{\ul{a},0}$ is $1$-bounded for each $\ul{a}\in A_0$, and $g_{j,0}$ equals either $f_{j'}$ or $\overline{f_{j'}}$ if $Q_j$ has leading coefficient $c_{j'}$, provided that $\gamma'\ll_{C,\deg{P_1},\dots,\deg{P_{m-m'-1}}}\gamma^{O_{\deg{P_1},\dots,\deg{P_{m-m'-1}}}(1)}$, by arguing exactly as in the proof of Lemma~\ref{lem4.7}, except using the assumption that the coefficients of $P_1,\dots,P_m$ are all bounded in absolute value by $C|c_m|$ in place of the $(C,q)$-coefficients hypothesis.

The conclusion of the lemma now follows by arguing almost exactly as in the proof of Lemma~\ref{lem4.8}, with the only differences being that we start with more polynomials of degree $\deg{P_m}$ with each leading coefficient and we already have an ordering $c_{m-(m'-1)},\dots,c_m$ of these coefficients (and do not care whether they have any particular structure), by applying Lemma~\ref{lem4.5} after repeating the following $m'-1$ times: apply Lemma~\ref{lem4.6} once, and then Lemma~\ref{lem4.4} as many times as necessary until we can apply one of Lemmas~\ref{lem4.5} or~\ref{lem4.6}.
\end{proof}

The proof of Theorem~\ref{thm6.2} is exactly the same as the proof of Theorem~\ref{thm3.7}, except that one uses Lemma~\ref{lem6.3} in place of Lemma~\ref{lem4.7} and does not need to do the initial application of the Cauchy--Schwarz inequality done in the proof of Theorem~\ref{thm3.7}.
\begin{proof}[Proof of Theorem~\ref{thm6.2}]
Following the proof of Proposition~\ref{prop3.4}, we apply Lemma~\ref{lem6.3} once, Lemma~\ref{lem4.8} $(\deg{P_m}-2)$ times, Lemma~\ref{lem4.10} once, and then, following the proof of Proposition~\ref{prop3.6}, Lemma~\ref{lem5.7} $\ll_{\deg{P_1},\dots,\deg{P_m}}1$ times.
\end{proof}

\section{Lemmas for degree-lowering}\label{sec7}

In this section, we collect and prove various lemmas needed for the proofs of Lemmas~\ref{lem3.9} and~\ref{lem3.10}. The first two lemmas are standard results on Weyl sums that can be found, for example, in~\cite{Tao12} as Lemmas~1.1.16 and~1.1.14, respectively.
\begin{lemma}\label{lem7.1}
Let $N>0$ and $P\in\R[y]$ be a polynomial with $P(y)=a_my^m+\dots+a_0$. If
\[
\left|\sum_{n\in[N]}e(P(y))\right|\geq\gamma N,
\]
then there exists $q\in\N$ satisfying $q\ll\gamma^{-O_m(1)}$ such that
\[
\|q a_i\|\ll\f{\gamma^{-O_m(1)}}{N^i}
\]
for each $i=1,\dots,m$.
\end{lemma}
\begin{lemma}\label{lem7.2}
Let $N,\ve,\gamma>0$ with $\ve\ll 1$, $\gamma\gg\ve$, and $N\gg\gamma\1$. If $\|n\beta\|\leq\ve$ for at least a $\gamma$-proportion of $n\in[-N,N]\cap\Z$, then there exists a positive integer $q\ll\gamma\1$ such that $\|q\beta\|\leq \ve q/\gamma N$.
\end{lemma}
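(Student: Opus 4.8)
The statement to prove is Lemma~\ref{lem7.2}, a classical transference-type result for Weyl sums: if $\|n\beta\|\le\ve$ for at least a $\gamma$-proportion of $n\in[-N,N]\cap\Z$, then some small $q$ satisfies $\|q\beta\|\le\ve q/\gamma N$.

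\textbf{Plan.} The approach is a pigeonhole/difference-set argument: because the set $S:=\{n\in[-N,N]\cap\Z:\|n\beta\|\le\ve\}$ is large, its difference set contains a long initial segment of integers, and $\|m\beta\|$ is small for every $m$ in that segment. I would proceed as follows. First I would set $S:=\{n\in[-N,N]\cap\Z:\|n\beta\|\le\ve\}$, so $|S|\ge\gamma(2N+1)\gg\gamma N$ by hypothesis. The key observation is that if $n,n'\in S$ then $\|(n-n')\beta\|\le\|n\beta\|+\|n'\beta\|\le 2\ve$ by the triangle inequality for $\|\cdot\|$, so $S-S\subseteq T:=\{m\in[-2N,2N]\cap\Z:\|m\beta\|\le 2\ve\}$.

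Next I would show $T$ contains an arithmetic-progression-like structure, or more simply, that $T$ contains a genuine interval of integers around $0$ of length $\gg\gamma N$. For this, note that $S-S$ is a symmetric subset of $[-2N,2N]$, and a standard fact (a one-dimensional Freiman-type / covering argument, or simply the pigeonhole principle applied to the translates $S, S+1, S+2,\dots$) gives that $S-S$ contains $\{-\ell,\dots,\ell\}$ for some $\ell\gg|S|\gg\gamma N$; alternatively one argues directly that among the residues $\lfloor n/k\rfloor$ for $n\in S$ and an appropriate $k\asymp 1/\gamma$ two elements of $S$ are within distance $k$, yielding a single $m\in T$ with $0<|m|\ll 1/\gamma\cdot(N/|S|)\ll\gamma^{-1}$... but that only gives one small-gap element, not a whole interval. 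The cleanest route is: since $|S|\ge\gamma(2N+1)$, by pigeonhole there exist $n,n'\in S$ with $0<n-n'=:q\le (2N+1)/(|S|-1)\ll \gamma^{-1}$ — wait, that requires $|S|$ consecutive... Let me instead use the correct standard argument: partition $[-N,N]$ into $\lceil 1/\gamma\rceil$ consecutive blocks; by pigeonhole one block contains two elements of $S$, so there is $q\in\N$ with $q\ll\gamma N/|S|\cdot$ ... I would in fact just invoke that there exists $q\in\N$, $q\ll\gamma\1$, with $\|q\beta\|\le 2\ve$, and then \emph{iterate}: the multiples $q,2q,3q,\dots$ stay within $[-2N,2N]$ for $\asymp N/q$ steps, and $\|jq\beta\|\le j\|q\beta\|$; combining with the three-distance theorem or a direct bound, one deduces $\|q\beta\|\le 2\ve q/(\gamma N)$ up to the constant. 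Concretely: if $\|q\beta\|>2\ve q/(\gamma N)$, then the points $jq\beta$ for $0\le j< \gamma N/q$ would be $2\ve$-separated... and there are $\gg\gamma N/q$ of them, but more than $|S|$... I would make this precise by comparing $|S|$ against the number of such separated points.

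\textbf{Main obstacle.} The delicate point is getting the \emph{quantitative} conclusion $\|q\beta\|\le\ve q/\gamma N$ rather than merely $\|q\beta\|\ll\ve$, and controlling the implied constants so the bound is clean (the paper states it with implied constant $1$, which suggests a careful direct argument). The way I would handle this: let $q$ be the \emph{least} positive integer with $\|q\beta\|\le 2\ve$ (it exists and $q\ll\gamma\1$ by the pigeonhole step above). For $0\le j<q$ the points $j\beta$ are, modulo $1$, at pairwise distances $\ge$ (something controlled by minimality), and the translates of a short interval around each $j\beta$ tile. Then for any $n\in[-N,N]$, writing $n=jq+i$ with $0\le i<q$, one has $\|n\beta-j(q\beta)\|=\|i\beta\|$, and membership $n\in S$ forces $\|j(q\beta)\|$ to be small unless $\|i\beta\|$ is — a counting over the $\gg\gamma N$ elements of $S$ and the $q$ residue classes shows that for some residue class at least $\gg\gamma N/q$ values of $j$ satisfy $\|j(q\beta)\|\le 3\ve$, whence $\{0,q\beta,2q\beta,\dots\}$ has $\gg\gamma N/q$ points in an arc of length $6\ve$; since consecutive such points differ by $\pm\|q\beta\|$, a gap argument gives $\lfloor\gamma N/q\rfloor\cdot\|q\beta\|\ll\ve$, i.e. $\|q\beta\|\ll\ve q/(\gamma N)$. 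The hypotheses $\ve\ll 1$, $\gamma\gg\ve$, $N\gg\gamma\1$ are exactly what is needed to make the pigeonhole blocks nonempty, to ensure the arc of length $6\ve$ is a proper arc, and to ensure $\gamma N/q\gg 1$ so the gap argument is not vacuous. Since this is Lemma~1.1.14 of~\cite{Tao12}, I would present the argument at this level of detail and cite the reference for the routine verification of constants.

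\medskip

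\emph{Note.} As this lemma is quoted verbatim from~\cite{Tao12} (Lemma~1.1.14), in the paper itself one would simply write: ``This is~\cite[Lemma~1.1.14]{Tao12}.'' and omit the proof; the sketch above indicates the standard argument underlying it.
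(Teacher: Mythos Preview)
The paper does not prove this lemma at all; it simply records it as Lemma~1.1.14 of~\cite{Tao12}, exactly as you note in your final remark. Your ultimate recommendation to cite rather than prove therefore matches the paper precisely, and the pigeonhole-then-residue-class sketch you outline is indeed the standard argument behind that reference (though a polished write-up would need to tidy the quantitative bookkeeping in the step bounding $\|q\beta\|$ by $\ve q/(\gamma N)$).
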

We also record, for the sake of convenience, the following result, which can be found in~\cite{PelusePrendiville19} as Lemma~6.5.
\begin{lemma}\label{lem7.3}
Let $\alpha\in \T$. If $a,b\in\N$ are such that
\[
\left|\alpha-\f{a}{b}\right|\leq\gamma,
\]
then, for any $D\geq 1$, there exists an integer $k$ with $|k|\leq D$ and a $\theta\in[-1,1]$ such that
\[
\alpha=\f{a}{b}+k\f{\gamma}{D}+\theta\f{\gamma}{D}.
\]
\end{lemma}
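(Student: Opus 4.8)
The plan is to treat this as an elementary rounding statement once the inessential structure is cleared away. We may assume $\gamma>0$, since for $\gamma=0$ the hypothesis forces $\alpha=\f ab$ in $\T$ and one simply takes $k=\theta=0$. By hypothesis, $\alpha-\f ab$ has a representative $\beta$ in $\T$ with $|\beta|\leq\gamma$, so it suffices to find an integer $k$ with $|k|\leq D$ and a real $\theta\in[-1,1]$ such that $\beta=k\f\gamma D+\theta\f\gamma D$; adding $\f ab$ then yields the claimed identity in $\T$. Dividing through by $\f\gamma D$ and writing $t:=\f{\beta D}{\gamma}$, this is equivalent to the following purely real-variable assertion: given any $t\in\R$ with $|t|\leq D$, one can write $t=k+\theta$ with $k\in\Z$, $|k|\leq D$, and $\theta\in[-1,1]$.

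To prove the reduced assertion I would round $t$ towards zero, setting
\[
k:=\sgn(t)\lfloor|t|\rfloor\in\Z,\qquad \theta:=t-k .
\]
Then $|k|=\lfloor|t|\rfloor\leq|t|\leq D$ and $|\theta|=|t|-\lfloor|t|\rfloor\in[0,1)$, so $k$ and $\theta$ have the required properties; unwinding the reduction (multiply by $\f\gamma D$ and add $\f ab$) finishes the argument. The only point that calls for a moment's care is the \emph{direction} of rounding: rounding $t$ to the nearest integer could, when $D$ is not an integer and $|t|$ is close to $D$, produce a $k$ with $|k|>D$ and so violate the constraint, whereas rounding towards zero never increases the absolute value and hence respects $|k|\leq D$ automatically while still keeping the remainder in $[-1,1]$. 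There is no genuine obstacle here — the content of the lemma is just that the points $\f ab+k\f\gamma D$ with $|k|\leq D$ form a $\f\gamma D$-spaced net of the interval $[\f ab-\gamma,\f ab+\gamma]$, which contains $\alpha$, and finding the nearest net point to $\alpha$ is exactly the rounding step above.
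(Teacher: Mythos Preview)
Your proof is correct. The paper does not actually give its own proof of this lemma: it simply records the statement ``for the sake of convenience'' and cites it as Lemma~6.5 of \cite{PelusePrendiville19}. Your direct argument---reducing to writing $t\in[-D,D]$ as $k+\theta$ with $|k|\leq D$ an integer and $\theta\in[-1,1]$, then rounding $t$ towards zero---is exactly the kind of elementary verification the paper omits, and your remark about why rounding towards zero (rather than to nearest) is the safe choice when $D$ need not be an integer is a nice touch.
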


Before stating and proving the remaining lemmas in this section, we need one more piece of notation. For $s\in\N$ and $H\subset\Z^{2s}$, let $\square_{s}(H)$ denote the set of $3s$-tuples
\[
(k_{1}^{(1)},\dots,k_{s}^{(1)},k_{1}^{(2)},\dots,k_{s}^{(2)},k_{1}^{(3)},\dots,k_{s}^{(3)})\in\Z^{3s}
\]
such that $(k_{1}^{(1)},\dots,k_{s}^{(1)},k_{1}^{(\omega_1+2)},\dots,k_{s}^{(\omega_s+2)})\in H$ for all $\omega\in\{0,1\}^s$. Note that this is not the same definition of $\square_s(H)$ that appeared in~\cite{PelusePrendiville19}, where $\square_s(H)$ instead consisted of $2s$-tuples.

The following lemma will play a similar role in the proof of the degree-lowering result in this paper as Lemma~6.3 of~\cite{PelusePrendiville19} played in that paper, and its proof follows the same general strategy, with differences mainly arising from dealing with more general dual functions and from the use of different definitions of the $U^s$-norm.
\begin{lemma}\label{lem7.4}
Let $L,M>0$, $2\leq\ell\leq m$, $H\subset[\gamma'L]^{2s}$ with $|H|\geq\gamma L^{2s}$, $f_0,\dots,f_{\ell-1}:\Z\to\C$ be $1$-bounded functions supported on the interval $[L]$, and $\psi_{\ell+1},\dots,\psi_m:\Z\to S^1$ be characters. Let $F_\ell$ be defined as in Corollary~\ref{cor3.8}. If
\begin{equation}\label{eq7.1}
\E_{(\ul{h},\ul{h}')\in H}\left|\f{1}{L}\sum_{x\in\Z}\Delta'_{(h_i,h_i')_{i=1}^s}F_\ell(x)e(\phi(\ul{h},\ul{h}')x)\right|^2\geq\gamma
\end{equation}
for some $\phi:H\to\T$, then
\[
\E_{\ul{k}\in\square_s(H)}\left|\f{1}{L}\sum_{x\in\Z}G_{\ell,\ul{k}}(x)e(\psi(\ul{k})x)\right|^2\geq(\gamma\gamma')^{O_{s}(1)},
\]
where
\[
G_{\ell,\ul{k}}(x):=\E_{y\in[M]}\Delta'_{(k_{i}^{(2)},k_{i}^{(3)})_{i=1}^s}f_0(x-P_\ell(y))\cdots \Delta'_{(k_{i}^{(2)},k_{i}^{(3)})_{i=1}^s}f_{\ell-1}(x+P_{\ell-1}(y)-P_\ell(y))
\]
and
\[
\psi(\ul{k}):=\sum_{\omega\in\{0,1\}^s}(-1)^{|\omega|}\phi(k_{1}^{(1)},\dots,k_{s}^{(1)},k_{1}^{(\omega_1+2)},\dots,k_{s}^{(\omega_s+2)}).
\]
\end{lemma}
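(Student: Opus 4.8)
The plan is to expand the square in \eqref{eq7.1}, unfold the definition of the dual function $F_\ell$, and then apply the Gowers--Cauchy--Schwarz inequality (Lemma~\ref{lem2.2}) in the $s$ pairs of ``outer'' parameters $(h_i,h_i')$ to pass from a single average over $H$ to an average over the box set $\square_s(H)$. First I would write $F_\ell(x)=\E_{y\in[M]}f_0(x-P_\ell(y))\cdots f_{\ell-1}(x+P_{\ell-1}(y)-P_\ell(y))\psi_{\ell+1}(P_{\ell+1}(y))\cdots\psi_m(P_m(y))$, so that $\Delta'_{(h_i,h_i')_{i=1}^s}F_\ell(x)$ becomes a $2^s$-fold product of translates of $F_\ell$, each of which is itself an average over its own copy of the $y$-variable. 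The inner modulus-squared in \eqref{eq7.1} can be written as $\f1{L^2}\sum_{x,x'}\Delta'_{(h_i,h_i')}F_\ell(x)\overline{\Delta'_{(h_i,h_i')}F_\ell(x')}e(\phi(\ul h,\ul h')(x-x'))$; after the change of variables $x'\mapsto x+n$ this is $\f1{L}\sum_x\sum_n(\cdots)e(\phi(\ul h,\ul h')n)$, and the point is that the phase $\phi(\ul h,\ul h')$ now appears linearly.

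The key step is the application of Lemma~\ref{lem2.2}. Treating $X_i$ as a copy of $(-\gamma'L,\gamma'L)\cap\Z$ (the range of $h_i$), and writing the summand of $\E_{(\ul h,\ul h')\in H}$ as a function of $(\ul h,\ul h')$ times the $1$-bounded weight $1_H(\ul h,\ul h')/(\text{density})$, I would apply the Cauchy--Schwarz--Gowers inequality in the variables $h_1,\dots,h_s$, doubling each $h_i$ into $h_i^{(0)},h_i^{(1)}$; the $h_i'$ variables can be carried along as fixed parameters (or doubled trivially), and the factor $1_H$ gets absorbed into the constraint defining $\square_s(H)$. Since the modulus-squared is a product over $\omega\in\{0,1\}^s$ of $\pm$-conjugated translates, the resulting expression after applying Lemma~\ref{lem2.2} is exactly an average over $\ul k\in\square_s(H)$ (with $k_i^{(1)}$ playing the role of $h_i'$, and $k_i^{(2)},k_i^{(3)}$ the two copies $h_i^{(0)},h_i^{(1)}$) of the modulus-squared of $\f1L\sum_x G_{\ell,\ul k}(x)e(\psi(\ul k)x)$, where the alternating sum $\psi(\ul k)=\sum_{\omega}(-1)^{|\omega|}\phi(\cdots)$ of the linear phases is precisely what the box norm identity produces. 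The losses incurred are: a power of $\gamma$ from the $2^s$-th power in Lemma~\ref{lem2.2}, and a power of $\gamma'$ from comparing the normalization $|H|/L^{2s}\geq\gamma$ against the full box $[\gamma'L]^{2s}$ and from the fact that $|\square_s(H)|$ is comparable to $|H|^{\,\ldots}/(\gamma'L)^{\ldots}$ up to a bounded power — all of which is accounted for by the bound $(\gamma\gamma')^{O_s(1)}$.

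The main obstacle, and the part requiring genuine care rather than routine bookkeeping, is the combinatorial geometry of the set $\square_s(H)$: one must check that applying Gowers--Cauchy--Schwarz with the indicator $1_H$ in place genuinely reproduces the constraint ``$(k_1^{(1)},\dots,k_s^{(1)},k_1^{(\omega_1+2)},\dots,k_s^{(\omega_s+2)})\in H$ for all $\omega$'' and not something weaker, and that the normalization factors match up so that the density $|\square_s(H)|/(\gamma'L)^{3s}$ is bounded below by $(\gamma\gamma')^{O_s(1)}$ — this is where the hypothesis $H\subset[\gamma'L]^{2s}$ and $|H|\geq\gamma L^{2s}$ gets used, together with a Cauchy--Schwarz argument bounding the number of ``box configurations'' in $H$ from below. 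I would handle this by first passing to the unnormalized count, applying Lemma~\ref{lem2.2} to the sum (not the average), and only at the very end dividing through by $|\square_s(H)|$ and estimating it; the bookkeeping of which $h_i'$ becomes which $k_i^{(1)}$ is the one place to be careful about indices, but it is forced by matching the definition of $\Delta'$ and of $\square_s(H)$ given in the excerpt.
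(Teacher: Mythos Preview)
Your approach coincides with the paper's: expand the square, unfold $F_\ell=\E_y F_{\ell,y}$, and apply Cauchy--Schwarz $s$ times to double half of the $2s$ differencing parameters. Two corrections are needed. First, to match the stated $\square_s(H)$ and $\psi(\ul k)$ you must double the $h_i'$ (not the $h_i$): the formula $\psi(\ul k)=\sum_\omega(-1)^{|\omega|}\phi(k^{(1)},k^{(\omega+2)})$ places $k^{(1)}$ in the \emph{first} $s$ slots of $\phi$, which are the $\ul h$-slots, so $k_i^{(1)}=h_i$ is the undoubled variable and $k_i^{(2)},k_i^{(3)}$ are the two copies of $h_i'$. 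Second, the ``main obstacle'' you flag --- a lower bound on $|\square_s(H)|$ --- is not needed at all. The paper performs the $s$ steps as ordinary Cauchy--Schwarz rather than invoking Lemma~\ref{lem2.2} (which does not apply as a black box here, since the weight $1_H$ and the phase $e(\phi(\ul h,\ul h')\cdot)$ depend on all the $h_i'$); at step $j$ the $1$-bounded factors $F_{\ell,y_\omega}$ with $\omega_j=1$ are independent of $h_j'$ and drop out together with their now-dummy $y_\omega$-variables, leaving after $s$ steps an inequality of the shape $L^{-3s}\sum_{\ul k\in\square_s(H)}[\cdots]\geq(\gamma\gamma')^{O_s(1)}$. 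The trivial \emph{upper} bound $|\square_s(H)|\leq(\gamma'L)^{3s}$ then converts this directly to $\E_{\ul k\in\square_s(H)}[\cdots]\geq(\gamma\gamma')^{O_s(1)}$; no Cauchy--Schwarz count of box configurations in $H$ is required.
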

For example, when $s=2$, the function $\psi(\ul{k})$ equals
\[
\phi(k_1^{(1)},k_2^{(1)},k_1^{(2)},k_2^{(2)})-\phi(k_1^{(1)},k_2^{(1)},k_1^{(2)},k_2^{(3)})-\phi(k_1^{(1)},k_2^{(1)},k_1^{(3)},k_2^{(2)})+\phi(k_1^{(1)},k_2^{(1)},k_1^{(3)},k_2^{(3)}).
\]
\begin{proof}[Proof of Lemma~\ref{lem7.4}]
Define, for each $y\in[M]$, the function
\[
F_{\ell,y}(x):=f_0(x-P_\ell(y))\cdots f_{\ell-1}(x+P_{\ell-1}(y)-P_\ell(y))\psi_{\ell+1}(P_{\ell+1}(y))\cdots\psi_m(P_m(y)),
\]
so that $F_\ell(x)=\E_{y\in[M]}F_{\ell,y}(x)$. We can thus write the left-hand side of~\eqref{eq7.1} as
\begin{align*}
  \E_{\substack{y_{\omega 0},y_{\omega 1}\in[M] \\ \omega\in\{0,1\}^s}}\E_{(\ul{h},\ul{h}')\in H}\f{1}{L^2}\sum_{x,z\in\Z}e(\phi(\ul{h},\ul{h}')(x-z))\prod_{\omega\in\{0,1\}^s}[&F_{\ell,y_{\omega0}}(x+\ul{h}\cdot\omega+\ul{h}'\cdot(\ul{1}-\omega))\cdot\\
&\overline{F_{\ell,y_{\omega1}}(z+\ul{h}\cdot\omega+\ul{h}'\cdot(\ul{1}-\omega))}].
\end{align*}
Applying the Cauchy--Schwarz inequality to double the $h_1'$ variable gives the bound
\begin{align*}
(\gamma\gamma')^{O(1)}\leq \E_{\substack{y_{\omega 0},y_{\omega 1}\in[M] \\ \omega\in\{0,1\}^s}}\sum_{\substack{\ul{h},\ul{h}'\in [\gamma'L]^{2s} \\ h_1''\in[\gamma'L] }}&\f{1_H(\ul{h},\ul{h}')1_{H}(\ul{h},h_1'',h_2',\dots,h_s')}{L^{2s+1}}\cdot \\
&\bigg[\f{1}{L^2}\sum_{x,z\in\Z}\prod_{\substack{ \omega\in\{0,1\}^s\\ \omega_1=0}}\Delta_{h_1''-h_1}F_{\ell,y_{\omega0}}(x+\ul{h}\cdot\omega+\ul{h}'\cdot(\ul{1}-\omega))\\
&\quad \quad \quad \quad \quad \quad \ \ \overline{\Delta_{h_1''-h_1'}F_{\ell,y_{\omega1}}(z+\ul{h}\cdot\omega+\ul{h}'\cdot(\ul{1}-\omega))}\\
&\quad \quad \quad \quad \quad \quad \ \ e((\phi(\ul{h},\ul{h}')-\phi(\ul{h},h_1'',h_2',\dots,h_s'))(x-z))\bigg],
\end{align*}
by using the fact that $H\subset[\gamma'L]^{2s}$ and $|H|\geq\gamma L^{2s}$. Note that nothing inside of the above average depends on the variables $y_{\omega0},y_{\omega1}$ for any $\omega\in\{0,1\}^s$ with $\omega_1=1$, so we can restrict the first average to $y_{\omega0},y_{\omega1}\in[M]$ with $\omega_1=0$.

We apply the Cauchy--Schwarz inequality $s$ total times in this manner, doubling the $h_i'$ variable for each $i=1,\dots,s$, to get that
\[
\E_{y_0,y_1\in[M]}\E_{\ul{k}\in\square_s(H)}\f{1}{L^2}\sum_{x,z\in\Z}\Delta'_{(k_{i}^{(2)},k_{i}^{(3)})_{i=1}^s}F_{\ell,y_0}(x)\overline{\Delta'_{(k_{i}^{(2)},k_{i}^{(3)})_{i=1}^s}F_{\ell,y_1}(z)}e(\psi(\ul{k})(x-z))\geq(\gamma\gamma')^{O_s(1)},
\]
using the trivial upper bound $|\square_s(H)|\leq (\gamma'L)^{3s}$. Finally, note that the left-hand side of the above inequality equals
\[
\E_{\ul{k}\in\square_s(H)}\left|\f{1}{L}\sum_{x\in\Z}G_{\ell,\ul{k}}(x)e(\psi(\ul{k})x)\right|^2
\]
by recalling the definition of $F_{\ell,y}$ and using the fact that the $\Delta'$ operator distributes over the product of functions (the characters in $F_{\ell,y}$ cancel since $s\geq 1$).
\end{proof}

The final lemma of this section is a generalization of Lemma~6.4 of~\cite{PelusePrendiville19}, and its proof is essentially the same as the argument in~\cite{PelusePrendiville19}.

\begin{lemma}\label{lem7.5}
Let $L>0$ and, for each $i=1,\dots,s$, let $\phi_i:\Z^{2s}\to\T$ be a function not depending on the $(s+i)^{th}$ variable. If $0<\gamma'\leq 1$, $f:\Z\to\C$ is $1$-bounded and supported on the interval $[L]$, and
\begin{equation}\label{eq7.2}
\E_{\ul{h},\ul{h}'\in [\gamma'L]^s}\left|\f{1}{L}\sum_{x\in\Z}\Delta'_{(h_i,h_i')_{i=1}^s}f(x)e\left(\sum_{i=1}^s\phi_i(\ul{h},\ul{h}')x\right)\right|^2\geq\gamma,
\end{equation}
then $\|f\|^2_{U^{s+1}_{[\gamma'L]}([L])}\gg_s\gamma^{O_s(1)}$.
\end{lemma}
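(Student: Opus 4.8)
The plan is to adapt the proof of Lemma~6.4 of~\cite{PelusePrendiville19}; the only substantive changes are notational, to handle $s$ pairs of differencing parameters rather than a bounded number, and to accommodate the normalisation of the box norms used here. The mechanism is that the hypothesis ``$\phi_i$ does not depend on the $(s+i)^{th}$ variable'' (i.e.\ on $h_i'$) is precisely what permits all $s$ phases to be erased by a single application of the Gowers--Cauchy--Schwarz inequality in the variables $h_1',\dots,h_s'$.

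First I would expand the modulus squared in~\eqref{eq7.2}, writing the left-hand side as
\[
\E_{\ul h\in[\gamma'L]^s}\ \frac1{L^2}\sum_{x,z\in\Z}\ \E_{\ul h'\in[\gamma'L]^s}\ \Delta'_{(h_i,h_i')_{i=1}^s}f(x)\ \overline{\Delta'_{(h_i,h_i')_{i=1}^s}f(z)}\ \prod_{i=1}^s e\bigl(\phi_i(\ul h,\ul h')(x-z)\bigr).
\]
For fixed $\ul h,x,z$ the inner average has the shape $\E_{\ul h'}\bigl[F(\ul h')\prod_{i=1}^s g_i(\ul h')\bigr]$ with $F(\ul h'):=\Delta'_{(h_i,h_i')_i}f(x)\,\overline{\Delta'_{(h_i,h_i')_i}f(z)}$ and $g_i(\ul h'):=e(\phi_i(\ul h,\ul h')(x-z))$, where each $g_i$ is $1$-bounded and, by hypothesis, independent of $h_i'$. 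Applying Lemma~\ref{lem2.2} with the $h_i'$ in the role of the variables $x_i$, raising the resulting inequality to the power $2^s$, and using H\"older's inequality to bring the averages over $\ul h,x,z$ inside, every phase disappears and one is left with
\[
\gamma^{2^s}\ \le\ \E_{\ul h\in[\gamma'L]^s}\ \E_{a_i^0,a_i^1\in[\gamma'L],\,i=1,\dots,s}\ \Bigl|\tfrac1L\sum_{x\in\Z}R(x)\Bigr|^2,\qquad R(x):=\prod_{\epsilon\in\{0,1\}^s}\mathcal C^{|\epsilon|}\,\Delta'_{(h_i,a_i^{\epsilon_i})_{i=1}^s}f(x).
\]
Multiplying out the product over $\epsilon$ shows that $R(x)=W(x)\cdot\Delta'_{(a_i^1,a_i^0)_{i=1}^s}f(x)$, where $W$ is real, non-negative and $1$-bounded (in the case $s=1$, $R(x)=|f(x+h_1)|^2 f(x+a_1^0)\overline{f(x+a_1^1)}$ and $W(x)=|f(x+h_1)|^2$).

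It remains to bound this phaseless average above by $\|f\|^{O_s(1)}_{U^{s+1}_{[\gamma'L]}([L])}$, which will complete the proof. The tool is the elementary smoothing inequality
\[
\Bigl|\tfrac1L\sum_{x}g(x)\Bigr|^2\ \le\ 2\cdot\tfrac1L\sum_{x}\Bigl|\E_{w\in[\gamma'L]}g(x+w)\Bigr|^2,
\]
valid for any $g$ supported on an interval of length $O(L)$ (Cauchy--Schwarz, using $\tfrac1L\sum_x g(x)=\tfrac1L\sum_x\E_w g(x+w)$). Applying it with $g=R$ introduces one more pair of differencing directions $(w,w')\in[\gamma'L]^2$, and expanding $\overline{R(x+w)}R(x+w')$ reveals $\Delta'_{(w,w'),(a_i^1,a_i^0)_{i=1}^s}f(x)$ multiplied by a further $1$-bounded real factor coming from $W$; after averaging over $\ul h$, the $a_i^j$ and $(w,w')$ and using the Gowers--Cauchy--Schwarz inequality (Lemma~\ref{lem2.3}, together with a short argument to dispose of the extra $1$-bounded factor) what survives is $\|f\|^{O_s(1)}_{\square^{s+1}_{[\gamma'L],\dots,[\gamma'L]}([L])}=\|f\|^{O_s(1)}_{U^{s+1}_{[\gamma'L]}([L])}$. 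I expect the main difficulty to be precisely this last step: when $s\ge2$ the factor $W$ depends on several of the $a_i^j$ (unlike the $s=1$ case, where it depends only on $h_1$, so that averaging over $a_1^0,a_1^1$ and $h_1$ produces $\E_{h',h''\in[\gamma'L]}\bigl|\tfrac1L\sum_x\Delta'_{(h'',h')}f(x)\bigr|^2$ directly, and one more application of the smoothing inequality bounds this by $2\|f\|^4_{U^2_{[\gamma'L]}([L])}$). One must therefore check carefully that the ``cube'' underlying $R$ — many of whose corners coincide or are absorbed into $W$ — really leaves, after Lemma~\ref{lem2.3}, a genuine $(s+1)$-dimensional box norm of $f$ with all differencing sets equal to $[\gamma'L]$, rather than a product of box norms in unrelated directions.
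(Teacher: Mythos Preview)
Your overall strategy matches the paper's: expand the square, apply the Gowers--Cauchy--Schwarz inequality in the $h_i'$ variables to eliminate the phases, then introduce one extra differencing direction to reach the $U^{s+1}$-norm. The gap is in how you apply Lemma~\ref{lem2.2}. By taking the entire product $F(\ul{h}')=\Delta'_{(h_i,h_i')_i}f(x)\,\overline{\Delta'_{(h_i,h_i')_i}f(z)}$ as the ``main function'', you are left with the weight $W$, which depends on all of the box-norm variables $a_i^0,a_i^1$ (already for $s=2$ every factor of $W$ involves at least one $a_i^j$). Lemma~\ref{lem2.3} does not dispose of such a factor: the hypotheses of that lemma require each $f_\omega$ to be evaluated at $x+\ul{h}\cdot\omega+\ul{h}'\cdot(\ul{1}-\omega)$, and the factors of $W(x+w)$, $W(x+w')$ are not of this form --- they mix $a^0$ and $a^1$ inside a single argument. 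Your own closing paragraph correctly identifies this as the crux, but the ``short argument'' you invoke does not exist as stated.

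The paper's fix is to apply Lemma~\ref{lem2.2} more sharply at the very first step. In the expansion of $\Delta'_{(h_i,h_i')_i}f(x)\,\overline{\Delta'_{(h_i,h_i')_i}f(z)}$, only the single factor $f(x+\sum_i h_i')\overline{f(z+\sum_i h_i')}$ (the $\omega=\ul{0}$ corner) depends on \emph{all} of $h_1',\dots,h_s'$; every other $f$-factor, as well as every phase $e(\phi_i(\ul{h},\ul{h}')(x-z))$, is independent of at least one $h_i'$, and can therefore be absorbed into the $g_i$'s. With this choice of ``main function'', Lemma~\ref{lem2.2} yields directly
\[
\frac{1}{L^2}\sum_{x,z}\E_{\ul{h}',\ul{h}''\in[\gamma'L]^s}\Delta'_{(h_i',h_i'')_i}f(x)\,\overline{\Delta'_{(h_i',h_i'')_i}f(z)}\ \geq\ \gamma^{O_s(1)},
\]
with no $W$ and no residual dependence on $\ul{h}$. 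From here the finish is short: insert an average over shifts $w\in[\gamma'L]$ in $x$, pigeonhole to fix $z$, and apply Cauchy--Schwarz once to double $w$. (Equivalently, your smoothing inequality applied to $g=\Delta'_{(h_i',h_i'')_i}f$ already gives $\|f\|_{U^{s+1}_{[\gamma'L]}([L])}^{2^{s+1}}$ immediately.) Your route can in fact be salvaged by one further application of Lemma~\ref{lem2.2} in the $s+1$ variables $w',a_1^0,\dots,a_s^0$ --- again only one factor depends on all of them --- but this is redundant once the first step is done correctly.
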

\begin{proof}
Expanding the square, the left-hand side of~\eqref{eq7.2} can be written as
\[
\f{1}{L^2}\sum_{x,z\in\Z}\E_{\ul{h},\ul{h}'\in [\gamma'L]^s}\Delta'_{(h_i,h_i')_{i=1}^s}f(x)\overline{\Delta'_{(h_i,h_i')_{i=1}^s}f(z)}e\left(\sum_{i=1}^s\phi_i(\ul{h},\ul{h}')[x-z]\right),
\]
so that applying Lemma~\ref{lem2.2} for each fixed $x,z\in\Z$ and $\ul{h}\in[\gamma'L]^s$ gives
\[
\f{1}{L^2}\sum_{x,z\in\Z}\E_{\ul{h}',\ul{h}''\in[\gamma'L]^s}\Delta'_{(h_i',h_i'')_{i=1}^s}f(x)\overline{\Delta'_{(h_i',h_i'')_{i=1}^s}f(z)}\geq\gamma^{O_s(1)}.
\]
By inserting extra averaging in the $x$ variable and using the pigeonhole principle to fix $z$ (which we may do since $f$ is supported on $[L]$ and $\gamma'\leq 1$), it follows that
\[
\f{1}{L}\sum_{x\in\Z}\E_{\ul{h}',\ul{h}''\in[\gamma'L]^s}\overline{\Delta'_{(h_i',h_i'')_{i=1}^s}f(x)}\E_{w\in[\gamma'L]}\Delta'_{(h_i',h_i'')_{i=1}^s}f(x+w)\gg_s\gamma^{O_s(1)}
\]
for some $z\in\Z$. To conclude, we apply the Cauchy--Schwarz inequality to double the $w$ variable, again using that $f$ is supported on $[L]$ and $\gamma'\leq 1$.
\end{proof}

\section{Degree-lowering}\label{sec8}

We begin by handling the base case of the inductive proof of Lemmas~\ref{lem3.9} and~\ref{lem3.10}.

\begin{lemma}\label{lem8.1}
Let $N,M>0$, $P_1,\dots,P_m\in\Z[y]$ be polynomials such that $P_1$ and $P_2$ have $(C,q)$-coefficients, $\deg{P_1}<\dots<\deg{P_m}$, and $P_i$ has leading coefficient $c_i$ for $i=1,\dots,m$, and $\psi_2,\dots,\psi_m:\Z\to S^1$ be characters such that $\psi_i(x)=e(\alpha_ix)$ with $\alpha_i\in\T$ for $i=2,\dots,m$. Assume further that $|c_1|M^{\deg{P_1}}/N\leq C$. If there exist $1$-bounded functions $f_0,f_1:\Z\to\C$ supported on the interval $[N]$ such that
\begin{equation}\label{eq8.1}
\left|\f{1}{N/c}\sum_{x\in\Z}F_{2}(cx)\psi_\ell(cx)\right|\geq\gamma,
\end{equation}
where $F_2$ is as in Corollary~\ref{cor3.8}, then there exists a positive integer $t\ll_{C,\deg{P_m}}\gamma^{-O_{\deg{P_m}}(1)}$ such that
\[
\|tc^{\deg{P_m}}c_m\alpha_m\|\ll_{C,\deg{P_m}}\f{\gamma^{-O_{\deg{P_m}}(1)}}{(M/|c|)^{\deg{P_m}}},
\]
provided that $N\gg_{C,\deg{P_m}}(q/\gamma)^{O_{\deg{P_m}}(1)}$.
\end{lemma}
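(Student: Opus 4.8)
The plan is to reduce the weighted average in~\eqref{eq8.1} to an (unweighted) Weyl sum in a single variable, following the structure of the analogous ``major arc'' lemma for the configuration $x,x+y,x+y^2$ in~\cite{PelusePrendiville19}; the new ingredient needed for general degrees — and the source of the factors of $c$ in the conclusion — is to restrict the $y$-average to a fixed residue class modulo $c$ before anything else. Write $d_i:=\deg P_i$, so $d_1<\dots<d_m$, and recall $F_2(u)=\E_{y\in[M]}f_0(u-P_2(y))f_1(u+P_1(y)-P_2(y))\Psi(y)$ with $\Psi(y)=e\big(\sum_{i=3}^m\alpha_iP_i(y)\big)$. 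First I would expand $F_2$ in~\eqref{eq8.1}; since $f_0$ is supported on $[N]$ the $x$-sum effectively has length $\asymp_C N/|c|$ for each $y$. Splitting the $y$-average over residue classes modulo $c$ and pigeonholing (which costs only a pigeonhole, not a factor of $c$), I would fix $j$ so that the average with $y=cz+j$, $z$ ranging over an interval of length $M':=\lfloor M/|c|\rfloor$, remains $\gg\gamma$. We may assume $M/|c|$ is large (larger than a suitable power of $q/\gamma$), as otherwise the claimed conclusion is vacuous.

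The point of the residue restriction is the identity $P_i(cz+j)=P_i(j)+cQ_i(z)$, with $Q_i\in\Z[z]$ of degree $d_i$ and leading coefficient $c_ic^{d_i-1}$. Using it I can absorb the constants $P_i(j)$ into new $1$-bounded functions $g_0,g_1$ (dilated and translated copies of $f_0,f_1$) and, after the substitution $v=x-Q_2(z)$, rewrite the quantity as
\[
\Big|\,\E_{z}\,\E_{v}\,g_0(v)\,g_1\big(v+Q_1(z)\big)\,e(\alpha_2cv)\,e\big(\Phi(z)\big)\,\Big|\gg_C\gamma ,
\]
where $\Phi(z):=\alpha_2cQ_2(z)+c\sum_{i=3}^m\alpha_iQ_i(z)$ has degree $d_m$ and, crucially, leading coefficient exactly $c^{d_m}c_m\alpha_m$ (when $m=2$ the sum is empty and $\Phi(z)=\alpha_2cQ_2(z)$, whose leading coefficient is $\alpha_2c\cdot c_2c^{d_2-1}=c^{d_2}c_2\alpha_2=c^{d_m}c_m\alpha_m$). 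The core of the argument is now to remove the functions $g_0,g_1$ from this weighted average by an iteration of the Cauchy--Schwarz and van der Corput inequalities in the $z$-variable — a phase-carrying version of the PET scheme of Section~\ref{sec4}, as developed in~\cite{PelusePrendiville19}. Here one should sum over all of $\Z$ against functions supported on intervals, so that every substitution is exact, and use van der Corput (Lemma~\ref{lem4.1}) with small shift parameters so that truncation errors stay negligible. The upshot should be a bound of the form
\[
\E_{\ul{k}}\,\Big|\,\frac1{M'}\sum_{z}e\big(\Psi_{\ul{k}}(z)\big)\,\Big|\gg_{C}\gamma^{O_{d_m}(1)},
\]
where $\ul{k}$ is a tuple of $d_1$ shift variables each ranging over an interval of length $\asymp M'$, and $\Psi_{\ul{k}}$ is $\Phi$ differenced $d_1$ times: a polynomial in $z$ of degree $d_m-d_1\ge 1$ whose leading coefficient is a fixed nonzero integer multiple of $c^{d_m}c_m\alpha_m\,k_1\cdots k_{d_1}$ (the degree conservation $d_1+(d_m-d_1)=d_m$ is what matters).

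For the $\gg\gamma^{O(1)}$-proportion of $\ul{k}$ for which the inner sum is $\gg\gamma^{O(1)}$, Lemma~\ref{lem7.1} then yields a $q_{\ul{k}}\ll\gamma^{-O(1)}$ with $\|\,q_{\ul{k}}\cdot(\text{leading coefficient of }\Psi_{\ul{k}})\,\|\ll\gamma^{-O(1)}/(M')^{d_m-d_1}$. Pigeonholing $q_{\ul{k}}$ to a single value and absorbing the fixed integer constant, I would clear the variables $k_1,\dots,k_{d_1}$ one at a time using Lemma~\ref{lem7.2} — each application, after fixing the remaining variables generically (and nonzero) and pigeonholing the resulting multiplier, gains a factor $1/M'$ — to arrive at a $t\ll_{C,d_m}\gamma^{-O_{d_m}(1)}$ with
\[
\big\|\,t\,c^{d_m}c_m\alpha_m\,\big\|\ll_{C,d_m}\frac{\gamma^{-O_{d_m}(1)}}{(M')^{d_1}(M')^{d_m-d_1}}=\frac{\gamma^{-O_{d_m}(1)}}{(M')^{d_m}},
\]
which is the assertion, since $M'\asymp_C M/|c|$.

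The step I expect to be the main obstacle is the middle one: running the phase-weighted PET/van der Corput iteration so that it terminates with the polynomial phase in the form above, while faithfully tracking the powers of $c$ and the factor $c_m$ in its leading coefficient. (For $d_1=1$ this is short — a single van der Corput step, as in~\cite{PelusePrendiville19} — but for larger $d_1$ the iteration is more delicate, since differencing a composition $g\circ Q_1$ does not by itself lower the degree of $Q_1$, so the functions must be handled together with their (auto)correlations.) The remaining points — the support/truncation bookkeeping when passing between the $x$-, $v$- and $z$-averages, and checking that the hypotheses $|c_1|M^{d_1}/N\le C$ and $N\gg_{C,d_m}(q/\gamma)^{O(1)}$ (together with the harmless reduction to $M/|c|$ large) make all the size requirements of Lemmas~\ref{lem7.1} and~\ref{lem7.2} hold and render the various normalization errors negligible — are routine.
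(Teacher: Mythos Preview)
Your proposal is correct and follows essentially the same route as the paper: split $y$ into residues modulo $c$, shift in $x$ to absorb $P_2$, then apply Lemma~\ref{lem4.2} (Cauchy--Schwarz plus van der Corput) $d_1:=\deg P_1$ times followed by one more Cauchy--Schwarz to reduce to a pure exponential sum, to which Lemmas~\ref{lem7.1} and~\ref{lem7.2} are applied exactly as you describe. Your worry about the ``middle step'' is unnecessary: with only the single polynomial $P_1'$ in the function part, each invocation of Lemma~\ref{lem4.2} (with $i_0$ the sole index) automatically replaces $P_1'(z)$ by $P_1'(z+a)-P_1'(z)$ and hence drops its degree by one, so after $d_1$ steps both $g_0$ and $g_1$ are gone and only the differenced phase remains.
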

Note that the hypothesis $c_1M^{\deg{P_1}}/N\leq C$ above actually follows from the slightly stronger condition $1/C\leq |c|M^{\deg{P_2}}/N\leq C$ in Lemma~\ref{lem3.10} and the assumptions that $P_1$ has $(C,q)$-coefficients, $\deg{P_2}>\deg{P_1}$, and $N\gg_{C,\deg{P_m}}(q/\gamma)^{O(1)}$. So, this lemma does indeed cover the $\ell=2$ case of Lemma~\ref{lem3.10}.
\begin{proof}
Inserting the definition of $F_2$, the inequality~\eqref{eq8.1} reads
\[
\left|\f{1}{N/c}\sum_{x\in\Z}\E_{y\in[M]}g_0(cx-P_2(y))g_1(cx+P_1(y)-P_2(y))\psi_2(cx)\psi_3(P_3(y))\cdots\psi_m(P_m(y))\right|\geq\gamma.
\]
We split the sum over $y\in[M]$ up into progressions modulo $c$ by writing $y=cz+h$ for $h=0,\dots,|c|-1$ and use the pigeonhole principle to fix an $h$ such that
\begin{align*}
\bigg|\f{1}{N/c}\sum_{x\in\Z}\E_{z\in[M/|c|]}g_0(cx-P_2(cz+h))g_1(cx+P_1(cz+h)-P_2(cz+h))& \\
\psi_2(cx)\psi_3(P_3(cz+h))\cdots\psi_m(P_m(cz+h))&\bigg|\gg\gamma,
\end{align*}
provided that $N\gg\gamma^{-O(1)}$. Note that $\f{P_2(cz+h)-P_2(h)}{c}\in\Z[y]$ has $(O_{\deg{P_2}}(C),cq)$-coefficients since $|h|\leq |c|$. We make the change of variables $x\mapsto x+\f{P_2(cz+h)-P_2(h)}{c}$ to get that
\[
\bigg|\f{1}{N/c}\sum_{x\in\Z}\E_{z\in[M/|c|]}g_0'(x)g_1'(x+P_1'(z))\psi_2(P_2(cz+h))\cdots\psi_m(P_m(cz+h))\bigg|\gg\gamma,
\]
where $g_0'(x):=T_{-P_2(h)}(g_0\psi_2)(cx)$, $g_1'(x):=T_{P_1(h)-P_2(h)}g_1(cx)$, and $P_1'(z):=\f{P_1(cz+h)-P_1(h)}{c}$, which also has $(O_{\deg{P_1}}(C),cq)$-coefficients. By the assumption $|c_1|M^{\deg{P_1}}/N\leq C$, we can apply Lemma~\ref{lem4.2} $d:=\deg{P_1}$ times and then the Cauchy--Schwarz inequality once to deduce from the above that
\[
\E_{|a_1|,\dots,|a_d|<\gamma'M/c}\left|\E_{z\in[M/|c|]}e(Q(\ul{a},z))\right|^2\gg_{C,d}\gamma^{O_d(1)}
\]
whenever $\gamma'\ll_{C,d}\gamma^{O_d(1)}$, where
\[
Q(\ul{a},z):=\sum_{i=2}^m\alpha_i\left[\sum_{\omega\in\{0,1\}^d}(-1)^{|\omega|}P_i(c(z+\ul{a}\cdot\omega)-h)\right].
\]
Thus,
\begin{equation}\label{eq8.2}
|\E_{z\in[M/|c|]}e(Q(\ul{a},z))|\gg_{C,d}\gamma^{O_d(1)}
\end{equation}
for a $\gg_{C,d}\gamma^{O_d(1)}$ proportion of integers $|a_1|,\dots,|a_d|<\gamma'M/|c|$.

Note that the leading term of $Q(\ul{a},z)$ equals $\f{(\deg{P_m})!}{(\deg{P_m}-d)!}c^{\deg{P_m}}a_1\cdots a_d c_m \alpha_mz^{\deg{P_m}-d}$. By Lemma~\ref{lem7.1}, there thus exists a $t_0\ll_{C,\deg{P_m}}\gamma^{-O_{\deg{P_m}}(1)}$ such that for each $d$-tuple of integers $\ul{a}=(a_1,\dots,a_d)$ with $|a_i|<\gamma' M/c$ for which~\eqref{eq8.2} holds, we have 
\[
\|t_0 c^{\deg{P_m}}a_1\cdots a_d c_m \alpha_m\|\ll_{C,d}\gamma^{-O_{\deg{P_m}}(1)}/(M/c)^{\deg{P_m}-d}.
\]
Fixing $\gamma'\asymp_{C,d}\gamma^{-O_d(1)}$, the conclusion of the lemma follows by applying Lemma~\ref{lem7.2} $d$ times, once for each $a_i$ appearing in the product $c^{\deg{P_m}}a_1\cdots a_d c_m \alpha_m$.
\end{proof}

Next, we show that Lemma~\ref{lem3.9} in the general $\ell\geq 2$ case follows from Lemma~\ref{lem3.10} in the $\ell$ case. The overall strategy of the following proof is the same as the proof of Proposition~6.6 in~\cite{PelusePrendiville19}, though several small changes need to be made due to the greater generality of Lemma~\ref{lem3.9} and the use of different definitions of the $U^s$-norm in the two papers. We now briefly sketch the structure of the argument. The proof starts by writing the $U^s$-norm of the dual function $F_\ell$ as an average of $U^2$-norms of differenced versions of $F_\ell$ (that is, $\Delta'_{(h_i,h_i')_{i=1}^{s-2}}F_\ell$ in the following proof and $\Delta_{h_1,\dots,h_{s-2}}F_\ell$ in~\cite{PelusePrendiville19}). By the inverse theorem for the $U^2$-norm, it follows that, on average, the differenced versions of $F_\ell$ have large correlation with some character $x\mapsto e(\phi(\ul{h},\ul{h}')x)$ depending on $(\ul{h},\ul{h}')$. One then uses Lemma~\ref{lem3.10} and the pigeonhole principle (along with Lemma~\ref{lem7.3}) to show that the function $\phi(\ul{h},\ul{h}')$ must be very close to a function of the form $\sum_{i=1}^{s-2}\phi_i(\ul{h},\ul{h}')$ appearing in Lemma~\ref{lem7.5} for many differencing parameters $(\ul{h},\ul{h}')$. The conclusion of the lemma then follows from Lemma~\ref{lem7.5}.
\begin{proof}[Proof of Lemma~\ref{lem3.9} for $\ell$ assuming Lemma~\ref{lem3.10} for $\ell$]
Note that, by splitting $\Z$ up into progressions modulo $|c|$, we have
\[
\|F_\ell\|^{2^s}_{U^s_{c[\delta'M^{\deg{P_\ell}}]}([CN])}=\E_{u=0,\dots,|c|-1}\E_{\substack{h_1,\dots,h_{s-2}\in[\delta' M^{\deg{P_\ell}}]\\ h_1',\dots,h_{s-2}'\in[\delta' M^{\deg{P_\ell}}]}}\|\Delta'_{c(h_i,h_i')_{i=1}^{s-2}}(T_uF_\ell)(c\cdot)\|_{U^2_{[\delta'M^{\deg{P_\ell}}]}([CN/c])}^4.
\]
Thus, since $M^{\deg{P_\ell}}\asymp_CN/c$, Lemma~\ref{lem2.4} tells us that
\[
\E_{u=0,\dots,|c|-1}\E_{\substack{h_1,\dots,h_{s-2}\in[\delta' M^{\deg{P_\ell}}]\\ h_1',\dots,h_{s-2}'\in[\delta' M^{\deg{P_\ell}}]}}\left|\f{1}{N/c}\sum_{x\in\Z}\Delta'_{c(h_i,h_i')_{i=1}^{s-2}}(T_uF_\ell)(cx)e(c\phi_u(\ul{h},\ul{h}')x)\right|^2\gg_C(\delta\delta')^{O(1)}
\]
for some $\phi_u:[\delta'M^{\deg{P_\ell}}]^{2(s-2)}\to\T$ for each $u=0,\dots,|c|-1$. By the pigeonhole principle, there exists an $H\subset[\delta' M^{\deg{P_\ell}}]^{2(s-2)}$ with $|H|\gg_C(\delta\delta')^{O(1)}(\delta' M^{\deg{P_\ell}})^{2(s-2)}$ and $U\subset\{0,\dots,|c|-1\}$ with $|U|\gg_{C}(\delta\delta')^{O(1)}|c|$ such that
\[
\left|\f{1}{N/c}\sum_{x\in\Z}\Delta'_{c(h_i,h_i')_{i=1}^{s-2}}(T_uF_\ell)(cx)e(c\phi_u(\ul{h},\ul{h}')x)\right|^2\gg_C(\delta\delta')^{O(1)}
\]
for every $(\ul{h},\ul{h}')\in H$ and $u\in U$.

Next, we apply Lemma~\ref{lem7.4} with $L=N/|c|$, which, since $M^{\deg{P_\ell}}\gg_CN/|c|$, yields
\[
\E_{\ul{k}\in\square_{s-2}(H)}\left|\f{1}{N/c}\sum_{x\in\Z}G_{\ell,\ul{k}}(c x)e(c\psi_u(\ul{k})x)\right|^2\gg_{C,s}(\delta\delta')^{O_s(1)},
\]
where, as in Lemma~\ref{lem7.4}, we have
\[
G_{\ell,\ul{k}}(x):=\E_{y\in[M]}\Delta'_{c(k_{i}^{(2)},k_{i}^{(3)})_{i=1}^{s-2}}T_uf_0(x-P_\ell(y))\cdots\Delta'_{c(k_{i}^{(2)},k_{i}^{(3)})_{i=1}^{s-2}}T_uf_{\ell-1}(x+P_{\ell-1}(y)-P_\ell(y))
\]
and
\[
\psi_u(\ul{k}):=\sum_{\omega\in\{0,1\}^{s-2}}(-1)^{|\omega|}\phi_u(k_{1}^{(1)},\dots,k_{s}^{(1)},k_{1}^{(\omega_1+2)},\dots,k_{s}^{(\omega_s+2)}).
\]
By the pigeonhole principle again, for each $u\in U$ there exists a set of $3(s-2)$-tuples $H_u'\subset\square_{s-2}(H)$ with $|H'_u|\gg_{C,s}(\delta\delta')^{O_s(1)}(\delta'M^{\deg{P_\ell}})^{3(s-2)}$ such that
\[
\left|\f{1}{N/c}\sum_{x\in\Z}G_{\ell,\ul{k}}(c x)e(c\psi_u(\ul{k})x)\right|^2\gg_{C,s}(\delta\delta')^{O_s(1)}
\]
for every $\ul{k}\in H_u'$. By applying Lemma~\ref{lem3.10} for $\ell$ with $m=\ell$, for each $\ul{k}\in H_u'$ there thus exist $c_u'\ll_{C}|cc_\ell|^{O_{\deg{P_\ell}}(1)}$ and  $t_u\ll_{C,\deg{P_\ell},s}(\delta\delta')^{-O_{s,\deg{P_\ell}}(1)}$ such that
\[
  \|t_u c_u'c_\ell\psi_u(\ul{k})\|\ll_{C,\deg{P_\ell},s}\f{(\delta\delta')^{-O_{\deg{P_\ell},s}(1)}}{M^{\deg{P_\ell}}/c_u'}.
\]
By applying Lemma~\ref{lem7.3} with $D\asymp_{C,\deg{P_\ell},s}(\delta\delta')^{-O_{\deg{P_\ell},s}(1)}$, it follows that for each $\ul{k}\in H_u'$, there exist integers $a_u(\ul{k})\ll_{C,\deg{P_\ell},s}(\delta\delta')^{-O_{\deg{P_\ell},s}(1)}$ and $|m_u(\ul{k})|\ll_{C,\deg{P_\ell},s} (\delta\delta')^{-O_{\deg{P_\ell},s}(1)}$ and $|\theta_u(\ul{k})|\leq 1$ such that
\[
c_\ell\psi_u(\ul{k})=\f{a_u(\ul{k})}{t_uc'_u}+\f{m_u(\ul{k})}{(\delta\delta')^{-O_{\deg{P_\ell},s}(1)}M^{\deg{P_\ell}}}+\f{\theta_u(\ul{k})}{(\delta\delta')^{-O_{\deg{P_\ell},s}(1)}M^{\deg{P_\ell}}}.
\]
By the pigeonhole principle yet again, for each $u\in U$ there exists a subset $H''_u\subset H_u'$ of size $|H''_u|\gg_{C,\deg{P_\ell},s}(\delta\delta')^{O_{\deg{P_\ell},s}(1)}|H'_u|$ for which there are $a_u\ll_{C,\deg{P_\ell},s}(\delta\delta')^{-O_{\deg{P_\ell},s}(1)}$ and $|m_u|\ll_{C,\deg{P_\ell},s}(\delta\delta')^{-O_{\deg{P_\ell},s}(1)}$ such that for any $\ul{k}\in H''_u$, we have
\[
c_\ell\psi_u(\ul{k})=\f{a_u}{t_uc'_u}+\f{m_u}{(\delta\delta')^{-O_{\deg{P_\ell},s}(1)}M^{\deg{P_\ell}}}+\f{\theta_u(\ul{k})}{(\delta\delta')^{-O_{\deg{P_\ell},s}(1)}M^{\deg{P_\ell}}}.
\]
Set
\begin{align*}
  \phi_{u,1}(\ul{k}):=(-1)^s&\sum_{\substack{\ul{0}\neq\omega\in\{0,1\}^{s-2} \\ \omega_1=0}} (-1)^{|\omega|}\phi_u(k_{1}^{(1)},\dots,k_{s}^{(1)},k_{1}^{(\omega_1+2)},\dots,k_{s}^{(\omega_s+2)})\\
  &+\f{a_u}{t_uc'_uc_{\ell}}+\f{m_u}{(\delta\delta')^{-O_{\deg{P_\ell},s}(1)}c_{\ell}M^{\deg{P_\ell}}}
\end{align*}
and, for $i=2,\dots,s-2$, set
\[
\phi_{u,i}(\ul{k}):=(-1)^s\sum_{\substack{\ul{0}\neq\omega\in\{0,1\}^{s-2} \\ \omega_1=\dots=\omega_{i-1}=1 \\ \omega_i=0}}(-1)^{|\omega|}\phi_u(k_{1}^{(1)},\dots,k_{s}^{(1)},k_{1}^{(\omega_1+2)},\dots,k_s^{(\omega_s+2)}).
\]
Note that $\phi_{x,i}$ does not depend on on $k_{i}^{(3)}$ and
\[
\psi_u(\ul{k})=\sum_{i=1}^{s-2}\phi_{u,i}(\ul{k})+\f{\theta_u(\ul{k})}{(\delta\delta')^{-O_{\deg{P_\ell},s}(1)}c_\ell M^{\deg{P_\ell}}}.
\]
For any $\ul{k}\in H''_u$, we thus have
\[
\left|c\psi_u(\ul{k})-c\sum_{i=1}^{s-2}\phi_{u,i}(\ul{k})\right|\ll_C\f{1}{(\delta\delta')^{-O_{\deg{P_\ell},s}(1)}M^{\deg{P_\ell}}},
\]
because $c\asymp_Cc_\ell$

By the pigeonhole principle again, for each $u\in U$ there exist $h_{u,1}',\dots,h_{u,s-2}'\in[\delta' M^{\deg{P_\ell}}]$ such that the fiber
\[
H'''_u:=\{(h_1,\dots,h_{s-2},h_1'',\dots,h_{s-2}'')\in H:(\ul{h},\ul{h}',\ul{h}'')\in H_u''\}
\]
has size $\gg_{C,\deg{P_\ell},s}(\delta\delta')^{O_{\deg{P_\ell},s}(1)}(\delta' M^{\deg{P_\ell}})^{2(s-2)}$. Fixing such $h'_{u,1},\dots,h'_{u,s-2}$, it follows that
\[
\E_{(\ul{h},\ul{h}'')\in H'''_u}\left|\f{1}{N/c}\sum_{x\in\Z}\Delta'_{(h_i,h_i'')_{i=1}^{s-2}}T_uF_\ell(c x)e\left(c\sum_{i=1}^{s-2}\phi_{u,i}(\ul{h},\ul{h}_u',\ul{h}'')x\right)\right|^2\gg_{C,\deg{P_\ell},s}(\delta\delta')^{O_{\deg{P_\ell},s}(1)},
\]
by the assumption $N/|c|\ll_CM^{\deg{P_\ell}}$. By positivity, for each $u\in U$ we can extend the average over $H'''_u$ to an average over all of $[\delta' M^{\deg{P_\ell}}]^{2(s-2)}$ using our lower bound on $|H'''_u|$ to get that
\[
\E_{\ul{h},\ul{h}''\in [\delta'M^{\deg{P_\ell}}]^{s-2}}\left|\f{1}{N/c}\sum_{x\in\Z}\Delta'_{(h_i,h_i'')_{i=1}^{s-2}}T_uF_\ell(c x)e\left(c\sum_{i=1}^{s-2}\phi_{u,i}(\ul{h},\ul{h}_u',\ul{h}'')x\right)\right|^2
\]
is $\gg_{C,\deg{P_\ell},s}(\delta\delta')^{O_{\deg{P_\ell},s}(1)}$. Applying Lemma~\ref{lem7.5} for each $u\in U$ and using positivity again, we deduce that
\[
\E_{u=0,\dots,c-1}\|T_uF_\ell(c\cdot)\|^{2^{s-1}}_{U^{s-1}_{[\delta'M^{\deg{P_\ell}}]}([CN/c])}\gg_{C,\deg{P_\ell},s}(\delta\delta')^{O_{\deg{P_\ell},s}(1)},
\]
from which we conclude the lemma by expanding the definition of the Gowers box norm.
\end{proof}

Now we show that Lemma~\ref{lem3.10} in the general $\ell\geq 3$ case follows from Lemmas~\ref{lem3.9} and~\ref{lem3.10} in the $\ell-1$ case.
\begin{proof}[Proof of Lemma~\ref{lem3.10} for $\ell$ assuming Lemmas~\ref{lem3.9} and~\ref{lem3.10} for $\ell-1$]
As in the proof of the base case, we insert the definition of $F_\ell$ and split the sum over $y\in[M]$ up into progressions modulo $|c|$ by writing $y=cz+h$ for $h=0,\dots,|c|-1$, and use the pigeonhole principle to fix an $h$ such that
\begin{align*}
\bigg|\f{1}{N/c}\sum_{x\in\Z}\E_{z\in[M/|c|]}f_0(cx-P_{\ell}(cz+h))\cdots f_{\ell-1}(cx+P_{\ell-1}(cz+h)-P_\ell(cz+h)) &\\
\psi_\ell(cx)\psi_{\ell+1}(P_{\ell+1}(cz+h))\cdots\psi_m(P_m(cz+h))&\bigg|\gg\delta,
\end{align*}
and then make the change of variables $x\mapsto x+\f{P_\ell(cz+h)-P_\ell(h)}{c}$ to deduce that
\begin{equation}\label{eq8.3}
  \left|\Lambda^{N/c,M/c}_{P_1',\dots,P_m'}(f_0',\dots,f_{\ell-1}';\psi_{\ell},\dots,\psi_m)\right|\gg\delta,
\end{equation}
where
\[
f'_i(x):=\begin{cases}
T_{-P_\ell(h)}(f_0\psi_\ell)(cx) & i=0 \\
T_{P_i(h)-P_\ell(h)}f_i(cx) & i=1,\dots,m
\end{cases}
\]
and
\[
P_i'(z):=\begin{cases}
\f{P_i(cz+h)-P_i(h)}{c} & i=1,\dots,\ell-1 \\
P_i(cz+h)-P_i(h) & i=\ell,\dots,m
\end{cases}.
\]
Note, as it will be relevant later, that the leading coefficient $c_i'$ of $P_i'$ equals $c^{\deg{P_i}-1}c_i$ when $i=1,\dots,\ell-1$ and equals $c^{\deg{P_i}}c_i$ when $i=\ell,\dots,m$, and the polynomials $P_1',\dots,P_{\ell-1}'\in\Z[z]$ all have $(O_{\deg{P_{\ell-1}}}(C),qc)$-coefficients.

Set $M':=M/|c|$ and $N':=(M')^{\deg{P_{\ell-1}}}(q|c|)^{\deg{P_{\ell-1}}-1}$. With a view towards applying Corollary~\ref{cor3.8}, we rewrite the left-hand side of~\eqref{eq8.3} as
\begin{align*}
  \bigg|\E_{\substack{0\leq w<(N/|c|)/C'N' \\ x\in[C'N']}}\E_{z\in[M']}T_{C'N'w}f_0'(x)T_{C'N'w}f'_1(x+P_1'(z))\cdots T_{C'N'w}f_{\ell-1}'(x+P_{\ell-1}'(z))&\\
                                                                                                                                                  \psi_\ell(P_\ell'(z))\cdots\psi_m(P_m'(z))&\bigg|\\
\end{align*}
for $C'\asymp_{C,\deg{P_{\ell-1}}}1$ and use the fact that $\max_{z\in[M']}|P_i'(z)|\ll_{C,\deg{P_{\ell-1}}} N'$ for each $i=1,\dots,\ell-1$ (which is a consequence of each $P_i'$ having $(O_{\deg{P_{\ell-1}}}(C),cq)$-coefficients) and the pigeonhole principle to deduce, for suitable $C'$, that
\[
\left|\Lambda^{C'N',M'}_{P_1',\dots,P_m'}(f_0'',\dots,f_{\ell-1}'';\psi_{\ell},\dots,\psi_m)\right|\geq\delta,
\]
where $f_i'':=T_{C'N'w}f_i'\cdot 1_{[C'N']}$ for some integer $0\leq w<(N/|c|)/C'N'$.

Now, since $(q|c|)^{\deg{P_{\ell-1}}-1}(M')^{\deg{P_{\ell-1}}}= N'$ and  $P_1',\dots,P_{\ell-1}'\in\Z[z]$ have $(O_{\deg{P_{\ell-1}}}(C),qc)$-coefficients, we may apply Corollary~\ref{cor3.8} to get that
\[
\|F_{\ell-1}'\|_{U^s_{(\deg{P_{\ell-1}})!c_{\ell-1}'[\delta'(M')^{\deg{P_{\ell-1}}}]}([O_{C,\deg{P_{\ell-1}}}(1)N'])}\gg_{C,\deg{P_{\ell-1}}}\delta^{O_{\deg{P_{\ell-1}}}(1)}
\]
for any $\delta'\ll_{C,\deg{P_{\ell-1}}}\delta^{O_{\deg{P_{\ell-1}}}(1)}$, where $s\ll_{\deg{P_{\ell-1}}}1$ and
\[
F_{\ell-1}'(x):=\E_{z\in[M']}f_0''(x-P_{\ell-1}'(z))\cdots f_{\ell-2}''(x+P_{\ell-2}'(z)-P_{\ell-1}'(z))\psi_{\ell}(P_\ell'(z))\cdots\psi_{m}(P_m'(z)).
\]
Fixing $\delta'\asymp_{C,\deg{P_{\ell-1}}}\delta^{O_{\deg{P_{\ell-1}}}(1)}$, it thus follows from repeated applications of Lemma~\ref{lem3.9} in the $\ell-1$ case that
\[
\|F_{\ell-1}'\|_{U^2_{(\deg{P_{\ell-1}})!c_{\ell-1}'[\delta'(M')^{\deg{P_{\ell-1}}}]}([O_{C,\deg{P_{\ell-1}}}(1)N'])}\gg_{C,\deg{P_{\ell-1}}}\delta^{O_{\deg{P_{\ell-1}}}(1)}
\]
Set $c':=(\deg{P_{\ell-1}})!c_{\ell-1}'$. By applying Lemma~\ref{lem2.4} in the same manner as in the previous proof and using the pigeonhole principle, we deduce that there exists a $u\in[c']$ such that
\[
\left|\f{1}{N'/c'}\sum_{x\in\Z}T_uF_{\ell-1}'(c'x)\psi_{\ell-1}(c'x)\right|\gg_{C,\deg{P_{\ell-1}}}\delta^{O_{\deg{P_{\ell-1}}}(1)}
\]
for some character $\psi_{\ell-1}:\Z\to S'$. We now apply Lemma~\ref{lem3.10} for $\ell-1$ to deduce that there exists a $c''\ll_C|c'c_\ell c_m|^{O_{\deg{P_m}}(1)}\ll_C|cc_m|^{O_{\deg{P_m}}(1)}$ and $t\ll_{C,\deg{P_m}}\delta^{-O_{\deg{P_m}}(1)}$ such that
\[
\|tc'' c^{\deg{P_m}}c_m\alpha_m\|\ll_{C,\deg{P_m}} \f{\delta^{-O_{\deg{P_m}}(1)}}{(M/c)^{\deg{P_m}}/c''},
\]
since the leading coefficient of $P_m'$ is $c^{\deg{P_m}}c_m$. This gives the conclusion of the lemma.
\end{proof}

Since we have shown that Lemma~\ref{lem3.10} holds in the $\ell=2$ case, Lemma~\ref{lem3.10} in the $\ell$ case implies Lemma~\ref{lem3.9} in the $\ell$ case, and Lemmas~\ref{lem3.9} and~\ref{lem3.10} in the $\ell-1$ case together imply Lemma~\ref{lem3.10} in the $\ell$ case, it now follows by induction that Lemmas~\ref{lem3.9} and~\ref{lem3.10} hold in general.

\section{Local $U^1$-control}\label{sec9}
As was mentioned in Section~\ref{sec3}, Theorem~\ref{thm3.3} will be proved using a combination of Corollary~\ref{cor3.8}, Lemma~\ref{lem3.9}, and Lemma~\ref{lem2.4}. For the sake of convenience, before proving Theorem~\ref{thm3.3} we first prove Lemma~\ref{lem3.11}, which gives the result of applying Corollary~\ref{cor3.8} once, Lemma~\ref{lem3.9} as many times as necessary, and then Lemma~\ref{lem2.4} once.

\begin{proof}[Proof of Lemma~\ref{lem3.11}]
We first apply Corollary~\ref{cor3.8}, which tells us that
\[
\|F_\ell\|_{U^s_{c'[\delta'M^{\deg{P_\ell}}]}([O_{\deg{P_\ell}}(CN)])}\gg_{C,\deg{P_\ell}}\delta^{O_{\deg{P_\ell}}(1)}
\]
for some $s\ll_{\deg{P_\ell}}1$ whenever $\delta'\ll_{C,\deg{P_\ell}}\delta^{O_{\deg{P_\ell}}(1)}$ and $N\gg_{\deg{P_\ell}}(q/\delta\delta')^{O_{\deg{P_\ell}}(1)}$. Fixing $\delta'\asymp_{C,\deg{P_\ell}}\delta^{O_{\deg{P_\ell}}(1)}$ and then applying Lemma~\ref{lem3.9} repeatedly (which we can do because $(\deg{P_\ell})!/C\leq |c'|M^{\deg{P_\ell}}/N\leq (\deg{P_\ell})!C^2$) thus yields
\[
\|F_\ell\|_{U^2_{c'[\delta'M^{\deg{P_\ell}}]}([O_{\deg{P_\ell}}(CN)])}\gg_{C,\deg{P_\ell}}\delta^{O_{\deg{P_\ell}}(1)}.
\]
We now expand the definition of the Gowers box norm and split the sum over $\Z$ up into progressions modulo $|c'|$ as in the proof of Lemmas~\ref{lem3.9} and~\ref{lem3.10} to write the above as
\[
\E_{u=0,\dots,|c'|-1}\|T_{-u}F_\ell(c'\cdot)\|_{U^2_{[\delta'M^{\deg{P_\ell}}]}([O_{\deg{P_\ell}}(CN/|c'|)])}\gg_{C,\deg{P_\ell}}\delta^{O_{\deg{P_\ell}}(1)},
\]
so that, by Lemma~\ref{lem2.4} and the inequality $(\deg{P_\ell})!/C\leq |c'|M^{\deg{P_\ell}}/N\leq (\deg{P_\ell})!C^2$ again, we have that
\[
\E_{u=0,\dots,|c'|-1}\left|\f{1}{N/c'}\sum_{x\in\Z}T_{-u}F_\ell(c'x)\psi_{\ell,u}(c'x)\right|\gg_{C,\deg{P_\ell}}\delta^{O_{\deg{P_\ell}}(1)}
\]
for some characters $\psi_{\ell,u}:\Z\to S^1$. Expanding the definition of $F_\ell$, the above inequality says that
\begin{align*}
  \E_{u=0,\dots,|c'|-1}\bigg|\f{1}{N/c'}\sum_{x\in\Z}\E_{y\in[M]}T_{-u}f_0(c'x-P_\ell(y))\cdots T_{-u}f_{\ell-1}(c'x+P_{\ell-1}(y)-P_\ell(y))&\\
  \psi_{\ell,u}(c'x)\psi_{\ell+1}(P_{\ell+1}(y))\cdots\psi_{m}(P_m(y))&\bigg|
\end{align*}
is $\gg_{C,\deg{P_\ell}}\delta^{O_{\deg{P_\ell}}(1)}$.

Next, as in the proofs of Lemmas~\ref{lem8.1} and~\ref{lem3.10}, we split the average over $y\in[M]$ above up into congruence classes modulo $|c'|$ by setting $y=c'z+h$ for $h=0,\dots,|c'|-1$ and make the change of variables $x\mapsto x+\f{P_\ell(c'z+h)-P_\ell(h)}{c'}$ to get, assuming $N\gg_{C,\deg{P_\ell}}(q/\delta)^{O_{\deg{P_\ell}}(1)}$, that
\[
  \E_{u,h=0,\dots,|c'|-1}\left|\Lambda_{P_1^{h},\dots,P_m^h}^{N/|c'|,M'}(f_0^{u,h},\dots,f_{\ell-1}^{u,h};\psi_{\ell,u},\psi_{\ell+1},\dots,\psi_m)\right|\gg_{C,\deg{P_\ell}}\delta^{O_{\deg{P_\ell}}(1)},
\]
where
\[
f_{i}^{u,h}(x):=\begin{cases}
T_{-P_\ell(h)}T_{-u}(f_0\psi_{\ell,u})(c'x) & i=0 \\
T_{P_i(h)-P_\ell(h)}T_{-u}f_i(c'x) & i=1,\dots,\ell-1
\end{cases}.
\]

To conclude, we argue as in the proof of Lemma~\ref{lem3.10}, using the fact that $\max_{z\in[M']}|P_i^h(z)|\leq C'N'/2$ for all $|h|\leq |c'|$ and $i=1,\dots,\ell-1$ whenever $N\gg_{C,\deg{P_\ell}}(q/\delta)^{O_{\deg{P_\ell}}(1)}$ to split the sum over $x\in\Z$ in $\Lambda_{P_1^{h},\dots,P_m^h}^{N/|c'|,M'}(f_0^{u,h},\dots,f_{\ell-1}^{u,h};\psi_{\ell,u},\psi_{\ell+1},\dots,\psi_m)$ up into intervals of length $C'N'$ and then applying the triangle inequality to get
\[
\E_{\substack{u,h=0,\dots,|c'|-1 \\ 0\leq w<(N/|c'|)/C'N'}}\left|\Lambda_{P_1^h,\dots,P_m^h}^{C'N',M'}(f_0^{u,h,w},\dots,f_{\ell-1}^{u,h,w};\psi_{\ell,u},\psi_{\ell+1},\dots,\psi_m)\right|\gg_{C,\deg{P_\ell}}\delta^{O_{\deg{P_\ell}}(1)}.
\]
\end{proof}

Now we can prove Theorem~\ref{thm3.3}.
\begin{proof}[Proof of Theorem~\ref{thm3.3}]
  We apply Lemma~\ref{lem3.11} $m-1$ times to get that
  \begin{equation}\label{eq9.1}
    \E_{\substack{u_i,h_i=0,\dots,|c_i|-1\\ 0\leq w_i<(C_{i+1}N_{i+1}/|c_i|)/C_iN_i \\ i=2,\dots,m}}\left|\Lambda_{P_1^{\ul{h}},\dots,P_m^{\ul{h}}}^{C_2N_2,M_2}(f_0^{\ul{u},\ul{h},\ul{w}},f_1^{\ul{u},\ul{h},\ul{w}};\psi_2^{\ul{u},\ul{h},\ul{w}},\dots,\psi_m^{\ul{u},\ul{h},\ul{w}})\right|\gg_{C,\deg{P_m}}\delta^{O_{\deg{P_m}}(1)},
  \end{equation}
  where $C_{m+1}=1$, $N_{m+1}=N$, $c_i=\tilde{c}_iq^{b_i}$ for $\tilde{c}_i\asymp_{C,\deg{P_m}}1$ and $b_i\ll_{\deg{P_m}}1$, $M_i:=M/\prod_{j=i}^m|c_i|$, $C_i\asymp_{C,\deg{P_m}}1$, and $N_i:=M_i^{\deg{P_{i-1}}}(q|c_i\cdots c_m|)^{\deg{P_{i-1}}-1}$ for each $i=2,\dots,m$, $f_0^{\ul{u},\ul{h},\ul{w}}$ is $1$-bounded and $f_1^{\ul{u},\ul{h},\ul{w}}(x)$ equals $1_{[C_2N_2]}(x)$ times
  \[
T_{\sum_{i=2}^m (c_{i+1}\cdots c_m)[w_ic_iC_iN_i-u_i+[P_1^{h_m,\dots,h_{i+1}}(h_{i})-P_i^{h_m,\dots,h_{i+1}}(h_{i})]]}f_1(c_2\cdots c_mx)
\]
for each $\ul{u},\ul{h}\in\prod_{i=2}^m\{0,\dots,|c_i|-1\}$ and $\ul{w}\in\prod_{i=2}^m([0,(C_{i+1}N_{i+1}/|c_i|)/C_iN_i)\cap\Z)$, where $P^{h_m,\dots,h_{i+1}}$ denotes the polynomial $((P^{h_m})^{h_{m-1}})\dots)^{h_{i+1}}$ using the notation from Lemma~\ref{lem3.11}, each $P_{i}^{\ul{h}}$ is a polynomial of degree $\deg{P_i}$ whose coefficients have magnitude $\ll_{C,\deg{P_m}}q^{O_{\deg{P_m}}(1)}$ and whose leading coefficient is independent of $h$, and $P_1^{\ul{h}}$ has leading coefficient of the form $C'(qc_2\cdots c_m)^{\deg{P_1}-1}$ for some $C'\ll_{C}1$ and satisfies $\max_{y\in[M_2]}|P_1^{\ul{h}}(y)|\ll_{C,\deg{P_m}}N_2$.

For each character $\psi_i^{\ul{u},\ul{h},\ul{w}}$, let $\beta_i^{\ul{u},\ul{h},\ul{w}}\in\T$ be such that $\psi_i^{\ul{u},\ul{h},\ul{w}}(x)=e(\beta_i^{\ul{u},\ul{h},\ul{w}}x)$. Next, we argue as in the proof of Lemma~\ref{lem8.1} and apply Lemma~\ref{lem4.2} $d:=\deg{P_1}$ times and the Cauchy--Schwarz inequality once to get that
\[
\E_{\substack{u_i,h_i=0,\dots,|c_i| -1\\ 0\leq w_i<(C_{i+1}N_{i+1}/|c_i|)/C_iN_i \\ i=2,\dots,m}}\E_{|a_1|,\dots,|a_d|<\delta'M_2}\left|\E_{y\in[M_2]}e(Q^{\ul{u},\ul{h},\ul{w}}(\ul{a},y))\right|^2\gg_{C,\deg{P_m}}\delta^{O_{\deg{P_m}}(1)}
\]
whenever $\delta'\ll_{C,\deg{P_m}}\delta^{O_{\deg{P_m}}(1)}$, where
\[
Q^{\ul{u},\ul{h},\ul{w}}(\ul{a},y):=\sum_{i=2}^m\beta_i^{\ul{u},\ul{h},\ul{w}}\left[\sum_{\omega\in\{0,1\}^d}(-1)^{|\omega|}P_i^{\ul{h}}(y+\ul{a}\cdot\omega)\right].
\]
As in the proof of Lemma~\ref{lem8.1}, we have that $\left|\E_{y\in[M_2]}e(Q^{\ul{u},\ul{h},\ul{w}}(\ul{a},y))\right|\gg_{C,\deg{P_m}}\delta^{O_{\deg{P_m}}(1)}$ for a $\gg_{C,\deg{P_m}}\delta^{O_{\deg{P_m}}(1)}$ proportion of tuples $\ul{u},\ul{h},$ and $\ul{w}$ and integers $|a_1|,\dots,|a_d|<\delta'M_2$.

Now set $d':=\deg{P_m}-\deg{P_1}$ and write
\[
Q^{\ul{u},\ul{h},\ul{w}}(\ul{a},y)=B_{d'}^{\ul{u},\ul{h},\ul{w}}(\ul{a})y^{d'}+\dots+B_1^{\ul{u},\ul{h},\ul{w}}(\ul{a})y+B_0^{\ul{u},\ul{h},\ul{w}}(\ul{a}),
\]
so that by Lemma~\ref{lem7.1} there exists a $t\ll_{C,\deg{P_m}}\delta^{-O_{\deg{P_m}}(1)}$ such that for a $\gg_{C,\deg{P_m}}\delta^{O_{\deg{P_m}}(1)}$ proportion of $\ul{a},\ul{u},\ul{h},$ and $\ul{w}$, we have $\|tB_i^{\ul{u},\ul{h},\ul{w}}(\ul{a})\|\ll_{C,\deg{P_m}}\delta^{-O_{\deg{P_m}}(1)}/M_2^i$ for $i=1,\dots,d'$. By expanding each $B_i^{\ul{u},\ul{h},\ul{w}}(\ul{a})$ in terms of $a_1,\dots,a_d$, it then follows from repeated applications of Lemma~\ref{lem7.2} and the triangle inequality that, if $\delta'\asymp_{C,\deg{P_m}}\delta^{O_{\deg{P_m}}(1)}$ is fixed suitably small, there must exist $t'\ll_{C,\deg{P_m}}\delta^{-O_{\deg{P_m}}(1)}$ and $b_i\ll_{\deg{P_m}}1$ such that $\|t'q^{b_i}\beta_i^{\ul{u},\ul{h},\ul{w}}\|\ll_{C,\deg{P_m}}\delta^{-O_{\deg{P_m}}(1)}/M_2^{\deg{P_i}}$ for all $i=2,\dots,m$.

Thus, by splitting $y\in[M_2]$ up into progressions of length $M_2'\asymp_{C,\deg{P_m}}(\delta/q)^{O_{\deg{P_m}}(1)}M_2$ modulo $t'q^{s}$ for some $s\ll_{\deg{P_m}}1$, it follows from~\eqref{eq9.1} that
\[
  \E_{\substack{u_i,h_i=0,\dots,|c_i|-1\\ 0\leq w_i<(C_{i+1}N_{i+1}/|c_i|)/C_iN_i \\ i=2,\dots,m \\k_{\ul{u},\ul{h},\ul{w}}\in[M_2/M_2'] \\ k_{\ul{u},\ul{h},\ul{w}}'\in[t'q^{s}]}}\left|\f{1}{C_2N_2}\sum_{x}\E_{z\in[M_2']}f_0^{\ul{u},\ul{h},\ul{w}}(x)f_1^{\ul{u},\ul{h},\ul{w}}(x+P_1^{\ul{h}}(t'q^{s}(z-M_2'k_{\ul{u},\ul{h},\ul{w}}')-k_{\ul{u},\ul{h},\ul{w}}))\right|
\]
is $\gg_{C,\deg{P_m}}\delta^{O_{\deg{P_m}}(1)}$. Applying Lemma~\ref{lem4.2} $d$ more times, we get from the above that
\[
  \E_{\substack{u_i,h_i=0,\dots,|c_i|-1\\ 0\leq w_i<(C_{i+1}N_{i+1}/|c_i|)/C_iN_i \\ i=2,\dots,m}}\f{1}{C_2N_2}\sum_{x}\E_{|a_1|,\dots,|a_d|<\delta''M_2'}f_1^{\ul{u},\ul{h},\ul{w}}(x)f_1^{\ul{u},\ul{h},\ul{w}}(x+C'd!(t'q^{s})^{d}(qc_2\cdots c_m)a_1\cdots a_d)
\]
is $\gg_{C,\deg{P_m}}\delta^{O_{\deg{P_m}}(1)}$ whenever $\delta''\ll_{C,\deg{P_m}}\delta^{O_{\deg{P_m}}(1)}$. Note that this can be written as
\[
  \E_{\substack{u_i,h_i=0,\dots,|c_i|-1\\ 0\leq w_i<(C_{i+1}N_{i+1}/|c_i|)/C_iN_i \\ i=2,\dots,m}}\f{1}{C_2 N}\sum_x\sum_{|y|\leq (\delta'' M_2')^d}f_1^{\ul{u},\ul{h},\ul{w}}(x)f_1^{\ul{u},\ul{h},\ul{w}}(x+C'd!(t'q^{s})^{d}(qc_2\cdots c_m)y)G(y),
\]
where $G(y):=\E_{|a_1|,\dots,|a_d|<\delta'' M_2'}1_{y=a_1\cdots a_d}$. Inserting the $\int_0^1\widehat{G}(\xi)e(\xi y)d\xi$ for $G(y)$ above, bounding the contribution of minor arcs using Lemma~\ref{lem7.1}, pigeonholing in the major arcs, and fixing $\delta''\ll_{C,\deg{P_m}}\delta^{O_{\deg{P_m}}(1)}$ sufficiantly small, we get that there exists a $t''\ll (\delta\delta'')^{-O_d(1)}$ and $0<a\leq t''$ relatively prime to $t''$ such that
\[
  \E_{\substack{u_i,h_i=0,\dots,|c_i|-1\\ 0\leq w_i<(C_{i+1}N_{i+1}/|c_i|)/C_iN_i \\ i=2,\dots,m}}\left|\f{1}{C_2 N}\sum_x\E_{y<(\delta'' M_2')^d}f_1^{\ul{u},\ul{h},\ul{w}}(x)f_1^{\ul{u},\ul{h},\ul{w}}(x+C'd!(t'q^{s})^{d}(qc_2\cdots c_m)y)e\left(\f{ay}{t''}\right)\right|
\]
is $\gg_{C,\deg{P_m}}\delta^{O_{\deg{P_m}}(1)}$. We now split the sum over $y<(\delta'M_2')^d$ into arithmetic progressions modulo $t''$ of length $M_2'':=\lfloor(\delta''M_2')^d/t''\rfloor$ and apply Lemma~\ref{lem4.2} once more and use that $f_1^{\ul{u},\ul{h},\ul{w}}$ is $1$-bounded to deduce that
\[
  \E_{\substack{u_i,h_i=0,\dots,|c_i|-1\\ 0\leq w_i<(C_{i+1}N_{i+1}/|c_i|)/C_iN_i \\ i=2,\dots,m}}\left|\f{1}{C_2 N}\sum_x\E_{z\in[M_2'']}f_1^{\ul{u},\ul{h},\ul{w}}(x+C'd!(t'q^{s})^{d}t''(qc_2\cdots c_m)z)\right|\gg_{C,\deg{P_m}}\delta^{O_{\deg{P_m}}(1)}.
\]
Set $Q(z):=C'd!(t'q^{s})^{d}t''(qc_2\cdots c_m)z$ for ease of notation throughout the remainder of the argument.

To complete the proof of the theorem, it remains to unravel the definition of $f_1^{\ul{u},\ul{h},\ul{w}}$. First, we apply the pigeonhole principle to fix an $\ul{h}\in\prod_{i=2}^m\{0,\dots,|c_i|-1\}$ such that
\[
  \E_{\substack{u_i=0,\dots,|c_i|-1\\ 0\leq w_i<(C_{i+1}N_{i+1}/|c_i|)/C_iN_i \\ i=2,\dots,m}}\E_{x\in[C_2N_2]}\left|\E_{z\in[M_2'']}f_1^{\ul{u},\ul{h},\ul{w}}(x+Q(z))\right|\gg_{C,\deg{P_m}}\delta^{O_{\deg{P_m}}(1)}.
\]
For some $r_{\ul{h}}\ll_{C,\deg{P_m}}q^{O_{\deg{P_m}}(1)}$, the left-hand side of the above can thus be written as
\[
  \E_{\substack{x\in[C_2N_2]\\u_i=0,\dots,|c_i|-1\\ 0\leq w_i<(C_{i+1}N_{i+1}/|c_i|)/C_iN_i \\ i=2,\dots,m}}\left|\E_{z\in[M_2'']}T_{r_{\ul{h}}+\sum_{i=2}^m (c_{i+1}\cdots c_m)[w_iC_iN_i-u_i]}f_1(c_2\cdots c_m(x+Q(z)))\right|.
\]
Since, as $x$, $u_i$, and $w_i$ for each $i=2,\dots,m$ range over $[C_2N_2]$, $\{0,\dots,|c_i|-1\}$, and $[0,(C_{i+1}N_{i+1}/|c_i|)/C_iN_i)\cap\Z$, respectively, the quantity
\[
c_2\cdots c_m x+\sum_{i=2}^m(c_{i+1}\cdots c_m)[w_ic_iC_iN_i-u_i]
\]
ranges over $\ll N$ distinct integers lying within the interval $[1,N+O_m(|c_2\cdots c_m|C_mN_m)]$, and $N_m\ll_{C,\deg{P_m}}qN^{1-\ve}$ for some $0<\ve<1$ satisfying $\ve\gg_{\deg{P_m}}1$, we have that
\[
\f{1}{N}\sum_{x\in\Z}\left|\E_{z\in[M_2'']}f_1(x+c_2\cdots c_mQ(z)+r_{\ul{h}})\right|\gg_{\deg{P_m},C}\delta^{O_{\deg{P_m}}(1)},
\]
provided $N\gg_{C,\deg{P_m}}(q/\delta)^{O_{\deg{P_m}}(1)}$. We conclude by making the change of variables $x\mapsto x-r_{\ul{h}}$ and noting that any progression of the form $x-a[L]$ with $a>0$ can be written as $x-a(L+1)+a[L]$.
\end{proof}

\section{Density increment}\label{sec10}
In this section, we prove Theorem~\ref{thm3.2}, which we then use to finally prove Theorem~\ref{thm1.1}.
\begin{proof}[Proof of Theorem~\ref{thm3.2}]
Set $f_A:=1_A-\alpha 1_{[N]}$ and $M:= (N/q^{\deg{P_m}-1})^{1/\deg{P_m}}$. Note that $\Lambda_{P_1,\dots,P_m}^{N,M}(1_A)=0$ since $A$ contains only trivial progressions. By the multilinearity of $\Lambda_{P_1,\dots,P_m}^{N,M}$ and the identity $1_{A}=f_A+\alpha 1_{[N]}$, we have that $\Lambda_{P_1,\dots,P_m}^{N,M}(1_A)$ also equals
\[
\Lambda_{P_1,\dots,P_m}^{N,M}(1_A,f_A,1_A,\dots,1_A)+\alpha\Lambda_{P_1,\dots,P_m}^{N,M}(1_A,1_{[N]},f_A,1_A,\dots,1_A)+\cdots+\alpha^{m+1}\Lambda_{P_1,\dots,P_m}^{N,M}(1_{[N]}).
\]
Since $\Lambda_{P_1,\dots,P_m}^{N,M}(1_{[N]})\gg_{C,\deg{P_m}} 1$, we must have that
\[
\left|\Lambda_{P_i,\dots,P_m}^{N,M}(1_A,f_A,1_A,\dots,1_A)\right|\gg_{C,\deg{P_m}}\alpha^{O_{m}(1)}
\]
for some $i=1,\dots,m$. Theorem~\ref{thm3.3} then tells us that there exists a $q'\ll_{C,\deg{P_m}}\alpha^{-O_{\deg{P_m}}(1)}$, $b\ll_{\deg{P_m}}1$, and an $N'$ satisfying $M\geq N'\gg_{C,\deg{P_m}} M(\alpha/q)^{O_{\deg{P_m}}(1)}$ such that
\[
\f{1}{N}\sum_{x\in\Z}\left|\E_{y\in[N']}f_A(x+q'q^{b}y)\right|\gg_{C,\deg{P_m}}\alpha^{O_{\deg{P_m}}(1)},
\]
provided that $N\gg_{C,\deg{P_m}}(q/\alpha)^{O_{\deg{P_m}}(1)}$.

Note that $f_A$ has mean zero, so $\f{1}{N}\sum_{x\in\Z}\E_{y\in[N']}f_A(x+q'q^{b}y)=0$, which we can add to both sides of the above to get that
\[
\f{1}{N}\sum_{x\in\Z}\max\left(0,\E_{y\in[N']}f_A(x+q'q^{b}y)\right)\gg_{C,\deg{P_m}}\alpha^{O_{\deg{P_m}}(1)}.
\]
The total contribution to the above coming from $x\in\Z$ such that $x+q'q^{b}[N']\not\subset[N]$ is $\ll q'q^{O_{\deg{P_m}}(1)}N^{-1+1/\deg{P_m}}$, so that as long as $N\gg_{C,\deg{P_m}}(q/\alpha)^{O_{\deg{P_m}}(1)}$, there exists an $a\in[N]$ such that $a+q'q^{b}[N']\subset[N]$ and 
\[
\E_{y\in[N']}1_A(a+q'q^{b}y)\geq\alpha+\Omega_{C,\deg{P_m}}(\alpha^{O_{\deg{P_m}}(1)}),
\]
which means that we have the desired density increment.
\end{proof}
\begin{proof}[Proof of Theorem~\ref{thm1.1}]
%Recall from the discussion at the beginning of Section~\ref{sec3} tha if $A\subset[N]$ has no nontrivial progressions of the form $x,x+P_1(y),\dots,x+P_m(y)$, then the rescaled set $\{n\in[N']:a+qn\in A\cap (a+q[N'])\}$ has no nontrivial progressions of the form $x,x+P_1^{(q)}(y),\cdots,x+P_m^{(q)}(y)$, where we define
%\[
%P^{(q)}(y):=\f{P(qy)}{q}
%\]
%for any polynomial $P\in\Z[y]$. Note that if $P$ has degree $d$ and zero constant term, then $P^{(q)}$ is a degree $d$ polynomial with zero constant term and $P^{(q)}\in\Z[y]$ and as well, and $P^{(q)}$ has leading coefficient equal to $q^{d-1}$ times the leading coefficient of $P$. Note also that $(P^{(q)})^{(q')}=P^{(qq')}$ for all $q,q'\in\N$.

Suppose that $A\subset[N]$ has density $\alpha$ and contains no nontrivial progressions of the form $x,x+P_1(y),\dots,x+P_m(y)$. Set $A_0=A$, $N_0=N$, $\alpha_0=\alpha$, and $q_0=1$. By applying Theorem~\ref{thm3.2} repeatedly, we get a sequence of $A_i$'s, $N_i$'s, $\alpha_i$'s, and $q_i$'s such that
\begin{enumerate}
\item $A_i\subset[N_i]$ with $\alpha_i=|A_i|/N_i$ and $\alpha_i\geq\alpha_{i-1}+\Omega_{P_1,\dots,P_m}(\alpha_{i-1}^{O_{P_1,\dots,P_m}(1)})$,
\item $N_i\gg_{P_1,\dots,P_m} (\alpha_{i-1}/(q_0\cdots q_{i-1}))^{O_{P_1,\dots,P_m}(1)}N_{i-1}^{1/\deg{P_m}}$,
\item $q_i\ll_{P_1,\dots,P_m}(q_0\cdots q_{i-1}/\alpha_{i-1})^{O_{P_1,\dots,P_m}(1)}$, and
\item $A_i$ contains no nontrivial progressions of the form
\[
x,x+P_1^{(q_0\cdots q_i)}(y),\dots,x+P_m^{(q_0\cdots q_i)}(y),
\]
\end{enumerate}
provided that $N_{i-1}\gg_{P_1,\dots,P_m}(q_0\cdots q_{i-1}/\alpha)^{O_{P_1,\dots,P_m}(1)}$.

Since no set can have density greater than $1$, the bound $N_i\gg_{P_1,\dots,P_m}(q_0\cdots q_i/\alpha)^{O_{P_1,\dots,P_m}(1)}$ must fail to hold for some $i\ll_{P_1,\dots,P_m}\alpha^{-O_{P_1,\dots,P_m}(1)}$. Thus,
\[
  N_i\ll_{P_1,\dots,P_m}\left(\f{q_0\cdots q_i}{\alpha}\right)^{O_{P_1,\dots,P_m}(1)}\ll_{P_1,\dots,P_m}\alpha^{-O_{P_1,\dots,P_m}(\sigma_1^i)}
\]
for some $0<\sigma_1\ll_{P_1,\dots,P_m} 1$ by the upper bound on the $q_i$'s. On the other hand, we also have that $N_i\gg_{P_1,\dots,P_m}\alpha^{O_{P_1,\dots,P_m}(\sigma_2^i)}N^{1/(\deg{P_m})^i}$ for some $0<\sigma_2\ll_{P_1,\dots,P_m}1$, again by the upper bound on the $q_i$'s. Comparing the upper and lower bounds for $N_i$ thus gives $N\ll_{P_1,\dots,P_m}\alpha^{-O_{P_1,\dots,P_m}(\sigma^i)}$ for some $\sigma\ll_{P_1,\dots,P_m}1$. Since $i\ll_{P_1,\dots,P_m}\alpha^{-O_{P_1,\dots,P_m}(1)}$, we get that $N\ll_{P_1,\dots,P_m}\alpha^{-O_{P_1,\dots,P_m}(\sigma^{O_{P_1,\dots,P_m}(\alpha^{-O_{P_1,\dots,P_m}(1)})})}$, from which the conclusion of the theorem follows.
\end{proof}

\bibliographystyle{plain}
\bibliography{bib}

\end{document}